\newif\ifhyperref
\newif\ifbackrefs
\newif\iftodoenv
\newif\iflinenumbers
\newif\ifindexinmargin
\newif\ifnotes
\newif\ifipad{}
\newenvironment{xequation}{%
  \begin{linenomath*}
    \begin{equation}}
    {%
    \end{equation}
  \end{linenomath*}
}
\newenvironment{xequation*}{%
  \begin{linenomath*}
    \begin{equation*}}
    {%
    \end{equation*}
  \end{linenomath*}
}
\newenvironment{xequation*}{%
  \begin{equation*}}
  {%
  \end{equation*}
}
\title{The sharp for the Chang model is small}
\author{William J. Mitchell\thanks{I would like to thank the Mitlag
    Leffler Institute, where this work was conceived while the author
    was resident at the program \emph{Mathematical Logic: Set theory and model
      theory} in 2009, and the Fields Institute, where much of
    the work on this paper was done while the 
    author participated in the \emph{Thematic Program on Forcing and
      its Applications} in Fall 2012.}}
\date{\today}
\newcommand\boxederrormessage[1]{\par\noindent\fbox{\vbox{#1}}}
\newcommand\boxederrormessage[1]{}
\newenvironment{myinparaenum}{\begin{inparaenum}[\upshape(i)]
}{\end{inparaenum}}
\newenvironment{case}[2]{\smallskip\par\noindent\textsc{Case #1} (#2).}{\smallskip}
\newtheorem{theorem}{Theorem}[section]
\newtheorem{lemma}[theorem]{Lemma}
\newtheorem{proposition}[theorem]{Proposition}
\newtheorem{corollary}[theorem]{Corollary}
\newtheorem{claim}{Claim}[theorem]
\newtheorem*{claim*}{Claim}
\theoremstyle{definition}
\newtheorem{definition}[theorem]{Definition}
\theoremstyle{remark}
\newtheorem{conjecture}[theorem]{Conjecture}
\newtheorem{question}[theorem]{Question}
\newcommand{\vg}{virtual gap construction}
\newcommand{\below}[2]{#1{\|_{#2}}}
\newcommand{\etarestrict}{\restrict}
\newcommand{\xer}{\etarestrict\vec\eta}
\newcommand{\ufFromExt}[2]{(#1)_{#2}}
\newcommand{\gn}[1]{\ulcorner\, #1\,\urcorner}
\newcommand{\forceKappa}{\bar\kappa} % Used so $\forceKappa_{\nu}$ is
\newcommand{\modelsCi}{\models_{\!\!\!\!\!\lower.7ex\hbox{$\scriptstyle\chang_\iota$}}}
\newcommand{\xre}{\etarestrict\vec\eta{}}
\newcommand{\xrep}{\etarestrict\vetap}
\newcommand{\vetap}{\vec{\eta'}}
\newcommand{\LS}{limit suitable}
\DeclareMathOperator{\supp}{supp}   % "support"
\DeclareMathOperator{\DEF}{Def}
\DeclareMathOperator{\add}{add}
\newcommand{\scutdown}{{\uparrow}}
\DeclareMathOperator{\otp}{otp}
\newcommand{\ords}{\Omega}
\newcommand\gkeq{\leftrightarrow}
\newcommand\mgkeq{/{\gkeq}}
\newcommand{\ngkeq}{\mathbin{\hbox to 0pt{\hspace*{.5em}/\hss}\gkeq}}
\DeclareMathOperator{\ps}{\mathcal{P}}
\newcommand\reals{\mathcal{R}}
\newcommand{\RK}{K(\reals)}
\newcommand{\forces}{\Vdash}
\newcommand{\decides}{\parallel}
\newcommand{\ndecides}{\nparallel}
\renewcommand{\phi}{\varphi}
\newcommand{\ecut}{{\uptodownarrow}} % So I can see where I use \cut
\newcommand{\cut}{{\vert}}
\DeclareMathOperator{\len}{length}
\newcommand{\sing}[1]{\{#1\}}
\newcommand{\set}[1]{\{\,#1\,\}}
\newcommand{\xset}[2]{\set{#1\mid#2}}
\newcommand{\pair}[1]{\langle#1\rangle}
\newcommand{\seq}[1]{\pair{\,#1\,}}
\newcommand{\xseq}[2]{\seq{#1\mid#2}}
\newcommand{\card}[1]{|#1|}
\DeclareMathOperator{\Crit}{Crit}
\DeclareMathOperator{\crit}{crit}
\DeclareMathOperator{\ult}{Ult}
\newcommand{\tless}{\mathbin{\triangleleft}}
\newcommand{\restrict}{{\upharpoonright}}
\DeclareMathOperator{\range}{range}
\DeclareMathOperator{\domain}{domain}
\DeclareMathOperator{\dirlim}{dir\,lim}
\DeclareMathOperator{\gen}{Gen}
\DeclareMathOperator{\cof}{cf}
\newcommand\chang{\mathbb{C}}
\begin{document}
\maketitle\
\begin{quotation}
  This paper is dedicated to the memory of Rich Laver and
  Jim Baumgartner, who  I treasured  as friends, colleagues and
  exemplars  since we were graduate students together.
\end{quotation}
\begin{abstract}
  Woodin has shown that if there is a measurable Woodin cardinal then
  there is, in an appropriate sense, a sharp for the Chang model.  We
  produce, in a weaker sense, a sharp for the Chang model using only
  the existence of a cardinal $\kappa$ having
  an extender of length $\kappa^{+\omega_1}$.
\end{abstract}

\setcounter{tocdepth}{8}        % Want to see all I can at this
% point.   Reduce for production.
\tableofcontents
\iflinenumbers
\setpagewiselinenumbers
\modulolinenumbers[3]
\linenumbers
\fi
% ----------------
\section{Introduction}

The Chang model, introduced in \cite{Chang1971Sets-constructi}, is the
smallest model of ZF set theory which contains all countable sequences
of ordinals.    It may be constructed  as $L({^{\omega}\ords})$,
that is, by imitating  the recursive definition of the $L_{\alpha}$
hierarchy,
setting $\chang_{0}=\emptyset$ and
$\chang_{\alpha+1}=\DEF^{\chang_{\alpha}}(\chang_{\alpha})$, but
modifying the definition for limit ordinals $\alpha$ by setting 
$\chang_{\alpha}=[\alpha]^{<\omega_1}\cup\bigcup_{\alpha'<\alpha}\chang_{\alpha}$. 
Alternatively it may be constructed, as did Chang, by replacing the
use of first order logic in the definition of $L$ with the
infinitary logic $L_{\omega_1,\omega_1}$.
We write $\chang$ for the Chang model.

Clearly the Chang model contains the set $\reals$ of reals, and hence is an
extension of $L(\reals)$.    Kunen \cite{Kunen1973A-model-for-the} has shown
that the axiom of choice fails in the
Chang model whenever there are uncountably many measurable cardinals; 
in particular the theory of $\chang$ may vary, even when the set of reals
is held fixed.    We show that in the presence of sufficiently large
cardinal strength this is not true.
An earlier unpublished result of Woodin states that if
there is a Woodin limit of  Woodin cardinals,  then there is a sharp for the
Chang model.   Our result is not strictly comparable to Woodin's,
since although ours
uses a much smaller cardinal, Woodin's notion of a sharp is
stronger, and his result gives the sharp for a stronger model.
Perhaps the most striking aspect of the new result is its
characterization of the size of the Chang model.   Although the Chang
model, like $L(\reals)$, can have arbitrary large  cardinal strength
coded into the reals,  the large cardinal strength of $\chang$
relative to $L(\reals)$, even in the presence of large cardinals in
$V$, is at most  $o(\kappa)=\kappa^{+\omega_1}+1$.

The next three definitions describe our notion of a sharp for $\chang$.
Following this definition and a formal statement of our theorem, we
will more specifically discuss the differences between our result and
that of Woodin.

As with traditional sharps, the sharp for the Chang model asserts the
existence of a closed, unbounded class $I$ of indiscernibles.    
The  conditions on $I$ are given in Definition~\ref{def:Csharp}, following
two preliminary definitions:

\begin{definition}
  \label{def:suitable}
  Say that a subset
  $B$ of  a closed class $I$ is \emph{suitable} if
  \begin{inparaenum}[(a)]\item
    $B$ is countable and closed,
  \item  every member of $B$ which is a
    limit point of $I$ of countable cofinality is also a limit point
    of $B$, and
  \item $B$ is  closed under  immediate predecessors in $I$.
  \end{inparaenum}

  We say that suitable sequences  $B$ and $B'$ are \emph{equivalent} if they
  have the same order type and, writing $\sigma\colon B\to B'$  for the order isomorphism, 
  $\forall\kappa\in B\; \sigma(\kappa)\in\lim(I)\iff\kappa\in\lim(I)$ . 
\end{definition}

Note that if $B$ is suitable and $\beta'$ is the successor of  $\beta$ in $B$, then
either $\beta'$ is the successor of $\beta$ in $I$, or else $\beta'$ is a limit
member of $I$ and $\cof(\beta')>\omega$.
Indeed clauses~(b) and~(c)
of the definition of a suitable sequence are equivalent to the
assertion that every gap in $B$, as a subset of $I$, is capped by a
member of $B$ which is a limit point of $I$ of uncountable cofinality.

\begin{definition}
  \label{def:restrictedFormula}
  Suppose that  $T$ is a collection of constants and functions with
  domain in $[\kappa]^{n}$ for some $n<\omega$.   Write 
  $\mathcal{L}_{T}$ for the language of set theory augmented with symbols denoting the members of $T$. A \emph{restricted formula} in the language $\mathcal{L}_T$   is  a
  formula $\phi$ such that every variable occurring inside an argument of a function in $T$ is free in $\phi$.
\end{definition}
\begin{definition}
  \label{def:Csharp}
  We say that there is a \emph{sharp for the Chang model} $\chang$ if there
  is a closed unbounded class $I$ of ordinals and a set $T$ of
  functions having the following three properties:

  \begin{enumerate}
  \item
    Suppose that $B$ and $B'$ are equivalent suitable sets, and let
    $\phi(B)$  be a restricted formula.  Then 
    \begin{equation*}
      \chang\models \phi(B)\iff \phi(B').
    \end{equation*}
  \item
    Every member of $\chang$ is of the form $\tau(B)$ for some
    term $\tau\in T$  and some suitable sequence $B$.
  \item
    If $V'$ is any universe of ZF set theory such that $V'\supseteq V$
    and $\reals^{V'}=\reals^{V}$ then, for all restricted formulas
    $\phi$
    \begin{equation*}
      \chang^{V'}\models
      \phi(B)\iff
      \chang^{V}\models\phi(B).
    \end{equation*}
    for any  $B\subseteq I$ which is suitable in both $V$ and $V'$.
  \end{enumerate}
\end{definition}
Note, in clause~3, that $\chang^{V'}$ may be larger than
$\chang^{V}$.   
A sequence $B$ which is suitable  in $V$ may not be suitable in
$V'$, as a limit member of $B$ may have uncountable cofinality in $V$
but countable cofinality in $V'$.   However the class $I$, as well as
the theory, will be the same in the two models.

The sharp defined here is somewhat provisional, as is suggested by the
gap between the upper and lower bounds in Theorem~\ref{thm:main}.  The
major consequence of $0^\sharp$ which is shared by this notion of a
sharp is the existence of nontrivial embeddings of $\chang$:

\begin{proposition}\label{thm:sharpEmbed}
  Suppose that $I$ is a class satisfying Definition~\ref{def:Csharp}
  and $\sigma\colon I\to I$ is an increasing map which 
  \begin{myinparaenum}
  \item is continuous at limit points of cofinality $\omega$, and for all $\kappa\in B$ 
  \item $\sigma(\min(I\setminus(\kappa+1)))=\min(I\setminus(\sigma( \kappa)+1))$ and
  \item $\sigma(\kappa)$ is a limit point of $I$ if and only if
    $\kappa$ is a limit point of $I$.
  \end{myinparaenum}
  Then $\sigma$
  can be extended to an elementary embedding $\sigma^*\colon\chang\to\chang$.\qed
\end{proposition}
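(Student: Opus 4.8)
The plan is to build $\sigma^*$ on terms, using the representation guaranteed by clause~2 of Definition~\ref{def:Csharp}. Every $x\in\chang$ has the form $x=\tau(B)$ for some term $\tau\in T$ and some suitable set $B$; so I would define $\sigma^*(\tau(B))=\tau(\sigma[B])$, where $\sigma[B]$ denotes the pointwise image. The first thing to check is that $\sigma[B]$ is again suitable and in fact \emph{equivalent} to a suitable set in the sense of Definition~\ref{def:suitable}: the hypotheses on $\sigma$ are exactly engineered for this. Clause~(i) (continuity at cofinality $\omega$) gives that $\sigma[B]$ is closed and that limit points of $I$ of countable cofinality lying in $\sigma[B]$ are limit points of $\sigma[B]$; clause~(ii) gives closure under immediate predecessors in $I$; and $B$ countable forces $\sigma[B]$ countable. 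Moreover clause~(iii) says $\sigma$ preserves the predicate ``is a limit point of $I$,'' which is precisely what is needed for $B$ and $\sigma[B]$ to be equivalent — note $\sigma$ restricted to $B$ is the order isomorphism $B\to\sigma[B]$, since $\sigma$ is increasing.

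Next I would verify that $\sigma^*$ is well-defined and elementary in one stroke, using clause~1. Suppose $\tau(B)=\tau'(B')$; enlarging both $B$ and $B'$ to a common suitable $B_0\supseteq B\cup B'$ (one must check suitable sets are closed under such finite-support enlargements — this follows from the ``gap'' reformulation noted after Definition~\ref{def:suitable}, since adding finitely many points and their caps preserves the gap condition), we may assume $B=B'$. Then the statement ``$\tau(B)=\tau'(B)$'' — more precisely, for each formula $\phi$, the statement ``$\phi(\tau(B),\ldots)$'' — is, after absorbing the terms, a restricted formula in $B$: the key point of Definition~\ref{def:restrictedFormula} is that the variables plugged into the functions of $T$ are free, and here they are filled by elements of $B$, so the relevant formula has parameters exactly from $B$. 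Applying clause~1 to the equivalent pair $B,\sigma[B]$ yields $\chang\models\phi(\tau(B))\iff\chang\models\phi(\tau(\sigma[B]))$. Taking $\phi$ to be the statement $\tau(B)=\tau'(B)$ gives well-definedness; taking $\phi$ arbitrary gives that $\sigma^*$ is elementary. Finally, to see $\sigma^*$ extends $\sigma$, observe that for $\kappa\in I$ the singleton $\{\kappa\}$ — or rather its closure under immediate predecessors, which is still a finite suitable set — realizes $\kappa$ via the projection term, and $\sigma^*$ sends it to the corresponding element of $\sigma[\{\kappa\}^{\mathrm{cl}}]$, namely $\sigma(\kappa)$.

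The main obstacle is the bookkeeping around \emph{equivalence} of suitable sets and the closure of the class of suitable sets under finite enlargement. One has to be careful that when passing to a common refinement $B_0$ of $B$ and $B'$, the image $\sigma[B_0]$ is still equivalent to $B_0$ and still contains $\sigma[B]$; this is where clauses~(ii) and~(iii) on $\sigma$ do real work, ensuring that immediate-predecessor caps and limit-point status are transported correctly, so that the ``gap capped by an uncountable-cofinality limit point'' characterization is preserved. A secondary point is checking that a formula with parameters drawn from a suitable set $B$, once the terms of $T$ are unwound, genuinely counts as a restricted formula — i.e.\ that no variable inside a $T$-function argument got bound; but this is immediate from the convention that $B$ supplies the (free) arguments to the terms. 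Once these combinatorial lemmas about suitable sets are in hand, everything else is the standard ``define an embedding on Skolem terms and read off elementarity from indiscernibility'' argument.
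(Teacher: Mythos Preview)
Your proposal is correct and is precisely the standard ``define the embedding on terms and read off elementarity from indiscernibility'' argument that the paper has in mind; the paper itself gives no proof beyond the \qed, so there is nothing to compare. One small simplification: for the common refinement you can just take $B_0=B\cup B'$, which is already suitable (closedness and the predecessor and limit-point clauses are each inherited from $B$ or $B'$ separately), so no ``adding caps'' is needed.
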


Definition~\ref{def:Csharp} is not strong enough to imply the converse, that any
elementary embedding $\sigma^*\colon \chang\to\chang$ is generated by
some such map $\sigma\colon I\to I$, and it does not  imply that the
embeddings $\sigma^*$
are unique.   Note, for example, that if a sharp for $\chang$ is
given, according to Definition~\ref{def:Csharp}, by $I$ and $T$ then 
$I'=\set{\kappa_{\nu\cdot\omega_1}\mid \nu\in\ords}$ 
also satisfies the definition, using the set
$T'=T\cup\set{t_{\alpha}\mid\alpha<\omega_1}$ of terms where
$t_{\alpha}(\kappa_{\omega_1\cdot\nu})=\kappa_{\omega_1\cdot\nu+\alpha}$.
However, the restriction to $I'$ of the embedding $i^*\colon\chang\to\chang$ induced by the
embedding 
$i\colon\kappa_{\omega_1\cdot\nu+\alpha}\mapsto\kappa_{\omega_1\cdot(\nu+1)+\alpha}$
does not satisfy the hypothesis of Proposition~\ref{thm:sharpEmbed}.
It is
likely that this deficiency will eventually  be resolved by a
characterization of the ``minimal sharp", that is, of the weakest large
cardinal (or the smallest mouse) which yields a sharp
in the sense of Definition~\ref{def:Csharp}. 

Recall that a traditional sharp, such as $0^{\sharp}$, may be viewed
in either of two different ways:  as a closed and unbounded
class of indiscernibles which generates the full (class) model, or as
a mouse with a final extender on its sequence which is an ultrafilter.

From the first viewpoint, perhaps the most striking difference between
$0^{\sharp}$ and our sharp for $\chang$ is the need for external terms in order to
generate $\chang$ from the indiscernibles.
From the second viewpoint,
regarding the sharp as a mouse, the sharp for the Chang model  involves two modifications:
\begin{enumerate}
\item
  For the purposes of this paper, a \emph{mouse}  will always be a
  mouse over the reals, that is, an extender model of the form $J_{\alpha}(\reals)[{\cal E}]$.
\item The final extender of the mouse which represents the sharp of
  the Chang model will be a proper extender, not an ultrafilter.
\end{enumerate}

It is still unknown  how large the  final extender must be.   We show
that its length is somewhere in the range from $\kappa^{+(\omega+1)}$
to $\kappa^{+\omega_1}$, inclusive:
\begin{theorem}[Main Theorem]
  \label{thm:main}
  \begin{enumerate}
  \item\label{item:main-lower} Suppose that there is no mouse
    $M=J_{\alpha}(\reals)[{\cal E}]$ with a final extender $E={\cal
      E}_{\gamma}$ with critical point $\kappa$ and length $\kappa^{+(\omega+1)}$ in $J_{\alpha}(\reals)[{\cal E}]$ such that  $\cof^V(\len(E))>\omega$.
    Then $K(\reals)^{\chang}$, 
    the core model  over the reals as defined in
    the Chang model, is an iterated ultrapower (without drops) of
    $K(\reals)^{V}$; and hence there is no sharp for the
    Chang model.

  \item\label{item:main-upper}
    Suppose that there is a model $L(\reals)[\mathcal{E}]$ which
    contains all of the reals  and has an extender $E$ of length
    $(\kappa^{+\omega_1})^{L(\reals)[\mathcal{E}]}$, where $\kappa$ is the critical point of $E$.
    Then
    there is a sharp for     $\chang$.
  \end{enumerate}
\end{theorem}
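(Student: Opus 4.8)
The two halves of the theorem call for essentially disjoint machinery, so I would prove them separately: Part~\ref{item:main-upper} is a construction of indiscernibles from a long extender, and Part~\ref{item:main-lower} is an argument in the core model theory of the choiceless model $\chang$.

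\emph{The upper bound.} Let $N$ be the hypothesised model $L(\reals)[\mathcal{E}]$, with $E$ its extender, $\crit(E)=\kappa$, $\len(E)=\lambda=(\kappa^{+\omega_1})^{N}$. The plan is to read off both $I$ and $T$ from the linear iteration of $N$ by images of $E$: set $N_0=N$, $N_{\xi+1}=\ult(N_\xi,j_{0\xi}(E))$, take direct limits at limit stages, write $j_\xi=j_{0\xi}\colon N\to N_\xi$ and $\kappa_\xi=j_\xi(\kappa)$, and let $I$ be the class read off, suitably closed, from the critical points $\kappa_\xi$ together with the images $j_\xi(\gamma)$ of the generators $\gamma$ of $E$. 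The point of taking $\len(E)$ to be the $\omega_1$-st successor cardinal of $\kappa$, rather than, say, the $(\omega+1)$-st, is that $\omega_1$ is exactly the cofinality the Chang model cannot cross — it contains every sequence of ordinals of length $<\omega_1$ but adds none of length $\omega_1$ — so the iteration can be arranged to make each ``block'' of $I$ between consecutive critical points closed with top point of uncountable cofinality, placing the limit points of countable cofinality precisely where clauses~(b) and~(c) of Definition~\ref{def:suitable} require them; this matching is also where the gap in the statement originates. For $T$ I would take the Skolem functions of (an appropriate initial segment of) $N$, closed under coordinatewise application along $\omega$-sequences and reinterpreted along the iteration, so that a term $\tau\in T$ and a suitable $B\subseteq I$ single out a finite support lying in finitely many blocks and $\tau(B)$ names the corresponding value in the relevant $N_\xi$; external terms are forced because a countable $B$ cannot enumerate a whole block. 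One then verifies the three clauses of Definition~\ref{def:Csharp}: clause~1 is the indiscernibility of the critical points and generator images for the theories of the $N_\xi$, transferred to $\chang$ through the internal $\DEF$-recursion by which $\chang$ is generated from ordinals and countable sequences, each of the form $\tau(B)$; clause~2 is then an induction on the $\chang$-hierarchy whose base case — every ordinal, and every countable sequence of ordinals, is some $\tau(B)$ — follows from the fact that every element of $N_{\xi+1}$ below $\kappa_{\xi+1}$ is $j_{\xi,\xi+1}(f)(a)$ with $f\in N_\xi$ and $a$ finite; and clause~3 asserts that this whole picture — the iteration, $I$, and the interpretation of $T$ — is computed identically in any $V'\supseteq V$ with $\reals^{V'}=\reals^{V}$.

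\emph{The lower bound.} I would argue contrapositively. Under the anti-large-cardinal hypothesis there is enough absoluteness to run the $K(\reals)$ construction inside $\chang$ — the hypothesis caps how far it can reach — and to compare $K(\reals)^{\chang}$ with $K(\reals)^{V}$ by coiteration inside $V$. Since $\chang$ is an inner model containing all of $V$'s countable sequences of ordinals but, by hypothesis, no new mouse at the critical level, a weak-covering argument forces the coiteration to be trivial on the $V$-side and free of drops on the $\chang$-side, so $K(\reals)^{\chang}$ is a non-dropping iterated ultrapower of $K(\reals)^{V}$. Finally, a sharp for $\chang$ would, in the mouse incarnation described before the statement, place a mouse with a proper top extender of critical point $\kappa$ and length between $\kappa^{+(\omega+1)}$ and $\kappa^{+\omega_1}$ inside $K(\reals)^{\chang}$; tracing this mouse back through the iteration produces in $K(\reals)^{V}$ a mouse with a proper top extender reaching the forbidden level, contradicting the hypothesis — so no sharp exists.

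\emph{The main obstacle.} For the upper bound the delicate clause is~3: the sharp must behave identically in every outer model with the same reals, even though such a $V'$ can strip some suitable $B$ of its suitability — a block boundary may acquire cofinality $\omega$ in $V'$ — and can strictly enlarge $\chang$. Handling this requires that the iterability of $N$ be absolute enough that the iteration $\langle N_\xi\rangle$, the class $I$, and the interpretations of the terms in $T$ are literally unchanged in $V'$, together with a covering lemma showing $\chang^{V'}=\{\tau(B):\tau\in T,\ B\text{ suitable in }V'\}$; making the block bookkeeping and the genuinely-needed interpolation terms survive this passage, so that clause~2 still holds in $V'$, is the technical heart of the construction. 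For the lower bound the analogous difficulty is carrying out $K(\reals)$ and its covering theory in a model of ZF without choice, and then matching the length and the $V$-cofinality of the putative sharp's top extender against the hypothesis precisely enough to force the contradiction.
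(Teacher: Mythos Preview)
Your proposal misidentifies where the real work lies in Part~\ref{item:main-upper}, and this is a genuine gap rather than a matter of emphasis.

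You write that clause~1 of Definition~\ref{def:Csharp} is ``the indiscernibility of the critical points\dots for the theories of the $N_\xi$, transferred to $\chang$ through the internal $\DEF$-recursion.'' But that transfer is exactly the problem, and it does not come for free. The $\kappa_\nu$ are certainly indiscernibles for $M_\ords\cut\ords$, but $\chang$ is strictly larger: at each limit stage the $\chang$-hierarchy throws in \emph{all} countable sequences of ordinals, not just those coded in $M_\ords$. Indeed Corollary~\ref{thm:counterexample} shows that closed countable subsets of $I$ of the same order type are \emph{not} in general indiscernible in $\chang$: whether $\kappa_{\nu+1}$ is the $I$-successor of $\kappa_\nu$, or whether $\kappa_\nu$ is a limit point of $I$, is detectable in $\chang$ by Prikry-genericity arguments. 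So the restriction to suitable sequences is forced on us, and proving indiscernibility even for those requires real machinery.

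The paper's route is to reduce clause~1 to the Main Lemma~\ref{thm:mainlemma}: for every limit-suitable $B$, the model $\chang_B$ is elementary in $\chang$. This is proved by constructing, in $V[h]$ for a Levy collapse $h$, an $M_B$-generic set for a Gitik-style forcing $P(\vec E\restrict\zeta)\mgkeq$ --- a Magidor--Prikry forcing interleaved with Cohen conditions, then quotiented by an equivalence relation --- so that $M_B[G]$ contains every countable subset of $M_B$ and $\chang_B$ is definable inside $M_B[G]$. The generic is built by Merimovich's method. The forcing, its Prikry property, its factorization, and the equivalence relation occupy essentially all of Section~\ref{sec:mainlemma}; elementarity then follows by an induction on $(\iota,\phi)$ using a ``virtual gap'' iteration to pull witnesses from $\chang_{B'}$ back into $\chang_B$. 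Your proposal has nothing corresponding to this, and I do not see how to bypass it: your ``transfer through the $\DEF$-recursion'' would need, at minimum, a proof that $\chang_B\prec\chang$, and that is the Main Lemma. By contrast, clause~3 --- which you flag as the delicate point --- is nearly automatic once the Main Lemma is available, since the mouse, the iteration, $I$, and the terms all depend only on $\reals$.

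For Part~\ref{item:main-lower} your outline is in the right direction (compare $K(\reals)^V$ with $K(\reals)^\chang$, trivial on one side), but you are missing the mechanism that makes the hypothesis bite. The paper does not use weak covering. Instead: if the comparison drops or runs to length $\ords$, one finds a stationary class of $\alpha$ of cofinality $\omega$ with coherent extenders $E_\alpha$, fixes a cofinal $\omega$-sequence $\vec\alpha\to\alpha$, and uses Gitik's \emph{thread} technique to recover each generator $\beta$ of $E_\alpha$ from the countable sequence $\langle i_{\alpha_n,\alpha}^{-1}(\beta)\rangle_n$. Threads are countable sequences of ordinals, hence in $\chang$, so $E_\alpha\in\chang$ --- contradiction. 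The hypothesis that no extender reaches length $\kappa^{+(\omega+1)}$ with $V$-uncountable cofinality is exactly what lets the thread recovery cover all of $E_\alpha$. Your final step, tracing a sharp-mouse back through the iteration, is also not the paper's argument: the paper simply observes that a sharp yields nontrivial embeddings of $\chang$ by Proposition~\ref{thm:sharpEmbed}, making an iterate of $K(\reals)$ non-rigid.
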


This problem was suggested by Woodin in a conversation at the
Mittag-Lefler Institute in 2009, in which he observed that there
was an immense  gap between the hypothesis needed for his sharp, and
easily obtained 
lower bounds such as a model with a single measure.
At the time I conjectured that the same argument might show that any
extender model would provide a similar lower bound,  but James
Cummings and Ralf Schindler, in the same conversation, 
pointed out that  Gitik's results suggest that it would fail at  an
extender of length $\kappa^{+(\omega+1)}$.

I would also like to thank Moti Gitik, for suggesting his
forcing for the proof of clause~2 and
explaining its use.    
I have generalized his forcing to add new sequences of arbitrary
countable length.   I have also made substantial but, I believe,
inessential changes to the presentation; I hope that he will recognize
his forcing in my presentation.    Many of the arguments in this
paper, indeed almost all of those which do not directly involve either the
generalization of the forcing or the application to the Chang model,
are due to Gitik.

\subsection{Comparison with Woodin's sharp}
Our notion of a sharp for $\chang$  differs from that of Woodin in
several ways.  We will discuss them in roughly increasing order of importance.

\begin{compactenum}
\item 
  The
  theory of our sharp can depend on the set of reals, while the theory of
  Woodin's sharp does not;    however this is due to the large
  cardinals involved, rather than the definition of the sharp.
  Woodin's proof that the theory of $L(\reals)$ is invariant under set
  forcing also shows that the theory of our sharp  stabilizes in the
  presence of a class of Woodin cardinals.
\end{compactenum}
\smallskip

Two differences which might seem to be weaknesses in our model are
actually only differences in presentation.

\begin{compactenum}
  \setcounter{enumi}{1}
\item 
  Woodin's indiscernibles are defined to be indiscernible in 
  the infinitary language $L_{\omega_1,\omega_1}$, whereas we use
  only first order logic.   However  the two languages are
  equivalent in this context: since $\chang$ is closed under countable sequences and
  $\chang_{\alpha}\prec \chang$ whenever $\alpha$ is a member of the
  class $I$ of indiscernibles, the existence of our sharp implies that
  any formula of $L_{\omega_1,\omega_1}$ is equivalent to a formula of
  first order logic having a parameter which is a countable sequence
  of ordinals.

\item 
  For Woodin's sharp, any two subsequences of $I$ are indiscernible,
  while for our sharp only ``suitable''  subsequences are considered.
  The requirement of suitability could be eliminated by replacing $I$ with the class
  of 
  limit points of $I$ of uncountable cofinality, and making a
  corresponding addition to the class
  $T$ of terms, but it seems that doing so would ultimately 
  lose information about the structure
  of the sharp.  This point is discussed further in
  Subsection~\ref{sec:suitability-required}. 
\end{compactenum}
\smallskip

The final two differences are significant.    The first can probably
be removed,  while
the second is basic and explains the difference in the hypotheses used:

\begin{compactenum}
  \setcounter{enumi}{3}
\item 
  The notion of restricted formulas is entirely absent from
  Woodin's results: he allows the  terms from $T$ to be  used as full elements of the
  language.   We believe that our need for restricted formulas is due to
  the choice of terms and will eventually
  be removed by a more complete analysis  resolving the
  question about the size of the minimal mouse needed to give a sharp for $\chang$. 
  If this conjecture turns out to be incorrect then its failure ould be
  a major weakness in our
  notion of a sharp.
\item 
  Woodin has observed, in a personal communication, that his sharp
  actually is a sharp for a much stronger model, namely the smallest model
  which  contains all countable sequences of ordinals and the
  stationary filter on the set $\ps_{\omega_1}([\lambda]^{\omega})$
  for every $\lambda$.   Thus  our constructions do not conflict, but instead
  describe sharps for different models, and this
  explains the difference in the hypotheses needed. 
\end{compactenum}

Woodin has observed (private communication) that some of the gap
between the two sharps can
be filled by modifying the construction of this paper to use the least
mouse $M$ 
over the reals such that $M$ has  infinitely many Woodin cardinals below the 
extenders needed for the conclusion of
Theorem~\ref{thm:main}(\ref{item:main-upper}).   This would
give a version of our sharp which can be coded by a set
$X\subseteq\reals$ having the following property: Suppose that $V'$ is
any inner model of $V$ such that $X\cap V'\in V'$.  Then
$X\cap V'$ codes the corresponding sharp
for the Chang model of $V'$.   Woodin regards this as the ``true
sharp''; however it seems that the better terminology would be to
regard this not as the analog of the sharp operator, but as the analog
of the $M_\omega$ mouse operator.

Future work, and the publication of Woodin's work on his sharp, will
be needed to better comprehend the possibilities of extensions of 
sharps  for Chang-like models in analogy with the
extended theory related to $0^{\sharp}$.  At the same time, as points~3 and~4 above
make clear, further work is needed towards clarifying the basic notion
of a sharp for the Chang model as presented in this paper.

\subsection{Some basic facts about $\chang$}
\label{sec:basic-facts}

As pointed out earlier, the Axiom of Choice fails in $\chang$
if there are infinitely many measurable cardinals.    However, the
fact that $\chang$ is closed under countable sequences implies that
the axiom of Dependent Choice holds, and this is enough to avoid most of the serious 
pathologies which can occur in a model without choice.   
For life without Dependent Choice, see for example
\cite{Gitik2012Violating-the-s}, which gives a model with surjective
maps from $\ps(\aleph_{\omega})$ onto an arbitrarily large cardinal
$\lambda$ without any need for large cardinals.

\begin{todoenv} {(7/25/14) --- idea --- A question: what can be said
    about the covering lemma in the absence of choice.  Does it apply
    to this model?}
\end{todoenv}

The same argument that shows that every member of $L$ is ordinal definable implies that every member of
$\chang$ is definable in $\chang$ using a countable sequence of
ordinals as parameters.

In the proof of part~1 of Theorem~\ref{thm:main} we make use of 
the core model $K(\reals)$ inside of $\chang$, and in the absence of the Axiom
of Choice this requires some justification.   In large part the Axiom
of Choice can be avoided in the construction and theory of this core
model, since the core model itself is well ordered (after using countably complete forcing to map the reals onto $\omega$).   However one 
application of the Axiom of Choice falls outside of this situation:
the use of Fodor's pressing down lemma, the proof of which requires
choosing closed unbounded sets as witnesses that the sets where the
function is constant are all nonstationary.    This lemma is needed in
the construction of $K(\reals)$ in order to prove that  the comparison of pairs of mice by iterated ultrapowers  always terminates.     
However, this is not a problem 
in the construction of $K(\reals)$ in $\chang$, as we can apply Fodor's lemma in  the universe $V$, which satisfies the Axiom of Choice, to verify that all
comparisons terminate.

The proof of the covering lemma involves other uses of Fodor's lemma;
however we  do not use the covering lemma.

\subsection{Notation}
\label{sec:notation}

We use generally standard set theoretic notation.    We use $\ords$
to mean the class of all ordinals, and frequently treat $\ords$ itself
as an ordinal.    If $h$ is a function, then  we use $h[B]$ for the range of $h$ on $B$,
$h[B]=\set{h(b)\mid b\in B}$.   We write $[X]^{\kappa}$ for the set of
subsets of $X$ of size $\kappa$.

In forcing,  we use $p< q$ to mean that $p$ is stronger than $q$.  
The notation $p\decides\phi$ means that the
condition $p$ decides $\phi$, that is, either $p\forces \phi$ for
$p\forces\lnot \phi$. 
If $P$ is a forcing order and $s\in P$,  then we write $\below{P}{s}$
for the forcing below $s$, that is, the restriction of $P$ to  $\set{t\in P\mid t\leq s}$.

If $E$ is an extender, then we write $\supp(E)$ for the support, or
set of generators, of $E$.  Typically we take this to be  the interval
$[\kappa,\len(E))$ where $\kappa$ is the critical point of $E$;
however we frequently make use of the restriction of $E$ to a
nontransitive\footnote{We regard $\supp(E)=[\kappa,\lambda)$ as
  ``transitive'' despite its omission of ordinals less than
  $\kappa$.   We could equivalently, but slightly less conveniently, use
  $\supp(E)=\len(E)$.} 
set of generators: that is, if $S\subseteq\supp(E)$ then
we write $E\ecut S$ for the restriction of $E$ to $S$, so $\ult(V,E\ecut
S)\cong\set{i^{E}(f)(a)\mid f\in V\land a\in[S]^{<\omega}}$.   We
remark that $\ult(V,E\ecut S)=\ult(V,\bar E)$, where $\bar E$ is the
\emph{transitive collapse of $E\ecut S$}, that is, the extender 
obtained from $E\ecut S$ by using the transitive collapse $\sigma\colon
[\kappa,\len(\bar E))\cong \supp(E)\cap
\set{i^E(f)(a)\mid a\in[S]^{<\omega}}$ and setting
the ultrafilter
$(\bar E)_{\alpha}=E_{\sigma^{-1}(\alpha)}$.
In cases where the $E\ecut S\notin M$ but the 
transitive collapse $\bar E\in M$, we frequently describe
constructions as using $E\ecut S$ when the actual construction inside
$M$ must use  $\bar E$.  Such use will not always be explicitly
stated. 

We write $\ufFromExt{E}{a}$ for the ultrafilter $\set{x\subseteq
  H_{\crit(E)}\mid a\in i^E(x)}$.

We make extensive use of the core model over the reals, $K(\reals)$.
However we make no (direct) use of fine structure, largely because we
make no attempt to use the weakest hypothesis which could be treated
by our argument.    The reader will need to be familiar with extender
models, but only those weaker than strong cardinal, that is, without the
complications of overlapping extenders and iteration trees.     For
our purposes, a mouse will be an extender model $M=J_{\alpha}(\reals)[\mathcal{E}]$, where
$\reals$ is the set $\ps(\omega)$ of reals and $\mathcal{E}$ is a
sequence of extenders, and it generally can be
assumed to be a model of Zermelo set theory (and therefore equal to $L_{\alpha}(\reals)[\mathcal{E}]$).

The ultrafilters in a mouse $M$ over the reals, including those
appearing as components of an extender, are all complete over sets of
reals.  That is, if $U$ is an ultrafilter and $f\colon
X\to\ps(\reals)$ for some $X\in U$ then there is a set $a\subseteq \reals$
such that $\set{x\in X\mid f(x)=a}\in U$.   This implies the needed
instances of the Axiom of Choice:
\begin{proposition}
  \label{thm:enoughAC}
  Suppose that $U$ is an ultrafilter and $X\in U$. Then
  \begin{enumerate}
  \item there is a well orderable  $X'\subseteq X$ such that
    $X'\in U$,
    and
  \item if $f$ is a function such that $\set{x\in X\mid
      f(x)\not=\emptyset}\in U$ then there is a function $g$ such that
    $\set{x\in X\mid g(x)\in f(x)}\in U$.
  \end{enumerate}
\end{proposition}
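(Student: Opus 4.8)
The plan is to reduce both clauses to a single observation that converts the stated completeness of $U$ over $\ps(\reals)$ into a choice principle, using the fact that every element of a mouse over the reals is ordinal definable from a real. First I would fix a function $H\colon\reals\times\ords\to M$, definable over $M$ (with the extender sequence of $M$ as a parameter when $M$ is a set), whose range is all of $M$; this is just the statement that $M\models V=\mathrm{OD}(\reals)$, proved as usual via Skolem hulls and condensation, after using the definable bijections $\reals^{<\omega}\cong\reals$ and Gödel pairing on $\ords$ to code a Skolem term together with finitely many real parameters into a single real, and finitely many ordinal parameters into a single ordinal. No choice is used in any of this. If $M$ is only a model of Zermelo set theory one notes that for each $x\in M$ the ordinal parameters needed to express $x$ are bounded by an ordinal computed from $x$, so that the auxiliary classes defined below are in fact sets of $M$.

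The key observation is this: if $X\in U$ and $\Phi\colon X\to\ps(\reals)$ is a function in $M$ with $\Phi(x)\neq\emptyset$ for all $x\in X$, then there is a single real $r_0$ and a set $X_0\in U$ with $r_0\in\Phi(x)$ for every $x\in X_0$. Indeed, completeness of $U$ over sets of reals gives an $a\subseteq\reals$ with $X_0:=\set{x\in X\mid\Phi(x)=a}\in U$, and $a\neq\emptyset$ because $\Phi(x)\neq\emptyset$, so one may take any $r_0\in a$. The point of working through $\Phi$ rather than trying to select a real for each $x$ directly is that there is no well-ordering of $\reals$ available; but both conclusions of the proposition are purely existential, so it suffices that one witness $r_0$ exist, which it does as soon as $a$ is nonempty.

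For clause~1, apply the observation with $\Phi(x)=\set{r\in\reals\mid\exists\xi\; H(r,\xi)=x}$, which is nonempty for every $x\in X$ since $\range(H)=M$. The resulting $r_0$ and $X_0\in U$ have the property that every $x\in X_0$ is of the form $H(r_0,\xi)$, so $x\mapsto(\text{the least }\xi\text{ with }H(r_0,\xi)=x)$ is an injection of $X_0$ into $\ords$ lying in $M$; thus $X_0$ is a well-orderable member of $U$ contained in $X$. For clause~2, after replacing $X$ by $\set{x\in X\mid f(x)\neq\emptyset}$ we may assume $f(x)\neq\emptyset$ for all $x\in X$; apply the observation with $\Phi(x)=\set{r\in\reals\mid\exists\xi\; H(r,\xi)\in f(x)}$, which is nonempty because every element of $f(x)\subseteq M$ has the form $H(r,\xi)$. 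The resulting $r_0$ and $X_0\in U$ have the property that for each $x\in X_0$ there is $\xi$ with $H(r_0,\xi)\in f(x)$, and then $g(x):=H(r_0,\xi_x)$, where $\xi_x$ is least with $H(r_0,\xi_x)\in f(x)$, defines a function $g\in M$ with domain $X_0\in U$ such that $g(x)\in f(x)$ for every $x\in X_0$.

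The only genuine pitfall is the tempting but doomed strategy of first well-ordering $X$ via clause~1 and then choosing an element of each $f(x)$: the family $\set{f(x)\mid x\in X}$ need not admit a choice function even when $X$ is well-ordered, since the sets $f(x)$ can themselves be non-well-orderable sets of reals. Completeness of $U$ over $\ps(\reals)$ is precisely the tool that collapses the dependence on reals down to a single set $a$, hence to a single real, before any choice is needed; verifying that $\Phi$ and $g$ really belong to $M$ is routine (immediate if $M\models\mathrm{ZF}$, and otherwise handled by the boundedness remark in the first paragraph).
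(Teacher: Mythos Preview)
Your proof is correct and follows essentially the same approach as the paper: both use that every element of $M$ is ordinal definable from a real, apply completeness of $U$ over $\ps(\reals)$ to a suitable function $x\mapsto\Phi(x)\subseteq\reals$, and then use a single real from the constant value to well-order or select. The paper packages this via the set $R_x$ of reals witnessing the \emph{least} defining formula for $x$ (and $\bigcup_{z\in f(x)}R_z$ for clause~2), whereas you use a fixed definable surjection $H\colon\reals\times\ords\to M$; these are equivalent formulations of the same idea.
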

\begin{proof}
  Every element of $M$ is ordinal definable from a real parameter.  If
  $x\in M$, then let $\phi_x$ be the least formula $\phi$, with ordinal
  parameters, such that $(\exists r\in\reals)\forall
  z\;(\phi(z,r)\iff z=x)$, and let $R_x=\set{r\in\reals\mid\forall
    z\;(\phi_x(z,r)\iff z=x)}$.    For the first clause, there is
  $R\subseteq \reals$ such that $X'=\set{x\in X\mid R_x=R}\in U$.
  Thus, if $r$ is any member of $R$, then every member of $X'$ is
  ordinal definable from $r$.

  The proof of the second clause is similar, using $R\subseteq \reals$
  such that $\set{x\in X\mid \bigcup_{z\in f(x)}R_z = R}$.
\end{proof}

If $M=J_{\alpha}(\reals)[\mathcal{E}]$ is a mouse then we write
$M\cut\gamma$ for $J_{\gamma}(\reals)[\mathcal{E}\restrict\gamma]$,
that is, for the cut off of $M$ at $\gamma$ without including the
active final extender $\mathcal{E}_{\gamma}$ if there is one.    This
is most commonly used as $N\cut\ords$, where   $N$
is the final model of an iteration of length $\ords$ and $\ords^{N}>\ords$.

\section{The Lower bound}
\label{sec:lower}
The proof of Theorem~\ref{thm:main}(1), giving a lower bound to
the large cardinal strength of a sharp for the Chang model, is  a
straightforward application of a technique 
of Gitik (see the proof of Lemma~2.5 for $\delta=\omega$ in \cite{gitik-mitchell.ext-indisc}).

\begin{proof}[Proof of Theorem~\ref{thm:main}(1)]
  The proof of the lower bound   uses iterated ultrapowers to
  compare $\RK$ with $\RK^{\chang}$.  Standard methods show that
  $\RK^{\chang}$ is not moved in this comparison, so  there is an
  iterated ultrapower $\seq{M_\nu\mid \nu\le\theta}$, For some
  $\theta\leq\ords$, such that  $M_0=\RK$ and $M_\theta=\RK^\chang$.  
  This iterated ultrapower is defined by setting
  \begin{myinparaenum}\item
    $M_\alpha=\dirlim\set{M_{\alpha'}\mid \alpha''<\alpha'<\alpha}$ for
    sufficiently large $\alpha''<\alpha$ if $\alpha$ is a  limit
    ordinal, and 
  \item
    $ M_{\alpha+1}=\ult(M^*_{\alpha},E_\alpha)$, where 
    $E_{\alpha}$ is the least extender in $M_{\alpha}$ which is not in
    $\RK^{\chang}$ and $M^*$ is equal to $M_{\alpha}$ unless
    $E_{\alpha}$ is not a full extender in $M_{\alpha}$, in which case
    $M^*_{\alpha}$ is the largest initial segment of $M_\alpha$ in which
    $E_{\alpha}$ is a full extender.
  \end{myinparaenum}

  We want to show that 
  \begin{myinparaenum}
  \item 
    this does not drop, that is,  $M^{*}_{\alpha}=M_{\alpha}$ for all $\alpha$, and 
  \item 
    $M_{\theta}=\RK^{\chang}$. 
  \end{myinparaenum}
  
  If either of these is false, then $\theta=\ords$
  and there is a closed unbounded class $C$ of ordinals $\alpha$ such that
  $\crit(E_\alpha)=\alpha=i_{\alpha}(\alpha)$. Since $o(\kappa)<\ords$ for all $\kappa$
  it follows that there is a stationary class $S\subseteq C$ of ordinals of
  cofinality $\omega$ such that $i_{\alpha',\alpha}(E_{\alpha'})=E_{\alpha}$ for all
  $\alpha'<\alpha$ in $S$.  Fix $\alpha\in S\cap\lim(S)$; we will
  show that  the hypothesis of
  Theorem~\ref{thm:main}(1) implies that $E_\alpha\in\chang$, contradicting the choice of $E_\alpha$.

  To this end, let  $\vec\alpha=\seq{\alpha_n\mid n\in\omega}$ be an increasing sequence of ordinals in $S$ such that $\bigcup_{n\in\omega}\alpha_n=\alpha$. We call a sequence $\seq{\beta_n\mid n\in\omega}$ a \emph{thread} for the generator $\beta$ of $E_\alpha$ if $\beta_n=i^{-1}_{\alpha_n,\alpha}(\beta)$ for all sufficiently large $n<\omega$.  
  The technique of Gitik used in \cite[Lemma~2.3]{gitik-mitchell.ext-indisc}  gives a formula $\phi$ such that $\phi(\vec\alpha,\vec\beta,\beta)$ holds if and only 
  if $\beta<\kappa^{+\omega}$ and $\vec\beta$ is a thread for $\beta$.
  Since all of the threads are in $\chang$, this implies that
  $E_\alpha\restrict\kappa^{+\omega}\in\chang$. If
  $\gamma=\len(E_\alpha)<\kappa_\alpha^{+(\omega+1)}$ then this
  construction can be extended to all of $E_\alpha$ by using
  $\seq{i^{-1}_{\alpha_n}(\len(E_\alpha))\mid n\in\omega}$ as an additional
  parameter.  
  But the hypothesis of Theorem~\ref{thm:main}(\ref{item:main-lower}) implies that
  $\len(E_\alpha)<(\kappa_\alpha^{+\omega})^\chang$, so
  $E_\alpha\in\chang$, contradicting the definition of $E_\alpha$. 
  
  It follows that no sharp for $\chang$ exists, as otherwise the embedding given
  by Proposition~\ref{thm:sharpEmbed}  would make an iterated ultrapower of $\RK$
  non-rigid. 
\end{proof}

\section{The upper bound}
\label{sec:upperbound}
The proof of Theorem~\ref{thm:main}(\ref{item:main-upper}) will take
up the rest of this paper except for the final 
Section~\ref{sec:questions}, which poses some open questions.

The hypothesis of Theorem~\ref{thm:main}(\ref{item:main-upper}) is stronger than
necessary:   our construction of the sharp for $\chang$  uses only a
sufficiently strong mouse  over the
reals, that is, a model  $M=J_{\gamma}(\reals)[\mathcal{E}]$ 
where $\mathcal{E}$ is an iterable extender sequence.  

At this point we describe a general procedure for constructing a  sharp from a mouse. 
For this purpose we will assume that $M$ is a mouse satisfying the
following conditions: 
\begin{myinparaenum}
\item $\card M=\card\reals$, definably over $M$, indeed
\item
  there is an onto function $h\colon \reals\to M$ which is the
  union of an increasing $\omega_1$ sequence of functions in  $M$, and 
\item $M$ has a last $(\kappa,\kappa^{+\omega_1})$-extender, $E\in M$.
\end{myinparaenum}
We can easily find such a mouse from the hypothesis of
Theorem~\ref{thm:main}(\ref{item:main-upper}) by choosing a model $N$ of
the form   $J_{\gamma}(\reals)[\mathcal{E}]$ with the last two properties and
letting $M$ be  the transitive collapse of the  Skolem hull of
$\reals\cup\omega_1$ in $N$.
In Definition~\ref{def:Nsequence},  at the start of
section~\ref{sec:mainlemma}, 
we will make additional and
more precise  assumptions on $M$ which are used in the proof of the
Main Theorem.

We remark that we could assume the Continuum Hypothesis by 
generically adding a map $g$ mapping $\omega_1$ onto the reals.
Doing so would not add any new countable sequences and hence would not
affect the Chang model.   Indeed we could use
$J_{\gamma}[g][\mathcal{E}]$ for the mouse $M$ instead of
$J_{\gamma}(\reals)[\mathcal{E}]$, so that $M$ satisfies the Axiom of
Choice and the Continuum Hypothesis, along with all of the properties we require of $M$.
We do not do so (though we will need to generically add such a
map $g$ near the end of the proof) but the reader certainly may, if
desired, assume that this has been done.

The following simple observation is basic to the construction:
\begin{proposition}\label{thm:Mcc}
  The mouse $M$ is closed
  under countable subsequences.
\end{proposition}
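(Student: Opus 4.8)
The plan is to show that any countable sequence $\vec{x} = \langle x_n \mid n \in \omega\rangle$ of elements of $M$ is itself a member of $M$, using the hypotheses we have imposed on $M$: that there is an onto function $h\colon\reals\to M$ which is the union of an increasing $\omega_1$-sequence $\langle h_\xi \mid \xi < \omega_1\rangle$ of functions in $M$. First I would pull the sequence $\vec x$ back along $h$: for each $n$ choose a real $r_n$ with $h(r_n) = x_n$, so that $\vec r = \langle r_n \mid n\in\omega\rangle$ is a countable sequence of reals. Since each $r_n$ is a real and $M$ contains all the reals, and moreover $M$ is a model of (enough of) ZF to form countable sequences of its own reals — in particular $\vec r$, being a countable sequence coded by a single real via a standard pairing, is itself a real and hence an element of $M$. (Alternatively, one notes directly that a countable sequence of reals is coded by a real.) Then I would argue that $\vec x = h \circ \vec r$ is computable inside $M$ from $\vec r$ and $h$.

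The one subtlety is that $h$ itself need not be an element of $M$ — it is only the union of an $\omega_1$-chain of functions $h_\xi \in M$. So the key step is to find a \emph{single} $\xi < \omega_1$ with $\vec r \subseteq \domain(h_\xi)$, equivalently with $r_n \in \domain(h_\xi)$ for all $n$. This is where I would use that the chain is increasing and indexed by $\omega_1$: for each $n$ there is $\xi_n < \omega_1$ with $r_n \in \domain(h_{\xi_n})$, and since $\omega_1$ is regular (and $\{\xi_n\}$ is a countable set of countable ordinals) we may take $\xi = \sup_n \xi_n < \omega_1$; then $h_\xi \supseteq h_{\xi_n} \ni r_n$ for every $n$. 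Now $h_\xi \in M$, and $\vec r \in M$, so $\langle h_\xi(r_n) \mid n \in \omega\rangle$ can be formed inside $M$ (it is the image of $\vec r$ under $h_\xi$, and $M$ satisfies enough ZF — replacement and countable sequence formation — to carry this out). But $h_\xi(r_n) = h(r_n) = x_n$, so $\vec x \in M$.

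I expect the main obstacle to be exactly the bookkeeping about where $h$ lives: one must be careful that, although $h \notin M$, each approximation $h_\xi$ is in $M$, and that $M$ has enough closure (countable sequences of \emph{reals}, and replacement along a function in $M$) to assemble $\vec x$. This closure is available because $M = J_\gamma(\reals)[\mathcal E]$ is a model of Zermelo set theory containing all reals, and a countable sequence of reals is coded by a real; no appeal to the final extender $E$ or to choice beyond Proposition~\ref{thm:enoughAC} is needed. I would also remark that the argument in fact shows slightly more: $M$ is closed under countable sequences of elements \emph{together with} a ``name'' for each via $h$, but closure under countable subsequences as stated is the immediate consequence and all that is claimed here.
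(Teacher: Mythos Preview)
Your proof is correct and follows essentially the same approach as the paper's: pull the countable sequence back to a countable set of reals (which is coded by a single real and hence lies in $M$), then push forward via a function in $M$. You are simply more explicit than the paper about the one real point, namely that the full surjection $h$ need not lie in $M$, so one must first pass to a single approximant $h_\xi\in M$ covering the countably many preimages; the paper's proof compresses this into the phrase ``a function $h\in M$'' without spelling out the $\sup$ argument.
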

\begin{proof}
  By the assumption (b) on $M$, any countable subset $B\subseteq M$ is
  equal to $h[b]$ for a function $h\in M$ and set $b\subset\reals$.
  Since $M$ contains all reals, and
  any countable set of reals can be coded by a single real, $b\in M$
  and thus $B\in M$.
\end{proof}

As in the case of $0^{\sharp}$, we obtain the sharp for the Chang model by iterating the final extender $E$ out of the universe:

\begin{definition}\label{def:IFromM}
  We write $i_{\alpha}\colon M_0=M\to M_{\alpha}=\ult_{\alpha}(M,E)$.  
  In particular $M_{\ords}$ is the result of  iterating $E$ out of the
  universe, so that $i_{\ords}(\kappa)=\ords$.

  Let $\kappa=\crit(E)$.  We write
  $\kappa_{\nu}=i_{\nu}(\kappa)$ 
  and $I=\set{\kappa_\nu\mid\nu\in\ords}$.      We say that an
  ordinal $\beta$ is a  \emph{generator
  belonging to $\kappa_{\nu}$} if $\beta=i_{\nu}(\bar\beta)$ for some 
  $\bar{\beta}\in[\kappa,\kappa^{+\omega_1})$
\end{definition}
Note that the set of generators belonging to $\kappa_\nu$ is a subset
of $\supp(i_{\nu}(E))$, that is, it is a set of generators for the
extender $i_{\nu}(E)$ on $\kappa_\nu$ in $M_\nu$.
 Every member of $M_{\ords}$ is equal to 
$i_{\ords}(f)(\vec \beta)$ for some function $f\in M$ with domain
$\kappa^{\card{\beta}}$ and some finite sequence $\vec \beta$  of
generators for members of $I$.
The following observation follows from this fact together with
Proposition~\ref{thm:Mcc}:

\begin{proposition}\label{thm:countable-sets-generators}
  Suppose that $N\supseteq M_{\ords}\cut\ords$ is a model of set
  theory which contains 
  all countable sets of generators.  Then $\chang^{N}=\chang$.
\end{proposition}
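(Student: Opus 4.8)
The plan is to pin down exactly which countable sequences of ordinals are available in $N$: I will show these are precisely the countable sequences of ordinals of $V$, and then $\chang^{N}=\chang$ follows at once, since $\chang$ is by definition the least model of ZF containing all such sequences. One inclusion is cheap: for $N$ lying between $M_{\ords}\cut\ords$ and $V$, every countable sequence of ordinals of $N$ is already one of $V$, so $\chang^{N}\subseteq\chang$. The content of the proposition is the reverse inclusion $\chang\subseteq\chang^{N}$, and for this it suffices to check that $N$ contains every countable sequence $\vec\xi=\xseq{\xi_{n}}{n<\omega}$ of ordinals.

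So fix such a $\vec\xi$. Each $\xi_{n}$ is an ordinal below $\ords=\ords^{M_{\ords}\cut\ords}$, so by the representation of members of $M_{\ords}$ recalled just above the proposition we may write $\xi_{n}=i_{\ords}(f_{n})(\vec\beta_{n})$ with $f_{n}\in M$ of domain $\kappa^{\card{\vec\beta_{n}}}$ and $\vec\beta_{n}$ a finite sequence of generators belonging to members of $I$. The point is that all of this witnessing data lies in $N$. First, $\vec f=\xseq{f_{n}}{n<\omega}$ is a countable subsequence of $M$, hence lies in $M$ by Proposition~\ref{thm:Mcc}, and so in $N$. Second, let $B=\bigcup_{n<\omega}\range(\vec\beta_{n})$; this is a countable set of generators, so $B\in N$ by hypothesis, and the assignment $n\mapsto\vec\beta_{n}$, read off as a countable sequence of finite tuples from the countable ordinal $\otp(B)$, is coded by a real, hence also lies in $N$ (the reals being contained in $M_{\ords}\cut\ords\subseteq N$). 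Thus both $\vec f$ and $\xseq{\vec\beta_{n}}{n<\omega}$ are members of $N$.

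It remains to reconstruct $\vec\xi$ inside $N$. Since $E\in M\subseteq N$, the iteration of Definition~\ref{def:IFromM} is a definable class of $N$. For each fixed $n$ only finitely many of its stages bear on $\vec\beta_{n}$, namely the $\kappa_{\nu}\in I$ to which some coordinate of $\vec\beta_{n}$ belongs; choosing $\nu$ just past these we get $\vec\beta_{n}\in[\kappa_{\nu}]^{<\omega}$ and $\xi_{n}=i_{\ords}(f_{n})(\vec\beta_{n})=i_{\nu,\ords}\bigl(i_{\nu}(f_{n})(\vec\beta_{n})\bigr)$, which exhibits $\xi_{n}$ as a definable (in $N$) function of the parameters $f_{n}$ and $\vec\beta_{n}$. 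Applying Replacement in $N$ to the definable function $n\mapsto\xi_{n}$ produces $\vec\xi\in N$. Hence $N$ contains all countable sequences of ordinals, so $\chang\subseteq N$ and therefore $\chang\subseteq\chang^{N}$; combined with the first inclusion this gives $\chang^{N}=\chang$.

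The step I expect to need the most care is the reconstruction: one must verify that the value $i_{\ords}(f)(\vec\beta)$ attached to a finite tuple of generators really is computed from only finitely much extender data, all of which already sits in $M_{\ords}\cut\ords$ and is therefore available to $N$. This is routine given that the iteration is linear and $\crit(i_{\nu,\ords})=\kappa_{\nu}$, so that the value $i_{\nu}(f)(\vec\beta)$ computed below $\kappa_{\nu}$ is undisturbed by the tail of the iteration and only finitely many further ultrapower images are taken; and since each $i_{\nu}(E)$ has length $\kappa_{\nu}^{+\omega_{1}}<\ords$, its finite restrictions appear on the extender sequence of $M_{\ords}\cut\ords$. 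Finally, I should flag that the appeal, in the first inclusion, to ``$N$ has no new countable sequences of ordinals'' is exactly what confines the statement to models $N$ with $M_{\ords}\cut\ords\subseteq N\subseteq V$ (or more generally to $N$ having the same countable sequences of ordinals as $V$, as in clause~3 of Definition~\ref{def:Csharp}); for such $N$ the argument above is complete.
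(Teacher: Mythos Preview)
Your proof follows essentially the same route as the paper's: write each ordinal as $i_{\ords}(f_n)(\vec\beta_n)$, use Proposition~\ref{thm:Mcc} to get the sequence $\seq{f_n\mid n\in\omega}\in M$, use the hypothesis (plus a real coding the indexing) to get $\seq{\vec\beta_n\mid n\in\omega}\in N$, and reconstruct $\vec\xi$ in $N$.

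The one place you diverge is the reconstruction step, and there you work harder than necessary. You argue that the class-length iteration is definable in $N$ (using $E\in M\subseteq N$), factor $\xi_n=i_{\nu,\ords}\bigl(i_\nu(f_n)(\vec\beta_n)\bigr)$, and then invoke Replacement. The paper bypasses all of this with a single observation: since $\seq{f_n}\in M$, applying $i_{\ords}$ and restricting gives
\[
\seq{i_{\ords}(f_n)\restrict\lambda\mid n\in\omega}\in M_{\ords}\cut\ords\subseteq N
\]
for any $\lambda>\sup\bigcup_n\vec\beta_n$, and then $\xi_n=(i_{\ords}(f_n)\restrict\lambda)(\vec\beta_n)$ is obtained in $N$ by mere function application. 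This is not only shorter but also cleaner in its hypotheses: your detour needs $M$ and $E$ to be available in $N$ as parameters in order to define the iteration there, and while both you and the paper write ``$M\subseteq N$'' in passing, the paper's argument does not actually depend on it---what is used is that the $i_{\ords}$-image of a sequence in $M$, suitably restricted, lands in $M_{\ords}\cut\ords$. Your version does depend on it, and you do not justify why $E$ (the top extender of the sharp mouse) should lie in $N$.
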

\begin{proof}
  It is sufficient to show that $N$ contains all countable sets of
  ordinals, but that is immediate since every countable set  $B$ of
  ordinals has the form
  $B=\set{i_{\ords}(f_{n})(\vec\beta_n)\mid n\in\omega}$, where each
  $f_n$ is a function in $M$ and each $\vec\beta_n$ is a finite
  sequence of generators.  Since the sequence $\seq{f_n\mid n\in\omega}$ is
  in $M\subseteq N$ by Proposition~\ref{thm:Mcc}, the sequence
  $\seq{i_{\ords}(f_n)\restrict\lambda\mid n\in\omega}\in
  M_{\ords}\cut\ords\subseteq N$ for
  $\lambda>\sup\bigcup_{n\in\omega}\vec\beta_n$,  and the sequence
  $\seq{\vec\beta_n\mid n\in\omega}$ is in $N$ by assumption.  Thus $B\in
  N$. 
\end{proof}

Clearly the class $I$ gives a sharp for the model $M_\ords\cut\ords$
in the sense of Definition~\ref{def:Csharp} (with suitable sequences
from $I$
replaced by finite sequences), but it is not at all clear that  $I$ 
gives a sharp for $\chang$ as well.
We show starting in
Section~\ref{sec:proof-start} that it does give a sharp when defined
using the mouse specified there.
\begin{conjecture}\label{thm:optimalmouseconjecture}
  If $M$ is the minimal mouse for which this procedure yields a sharp
  for $\chang$, then the core model $K(\reals)^{\chang}$ of the Chang
  model is given by 
  an iteration $k$, without drops,  of $M_{\ords}\cut\ords$.
\end{conjecture}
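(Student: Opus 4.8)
The plan is to prove the statement by the standard machinery for identifying the core model with a given iterable inner model: show that $M_{\ords}\cut\ords$ is an iterable, universal extender model over the reals inside $\chang$, that every mouse of $\chang$ sits (up to iteration) inside $M_{\ords}\cut\ords$, and then conclude by comparison that $K(\reals)^{\chang}$ is a non-dropping iterate of $M_{\ords}\cut\ords$. The guiding analogy is the classical fact that iterating the top measure of $0^{\sharp}$ out of the universe produces $L$, which is precisely the core model of $L$; here $M$ plays the role of $0^{\sharp}$ and $M_{\ords}\cut\ords$ the role of $L$.

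First I would have to give content to the hypothesis by establishing that a least mouse $M$ for the procedure of Definition~\ref{def:IFromM} exists. Any two mice over the reals with a last extender of critical point $\kappa$ whose associated class $I$ and term set $T$ satisfy Definition~\ref{def:Csharp} are comparable by coiteration, and the property ``yields a sharp for $\chang$'' passes to the smaller model of such a comparison — this last point being the one that genuinely requires the fine-structural characterization of the minimal sharp alluded to after Proposition~\ref{thm:sharpEmbed} — so there is a $\trianglelefteq$-least such $M$. Fix it and write $K^{*}=M_{\ords}\cut\ords$.

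Next I would record that $K^{*}$ is an iterable proper-class extender model over the reals which is an inner model of $\chang$: that $K^{*}\subseteq\chang$ and that $K^{*}$ is a definable class of $\chang$ follows from the construction of the sharp carried out from Section~\ref{sec:proof-start} onward together with Proposition~\ref{thm:countable-sets-generators}, while iterability holds in $V$ and the termination of the relevant comparisons is secured by applying Fodor's lemma in $V$ as in Subsection~\ref{sec:basic-facts}, so the comparisons go through in $\chang$ as well. I would then argue that $K^{*}$ is universal and, crucially, that every mouse $N\in\chang$ is an iterate of an initial segment of an iterate of $K^{*}$: between consecutive indiscernibles $\kappa_{\nu}<\kappa_{\nu+1}$ the model $K^{*}$ looks like the part of $M$ lying below its top extender, and minimality of $M$ forces this to exhaust the mice of $\chang$ of the corresponding size, while the indiscernible structure itself accounts for the larger mice. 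With these facts in hand, the standard ``core model $=$ union of all mice'' argument, run inside $\chang$, gives that $K(\reals)^{\chang}$ and $K^{*}$ coiterate to a common model with no drops on either side and with the $K(\reals)^{\chang}$-side trivial, whence $K(\reals)^{\chang}$ is a drop-free iterate $k$ of $K^{*}=M_{\ords}\cut\ords$.

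The main obstacle is the claim that every mouse of $\chang$ lies below $K^{*}$, equivalently that the $K(\reals)^{\chang}$-side of the coiteration is trivial and that no drops occur on either side. This is a genuinely fine-structural assertion about the minimal mouse — precisely the kind of analysis this paper deliberately avoids (cf.\ the remark in Subsection~\ref{sec:notation} that no direct use of fine structure is made) — and it is entangled with the still-unresolved question of the exact length of the final extender of the minimal sharp, i.e.\ the gap between $\kappa^{+(\omega+1)}$ and $\kappa^{+\omega_1}$ in Theorem~\ref{thm:main}. A further, more routine difficulty is carrying out the comparison and universality arguments in the choiceless setting of $\chang$, which, as in Subsection~\ref{sec:basic-facts}, forces one to export each application of Fodor's lemma to $V$.
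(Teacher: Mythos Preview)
The statement you are addressing is labeled a \emph{Conjecture} in the paper, not a theorem, and the paper offers no proof of it. Immediately after stating it the author writes that ``a verification of this conjecture would presumably determine the correct large cardinal strength of the sharp, and remove some of the weaknesses which have been remarked on in our results.'' So there is no ``paper's own proof'' to compare against: the result is explicitly left open, and indeed the author lists the determination of the minimal mouse as the first open problem in Section~\ref{sec:questions}.

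Your proposal is honest about its own gaps, and those gaps are exactly why the statement is a conjecture rather than a theorem. Two of the steps you flag as needing work are in fact the entire content of the problem. First, the assertion that the property ``yields a sharp for $\chang$'' passes down under coiteration is not established anywhere in the paper and is not obviously true; the paper's construction of the sharp uses specific features of the chosen mouse (Definition~\ref{def:Nsequence}) rather than a minimality property, so one cannot simply appeal to it. Second, and more seriously, the claim that every mouse of $\chang$ is absorbed by $K^{*}=M_{\ords}\cut\ords$ is precisely what is not known: the lower-bound argument of Section~\ref{sec:lower} shows only that $K(\reals)^{\chang}$ is an iterate of $K(\reals)^{V}$ under the anti-large-cardinal hypothesis of Theorem~\ref{thm:main}(\ref{item:main-lower}), not that it lines up with the iterate of any particular mouse when that hypothesis fails. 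Your outline is a reasonable description of what a proof would have to accomplish, but it does not supply the missing ideas, and the paper does not supply them either.
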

This mouse $M$ (which we will refer to as the ``optimal'' mouse) would
then give ``the'' sharp for $\chang$.   A verification of this
conjecture would presumably determine the correct large cardinal strength of the sharp, and
remove some of the weaknesses which have been remarked on in our results.

\subsection{Why is suitability required?}
\label{sec:suitability-required}
Two major weaknesses of the results of this
paper were pointed out earlier:  the need for restricted formulas and suitable sequences.   We expressed the hope that the need for restricted
formulas will be eliminated by strengthening these results to use
the minimal mouse.   In this subsection we make a brief digression to
look at the question of suitability.   Nothing in this subsection is
required for the proof of
Theorem~\ref{thm:main}(\ref{item:main-upper}) and nothing in this
subsection will be referred to again except for the statement of
Theorem~\ref{thm:modified-suitable}.

Say that a mouse $M$ is \emph{correct for the Chang model} if there is
an iteration $k\colon M_{\ords}\to\RK^{\chang}$, without drops, such
that $k[\kappa_{\nu}]\subset\kappa_{\nu}$ for all $\nu\in\ords$ and
$k(\kappa_\nu)>\kappa_\nu$ for all $\nu\in\ords$ of uncountable
cofinality.

Such a mouse must be the minimal mouse which is not a member of  $\chang$, since
otherwise the minimal such mouse would be a member of $M$ and the
iteration $k$ would either drop or go beyond $\ords$.  The converse is
not known, but it seems probable that the minimal mouse is correct and that $i\restrict
I=\set{(\kappa_\nu,k(\kappa_\nu)\mid\nu\in\ords}  $ is a 
class of indiscernibles for $\chang$.

Now suppose that $M$ is correct for $\chang$, and  
say that a sequence $\vec\alpha$ is Prikry for $\vec \beta$ if each is an
increasing $\omega$ sequence and there is a sequence of measures $U_n\in M_{\ords}$ on
$\beta_n$  such that $\vec\alpha$ satisfies the Mathias genericity
condition: for all $x\subseteq\sup(\vec\beta)$
in $M_{\ords}$, 
for all but finitely many  $n\in\omega$, we have $\alpha_n\in x$ if and
only if $x\cap\beta_n\in U_n$.

Note that we are not asserting here that $\vec\alpha$ is actually generic
over $M_{\ords}$, as neither $\vec\beta$ nor the sequence of measures
need be in $M_\ords$.

We write $\vec\lambda<^*\vec \eta$ if $\lambda_n<\eta_n$ for all but
finitely many $n$.

\begin{proposition}
    Suppose $\vec\nu$ and $\vec\mu$ are 
    increasing $\omega$-sequences of ordinals with
    $\vec\nu<^*\vec\mu$ and $\sup(\vec{\nu})=sup(\vec\mu)$.   Then
    $\seq{k(\kappa_{\nu_n})\mid n\in\omega}$ and 
    $\seq{\kappa_{\nu_n+1}\mid n\in\omega}$ are each Prikry for
    $\seq{k(\kappa_{\mu_n})\mid n\in\omega}$.  Furthermore,
    no sequence $\vec\alpha$ in the interval 
    $\seq{k(\kappa_{\nu_n})\mid n\in\omega}
    <^*\vec\alpha<^*\seq{\kappa_{\nu_n+1}\mid n \in\omega}$ 
    is Prikry for     $\seq{k(\kappa_{\mu_n})\mid n\in\omega} $.
\end{proposition}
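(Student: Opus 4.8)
The plan is to work at the common limit $\delta:=\sup(\vec\nu)=\sup(\vec\mu)$. Then $\delta$ is a limit ordinal of countable cofinality, and since the critical sequence $I$ is continuous, all four sequences $\langle\kappa_{\nu_n}\rangle,\langle k(\kappa_{\nu_n})\rangle,\langle\kappa_{\nu_n+1}\rangle,\langle k(\kappa_{\mu_n})\rangle$ are cofinal in $\kappa_\delta=i_{0,\delta}(\kappa)$ (for the $k$-images use $k[\kappa_\delta]\subseteq\kappa_\delta$ and continuity of $k$ at $\kappa_\delta$). One checks, using that $M$ has no measurable cardinals in $(\kappa,\kappa^{+\omega_1})$ and that $\kappa_{\rho+1}>\kappa_\rho^{+\omega_1}$, that a moved $\kappa_{\mu_n}$ would have $k(\kappa_{\mu_n})$ lying strictly between consecutive members of $I$, hence not measurable in $M_\ords$ and carrying no measure there; so $k(\kappa_{\mu_n})=\kappa_{\mu_n}$ for cofinitely many $n$ and I may work with $\langle\kappa_{\mu_n}\rangle$ throughout. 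The one general fact I isolate first: every $x\subseteq\kappa_\delta$ with $x\in M_\ords$ already lies in $M_\delta=\dirlim_{\xi<\delta}M_\xi$, because the tail iteration $i_{\delta,\ords}$ uses only extenders of critical point $\ge\kappa_\delta$ and so adds no subset of $\kappa_\delta$; hence $x=i_{\xi,\delta}(\bar x)$ for some $\xi<\delta$ and $\bar x\subseteq\kappa_\xi$ in $M_\xi$. Write $E_\rho:=i_{0,\rho}(E)$, so $\crit(E_\rho)=\kappa_\rho$, $\len(E_\rho)=(\kappa_\rho^{+\omega_1})^{M_\rho}$, and $i^{E_\rho}(\kappa_\rho)=\kappa_{\rho+1}$.

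For the positive assertions, fix large $n$. Since $k[\kappa_{\nu_n+1}]\subseteq\kappa_{\nu_n+1}$ we have $k(\kappa_{\nu_n})<\kappa_{\nu_n+1}=i^{E_{\nu_n}}(\kappa_{\nu_n})$, so $k(\kappa_{\nu_n})=i^{E_{\nu_n}}(g_n)(a_n)$ for some $g_n\in M_{\nu_n}$ with domain $[\kappa_{\nu_n}]^{<\omega}$ and range contained in $\kappa_{\nu_n}$, and some finite $a_n\subseteq\supp(E_{\nu_n})$. Put $g_n':=i_{\nu_n,\mu_n}(g_n)$, $a_n':=i_{\nu_n,\mu_n}(a_n)$, and let $U_n:=\{z\subseteq\kappa_{\mu_n}\mid (g_n')^{-1}[z]\in(E_{\mu_n})_{a_n'}\}$; as $k(\kappa_{\nu_n})\ge\kappa_{\nu_n}$, $g_n$ is not constant on an $(E_{\nu_n})_{a_n}$-large set, so $U_n$ is a nonprincipal ($\kappa_{\mu_n}$-complete) measure on $\kappa_{\mu_n}$, and it lies in $M_\ords$ since it is definable from $g_n'$ and $(E_{\mu_n})_{a_n'}$, both of rank below $\len(E_{\mu_n})$ and hence in $M_{\mu_n}$, with which $M_\ords$ agrees that far. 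Given $x=i_{\xi,\delta}(\bar x)$ and $n$ large enough that $\xi<\nu_n$: using $\crit(i_{\nu_n+1,\delta})=\kappa_{\nu_n+1}>k(\kappa_{\nu_n})$ I reduce ``$k(\kappa_{\nu_n})\in x$'' to ``$k(\kappa_{\nu_n})\in i^{E_{\nu_n}}(i_{\xi,\nu_n}(\bar x))$'', i.e.\ by definition of the extender ultrapower to ``$g_n^{-1}[i_{\xi,\nu_n}(\bar x)]\in(E_{\nu_n})_{a_n}$''; and using $\crit(i_{\mu_n,\delta})=\kappa_{\mu_n}$ I reduce ``$x\cap\kappa_{\mu_n}\in U_n$'' to ``$i_{\xi,\mu_n}(\bar x)\in U_n$'', which by elementarity of $i_{\nu_n,\mu_n}$ applied to $g_n,a_n,E_{\nu_n}$ together with $i_{\xi,\mu_n}(\bar x)=i_{\nu_n,\mu_n}(i_{\xi,\nu_n}(\bar x))$ becomes the same condition. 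Hence the Mathias condition holds and $\langle k(\kappa_{\nu_n})\rangle$ is Prikry for $\langle\kappa_{\mu_n}\rangle$. For $\langle\kappa_{\nu_n+1}\rangle$ I run the same bookkeeping, now with $\kappa_{\nu_n+1}=i^{E_{\nu_n}}(\kappa_{\nu_n})$ the image of the critical point itself: pulling $x$ back through $i_{\xi,\nu_n+1}$ and using that $\crit(i_{\nu_n+1,\mu_n})=\kappa_{\nu_n+1}$ with first extender $E_{\nu_n+1}$, ``$\kappa_{\nu_n+1}\in x$'' reduces to ``$i_{\xi,\nu_n+1}(\bar x)\in(E_{\nu_n+1})_{\kappa_{\nu_n+1}}$'', i.e.\ (elementarity of $i_{\xi,\nu_n+1}$, and $(E_{\nu_n+1})_{\kappa_{\nu_n+1}}=i_{0,\nu_n+1}((E)_\kappa)$) to ``$\bar x\in(E_\xi)_{\kappa_\xi}$'', which is exactly what ``$i_{\xi,\mu_n}(\bar x)\in(E_{\mu_n})_{\kappa_{\mu_n}}$'' reduces to; so $\langle\kappa_{\nu_n+1}\rangle$ is Prikry for $\langle\kappa_{\mu_n}\rangle$ witnessed by the normal measures $(E_{\mu_n})_{\kappa_{\mu_n}}=i_{0,\mu_n}((E)_\kappa)$.

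For the negative assertion, suppose $\vec\alpha$ were Prikry for $\langle\kappa_{\mu_n}\rangle$ via measures $U_n\in M_\ords$ with $k(\kappa_{\nu_n})<\alpha_n<\kappa_{\nu_n+1}$ for all large $n$. The argument has two halves. First, since ``$\alpha_n\in x\iff x\cap\kappa_{\mu_n}\in U_n$'' holds for every bounded $x$, $\alpha_n$ is, up to the $U_n$-generic identification, the projection of the seed of $U_n$; pulling $U_n$ (again an element of $M_{\mu_n}$) back through the iteration to a single measure on $\kappa_{\nu_n}$ in $M_{\nu_n}$ and exploiting that a measure carries only ``one generator's worth'' of information, I would show that $\alpha_n$ must in fact be the transitive-collapse image of a generator belonging to $\kappa_{\nu_n}$, hence $\alpha_n<\len(E_{\nu_n})=\kappa_{\nu_n}^{+\omega_1}$: the point is that $\len(E_{\nu_n})$ bounds $i^{E_{\nu_n}}(g)(\gamma)$ for every $g\colon[\kappa_{\nu_n}]^{<\omega}\to\kappa_{\nu_n}$ in $M_{\nu_n}$ and every single generator $\gamma$, since $M_{\nu_n}\models 2^{\kappa_{\nu_n}}=\kappa_{\nu_n}^{+}$. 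Second, correctness of $M$ for $\chang$ supplies an iteration $k\colon M_\ords\to\RK^\chang$ with $k[\kappa_{\nu_n}]\subseteq\kappa_{\nu_n}$ and $k(\kappa_{\nu_n})>\kappa_{\nu_n}$, and I would argue that the only extenders of $M_\ords$ with critical point $\kappa_{\nu_n}$ are transitive collapses of fragments $E_{\nu_n}\ecut S$ with $S\subseteq[\kappa_{\nu_n},\kappa_{\nu_n}^{+\omega_1})$, and that recovering $\RK^\chang$ correctly forces $k$ to apply at $\kappa_{\nu_n}$ a fragment whose generators are cofinal in $[\kappa_{\nu_n},\kappa_{\nu_n}^{+\omega_1})$, so that $k(\kappa_{\nu_n})\ge\len(E_{\nu_n})$. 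Combining, $\alpha_n<\len(E_{\nu_n})\le k(\kappa_{\nu_n})$, contradicting $\alpha_n>k(\kappa_{\nu_n})$.

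I expect the negative assertion to be the main obstacle, and within it the interplay of its two halves: on the one hand making precise, from the bare definition of ``correct for $\chang$'', that $k$ moves $\kappa_{\nu_n}$ past $\kappa_{\nu_n}^{+\omega_1}$ — this is essentially the $\kappa^{+\omega_1}$ of the Main Theorem, recast as a property of the comparison iteration $k$; on the other hand the rigidity step of the first half, ruling out that a single measure on $\kappa_{\mu_n}$ in $M_\ords$ could, through a non-trivial projection, name an ordinal in the open interval $(\len(E_{\nu_n}),\kappa_{\nu_n+1})$. A secondary technical point to watch is the behaviour of $k$ and of these measures on indices $\nu_n$ of countable cofinality, where the hypothesis does not itself assert $k$ moves $\kappa_{\nu_n}$; this may force choosing $\vec\nu$ to consist of limit indices of uncountable cofinality, or a separate argument at such indices.
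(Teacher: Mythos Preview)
Your preliminary reduction is a non-sequitur. You argue that if $k$ moves $\kappa_{\mu_n}$, then $k(\kappa_{\mu_n})$ lands strictly between consecutive members of $I$ and so carries no measure in $M_\ords$; from this you conclude $k(\kappa_{\mu_n})=\kappa_{\mu_n}$ for cofinitely many $n$. But the absence of a measure on $k(\kappa_{\mu_n})$ in $M_\ords$ says nothing about whether $k$ fixes $\kappa_{\mu_n}$ --- and the definition of ``correct for $\chang$'' \emph{requires} $k(\kappa_\nu)>\kappa_\nu$ whenever $\nu$ has uncountable cofinality, so for typical $\vec\mu$ your conclusion is simply false. The paper makes no such reduction: its witnessing measures are $U_n=k\circ i_{\mu_n}((E)_\kappa)$ for the sequence $\langle k(\kappa_{\nu_n})\rangle$ and $U_n=k\circ i_{\nu_n+1,\mu_n}(U'_n)$ for $\langle\kappa_{\nu_n+1}\rangle$; these are $k$-images, and the Mathias condition is verified via elementarity of $k$ together with pull-back bookkeeping of the sort you carry out. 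Your detailed verification is correct in spirit, but it is aimed at the wrong target $\langle\kappa_{\mu_n}\rangle$ rather than $\langle k(\kappa_{\mu_n})\rangle$.

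The serious gap is the negative assertion, where your approach is entirely different from the paper's and does not go through. You propose a sandwich: (i) $\alpha_n<\kappa_{\nu_n}^{+\omega_1}$ and (ii) $k(\kappa_{\nu_n})\ge\kappa_{\nu_n}^{+\omega_1}$. Neither half is established. For (ii), the hypothesis only gives $k(\kappa_\nu)>\kappa_\nu$ at uncountable-cofinality $\nu$; upgrading this to $k(\kappa_{\nu_n})\ge\kappa_{\nu_n}^{+\omega_1}$ would require a detailed analysis of the comparison iteration that you do not supply --- and you rightly identify this as ``essentially the $\kappa^{+\omega_1}$ of the Main Theorem'', which however does not assert it. For (i), ``a measure carries only one generator's worth of information'' is a slogan, not an argument: nothing forces a point satisfying the Mathias condition for some $U_n$ to lie below $\kappa_{\nu_n}^{+\omega_1}$.

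The paper's argument is short and avoids both difficulties. The key observation is that the set $\{k\circ i_\ords(f)(k(\kappa_\nu))\mid f\in M\}$ is cofinal in $\kappa_{\nu+1}$. Hence for each $n$ there is $f_n\in M$ with $k(i_\ords(f_n))(k(\kappa_{\nu_n}))>\alpha_n$; since $M$ is closed under countable sequences a single $f\in M$ dominates all $f_n$. Then the one set
\[
x=\bigl\{\nu\mid (\exists\nu'<\nu)\;k\circ i_\ords(f)(\nu')>\nu\bigr\}
\]
contains every $\alpha_n$ (take $\nu'=k(\kappa_{\nu_n})$), yet $x$ is the complement of the club of closure points of $k\circ i_\ords(f)$, so $x\cap k(\kappa_{\mu_n})$ lies in no measure on $k(\kappa_{\mu_n})$. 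No sandwich is needed.
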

\begin{proof}
  To see that  $\seq{\kappa_{\nu_n+1}\mid n\in\omega}$ is Prikry for
  $\seq{k(\kappa_{\mu_n})\mid n\in\omega}$, use $U_n=k\circ
    i_{\nu_n+1,\mu_n}(U'_n)$ where
    $U'_n=\set{x\subseteq\kappa_{\nu_n+1}\mid \kappa_{\xi_n+1}\in k(x)}$.
    To see that $\seq{k(\kappa_{\nu_n})\mid n\in\omega}$ is Prikry for
    $\seq{k(\kappa_{\mu_n})\mid n\in\omega}$, use  $U_n=k\circ
    i_{\mu_n}(\ufFromExt{E}{\kappa})$. 

    For the final sentence, observe that $\seq{k\circ
      i_{\ords}(f)(k(\kappa_{\nu}))\mid f\in M}$ is cofinal in
    $\kappa_{\nu+1}$ for all $\nu\in\ords$.   It follows that if 
    $\seq{k(\kappa_{\nu_n})\mid n\in\omega}
    <^*\vec\alpha<^*\seq{\kappa_{\nu_n+1}\mid n  \in\omega}$
    then there is a function $f\in M$ such that
    $k\circ i_{\ords}(f)(k(\kappa_{\nu_n}))>\alpha_n$ for all $n\in\omega$ such
    that $\alpha_n< \kappa_{\nu_n+1}$, so
    $x=\set{\nu\mid(\exists\nu'<\nu) \;k\circ i_{\ords}(f)(\nu')>\nu}$ witnesses that
    $\vec\alpha$ is not Prikry for $\seq{\kappa_{\mu_n}\mid n\in\omega}$.
\end{proof}
\begin{corollary}
  \label{thm:counterexample}
  Suppose that $B$ and $B'$ are two countable closed subsets of $I$
  such that for all formulas $\phi$
  of set theory (with no extra terms) $\chang\models\phi(k\restrict
  B)\iff\phi(k\restrict B')$.
  
  Then, writing $B=\seq{\kappa_{\nu_{\xi}}\mid\xi<\alpha}$ and
  $B'=\seq{\kappa_{\nu'_{\xi}}\mid\xi<\alpha'}$, we have
  $\alpha=\alpha'$,   $(\forall\xi<\alpha)\;(\cof(\nu_\xi)=\omega\iff\cof(\nu'_{\xi})=\omega)$,
  and 
  for all but finitely many $\xi<\alpha$
  \begin{enumerate}
  \item $\nu_{\xi+1}=\nu_{\xi}+1$ if and only if
    $\nu'_{\xi+1}=\nu'_{\xi}+1$, and 
  \item $\nu_{\xi}$ is a limit ordinal if and only if $\nu'_{\xi}$
    is
    a limit ordinal.
  \end{enumerate}
\end{corollary}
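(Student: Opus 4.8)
The plan is to deduce the corollary directly from the preceding proposition. The point is that, although $\chang$ contains neither $M_\ords$ itself nor the sequences of measures $\seq{U_n\mid n\in\omega}$ appearing in the definition of ``Prikry for $\vec\beta$'', it does contain every countable sequence of ordinals and it defines its own $\RK=\RK^\chang$; so, provided the bounded part of $M_\ords$ and the relevant $M_\ords$-measures on the members of $k[I]$ are available inside $\chang$, the relation ``$\vec\alpha$ is Prikry for $\vec\beta$'' between increasing $\omega$-sequences is $\chang$-expressible whenever $\vec\beta$ is an $\omega$-subsequence of $k[B]=\range(k\restrict B)$. Then any feature of the order type of $B$ as a subset of $I$ that the proposition exposes through the Prikry relation will be decoded, modulo finite, from the $\chang$-type of the parameter $k\restrict B$.

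First the reductions. Both $B$ and $B'$, hence the functions $k\restrict B$ and $k\restrict B'$, are countable sets of ordinals and so lie in $\chang$, and the hypothesis says precisely that $k\restrict B$ and $k\restrict B'$ realize the same type over $\chang$ (using countable sequences of ordinals as parameters, which is all we need). Since $\alpha=\otp(\domain(k\restrict B))$ is a $\chang$-definable function of $k\restrict B$, equality of types gives $\alpha=\alpha'$, and we may compare the two sequences along their increasing enumerations.

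The heart of the argument is the detection, inside $\chang$, of ``consecutive in $I$''. Fix an infinite set $S$ of positions below $\alpha$, with increasing enumeration $\seq{\xi_n\mid n\in\omega}$, and put $\vec\nu=\seq{\nu_{\xi_n}}$ and $\vec\mu=\seq{\nu_{\xi_{n+1}}}$; these are increasing $\omega$-sequences with $\vec\nu<^*\vec\mu$ and $\sup(\vec\nu)=\sup(\vec\mu)$, so the proposition applies to them. Write $a_n$ and $c_n$ for the $k$-images of the $\xi_n$-th and $\xi_{n+1}$-th elements of $\domain(k\restrict B)$, and $b_n$ for the $(\xi_n{+}1)$-st element of $\domain(k\restrict B)$ itself --- all read off in $\chang$ from $k\restrict B$ and $S$. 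I claim the $\chang$-statement ``$\seq{b_n}$ is Prikry for $\seq{c_n}$ and no Prikry sequence for $\seq{c_n}$ lies $<^*$-strictly between $\seq{a_n}$ and $\seq{b_n}$'' holds if and only if $\nu_{\xi_n+1}=\nu_{\xi_n}+1$ for every $n$. Indeed, if that equation holds then $b_n=\kappa_{\nu_{\xi_n}+1}$ and both conjuncts are exactly the content of the proposition; if instead $\nu_{\xi_n+1}>\nu_{\xi_n}+1$ for every $n$, then $\kappa_{\nu_{\xi_n}+1}<b_n$, the proposition makes $\seq{\kappa_{\nu_{\xi_n}+1}}$ Prikry for $\seq{c_n}$, and $\seq{a_n}<^*\seq{\kappa_{\nu_{\xi_n}+1}}<^*\seq{b_n}$ (using $k[\kappa_{\nu_{\xi_n}+1}]\subseteq\kappa_{\nu_{\xi_n}+1}$), so the second conjunct fails. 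Likewise, whether the sequence of $k$-images $\seq{a_n}$ is itself Prikry for $\seq{c_n}$ detects, via the same proposition, whether the $\nu_{\xi_n}$ are successors or $\omega$-cofinal limits as against limits of uncountable cofinality. Now suppose clause~(1) of the corollary failed for infinitely many $\xi$; passing to a subsequence on which the failure is in one direction --- say $\nu_{\xi_n+1}=\nu_{\xi_n}+1$ for all $n$ while $\nu'_{\xi_n+1}>\nu'_{\xi_n}+1$ for all $n$ --- we obtain an $S$ for which the displayed property holds of $k\restrict B$ but fails of $k\restrict B'$, contradicting equality of types. Clause~(2) is handled the same way, and the cofinality assertion follows by combining this with the fact that $\chang$, being closed under $\omega$-sequences, computes the ``cofinality $\omega$'' dichotomy correctly.

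The main obstacle I expect is the first point: a careful verification that, for a mouse that is correct for $\chang$, the relation ``Prikry for $\vec\beta$'' for $\vec\beta$ drawn from $k[B]$ really is expressible in $\chang$ --- equivalently, that $M_\ords\cut\ords$, and in particular the $M_\ords$-ultrafilters witnessing the proposition, lie in $\chang$ and are definable there from $\RK^\chang$. A secondary, purely combinatorial, loose end is the bookkeeping of the exceptional positions: at most finitely many near the top of $B$ when $\cof(\alpha)>\omega$, together with finitely many at the bottom where the ultrafilters used in the proposition degenerate. This bookkeeping is exactly the source of the ``all but finitely many'' qualifier in clauses~(1) and~(2).
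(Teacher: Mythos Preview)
Your approach is essentially the paper's: use the Prikry relation from the preceding proposition to build $\chang$-formulas that distinguish $k\restrict B$ from $k\restrict B'$. Your treatment of clause~(1) is correct and matches the paper's argument; you take $a_n=k(\kappa_{\nu_{\xi_n}})$ as the lower bound where the paper writes the domain element $\kappa_{\nu_{\xi_n}}$, but your version actually aligns more cleanly with the interval in the proposition.

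Your argument for clause~(2), however, has a genuine gap. You propose that ``whether $\seq{a_n}$ is itself Prikry for $\seq{c_n}$'' detects whether the $\nu_{\xi_n}$ are limits, but the proposition asserts \emph{unconditionally} that $\seq{k(\kappa_{\nu_n})}$ is Prikry for $\seq{k(\kappa_{\mu_n})}$ whenever $\vec\nu<^*\vec\mu$ with the same supremum --- there is no case distinction on the $\nu_n$. So $\seq{a_n}$ is always Prikry for $\seq{c_n}$, and your formula cannot separate limits from successors. The paper uses a different criterion: $\nu_{\xi_n}$ is a limit ordinal for all but finitely many $n$ if and only if there are $<^*$-cofinally many sequences $\vec\alpha<^*\seq{\kappa_{\nu_{\xi_n}}}$ which are Prikry for $\seq{k(\kappa_{\nu_{\xi_n}})}$. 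When each $\nu_{\xi_n}$ is a limit, take $\vec\alpha=\seq{\kappa_{\rho_n+1}}$ for $\rho_n$ arbitrarily close to $\nu_{\xi_n}$; when each $\nu_{\xi_n}=\rho_n+1$, the proposition's forbidden interval between $k(\kappa_{\rho_n})$ and $\kappa_{\rho_n+1}=\kappa_{\nu_{\xi_n}}$ blocks cofinality.

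Your concern about the $\chang$-expressibility of the Prikry relation is well placed, and the paper does not spell this out; it is handled implicitly by the hypothesis that $M$ is correct for $\chang$, which makes $\RK^{\chang}$ an iterate of $M_{\ords}$ and so places the relevant measures inside $\chang$.
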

\begin{proof}
  Only the two numbered assertions are problematic.   For the first
  assertion, suppose to the contrary that $\seq{\xi_n\mid n\in\omega}$
  is an increasing subsequence of $\alpha$ such that
  $\nu_{{\xi_n}+1}=\nu_{{\xi_n}}+1$ but
  $\nu'_{{\xi_{n}}+1}>\nu'_{\xi_n}+1$.
  Let  $\phi(k\restrict B)$ be the formula asserting that there is no
  sequence $\vec\alpha$ which is Prikry for
  $\seq{k(\kappa_{\nu_{\xi_n}+1})\mid n\in\omega}$ such that 
  $\seq{\kappa_{\nu_{\xi_n}}\mid
    n\in\omega}<^*\vec\alpha<^*\seq{\kappa_{\nu_{\xi_n+1}}\mid n\in\omega}$ for each
  $n\in\omega$.   Then $\phi$
  is  true of $B$ but false of $B'$.

  For the second assertion, observe that 
  $\nu_{\xi_n}$ is a limit ordinal for all but finitely many
  $n\in\omega$ if and only if 
  there are $<^*$-cofinally many
  sequences $\vec\alpha<^* \seq{\kappa_{\nu_{\xi_n}}\mid n\in\omega}$
  which are Prikry for $\seq{k(\kappa_{\nu_{\xi_n}})\mid n\in\omega}$.
\end{proof}

On its face this Corollary is vacuous: it applies only to (and only
conjecturally to) the optimal sharp for the Chang model, which itself
only conjecturally exists.   However it is an important  motivation
for the technique we use to prove the Main Theorem and  gives important
information about the structure of the sharp of the Chang model.
First, the \emph{gaps} in a sequence $B$, that is, the maximal
intervals of $I\setminus B$, are important.    Second, (assuming as we
do that no gaps have a least upper bound of cofinality $\omega$) the
only important characteristic of the gaps is whether their upper bound
is a limit point or a successor point of $I$.   Finally,  individual
gaps are not important---only 
infinite sets of gaps.

Indeed, in Subsection~\ref{sec:finite-exceptions} we will outline a
proof of Theorem~\ref{thm:modified-suitable} below, which strengthens
Theorem~\ref{thm:main}(\ref{item:main-upper}) to show that the class
$I$ of indiscernibles of given by the proof of that theorem satisfies
the converse of the conclusion  of
Corollary~\ref{thm:counterexample}.%

\begin{definition}
  \label{def:weaklySuitable}
  Call  a sequence $B\subseteq I$ \emph{weakly suitable} if  $B$
  is a countable and closed, and 
  $B\cap\lambda$ is unbounded in $\lambda$ whenever $\lambda\in B$
  and $\cof(\lambda)=\omega$.

  Suppose that $B=\seq{\lambda_\nu\mid \nu<\alpha}$ and
  $B'=\seq{\lambda'_\nu\mid \nu<\alpha'}$, enumerated in increasing
  order,  are weakly suitable.    We say that $B$ and $B'$ are 
  \emph{equivalent} if $\alpha=\alpha'$,
  $(\forall\nu<\alpha)\;(\cof(\lambda_\nu)=\omega\iff\cof(\lambda'_\nu)=\omega)$,
  and with at most finitely many exceptions the following hold for all $\nu<\alpha$:
  \begin{myinparaenum}
  \item $\lambda_{\nu_+1}=\min(I\setminus\lambda_\nu+1)$ if and only
    if $\lambda'_{\nu+1}=\min(I\setminus\lambda_{\nu}+1)$, and
  \item $\lambda_\nu$ is a limit member of $I$ if and only if
    $\lambda'_\nu$ is a limit member of $I$.
  \end{myinparaenum}
\end{definition}
\begin{theorem}\label{thm:modified-suitable}
  If $B$ and $B'$ are equivalent weakly suitable sequences then
  $\chang\models\phi(B)\iff\phi(B')$ for any restricted formula $\phi$
  in our language.
\end{theorem}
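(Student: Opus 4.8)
The plan is to deduce Theorem~\ref{thm:modified-suitable} from Theorem~\ref{thm:main}(\ref{item:main-upper}) by reexamining the construction that proves it. The guiding principle, already visible in Corollary~\ref{thm:counterexample} and the remarks following it, is that the theory of $\chang$ with a parameter $B\subseteq I$ depends only on the \emph{coarse type} of $B$: its order type, its cofinality-$\omega$ pattern, and the sequence recording, for each gap of $B$ in $I$, whether that gap is empty, capped by a successor point of $I$, or capped by a limit point of $I$ of uncountable cofinality --- and moreover only on the \emph{tail} of this last sequence, finitely many gaps being irrelevant. Two equivalent weakly suitable sequences have the same coarse type up to a finite set, so once this dependence is isolated the theorem follows. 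The proof has two parts: accommodating the weakly suitable hypothesis (a weakly suitable sequence, unlike a suitable one, may have gaps capped by successor points of $I$), and absorbing the finitely many exceptions permitted by the equivalence relation.

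For the first part I would revisit the gap-by-gap structure of the construction underlying Theorem~\ref{thm:main}(\ref{item:main-upper}). A gap of $B$ capped by a successor point $\kappa_{\xi+1}$ of $I$ is degenerate: the points of $I$ filling it are already handled by the analysis of the portion of $I$ below the next limit point, and the Gitik-style forcing component associated with such a gap is trivial. In particular the number of $I$-points in such a gap --- which, as the examples after Definition~\ref{def:weaklySuitable} show, need not be preserved by the equivalence relation --- is invisible to $\chang$. Thus the arguments of Theorem~\ref{thm:main}(\ref{item:main-upper}) go through with ``suitable'' replaced by ``weakly suitable'' and ``equivalent'' taken in the strict sense (no finite exceptions), reducing the problem to the case of weakly suitable sequences that are equivalent \emph{except on a finite set of indices}.

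To absorb the exceptions, let $F$ be the finite set of exceptional indices. By composing single-step modifications it suffices to treat $|F|=1$, provided one can interpolate: given weakly suitable $B$ and $B'$, I would construct a finite chain $B=C_0,C_1,\dots,C_n=B'$ of weakly suitable sequences in which $C_j$ has the coarse type obtained by using that of $B'$ at the first $j$ exceptional indices and that of $B$ elsewhere, so that consecutive $C_j,C_{j+1}$ are equivalent with a single exception. Such $C_j$ exist by a back-and-forth construction inside $I$, using the closure and unboundedness of $I$ and the fact that the hybrid coarse type, being patched together from two consistent ones agreeing off finitely many indices, is itself consistent (the cofinality-$\omega$ clause of weak suitability being the only nonlocal constraint). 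For the single-exception case --- one index $\nu_0$ at which $B$ and $B'$ disagree about emptiness of the $\nu_0$-th gap, or about whether $\lambda_{\nu_0}$ is a limit point of $I$ --- I would produce an automorphism, or a pair of directed isomorphisms, of the forcing appearing in the proof of Theorem~\ref{thm:main}(\ref{item:main-upper}) which fixes the tail beyond $\nu_0$ and alters only the bounded part indexed by $\nu_0$ and below, carrying a generic realization of $B$ to one of $B'$. Since by Propositions~\ref{thm:Mcc} and~\ref{thm:countable-sets-generators} this forcing does not change $\chang$, the automorphism yields $\chang\models\phi(B)\iff\phi(B')$ for every restricted $\phi$, and composing along the chain finishes the proof.

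I expect the main obstacle to be the construction of the automorphism in the single-exception case: this is precisely the assertion that individual gaps are not important, only infinite sets of gaps, and making it rigorous requires descending into the combinatorics of the Gitik forcing used for Theorem~\ref{thm:main}(\ref{item:main-upper}) to see that a finite change in the gap pattern of the parameter is realized by a finite surgery on the forcing that commutes with everything the Chang model can see. The weakly suitable relaxation, and the back-and-forth interpolation, should by contrast be routine once the Main Theorem and its proof are in hand.
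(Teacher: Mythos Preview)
Your outline --- reduce to a single exceptional index via an interpolating chain, then handle that index by a local modification --- has the right shape, and your identification of the single-exception step as the crux is accurate. But the mechanism you propose for that step is not what the paper does, and as you have stated it, it does not typecheck.

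You propose to construct an automorphism (or a pair of directed isomorphisms) of ``the forcing'' carrying a generic realization of $B$ to one of $B'$, and you cite Propositions~\ref{thm:Mcc} and~\ref{thm:countable-sets-generators} as saying that this forcing does not change $\chang$. Neither of those propositions is about the forcing; they concern closure of $M$ and $M_\ords$ under countable sequences. More seriously, the forcing $i_\ords(P(\vec E\restrict\zeta)\mgkeq)$ is defined over $M_B$, and the generic $G$ of Subsection~\ref{sec:generic_set} is an $M_B$-generic set whose construction via the iteration $i_\ords$ is what ties it to the particular embedding of $B$ in $I$. The gap structure of $B$ is not encoded in the poset --- two sequences of the same order type give isomorphic posets --- it is encoded in how the generic is \emph{built}. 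An automorphism of a fixed poset does not alter that.

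The paper's mechanism, demonstrated in Subsection~\ref{sec:finite-exceptions} for the special case $\kappa_0\notin B$ and asserted there to handle the finite exceptions of Theorem~\ref{thm:modified-suitable}, is \emph{factorization} (Proposition~\ref{thm:factorization}), not automorphism. One passes to a limit suitable extension on which the Main Lemma runs, finds a condition $q\in G$ forcing the formula in question whose domain contains the finitely many exceptional positions, and uses the factorization below $q$ to split off the exceptional components. On those components one discards the iteration-built part of the generic and replaces it by a generic taken directly over $M_B$. Because the exceptional-component forcing below $q$ adds no bounded subsets of the relevant cardinal (Corollary~\ref{thm:noBoundedSets}), and because the head of the gap is still interpreted as a gap head in the definition of $\chang_{B''}$, the hybrid generic computes the same $\chang_B$. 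The condition $q$ lies in both generics, so it forces $\phi$ in both. The paper's closing remark in Subsection~\ref{sec:finite-exceptions} is exactly the point: finitely many exceptional positions fit into the finite domain of a single condition, below which the forcing factors; infinitely many would not, because handling them would itself require Prikry-type sequences and hence the iteration.

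So your ``finite surgery'' intuition is correct, but the surgery is not an automorphism of one poset; it is the replacement, at finitely many factored-off coordinates, of the iteration-constructed portion of the generic by a directly-forced one, justified by Proposition~\ref{thm:factorization} and Corollary~\ref{thm:noBoundedSets}. Your first part --- extending the argument from suitable to weakly suitable sequences with no exceptions --- is plausible but also needs the same factorization idea at gaps capped by successor points of $I$, not the ``trivial component'' claim you make.
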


\subsection{Definition of the set $T$ of terms.}
The next definition gives the set of terms we will use to construct the sharp.
This list should be regarded as preliminary, as a better understanding
of the Chang model will undoubtedly suggest a more felicitous choice.

\begin{definition}
  \label{def:Istardef}\label{def:terms}
  The members of the set $T$ of terms of our language for the sharp of $\chang$ are those 
  obtained by compositions
  of the following set of basic terms:
  \begin{enumerate}
  \item \label{item:CfType}For each function $f\colon {^n\kappa}\to
    \kappa$ in $M$ for some $n\in\omega$,
    there is a term $\tau$ such that $\tau(z)=i_{\ords}(f)(z)$  for
    all $z\in {^n\ords}$.
  \item\label{item:CindiscBelong} For each $\bar\beta$ in the interval
    $\kappa\leq\bar\beta<(\kappa^{+\omega_1})^{M}$ there is a term $\tau$ such
    that $\tau(\kappa_{\nu}) = i_{\nu}(\bar\beta)$ for all $\nu\in\ords$.

  \item\label{item:Cinfinite} Suppose 
    $\seq{\tau_n\mid n\in\omega}$  is an $\omega$-sequence of compositions of terms from the previous two
    cases, and $\domain(\tau_n)\subseteq{^{k_n}\Omega_n}$.   Then
    there is a term $\tau$ such that $\tau(\vec a)=\seq{\tau_n(\vec
      a\restrict k_{n})\mid n\in\omega}$ for all  $\vec 
    a\in{^{\omega}\Omega}$.
  \item \label{item:CsuccType}
    For each formula $\phi$, there is a  term $\tau$
    such that  if $\iota$ is an ordinal and $y$ is a countable sequence
    of terms for members of $\chang_{\iota}$ then
    \begin{equation*}
      \tau(\iota, y)=
      \xset{x\in    \chang_{\iota}}{\chang_{\iota}\models
        \phi(x, y)}.
    \end{equation*}
  \end{enumerate}
\end{definition}

\begin{proposition}
  \label{thm:terms-suffice}
  For each $z\in\chang$ there is a term $\tau\in M$ and a suitable
  sequence $B$ such that $\tau(B)=z$.
\end{proposition}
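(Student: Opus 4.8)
The plan is to prove this by induction on the construction of $\chang$ as the hierarchy $\seq{\chang_\iota\mid\iota\in\ords}$, showing simultaneously that every element of $\chang_\iota$ has the form $\tau(B)$ for some term $\tau\in T$ and some suitable $B\subseteq I$, and that the relevant witnessing data can be read off in a uniform way. First I would recall the two facts already established in the excerpt that do most of the work: every member of $M_\ords$ is of the form $i_\ords(f)(\vec\beta)$ for $f\in M$ and $\vec\beta$ a finite sequence of generators belonging to members of $I$ (so the terms of type~(\ref{item:CfType}) and~(\ref{item:CindiscBelong}) in Definition~\ref{def:terms}, composed, capture all of $M_\ords\cut\ords$), and that $\chang=\chang^N$ for any $N\supseteq M_\ords\cut\ords$ closed under countable sets of generators, by Proposition~\ref{thm:countable-sets-generators}. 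Thus the content of the proposition is that countable sets of generators, and then all of $\chang$ built over them, can be named by terms applied to suitable sequences.

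The key steps, in order. (1) \emph{Ordinals and finite objects.} For $x\in M_\ords\cut\ords$, write $x=i_\ords(f)(\vec\beta)$ where $\vec\beta=\seq{\beta_0,\dots,\beta_{k-1}}$ and each $\beta_j$ is a generator belonging to some $\kappa_{\nu_j}$; by the definition of "generator belonging to", $\beta_j=i_{\nu_j}(\bar\beta_j)$ for $\bar\beta_j\in[\kappa,\kappa^{+\omega_1})^M$, so the type~(\ref{item:CindiscBelong}) term for $\bar\beta_j$ sends $\kappa_{\nu_j}\mapsto\beta_j$, and composing with the type~(\ref{item:CfType}) term for $f$ gives a term $\tau$ with $\tau(\kappa_{\nu_0},\dots,\kappa_{\nu_{k-1}})=x$. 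Now take $B$ to be the closure of $\{\kappa_{\nu_0},\dots,\kappa_{\nu_{k-1}}\}$ under immediate predecessors in $I$ together with enough further points of $I$ to make $B$ countable, closed, and to cap every gap below an uncountable-cofinality limit point of $I$ — i.e.\ suitable in the sense of Definition~\ref{def:suitable}; the extra coordinates of $B$ are simply ignored by $\tau$ (formally one composes with a projection term, obtained from case~(\ref{item:CfType}) using a projection function in $M$). (2) \emph{Countable sequences / the limit stage.} A countable set $y\in\chang_\iota$ with $\iota$ a limit is $y=\{i_\ords(f_n)(\vec\beta_n)\mid n\in\omega\}$; by step~(1) each entry is $\tau_n(\vec a\restrict k_n)$ for a fixed single suitable sequence $B$ obtained by taking the closure of $\bigcup_n\{\kappa$'s occurring in $\vec\beta_n\}$ — here clause~(b) of suitability is exactly what is needed, because a limit point of $I$ of countable cofinality that appears as a sup of the $\kappa_{\nu_j}$'s must itself be a limit point of $B$ — and then the type~(\ref{item:Cinfinite}) term assembles $\seq{\tau_n(\vec a\restrict k_n)\mid n\in\omega}$ from $B$. (3) \emph{Successor stage.} If $x\in\chang_{\iota+1}\setminus\chang_\iota$, write $x=\{z\in\chang_\iota\mid\chang_\iota\models\phi(z,y)\}$ for some formula $\phi$ and parameter $y$ a countable sequence of elements of $\chang_\iota$; by the inductive hypothesis $y=\tau_y(B_y)$ and $\iota=\tau_\iota(B_\iota)$ for suitable $B_y,B_\iota$, and applying the type~(\ref{item:CsuccType}) term $\tau$ for $\phi$ gives $x=\tau(\iota,y)=\tau(\tau_\iota(B_\iota),\tau_y(B_y))$; merging $B_\iota$ and $B_y$ into a single suitable $B$ (the union of two suitable sequences, re-closed under immediate predecessors and gap-capping, is again suitable) and reindexing finishes this case.

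The main obstacle — and the step I would spend the most care on — is the bookkeeping that lets all the parameters in steps~(2) and~(3) be folded into \emph{one} suitable sequence $B$ in a way that is uniform enough for a term to act on. One has to check that the union of countably many finite subsets of $I$ (resp.\ of two suitable sequences), after closing under immediate predecessors in $I$ and adding caps for gaps below uncountable-cofinality limit points, is still countable (countability is the delicate clause, since closing under predecessors and caps could a priori blow up the size — but immediate predecessors and the least cap above each gap add only one point per existing point or gap, so countability is preserved), closed, and satisfies clauses~(b) and~(c) of Definition~\ref{def:suitable}; and then that the terms of Definition~\ref{def:terms}, which are defined to act on tuples of \emph{specific} coordinates $\kappa_\nu$, can be pre-composed with definable-in-$M$ projection and re-indexing functions (available via case~(\ref{item:CfType})) so that they make sense applied to the enlarged $B$. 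Everything else is a routine induction once the closure properties of suitable sequences under these finite/countable unions are nailed down, together with the observation that, by Proposition~\ref{thm:terms-suffice}'s companion fact, $\chang$ is exhausted by $\bigcup_\iota\chang_\iota$ and the three stage-types above.
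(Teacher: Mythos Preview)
Your proposal is correct and follows essentially the same three-step structure as the paper's proof: name ordinals via terms of types~\ref{item:CfType} and~\ref{item:CindiscBelong}, name countable sequences of ordinals via type-\ref{item:Cinfinite} terms (using that $M$ is closed under countable sequences), and then name arbitrary members of $\chang$ by recursion on $\iota$ via type-\ref{item:CsuccType} terms. The paper's proof is considerably terser and does not address your bookkeeping concern about assembling a single suitable $B$ at all, treating that step as routine.
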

\begin{proof}
  First we observe that any ordinal $\nu$ can be written in the form
  $\nu=i_{\ords}(f)(\vec \beta)$ for some $f\in M$ and finite sequence
  $\vec \beta$ of generators.     Each generator $\beta$ belonging to
  some $\kappa_\xi\in I$ is equal to $i_{\xi}(\bar \beta)$ for some
  $\bar\beta\in\left[\kappa,(\kappa^{+\omega_1})^{M}\right)$,
  and thus is denoted by a term $\tau(\kappa_\xi)$
  built from
  clause~\ref{item:CindiscBelong}.    Thus any finite sequence of
  ordinals is denoted by an expression using terms of
  type~\ref{item:CfType} and~\ref{item:CindiscBelong}.   Since $M$ is closed under
  countable sequences, adding terms of type~\ref{item:Cinfinite} adds
  in all  countable sequences of ordinals.

  Finally, any set $x\in\chang$ has the form
  $\set{x\in\chang_{\iota}\mid \chang_{\iota}\models \phi(x,y)}$ for some
  $\iota,\phi$ and $y$ as in clause~\ref{item:CsuccType}.    Thus a
  simple recursion on $\iota$ shows that every member of $\chang$ is
  denoted by a term from clause~\ref{item:CsuccType}.
\end{proof}

The terms of clause~\ref{item:CindiscBelong} force the limitation 
to restricted formulas in
Theorem~\ref{thm:main}(\ref{item:main-upper}), since the domain of these terms is exactly the
class $I$ of indiscernibles.  It is possible that a more natural set
of terms would enable this restriction to be removed, but this would
depend on a precise understanding of the iteration $k\colon M_\ords\to
K(\reals)^{\chang}$  from Subsection~\ref{sec:suitability-required}.

Proposition~\ref{thm:terms-suffice} actually exposes a probable
weakness in our current state of understanding of the Chang model.   This proposition
corresponds to the property of  $0^{\sharp}$ that every ordinal $\alpha$
is definable is using as parameters  members of the class $I$ of
indiscernibles.    In the case of $0^{\sharp}$ this is only true if
the parameters are allowed to include members of $I\setminus\alpha+1$.
In contrast, Proposition~\ref{thm:terms-suffice} says that $\alpha$
is alway denoted by a term $\tau(B)$ with
$B\in[I\cap(\alpha+1)]^{\omega}$.    Possibly a more polished set of
terms, obtained  through a more careful
analysis of  the fine structure of the models and the iteration
$k$,  would yield definability
properties more like those of  $0^{\sharp}$. 

\subsection{Outline of the proof}
\label{sec:proof-start}

Proposition~\ref{thm:countable-sets-generators} suggests  a possible
strategy for the proof of
Theorem~\ref{thm:main}(\ref{item:main-upper}):  find  a generic
extension of $M_{\ords}\cut\ords$ which  contains all 
countable sequences of generators.     There are good reasons
why this is likely to be impossible, beginning with the problem of
actually constructing a generic set for a class sized model.  
Beyond that, many of the known forcing constructions used to add countable sequences of
ordinals require large cardinal strength far stronger than that
assumed in the hypothesis of Theorem~\ref{thm:main}, and give models with
properties which are known to imply the existence of submodels having strong
large cardinal strength.
However, two considerations suggest that this last problem may be less
serious than it first  appears.    First,  there can be  much more large cardinal strength in
the Chang model than is apparent from the actual extenders present in $\RK^{\chang}$, 
since much of the large cardinal strength in $V$ is
encoded in the set of reals.      Second, 
many properties known to imply large cardinal strength
are false in the Chang model not because of the lack of such
strength, but because of the failure of the
Axiom of Choice.
Results involving the size of the power set of singular cardinals, for
example, are irrelevant to the Chang model since the power set is not
(typically) well ordered there.

We avoid the problem of constructing generic extensions for class sized
model by working with submodels generated by countable subsets of $I$,
and we find that in fact none of the large cardinal structure in $V$
survives the passage to the Chang model beyond that given in the
hypothesis to Theorem~\ref{thm:main}.

\begin{definition}
  If $B\subseteq I$ and $\gen_B$ is the set of generators belonging to members of $B$ then we write
  \begin{equation*}
    M_{B} = \{i_{\Omega}(f)(b)\mid  f\in M \land b\in[\gen_B]^{<\omega}\}.
  \end{equation*}
  
  If $B$ is closed, and in particular if it is suitable, then we write $\chang_{B}$ for the Chang model 
  evaluated using the ordinals of $M_B\cut\ords$ and all countable
  sequences of these ordinals.
\end{definition}

Note
that $M_B$ is not transitive: it is a submodel of $M_{\Omega}$, and 
$i_{\ords}: M\to M_\ords$ is  
the canonical embedding  $M\to M_B$ for any $B\subseteq I$.  It is not
obvious even that the model $\chang_B$ can be regarded as a subset of
$\chang$; the proof of this is a part of the proof of the main lemma. 
The definition of $\chang_{B}$ does imply that if $B$ and $B'$ are
closed subsets of $I$ with the same
order type then $\chang_{B}\cong
\chang_{B'}$.   In particular, if $\otp(B)=\alpha+1$ then, setting
$B(\alpha+1)=\set{\kappa_\nu\mid \nu<\alpha+1}$, 
$\chang_{B}\cong \chang_{B(\alpha+1)}$, which in turn is equal to the
$\kappa_{\alpha+1}$st stage $\chang_{\kappa_{\alpha+1}}$ of the
recursive definition of the Chang model as stated at the beginning of
this paper.

The motivation for our work begins with the observation that  $M_{B}\cut
\ords\prec M_{B'}\cut\ords\prec M_{\ords}\cut\ords$ whenever 
$B\subseteq B'\subseteq I$.   
Corollary ~\ref{thm:counterexample} refutes any suggestion that this 
necessarily extends to the models $\chang_B$ and $\chang_{B'}$,
however it also  motivates
Definition~\ref{def:limitsuitable} below.  

Corollary~\ref{thm:counterexample}  says that we must take account of the gaps in $B$.
To be precise, we will say that a \emph{gap} in $B$ is a maximal 
nonempty 
interval in $I\setminus B$.    For $B$ either suitable or \LS{},
every gap in $B$ is  headed by a limit point $\lambda$ of 
$I$  which is a member of $B\cup\sing{\ords}$ and has uncountable cofinality.

\begin{definition}
  \label{def:limitsuitable}
  A subset $B$ of $I$ is \emph{\LS} if
  \begin{myinparaenum}
  \item\label{item:LS-suitable} its closure $\bar B$ is suitable,  and 
    \label{item:LS-gaps} every gap in $B$ is an interval of the
    form $[\lambda,\delta)$ where
  \item $\delta$ is either $\ords$ or a member of $B$ which is a limit
    point of $I$ of uncountable cofinality, 
  \item if $\lambda\not=\emptyset$, then $\lambda=\sup(\sing{0}\cup
    B\cap\delta)$, and  
  \item\label{item:LS-top}
    $\lambda=\kappa_{\nu+\omega}$ for some $\nu\in\ords$.
  \end{myinparaenum}

  Two \LS{} sets $B$ and $B'$ are said to be \emph{equivalent} if they have
  the same order type and they have gaps in the same locations.
  For a \LS\  set $B$, which is never closed (except for
  $B=\emptyset$), we write
  $\chang_{B}=\bigcup\set{\chang_{B'}\mid B'\subset B\land B'\text{ is suitable}}$. 
  That is, for limit suitable sets $B$ the model is constructed, like
  $\chang_B$ for suitable $B$, by construction over the
  (nontransitive) set of ordinals of $M_B$, but using only those
  countable sets of ordinals which are in $\chang_{B'}$ for some
  suitable $B'\subset B$. 
\end{definition}

The use of $\kappa_{\nu+\omega}$ in the final Clause~(\ref{item:LS-top}) is for
convenience: our arguments would still be valid if it were only required
that $\lambda$ be a limit member of $I$ of countable cofinality which
is not a member of $B$.

Note that if $B$ is a \LS\ sequence then $\chang_{B}$ is not closed
under countable sequences; in particular $B$ is not a member of $\chang_B$.
Thus if $\delta$ is the head of a gap of $B$ then $\chang_{B}$
believes (correctly)  that $\delta$  has
uncountable cofinality.

Theorem~\ref{thm:main}(\ref{item:main-upper}) will follow  from the following
lemma: 

\begin{lemma}[Main Lemma]
  \label{thm:mainlemma}
  Suppose $B\subset I$ is \LS. Then 
  $\chang_{B}$ is isomorphic to an elementary substructure of $\chang$
  via the map defined by
  $\tau^{\chang_B}(\vec\beta)\mapsto\tau^\chang(\vec\beta)$ for any
  term $\tau\in T$ and any $\vec \beta$ which is a countable sequence of
  generators for members of some suitable $B'\subset B$. 
\end{lemma}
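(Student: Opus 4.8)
The plan is to build the elementary embedding $\chang_B\to\chang$ by a careful induction on the structure of $B$, exploiting the fact that $\chang_B$ was defined as a direct limit of the models $\chang_{B'}$ for suitable $B'\subset B$ (so it suffices, by a direct-limit argument, to handle the case that $B$ itself is suitable, i.e. closed — but the limit-suitable case must be tracked because the models $\chang_{B'}$ for suitable $B'$ that approximate a limit-suitable $B$ do \emph{not} cohere in the naive way across a gap). First I would fix the map $j\colon\tau^{\chang_B}(\vec\beta)\mapsto\tau^\chang(\vec\beta)$ as stated and verify it is well-defined and elementary by a Tarski--Vaught/Łoś-style argument: one must show that for a restricted formula $\phi$ and a countable sequence $\vec\beta$ of generators belonging to members of some suitable $B'\subset B$, $\chang_B\models\phi(\vec\beta)$ iff $\chang\models\phi(\vec\beta)$. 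The key is that $M_B\cut\ords\prec M_\ords\cut\ords$ (already noted just before Definition~\ref{def:limitsuitable}), so the \emph{ordinal} parameters and the first-order structure on ordinals transfer for free; the content is entirely in the passage from $M_B\cut\ords$ to $\chang_B$, i.e. in which \emph{countable sequences} of these ordinals are present.

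The induction would be on the order type of $B$, mirroring the recursive construction of $\chang$ given at the start of the paper ($\chang_{\alpha+1}=\DEF^{\chang_\alpha}(\chang_\alpha)$ at successors, $\chang_\alpha=[\alpha]^{<\omega_1}\cup\bigcup_{\alpha'<\alpha}\chang_{\alpha'}$ at limits). The terms in $T$ from Definition~\ref{def:terms} are tailored to this recursion: clause~(\ref{item:CsuccType}) handles the successor/definability step, clause~(\ref{item:Cinfinite}) handles the limit step of assembling countable sequences, and clauses~(\ref{item:CfType})--(\ref{item:CindiscBelong}) generate the ordinals themselves. So at a successor stage the transfer $\chang_{B}\models\phi(\vec\beta)\iff\chang\models\phi(\vec\beta)$ reduces, using clause~(\ref{item:CsuccType}), to the same statement at the previous stage together with elementarity of $M_B\cut\ords$ in $M_\ords\cut\ords$. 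The genuinely delicate stage is a limit point $\lambda$ of $B$ of \emph{uncountable} cofinality that heads a gap — here $\chang_B$ must (correctly, as remarked after Definition~\ref{def:limitsuitable}) believe $\lambda$ has uncountable cofinality, so no new countable sequence cofinal in $\lambda$ appears, whereas in $\chang$ the ordinal corresponding to $i_{\ords}$-image of $\lambda$ may have countable cofinality; one must check that no restricted formula can detect this discrepancy, which is exactly where suitability of $\bar B$ and the gap structure (every gap capped by a limit point of uncountable cofinality, per the remark after Definition~\ref{def:suitable}) is used. I expect the proof that $\chang_B$ can even be regarded as a substructure of $\chang$ at all — i.e. that the map $j$ is well-defined, injective, and that membership is preserved in both directions — to be folded into this same induction.

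The main obstacle, I anticipate, is precisely this limit-of-uncountable-cofinality step: showing that the "missing" countable cofinal sequences in $\chang_B$ are invisible to restricted formulas. The strategy would be to argue that any restricted formula $\phi(\vec\beta)$ witnessed in $\chang$ by some witness $w$ is in fact witnessed by $w'=\tau'(\vec\beta')$ for a term $\tau'\in T$ and a sequence $\vec\beta'$ of generators still belonging to a suitable subset of $B$ — reflecting the witness down past the gap using elementarity of $M_B\cut\ords$ and the fact (Proposition~\ref{thm:terms-suffice}) that every element of $\chang$ is of the form $\tau(C)$ for suitable $C$; the restriction to \emph{restricted} formulas is what prevents the arguments of the new terms (which range over all of $I$, per the remark after Proposition~\ref{thm:terms-suffice}) from being quantified in a way that would expose the gap. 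A secondary obstacle is bookkeeping in the limit-suitable case: verifying that the direct limit $\bigcup\set{\chang_{B'}\mid B'\subset B\text{ suitable}}$ is well-defined and that the embeddings $\chang_{B'}\to\chang$ cohere, so that they amalgamate to a single embedding $\chang_B\to\chang$ — this should follow from uniqueness of the maps $j$ (each determined by the same term-evaluation rule) once the suitable case is established, but the coherence across a gap requires noting that the relevant countable sequences live in $\chang_{B'}$ for a \emph{single} suitable $B'$, as built into the definition of $\chang_B$ for limit-suitable $B$.
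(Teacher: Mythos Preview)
Your proposal has a genuine gap at the central step. You correctly identify that the existential-quantifier step is the crux: given $\chang\modelsCi\exists y\,\psi(x,y)$ with $x\in\chang_B$, find a witness in $\chang_B$. But your proposed mechanism---``reflecting the witness down'' via $M_B\cut\ords\prec M_\ords\cut\ords$ and Proposition~\ref{thm:terms-suffice}---does not work. The witness $y$ is $\tau(C)$ for some suitable $C\subset I$, but $C$ need not be contained in $B$: it may use generators belonging to members of $I\setminus B$. Elementarity of $M_B$ in $M_\ords$ says nothing about this, because the Chang construction at limit stages adds \emph{all} countable sequences of the ordinals present, not merely those definable from $M_B$. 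Your appeal to restricted formulas is also misplaced: the restriction limits how the terms $\tau\in T$ may appear in $\phi$, but it does not limit the range of the ordinary quantifier $\exists y$, which still ranges over all of $\chang_\iota$.

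The paper's proof supplies exactly the missing engine. It defines a Gitik-style Prikry forcing $P(\vec E\restrict\zeta)\mgkeq$ and constructs (via Merimovich's method) an $M_B$-generic $G$ so that $M_B[G]$ contains every countable subset of $M_B$; this makes $\chang_B$ definable inside $M_B[G]$. The induction is on pairs $(\iota,\phi)$, and the existential step is handled by taking a \LS\ $B'\supseteq B$ with $y\in\chang_{B'}$, then building an iterated ultrapower $k\colon M_B\to M_k$ (the ``virtual gap construction'') together with an isomorphism $\sigma$ so that the witness can be transported from $\chang_{B'}$ into $\chang_k=\chang_B$. This requires a simultaneous induction with a second lemma (Lemma~\ref{thm:mainlemma2}) asserting $\chang_k\xre\prec\chang$. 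None of this machinery---the forcing, the equivalence relation $\gkeq$ that permits substituting arbitrary generators, the construction of $G$, or the iteration $k$---appears in your outline, and without it there is no argument that a witness exists in $\chang_B$.
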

The elementarity holds for all restricted formulas.  The proof will be by an induction over pairs 
$(\iota,\phi)$, where $\iota\in M_B\cap \ords+1$, and $\phi$ is a
formula of set theory; and  the induction hypothesis implies that
the map 
\begin{equation*}
  \set{z\in\chang_\iota^{\chang_B}\mid \chang_\iota^{\chang_B}\models\phi(z,\vec\beta)}\mapsto
  \set{z\in\chang_\iota\mid\chang_\iota\models\phi(z,\vec\beta)}
\end{equation*}
is well defined. 
To see that Lemma~\ref{thm:mainlemma} suffices to prove
Theorem~\ref{thm:main}(2), observe that 
any suitable set $B$ can be extended to a \LS\ set defined by the equation
\begin{equation*}
  B'=B\cup\set{\kappa_{\nu+n}\mid \kappa_{\nu}\in B\land n\in\omega}, 
\end{equation*}
that is, by by adding the next $\omega$-sequence from $I$ at the foot
of each gap of $B$ and to the top of $B$.  Now
let $B_0$ and $B_1$ be two equivalent suitable sets.  Then their \LS\
extensions $B'_0$ and $B'_1$ are also equivalent, having the same
order type and having gaps in the corresponding places, so 
$\chang_{B'_0}\cong \chang_{B'_1}$.    Then for any restricted formula
$\phi$ we have
\begin{align*}
  \chang\models \phi(B_0)&\iff \chang_{B'_0}\models\phi(B_0)\\
                         &\iff \chang_{B'_1}\models\phi(B_1)\iff \chang\models\phi(B_1).
\end{align*}

%%%%%%%%%%%%%%%%%%%%%%%%%%%%%%%% 
% \include{subfiles/section4}
% This is section4.tex; included in ../changmodel-paper.tex.
\section{The Proof of the Main Lemma}
\label{sec:mainlemma}

At this point we fix a mouse $M$ to be used for 
the proof of the Main Lemma~\ref{thm:mainlemma}.   Some basic
properties of $M$ have already been 
sketched at the start of Section~\ref{sec:upperbound}, and
Definition~\ref{def:Nsequence} below gives  more specific
requirements.

For this section, $B\subseteq I$ is a limit suitable sequence and  $\zeta=\otp(B)$.
The main tool used for the proof is the forcing $P(\vec
E\restrict\zeta)\mgkeq$, to be  defined inside $M$, and a
$M_B$-generic set $G\subseteq i_{\ords}(P(\vec
E\restrict\zeta)\mgkeq)$ to be constructed inside $V[h]$ for a 
generic Levy collapse map  
$h\colon\omega_1\cong \reals$.    The model $M_B[G]$ will include all
its countable subsets, and $\chang_{B}$ will be definable as a
submodel of $M_B[G]$.

The
forcing  is essentially due to Gitik (see, for example,
\cite{Gitik2002Blowing-up-powe}) and the technique for constructing
the $M_B$-generic set $G$ is from Carmi Merimovich
\cite{Merimovich2007Prikry-on-exten}.
Gitik's forcing was designed to make the Singular Cardinal Hypothesis
fail at a cardinal of cofinality 
$\omega$ by adding many Prikry sequences, each of which is (in our
context) a sequence of generators  for cardinals in  $B$.    Thus it
would do what we need for the case when  
$\otp(B)=\omega$,  but  needs to be adapted to work for sequences $B$
of arbitrary countable length.   To this end we modify
Gitik's forcing by using  ideas introduced by Magidor  in
\cite{magidor.changecf} to adapt
Prikry forcing in order to to add sequences of indiscernibles of
length longer  
than $\omega$.   This adds some complications to Gitik's forcing, but
on the other hand much  of the complication of Gitik's
work is avoided since we do not need to know whether cardinals in
the interval $(\kappa^{+}, \kappa^{+\omega_1})$ are collapsed,  and
hence  we can omit his preliminary forcing.

Our forcing is based on a sequence $\vec E$ of extenders, derived from the
last extender $E$ of $M$.   We begin by defining this sequence, and at the
same time  specify  what properties we require of the chosen mouse $M$.

\begin{definition}\label{def:Nsequence}
  We  define an increasing sequence, $\seq{N_\nu\mid \nu<\omega_1}$
  of submodels of $M$.
  We write $E_{\nu}$ for $E\ecut N_{\nu}$,
  the restriction of $E$ to the ordinals in $N_{\nu}$,  
  we write $\pi_{\nu}\colon \bar N_\nu\to N_\nu$ for the Mostowski
  collapse of $N_\nu$, and we write $\bar E_\nu$ for
  $\pi_{\nu}^{-1}[E_\nu]=\pi_{\nu}^{-1}(E)\ecut\bar N_\nu$.

  We require that the $\reals$-mouse $M$ and the sequence
  $\seq{N_\nu\mid \nu<\omega_1}$   satisfy the following conditions: 
  \begin{compactenum}
  \item $M$ is a model of Zermelo set theory such that
    $\reals\subset M$, 
    $\card{M}=\card{\reals}$, and 
    $\cof(\ords\cap M)=\omega_1$. 
  \item  $\len(E)=(\kappa^{+\omega_1})^{M}$. 
  \item If $\nu'<\nu<\omega_1$ then $(N_{\nu'},E_{\nu'})\prec (N_{\nu},E_{\nu})\prec
    (M,E)$. 
  \item $^{\kappa}N_{\nu}\cap{ M}\subseteq N_{\nu}$.

  \item\label{item:cardNnuSubsetNnu}
    $\card {\bar N_\nu}^{M}\subset N_\nu$.
  \item\label{item:Nseq-doublepluss} $\card {\bar
      N_0}^{M}=(\kappa^{++})^{M}$, and if $\nu>0$ then
    $\card {\bar N_\nu}^{M}=\sup_{\nu'<\nu}(\card{\bar N_{\nu'}}^{++})^{M}$.
  \item \label{item:Eseq-MIsUnionN}
    $M=\bigcup_{\nu<\omega_1}N_{\nu}$.
  \end{compactenum}
\end{definition}

Clauses~5 and~6 are needed for the proof of
Proposition~\ref{thm:limitSuitableC_Bdefinable}. 

We will work primarily with the extenders $E_{\nu}$ rather than with their
collapses $\bar E_{\nu}$, because this makes  it easier to keep track
of the generators.   However it should be noted that  $E_{\nu}$ may
not be a member of $\ult(M,E)$, 
so further justification is needed for  many of the claims we
wish to make about being able to 
carry out constructions inside $M$.     Since we never actually use
more than countably many of the extenders $E_{\nu}$ at any one time,
the following observation will provide such justification:

\begin{proposition}\label{thm:EnuinM}  The following are all members
  of $\ult(M,E_{\nu})$, for any $\nu<\omega_1$:
  \begin{itemize}
   \item \label{item:Nseq-powerset}$\ps(\bigcup_{\nu'<\nu}\bar     N_{\nu'})$
   \item the extender $\bar E_{\nu'}$, and the map $\pi_{\nu''}^{-1}\circ \pi_{\nu'}\colon
     \supp(\bar E_{\nu'})\to\supp(\bar E_{\nu''})$, for each $\nu'<\nu''<\nu$
   \item the direct limit of the set $\set{\supp(\bar E_{\nu'})\mid
       \nu'<\nu''<\nu}$ along the maps $\pi_{\nu''}^{-1}\circ
       \pi_{\nu'}$, as well as with the injection maps from
       $\supp(E_{\nu'})$ into this direct limit
  \end{itemize}\qed
\end{proposition}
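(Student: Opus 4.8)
The plan is to show that every object named in the statement is an element of $M$ whose rank lies strictly below $\bar\lambda_\nu:=\len(\bar E_\nu)=(\kappa^{+\omega_1})^{\bar N_\nu}$, and then to argue that $\ult(M,E_\nu)$ contains all of $V^M_{\bar\lambda_\nu}$. Recall from the Notation section that $\ult(M,E_\nu)=\ult(M,\bar E_\nu)$, and note first that $\bar E_\nu$ really is a $(\kappa,\bar\lambda_\nu)$-extender over $M$, not merely over $\bar N_\nu$: since $^{\kappa}N_\nu\cap M\subseteq N_\nu$ we have $\ps(\kappa)^M\subseteq N_\nu$, so $\pi_\nu$ is the identity on $H_{\kappa^+}^M$ and $\bar E_\nu$ measures exactly the subsets of $\kappa$ in $M$.

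Next I would do the rank bookkeeping, which is where Definition~\ref{def:Nsequence}(\ref{item:cardNnuSubsetNnu})--(\ref{item:Nseq-doublepluss}) enters. Writing $\theta_{\nu'}=\card{\bar N_{\nu'}}^M$, transitivity of $\bar N_{\nu'}$ gives $\ords\cap\bar N_{\nu'}<(\theta_{\nu'}^{+})^M$, while clause~(\ref{item:Nseq-doublepluss}) gives $(\theta_{\nu'}^{++})^M=\theta_{\nu'+1}\le\theta_\nu$ for every $\nu'<\nu$; hence $\ords\cap\bar N_{\nu'}<\theta_\nu$. Therefore $\bigcup_{\nu'<\nu}\bar N_{\nu'}$ is transitive of rank $\le\theta_\nu$ and its $M$-power set has rank $\le\theta_\nu+1$; likewise each $\bar E_{\nu'}$, each map $\pi_{\nu''}^{-1}\circ\pi_{\nu'}$, and the direct limit together with its injection maps are coded by subsets of finite products of ordinals $<\theta_\nu$, hence have rank $<\theta_\nu+\omega$. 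Finally, since $\theta_\nu\subseteq N_\nu$ by clause~(\ref{item:cardNnuSubsetNnu}) and $N_\nu$ contains $\kappa^{+\beta,M}$ for every $\beta<\omega_1^M$ (these being cofinal in $\len(E)=\kappa^{+\omega_1,M}$), one gets $\theta_\nu<\bar\lambda_\nu<(\theta_\nu^{+})^M$; as $\bar\lambda_\nu$ is a limit ordinal, every listed object lies in $V^M_{\bar\lambda_\nu}$.

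The hard part is the inclusion $V^M_{\bar\lambda_\nu}\subseteq\ult(M,E_\nu)$. The route I would take is to exploit that $E$ is the last extender of the mouse $M$, so that coherence of its extender sequence gives $V^{\ult(M,E)}_{\len(E)}=V^M_{\len(E)}$; in particular $V^M_{\bar\lambda_\nu}$ is computed the same way in $\ult(M,E)$. Now $\ult(M,E_\nu)$ is the transitive collapse of the seed hull $H^{E_\nu}=\set{i^E(f)(a)\mid f\in M\land a\in[\supp(E)\cap N_\nu]^{<\omega}}\prec\ult(M,E)$, and since $\supp(E)=[\kappa,\len(E))$ the set $\supp(E)\cap N_\nu$ is cofinal in $\len(E)$ and has order type $\bar\lambda_\nu$; combining this cofinality with the $\kappa$-closure of $N_\nu$ one should be able to check that $H^{E_\nu}$ contains all of $V^{\ult(M,E)}_{\bar\lambda_\nu}=V^M_{\bar\lambda_\nu}$ and that the transitive collapse is the identity there. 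The obstacle I expect to require care is precisely this last assertion — showing that every bounded subset of $\len(E)$ in $M$, and hence every element of $V^M_{\bar\lambda_\nu}$, is already captured by finitely many generators lying in $N_\nu$ — which is a Skolem-hull/condensation argument using the closure and cofinality just noted. Granting it, $V^M_{\bar\lambda_\nu}\subseteq\ult(M,E_\nu)$, and together with the rank estimate this places all three items of the proposition in $\ult(M,E_\nu)$.
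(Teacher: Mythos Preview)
The paper states this proposition with a bare \qed{} and no argument, so there is nothing to compare your approach against. Your overall strategy---bound the rank of each object and then show that $\ult(M,E_\nu)$ contains the relevant initial segment of $M$---is the natural one. But the execution of what you call the ``hard part'' is misdirected, and there is an unacknowledged gap earlier.

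On the hard part: the cofinality of $N_\nu\cap\supp(E)$ in $\len(E)$ and the $\kappa$-closure of $N_\nu$ are red herrings. What you actually need is immediate from clause~(\ref{item:cardNnuSubsetNnu}) of Definition~\ref{def:Nsequence}: since $\theta_\nu\subset N_\nu$, every ordinal below $\theta_\nu$ is already a generator of $E_\nu$ (or lies below $\kappa$), so the factor map $k\colon\ult(M,E_\nu)\to\ult(M,E)$ has $\crit(k)\ge\theta_\nu$. Combining this with coherence of the top extender $E$ (which gives $\ult(M,E)\,|\,\len(E)=M\,|\,\len(E)$) yields $\ult(M,E_\nu)\,|\,\theta_\nu=M\,|\,\theta_\nu$ directly, since the $J$-hierarchies over $\reals$ are determined by the extender sequences below $\theta_\nu$. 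There is no Skolem-hull or condensation argument needed here, and your target $\bar\lambda_\nu$ overshoots: the agreement is only guaranteed up to $\theta_\nu$, not up to $\bar\lambda_\nu\ge\theta_\nu$, and that is all your rank bookkeeping needs (each $\bar E_{\nu'}$, each map, and the direct limit have $M$-hereditary cardinality strictly below $\theta_\nu$, hence lie in $H^M_{\theta_\nu}\subseteq M\,|\,\theta_\nu$ by acceptability).

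On the earlier gap: you assert without comment that every listed object is an element of $M$. For $\bar N_{\nu'}$ and $\bar E_{\nu'}$ this follows from condensation (the collapse of an elementary submodel of a mouse is an initial segment), but you should say so. For the maps $\pi_{\nu''}^{-1}\circ\pi_{\nu'}$ this is less automatic: these depend on the externally chosen $N_{\nu'},N_{\nu''}$, not merely on $\bar N_{\nu'},\bar N_{\nu''}$, and the fact that they land in $M$ requires a separate argument (or an additional, tacit assumption on how the $N_\nu$ are chosen).
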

Since $\ult(M,E_{\nu})=\ult(M,\bar E_{\nu})$, this proposition allows
us to regard the direct limit as a code inside $M$ for the extender
$E_{\nu}$ together with its system of
subextenders $E_{\nu'}$ for $\nu'<\nu$.

The hypothesis of Theorem~\ref{thm:main} is more than sufficient to
find a 
mouse $M$ and sequence $\vec N$ of submodels satisfying
Definition~\ref{def:Nsequence}: this can be done  by first defining models $M'$ and
$\seq{N'_{\nu}\mid 
  \nu<\omega_1}$ satisfying all of the conditions except
Clause~\ref{item:Eseq-MIsUnionN}, and then taking $M$ to be the
transitive collapse of $\bigcup_{\nu<\omega_1} N'_{\nu}$.    The
conditions on $M$ are, in turn, much stronger than is needed to carry out
this construction.  In view of the fact that there is
no clear reason to believe that the actual strength needed is greater
that $o(E)=\kappa^{+(\omega+1)}$, it does not seem useful  to
complicate the argument in order to determine the minimal mouse for which
the present argument works.

\medskip{}

We are now ready to begin the proof of Lemma~\ref{thm:mainlemma}.
Following Gitik we define, in subsections~\ref{sec:absolutelyfinal} and~\ref{sec:PForder}, a Prikry type forcing
$P(\vec F)$ depending on a sequence $\vec F$ of extenders.
Subsections~\ref{sec:PFproperties} and~\ref{sec:prikry} develop the
properties of this forcing, and Subsection~\ref{sec:gkeqDef} describes an equivalence
relation $\gkeq$ on its set of conditions.
Subsection~\ref{sec:generic_set} constructs an $M_B$-generic subset of
$i_{\ords}(P(\vec E\restrict\zeta)\mgkeq)$, and
subsection~\ref{sec:proof-main-lemma} uses this construction to prove
Lemma~\ref{thm:mainlemma} under the additional assumption that
$\kappa=\kappa_0\in B$.   Finally, 
Subsection~\ref{sec:finite-exceptions} deals with the special case
$\kappa\notin B$ and indicates how the same  technique can be used to prove
Theorem~\ref{thm:modified-suitable}. 

\subsection[The main forcing]{The forcing $P(\vec F)$}
\label{sec:absolutelyfinal}

Throughout the definition of the forcing, from
Subsections~\ref{sec:absolutelyfinal} through \ref{sec:gkeqDef},  we work entirely inside  the
mouse $M$; in particular all cardinal calculations are carried out
inside $M$.   We are
interested in defining $P(\vec E\restrict\zeta)$, but for the purposes
of the recursion used in the definition we allow $\vec F$ to be any suitable
sequence of extenders.   We will not give a definition of the
notion of a \emph{suitable 
  sequence} of extenders.   All the sequences used in this section are
suitable: specifically, all of the sequences $\vec E\restrict
{\xi}$ for $\xi<\omega_1$ are suitable,  all of the ultrafilters
$\ufFromExt{E}{\vec E\restrict\xi}=\set{X\subseteq H^{M}_{\kappa}\mid \vec E\restrict\xi \in i^{E}(X)}$
concentrate on suitable sequences, and     furthermore, if $\vec F$ is
suitable then so is  $\vec F\restrict[\gamma_0,\tau)$ for any
$0\leq\gamma_0\leq\tau\leq\len(\vec F)$. 

Before starting the definition of the forcing, we give a brief
discussion of its design, techniques and origin.

The constructed generic extension of $M_B$ will have the form 
$
  M[G]=M[\vec\kappa, \vec h] 
$,
where  
  $\vec \kappa =\seq{\forceKappa _\gamma\mid\gamma\leq\zeta}$
  enumerates $B\cup\sing{\ords}$ and 
  $\vec  h=\seq{h_{\nu,\nu'}\mid \zeta\geq\nu>\nu'}$ 
  is a sequence of functions
  $h_{\nu,\nu'}\colon  [\forceKappa _\nu,\forceKappa _{\nu}^{+})\to
  \forceKappa _{\nu}$.  Each of the functions $h_{\nu,\nu'}$ is,
  individually,  Cohen generic over $M$.

  The purpose of this forcing is to provide what
  we will call ``standard forcing names'' for the generators belonging
  to members of $B$.  Specifically, consider 
  $\ords=\kappa_{\ords}\in M_B$ and suppose
  $\beta=i_{\bar\nu}(\bar\beta)$ is a generator belonging to
  $\kappa_{\bar\nu}=\forceKappa_{\nu}\in B$.   The construction of the
  $M_B$-generic set $G$ will determine  an ordinal 
  $\bar\xi\in[\kappa,\kappa^{+})$ such that
  $\beta=h_{\zeta,\nu}(i_{\ords}(\bar\xi))$, and this will be used as
  a name in $M$, with parameters $\nu$ and $\bar\xi$, for
  the generator $\beta$ in $M_B$.
  Since $M$ 
  is closed under countable sequences, this will give a name
  for any countable
  sequence of generators, and this in turn will give, via
  clause~\ref{item:CsuccType} of Definition~\ref{def:terms}, a name
  for any member of $\chang_{B}$.

  The problem comes from the fact that the forcing $P(\vec E\restrict\zeta)$ only uses
  the extenders $E_{\nu}$ for $\nu<\zeta$.  The raw use of the
  iteration $\seq{i_{\xi}\mid\xi\in\ords}$ would specify that 
  $i_{\ords}(\bar\beta)$, for $\bar\beta\in[\kappa,\kappa^{+})$,
  should be assigned the indiscernibles 
  $\set{i_{\bar\nu}(\bar\beta)\mid
    \kappa_{\bar\nu}=\forceKappa_{\nu}\in B}$; however this would
  establish names only for the generators  $i_{\bar 
    \nu}(\bar\beta)$ such that 
  $\bar\beta\in\bigcup_{\nu<\zeta}\supp(E_{\nu})$.  To get around this
  problem we need to have a way to slip any ordinal 
  $i_{\bar\nu}(\bar \beta)$, for $\kappa_{\bar\nu}=\forceKappa_{\nu}\in
  B$ and 
  $\bar\beta\in[\kappa,\kappa^{+\omega_1})$, 
  into the generic set as a 
  substitute for some $i_{\bar{\nu}}(\bar\beta')$ with $\bar\beta'\in\bigcup_{\nu<\zeta}\supp(E_\nu)$.

  The trick is to design the forcing to disassociate the
  indiscernibles added by the Prikry component of the forcing from
  any particular ordinal for which it is an indiscernible.  
  We follow Gitik
  \cite{Gitik2002Blowing-up-powe,Gitik2005No-bound-for-th,
    Gitik2010Prikry-type-for,Gitik2012Violating-the-s}
  in using three successive stages to do so.  

 The first stage involves mixing Cohen forcing in  with the
 Prikry forcing.      For any apparent indiscernible
 $h_{\gamma,\gamma'}(\xi)=\xi'$ determined by the
 generic set $G$, there are conditions in $G$ which
 assign the value via a Cohen condition as well as conditions which assign
 it via a Prikry condition.
 In particular, there is no function in $M_B[G]$ which
 assigns uniform indiscernibles to any subset of
 $[\kappa_\ords,\kappa_{\ords}^{+\omega_1})$ of size greater than $\ords=\kappa_{\ords}$.

 The second stage involves the use of
 $[\kappa_\ords,\kappa_{\ords}^{+})$ as the domain of
 $h_{\zeta,\nu}$, rather than
 $\bigcup_{\nu<\zeta}\supp(i_{\ords}(E_{\nu}))$.
 This is accomplished by using, in the Prikry component of the forcing,
 functions 
 $a=a^{s,\zeta}_{\zeta,\nu}$ which map  a subset of $[\kappa_{\ords},\kappa_{\ords}^{+})$
 of size $\ords$ into $\supp(i_{\ords}(E_{\nu}))$.  
 The atomic non-direct extension will use a function $a'$, taken from
 a member of the ultrafilter $\ufFromExt{i_\ords(E_\nu)}{a}$. 
 The function $a'$ could be regarded as a Prikry indiscernible for
 $a$; however it will be recorded in the
 extension only via a Cohen condition $f_{a, a'}$
 defined by $f(\xi)=a'(\xi')$, where $\xi'\in\domain(a')$ corresponds
 to $\xi\in\domain(a)$.

 The effect of this is that if $\alpha\in i_{\ords}(\supp(E_0))$ and $s$ is a condition including
 $a^{s,\zeta}_{\zeta,\nu}(\xi)=\alpha$ for each $\nu<\zeta$, then the 
 sequence $\vec\beta=\seq{h_{\zeta,\nu}(\xi)\mid \nu<\zeta}$ in $M_B[G]$ 
 will be a Prikry sequence for the ultrafilter
 $\ufFromExt{i_{\ords}(E_0)}{\alpha}$;  however there will be no
 association, or at least no explicit association, with the ordinal
 $\beta$ as distinguished from
 any other member of $\set{\beta'\in[\kappa_{\ords},\kappa_{\ords}^{+\omega_1})\mid
   \ufFromExt{i_{\ords}(E_0)}{\beta'}=\ufFromExt{i_{\ords}(E_0)}{\beta}}$,
   which will for typical $\beta$ be unbounded in
   $\supp(i_{\ords}(E_\nu))$ for each $\nu\leq\zeta$. 
 
The ambiguity introduced by the second stage allows the third, and
final, stage in the disassociation of the Prikry 
 conditions, via the equivalence relation $\gkeq$  introduced in
 Subsection~\ref{sec:gkeqDef}.   Gitik uses this equivalence relation
 to ensure that the final  forcing has the $\kappa^{++}$-chain condition and
 hence does not collapse $\kappa^{++}$. 
 We do not care whether the cardinals 
 $\forceKappa_{\nu}^{++}$ are collapsed in $M_B[G]$, but we need to use
 the  equivalence relation in order to construct a generic set $G$ which gives
 standard forcing names to  all
 generators $i_{\bar\nu}(\bar\beta)$ belonging to
 $\forceKappa_{\nu}=\kappa_{\bar\nu}\in B$.   
 This may be regarded as a way of making the notions of ``no
 association'' versus ``no explicit
 association'' in the last paragraph more precise.  As an example of a
 non-explicit association, 
 suppose that 
 $\ufFromExt{E}{\beta'}\not=\ufFromExt{E}{\beta}$ for all
 $\beta'<\beta$.  
Then $E_{\beta}$ is necessarily associated with the least of the
Prikry sequences for the ultrafilter $\ufFromExt{E}{_{\beta}}$.
Thus, in this case, the
 association, though not explicit, is unavoidable.
 The equivalence relation $\gkeq$ will allow us to determine, for any 
 ordinal $\bar\beta\in
 [\kappa,\kappa^{+\omega_1})$,
 sequences $\seq{\bar\beta_{\nu}\mid\nu<\zeta}$ with
 $\bar\beta_\nu\in\supp(E_\nu)$ such that the Prikry
 sequence $\seq{i_{\nu}(\bar\beta)\mid\kappa_{\nu}\in B}$ 
 induced by the iteration $i$ can be
 substituted in the constructed generic set for the sequence
 $\seq{i_{\nu}(\bar\beta_\nu)\mid \kappa_\nu\in B}$ which would be assigned
 by the iteration $i_{\ords}$  as the indiscernibles associated with
 $\seq{i_{\ords}(\bar\beta_\nu)\mid\nu<\zeta}$. 

\subsubsection{Definition of the forcing: Overview}
\begin{definition}\label{def:overview}

The conditions of $P(\vec F)$ are functions $s$ satisfying the
following conditions:
\begin{enumerate}
    \item The domain of $s$ is a finite subset of $\zeta+1$ with $\zeta\in\domain(s)$.
    \item Each value $s(\tau)$ of $s$ is a  member of  the set $P^*_\tau$
      of  quadruples 
    \begin{equation*}
s(\tau)=(\forceKappa^{s,\tau},\vec F^{s,\tau},z^{s,\tau},\vec
A^{s,\tau}).
\end{equation*}
 satisfying the following conditions:
\begin{enumerate}
    \item $\vec F^{s,\tau}$ is a suitable sequence  $\vec F^{s,\tau}=\seq{F^{s,\tau}_{\nu}\mid
  \gamma_0\leq\nu<\tau}$ of extenders, where
$\gamma_0=\max(\domain(s)\cap \tau)+1$, or $\gamma_0=0$
if $\tau=\min(\domain(s))$.
\item 
 $\forceKappa^{s,\tau}$ is the critical point of the extenders in $\vec F^{s,\tau}$.
 \item $z^{s,\tau}$ is a tableau of functions giving information about
   the functions $h_{\nu,\nu'}$.  This tableau will  be fully
   specified in Definition~\ref{def:tableau}. 
 \item $\vec A^{s,\tau}$ is a sequence of sets $A^{s,\tau}_\nu\in
   U^{s,\tau}_\nu$, for $\gamma_0\le\nu<\tau$.  The definition of the ultrafilter 
    $U^{s,\tau}_\nu$ will be given in  Definition~\ref{def:A}.  
\end{enumerate}
\end{enumerate}

The two partial orders on $P(\vec F)$,  a direct extension order
$\le^*$ and a forcing
order $\le$, will be defined in Subsection~\ref{sec:PForder}. 
\end{definition}

\subsubsection{Definition of the forcing: the tableau $z=z^{s,\tau}$}
The third component $z^{s,\tau}$ of $s(\tau)$ is a tableau which is represented
in  Figure~\ref{fig:1}.    

The following definition specifies the
members of this tableau:

\begin{definition}\label{def:tableau}
Suppose that $\tau\in\domain(s)$, and set
$\gamma_0=\sup(\domain(s)\cap\tau)+1$, or $\gamma_0=0$ if
$\tau=\min(\domain(s))$.  The tableau $z=z^{s,\tau}$ includes 
\begin{enumerate}
    \item for each pair $(\gamma,\nu)$ of ordinals
with  $\tau\ge\gamma\geq\gamma_0>\nu\geq0$, a function
$f^{z}_{\gamma,\nu}$ and 
\item for each pair  $(\gamma,\nu)$ with
$\tau\ge\gamma>\nu\geq\gamma_0$, a pair of functions
$(a^{z}_{\gamma,\nu}, f^{z}_{\gamma,\nu})$.
\end{enumerate}
For each pair $\gamma,\nu$ the function $f^z_{\gamma,\nu}=f^{s,\tau}_{\gamma,\nu}$ is a
slightly modified Cohen function:
\begin{enumerate}
    \item $\domain(
      f^{z}_{\gamma,\nu})\subseteq[\forceKappa^{z},(\forceKappa^z)^{+})$
      and $\card{\domain(f^{z}_{\gamma,\nu})}\leq\forceKappa^z$.
\item Each of the values $f^z_{\gamma,\nu}(\xi)$ of $f^z_{\gamma,\nu}$
  has  one of the two following forms:
\begin{enumerate}
\item $f^{z}_{\gamma,\nu}(\xi)=\xi'\in \forceKappa^z_{\tau}$, or
\item\label{item:fpeculiar} $f^{z}_{\gamma,\nu}(\xi)=h_{\gamma',\nu}(\xi')$ for some
  $\gamma'$ in the interval $\gamma>\gamma'>\nu$ and some
  $\xi'\in\forceKappa^z_{\tau}$.
\end{enumerate}
\end{enumerate}
The functions $a^{z}_{\gamma,\nu}=a^{s,\tau}_{\gamma,\nu}$ satisfy the following conditions:
\begin{enumerate}
\item 
  $\domain( a^{z}_{\gamma,\nu})\subseteq[\forceKappa^{z},(\forceKappa^z)^{+})$
  and
  $\card{\domain(a^{z}_{\gamma,\nu})}\leq\forceKappa^z$.    
\item
  $\range(a^{z}_{\gamma,\nu})\subseteq\supp(F^{s,\tau}_{\nu})$. 
\item $\domain(a^{z}_{\gamma,\nu})\cap\domain(f^z_{\gamma,\nu})=\emptyset$, 
\item\label{item:asubset} If $\tau\geq\gamma>\gamma'>\nu$ then
    $a^{z}_{\gamma,\nu}\subseteq a^{z}_{\gamma',\nu}$.
    
\end{enumerate}
\end{definition}

\begin{figure}[t]
\renewcommand{\arraystretch}{1.25}
\begin{equation*}
    \begin{array}{c|ccccccc} 
      &0&\cdots&\gamma_0-1&\gamma_0&\cdots&\gamma&\cdots \\
      \hline \tau &
      f^{z}_{\tau,0}&\dots&f^{z}_{\tau,\gamma_0-1}&(a^{z}_{\tau,\gamma_0},f^{z}_{\tau,\gamma_0})&\dots&
      (a^{z}_{\tau,\gamma},f^{z}_{\tau,\gamma})&\dots
      \\    
      \vdots&\vdots&&\vdots&\vdots&&\vdots&
      \\
      \gamma& f^{z}_{\gamma,0}& \dots&f^{z}_{\gamma,\gamma_0-1}&
      (a^{z}_{\tau,\gamma_0},f^z_{\gamma,\gamma_0})&\dots&&
      \\
      \vdots&\vdots&&\vdots&\vdots&&&
      \\
      \gamma_0+1& f^{z}_{\gamma_0+1,0}& \dots&f^{z}_{\gamma_0+1,\gamma_0-1}&
      (a^{z}_{\gamma_0+1,\gamma_0},f^{z}_{\gamma_0+1,\gamma_0})& &&
      \\
      \gamma_0& f^{z}_{\gamma_0,0}&\dots&f^{z}_{\gamma_0,\gamma_0-1}&&&& \\
    \end{array}
  \end{equation*}
 \caption{The third component $z^{s,\tau}$ of $s(\tau)$.   The
   element at row $\alpha$ and column $\beta$ is used to determine
   $h_{\alpha,\beta}$.   In the case of the top row, this
   determination is direct; for the other rows this is indirect, via
   their use in  defining the ultrafilters $U^{s,\tau}_{\gamma}$ from which the sets
   $A^{s,\tau}_{\alpha}$ are taken.}
\label{fig:1}
\end{figure}

 The $(\gamma,\nu)$ entry in the tableau, whether a 
function $f^{z}_{\gamma,\nu}$ or a pair of functions
$(a^{z}_{\gamma,\nu}, f^{z}_{\gamma,\nu})$, will ultimately be used to determine
the values of the Cohen function $h_{\gamma,\nu}$.

 The functions $f^{s,\tau}_{\tau,\nu}$ in the first row of $z$ directly
determine $h_{\tau,\nu}$.  The functions $f^{s,\tau}_{\gamma,\nu}$ in the remaining rows, with
$\gamma<\tau$, 
indirectly help to determine $h_{\gamma,\nu}$ via the Prikry style
forcing: they restrict the possible values of $s'(\gamma)$ in
conditions $s'\leq s$.

The first form for the function $f_{\gamma,\nu}$ is the usual form for
a Cohen condition and asserts that 
$h_{\gamma,\nu}(\xi)=\xi'$;  or, more specifically,  if $s$ is a
condition with  $f^{s,\tau}_{\tau,\gamma}(\xi)=\xi'$,
then $s\forces \dot h_{\tau,\gamma}(\xi)=\xi'$.    The second form, 
the value $f^{s,\tau}_{\tau,\gamma}(\xi)=h_{\gamma',\nu}(\xi')$,  
of $f(\xi)$  may be taken as a formal expression: it specifies that  the value of 
the name $h_{\tau,\nu}(\xi)$ is given by
\begin{align}
  \text{if $s\forces \dot h_{\gamma',\nu}(\xi')=\xi''$ then}\quad&\quad s\forces\dot
  h_{\tau,\nu}(\xi)=\xi'',\label{eq:fpeculiareval}\\
  \text{if $s\forces\xi'\notin \domain(\dot h_{\gamma',\nu})$ then}\quad&\quad
  s\forces \dot h_{\tau,\nu}(\xi)=0, \text{ and}\notag\\
  \text{otherwise}
 \quad&\quad  s\ndecides\dot h_{\tau,\nu}(\xi).\notag
\end{align}
This definition requires recursion on $\tau$, using the fact that  ``$s\forces
\dot h_{\gamma',\nu}(\xi')=\xi''$'' depends only on $s\restrict\gamma'+1$.
In the first of these three cases, $s\forces \dot h_{\gamma',\nu}(\xi')=\xi''$, we will
regard the forms $f_{\tau,\nu}^{z}(\xi)=\xi''$ and
$f_{\tau,\nu}^{z}(\xi)=h_{\gamma',\nu}(\xi')$ as being identical.
\medskip{}

The functions $a^{z}_{\gamma,\nu}$ are included in order to generate the Prikry
indiscernibles. If $a^{s,\tau}_{\tau,\nu}(\xi)=\alpha$, then
$h_{\tau,\nu}(\xi)$ in the generic extension will 
be a Prikry indiscernible for the ultrafilter
$(F^{s,\tau}_{\nu})_{\alpha}=\set{x\in \ps(\forceKappa) \mid \alpha\in
  i^{F^{s,\tau}_{\nu}}(x)}$.

This completes the definition of the tableau $z^{s,\tau}$.   

\subsubsection{The forcing: the ultrafilters $U^{s,\tau}_{\gamma}$ and
  sets $A^{s,\tau}_{\gamma}$.}
We continue the definition of $P(\vec F)$ by specifying the
requirements for the final coordinate $A^{s,\zeta}$ for a quadruple
$w=s(\zeta)\in P^*_{\zeta}$.  Definition~\ref{def:A}  uses recursion on 
$\zeta$ to define the following for each    
for $\gamma<\zeta$: 
\begin{enumerate}
\item 
 a set $P^{*}_{\zeta,\gamma}$,  of which $A^{w}_{\gamma}$
is  a subset,
\item    a restriction operation
$w\scutdown\gamma$, which maps $w\in P^*_{\zeta}$ to a quadruple 
$w\scutdown\gamma\in P^{*}_{\zeta,\gamma}$, and 
\item an ultrafilter
$U^{w}_{\gamma}\subset\ps(P^{*}_{\zeta,\gamma})$.
\end{enumerate}

These will complete the definition of the set
$P^*_{\gamma}=P^*_{\gamma,\gamma}$, and hence of the set of conditions
of the forcing $P(\vec F)$.

In addition to $w\scutdown\gamma$ we use a second  restriction operator
$z\restrict[\gamma_0,\gamma]$, which may be applied to a tableau $z$
of the form of either Figure~\ref{fig:1} or~\ref{fig:member-of-A}.
This operator retains the rows of $z$ with indices in the interval
$[\gamma_0,\gamma]$ and discards the rows above these; thus if
$w=(\forceKappa^{w},\vec F^{w},z^{w},\vec A^{w})\in P^*_{\zeta}$, then 
$(\forceKappa^w,\vec F^{w}\restrict\gamma,
z^{w}\restrict[\gamma_0,\gamma], \vec A^{w}\restrict\gamma)\in P^*_{\gamma}$.

\begin{definition}
  \label{def:A}
  We assume as a recursion hypothesis that $P^{*}_{\tau}$ and
  $P^{*}_{\tau,\gamma}$
  have been defined for all
  $\gamma\le\tau<\zeta$.  
  If $\zeta\geq\gamma$ then 
  the members of $P^{*}_{\zeta,\gamma}$ are quadruples
  \begin{equation}
    w=(\forceKappa^{w},\vec F^{w}, z^{w}, \vec A^{w})\label{eq:c}
  \end{equation}
  satisfying the following conditions:
  \begin{enumerate}
  \item The tableau $z^w$ has the form of Figure~\ref{fig:member-of-A}.
  \item $w\restrict[\gamma_0,\gamma]=(\forceKappa^{w},\vec
    F^{w},z^{w}\restrict[\gamma_0,\gamma],\vec A^{w}) \in P^*_{\gamma}$.
  \item The functions $a^{z}_{\nu,\nu'}$ for $\tau\geq\nu>\gamma\geq\nu'$
    satisfy the conditions in Definition~\ref{def:tableau}, except
    that $a^{z}_{\tau,\nu'}$ has range contained in
    $[\forceKappa_\tau,(\forceKappa_\tau)^{+\omega_1})$.
  \end{enumerate}

Note that $P^*_{\tau,\tau}=P^{*}_{\tau}$.

Suppose that $\tau\leq\zeta$, $w\in P^*_{\tau,\gamma}$ and $\gamma'<\gamma$.  Then
$w\scutdown\gamma'$ is the quadruple
\begin{equation*}
w\scutdown\gamma'=(\forceKappa^{w},\vec
F^{w}\restrict\gamma', z^{w}\scutdown\gamma',\vec A^{w}\scutdown\gamma')\in
P^*_{\tau,\gamma'}
\end{equation*}
defined by recursion on $\gamma$ as follows:
\begin{compactenum}
\item $z^{w}\scutdown\gamma'$ is equal to the tableau obtained by
  deleting from $z^{w}$ all columns with index greater than $\gamma'$ and
  deleting the functions $f^{z}_{\nu,\nu'}$ from all rows with index
  greater than $\gamma'$.  Thus
  $(z^{w}\scutdown\gamma')\restrict[\gamma_0,\gamma']=z\restrict[\gamma_0,\gamma']$
  but the rows with index $\nu>\gamma'$ retain only the
  functions  $a^{w}_{\nu,\nu'}$ for $\gamma_0\leq\nu'<\nu\leq\gamma$.
\item
  $\vec
  A^{w}\scutdown\gamma'=\seq{A^{w}_{\gamma''}\scutdown\gamma'\mid
    \gamma_0\leq\gamma''\leq\gamma'}$ where
  $A^{w}_{\gamma''}\scutdown\gamma'=\set{w'\scutdown\gamma'\mid w'\in A^{w}_{\gamma''}}$.
\end{compactenum}
Note that this  definition  also applies for  $w\in P_{\tau}^*$, since
$P^*_{\tau}=P^*_{\tau,\tau}$. 
Finally, the ultrafilter $U^{s,\tau}_{\gamma}$ is defined as
\begin{equation}\label{eq:n}
  U^{s,\tau}_{\gamma}=\ufFromExt{F^{s,t}_{\gamma}}{s(\tau)\scutdown\gamma}=\set{X\subseteq P^*_{\tau,\gamma}\mid
    s(\tau)\scutdown\gamma\in i^{F^{s,\tau}_{\gamma}}(X)}.
\end{equation}

\end{definition}

% -----------------------------------------
\begin{figure}[]
\renewcommand{\arraystretch}{1.25}
\begin{equation*}
  \begin{array}{c|cccccc} 
    &0&\cdots&\gamma_0-1&\gamma_0&\cdots&\gamma \\
    \hline 
    \tau & & &  &a^{z}_{\tau,\gamma_0}&\dots& a^{z}_{\tau,\gamma}
    \\
    \vdots&&&&\vdots&&\vdots
    \\
    \gamma+1&&&&a^{z}_{\gamma+1,\gamma_0}& \dots&a^{z}_{\gamma+1,\gamma}
    \\[2pt]
    \arrayrulecolor{gray}\cline{2-7}\arrayrulecolor{black}
    \gamma& f^{z}_{\gamma,0}& \dots&f^z_{\gamma,\gamma_0-1}&(a^{z}_{\gamma,\gamma_0},f^z_{\gamma,\gamma_0})&\dots&
    \\
    \vdots&\vdots&&\vdots&\vdots&&
    \\
    \gamma_0+1& f^{z}_{\gamma_0+1,0}& \dots&f^{z}_{\gamma_0+1,\gamma_0-1}&
    (a^{z}_{\gamma_0+1,\gamma_0},f^{z}_{1,\gamma_0})& &
    \\
    \gamma_0& f^{z}_{\gamma_0,0}&\dots&f^{z}_{\gamma_0,\gamma_0-1}&&& \\
    % \hline
  \end{array}
\end{equation*}
  \caption{The tableau $z^w$ of a member of
    $A_{\gamma}^{s,\tau}\subseteq P^*_{\tau,\gamma}$. 
    The entry in row $\alpha$ and column $\beta$ is used in the
    determination of   $h_{\alpha,\beta}$.}
\label{fig:member-of-A}
\end{figure}
%%%%%%%%%%%%%%%%%%%

This completes the definition of the set of conditions for the forcing
$P(\vec F)$.

%%% Local Variables:
%%% mode: latex
%%% TeX-master: "../changmodel-paper.tex"
%%% End:

% t%%%%%%%%%%%%%%%%%%%%%ttttttt%%%%%

\subsection{The partial orderings of $P(\vec F)$.}
\label{sec:PForder}

Since $P(\vec F)$  is a
Prikry type forcing notion, we need to define both a direct extension
order $\leq^*$ and  a forcing order $\leq$.  
We will begin by defining the one-step extension, $\add(s,w)\leq s$,
 which is the atomic extension adding a new ordinal to the domain
of $s$.   We will then define the direct extension order $\leq^{*}$, which
will be the restriction of $\le$ to conditions $s'\leq s$ with $\domain(s')=\domain(s)$.
The forcing extension $\leq$ is then the smallest transitive relation
extending ${\leq^*}$ such that 
$\add(s,w)\leq s$ for all
$w\in\bigcup_{\tau\in\domain(s)}\bigcup_{\gamma}A^{s,\tau}_{\gamma}$. 

\subsubsection{The one-step extension}
\label{sec:one-step}
The one-step extension $s'=\add(s,w)$ in $P(\vec F)$  is the atomic
non-direct extension, corresponding to the extension
in Prikry forcing which simply adds one new ordinal to the finite
sequence.   In $P(\vec F)$ it acts by merging   Prikry components
$a^{s,\tau}_{\nu,\nu'}$ of $s(\tau)$ into the corresponding Cohen components of
$s'(\tau)$.   The following preliminary definition specifies  the
conversion of $a^{s,\tau}_{\nu,\nu'}$ to a Cohen  condition. 

\begin{definition}
  \label{def:a2f}
  Suppose  $w\in A^{s,\tau}_{\gamma}$ and
  $\tau\geq\nu>\gamma\geq\nu'\geq\gamma_0$, and let
  $a=a^{s,\tau}_{\nu,\nu'}$ and $a'=a^{w}_{\nu,\nu'}$.    The Cohen
  condition $f_{a,a'}$ is defined as follows:

  First, we define, for any function $a$ with domain a set of
  ordinals, a map $\sigma_{a,r}\colon
  \card{\domain(a)}\cong\domain(a)$.%
  \footnote{This definition would be simplified if a Levy collapse of
    $\reals$ onto $\omega_1$ had been taken at the start so that 
    $M$ satisfies GCH and hence the Axiom of Choice.  Then $\sigma_a$
    can be defined as the least map
    $\card{\domain(A)}\cong\domain(A)$ and used
    in place of the set of maps $\sigma_{a,r}$.}
  Write $\phi_a$  for 
  the least
  $\Sigma_{0}$ formula, with ordinal parameters, such that for some
  $r\in\reals$ the equation
  \begin{equation}\label{eq:w}
    \sigma_{a,r}(\nu)=\xi\iff \phi_a(r,\nu,\xi)
  \end{equation}
  defines an enumeration $\sigma_{x,r}\colon\card {\domain(a)}\cong
  \domain(a)$, and write $R_a$ for the set of $r\in\reals$ such that
  this holds.
  
  If
  $r\in R_a\cap R_{a'}$  then $f_{a,a',r}$ is the Cohen condition
  defined by 
  \begin{equation}
    \label{eq:faa-def}
    f_{a,a',r}(\xi)=
    \begin{cases}
      a'(\sigma_{a',r}\circ \sigma_{a,r}^{-1}(\xi))&\text{if
        $\sigma_{a,r}^{-1}(\xi)<\forceKappa^{w}$ and $\nu'=\gamma$,}  
      \\
      h_{\gamma,\nu'} (\sigma_{a',r}\circ\sigma_{a,r}^{-1}(\xi'))  &\text{if
        $\sigma_{a,r}^{-1}(\xi)<\forceKappa^{w}$ and 
        $\nu'<\gamma$,} 
      \\
      0&\text{if $\sigma_{a,r}^{-1}(\xi)\ge\forceKappa^{w}$,}
    \end{cases} 
  \end{equation}
  using in the second case the second form~(\ref{item:fpeculiar}) of the Cohen condition
  from Definition~\ref{def:overview}.
  Then $f_{a,a'}$ is defined if  and only if
  $R_a=R_{a'}$ and $(\forall r,r'\in R_a)\;f_{a,a',r}=f_{a,a',r'}$, in
  which case $f_{a,a'}$ is this common  value of $f_{a,a',r}$.
\end{definition}

  \begin{proposition}\label{thm:faa_exists}
    Suppose that $F$ is an extender with critical point $\lambda$.
    \begin{enumerate}
    \item
      If 
      $\card{\domain(a)}=\lambda$ then $\set{a'\mid f_{a,a'}\text{
          exists}}\in \ufFromExt{F}{a}$.
    \item If $\card{\domain(a_0)}=\card{\domain(a_1)}=\lambda$ and
      $a_1\supseteq a_0$ then $\set{(a'_0,a'_1)\mid
        f_{a_0,a'_0}=f_{a_1,a'_1}\restrict\domain(a_0)}\in \ufFromExt{F}{(a_0,a_1)}$.
    \end{enumerate}
  \end{proposition}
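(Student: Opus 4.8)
The plan is to use the central fact that $\ufFromExt{F}{a}$ is the ultrafilter $\set{X\subseteq H^M_\lambda \mid a\in i^F(X)}$, together with Łoś's theorem for the ultrapower $i^F\colon M\to \ult(M,F)$. For clause~1, one wants to show that the statement ``$f_{a,a'}$ exists'' holds for $i^F$-almost every $a'$, i.e., that $\ult(M,F)\models$ ``$f_{i^F(a), a'}$ exists'' when evaluated at the point $a'=a$, where the ambient object $a$ is replaced by $i^F(a)$. Unwinding Definition~\ref{def:a2f}, ``$f_{a,a'}$ exists'' is the conjunction of $R_a=R_{a'}$ and $(\forall r,r'\in R_a)\,f_{a,a',r}=f_{a,a',r'}$. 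So first I would identify what $i^F(a)$ and $R_{i^F(a)}$ look like and argue that the relevant agreement holds at the generator $a$.

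The key computation is the following. Because $\card{\domain(a)}=\lambda=\crit(F)$, the transitive collapse structure of $\domain(a)$ is an element of $M_\lambda=H^M_\lambda$ — more precisely, the pair $(\card{\domain(a)},\ \phi_a,\ R_a)$ consisting of the canonical formula $\phi_a$ and the real set $R_a\subseteq\reals$ witnessing the enumeration is determined by data below $\lambda$ (here we use that $M$ is a mouse over the reals, so $R_a$, being a set of reals, is available; this is exactly the role of Proposition~\ref{thm:enoughAC}, which guarantees the relevant instances of choice needed to pick the witnessing enumerations uniformly). Consequently $i^F$ fixes $\phi_a$ and fixes $R_a$, and $\phi_{i^F(a)}=\phi_a$ while $R_{i^F(a)}=R_a$. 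Since $\sigma_{a,r}$ is defined by the $\Sigma_0$ formula $\phi_a$, we get $\sigma_{i^F(a),r}=i^F\circ\sigma_{a,r}\circ(\text{collapse})$ — concretely $\sigma_{i^F(a),r}\restrict\card{\domain(a)}$ composed with $\sigma_{a,r}^{-1}$ is the relevant natural map, so that when one evaluates the three-case definition \eqref{eq:faa-def} of $f_{a',a''}$ at $a'=i^F(a)$, $a''=a$, $\forceKappa^w=\lambda$, the first two clauses produce exactly the value $a(\sigma_{a,r}\circ\sigma_{a,r}^{-1}(\xi))$ or $h_{\gamma,\nu'}(\cdots)$, which is independent of the choice of $r\in R_a$ precisely because, back in $M$, $\phi_a$ was chosen least and $R_a$ was chosen as the full set of reals for which the enumeration is correct, so different $r$ give the same enumeration. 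Hence ``$f_{i^F(a),a}$ exists'' holds in $\ult(M,F)$, and by Łoś the set of $a'$ with $f_{a,a'}$ defined lies in $\ufFromExt{F}{a}$. For clause~2 the argument is identical, run in the ultrapower $\ult(M,F)$ at the pair $(a_0,a_1)$ via the product ultrafilter $\ufFromExt{F}{(a_0,a_1)}$: since $a_1\supseteq a_0$ and $\card{\domain(a_0)}=\card{\domain(a_1)}=\lambda$, the enumerations $\sigma_{a_0,r}$ and $\sigma_{a_1,r}$ are coherent (the condition in Clause~\ref{item:asubset} of Definition~\ref{def:tableau} guarantees the nesting is compatible), so $f_{a_0,a_0'}$ is literally the restriction of $f_{a_1,a_1'}$ to $\domain(a_0)$, and one invokes Łoś again.

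I expect the main obstacle to be the bookkeeping around the canonical enumerations $\sigma_{a,r}$ and the sets $R_a$: one must check carefully that ``$\phi_a$ least, $R_a$ the set of all correct reals'' is a definition that commutes with $i^F$ and that this forces the three-case value in \eqref{eq:faa-def} to be genuinely $r$-independent, which is what makes $f_{a,a'}$ a well-defined single object rather than an $r$-indexed family. The second, subtler point is making sure the second case of \eqref{eq:faa-def} — where the value is $h_{\gamma,\nu'}(\cdots)$, a formal expression rather than an ordinal — is handled purely syntactically, so that ``$f_{a,a'}$ exists'' is a first-order property of $a,a'$ over $M$ (or over the appropriate $H^M_{\kappa^+}$) and Łoś applies without set-theoretic complications; this is where one uses that the forcing is being defined entirely inside $M$, which is closed under countable sequences, so all the relevant data is available and the ultrapower computation is legitimate.
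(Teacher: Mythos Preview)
Your overall strategy---use elementarity of $i^F$ to verify the defining conditions of $f_{a,a'}$ at the seed $a$---is the same as the paper's, and your argument that $R_{a'}=R_a$ for $(F)_a$-many $a'$ is correct. However, there is a genuine gap in your treatment of the second condition, namely that $f_{a,a',r}=f_{a,a',r'}$ for all $r,r'\in R_a$.

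You write that this holds ``precisely because \dots\ different $r$ give the same enumeration.'' This is false. The definition of $R_a$ only requires that for each $r\in R_a$ the formula $\phi_a(r,\cdot,\cdot)$ defines \emph{some} bijection $\sigma_{a,r}\colon\lambda\cong\domain(a)$; there is no reason these bijections should coincide as $r$ varies. Indeed, if they did coincide, the whole apparatus of carrying $r$ through the definition would be pointless. So your proposed reason for the $r$-independence of $f_{a,a',r}$ does not work, and the argument as written does not establish clause~(1).

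The paper closes this gap as follows. For $r,r'\in R_a$ set $\pi_{a,r,r'}=\sigma_{a,r'}^{-1}\circ\sigma_{a,r}$, a permutation of $\lambda$. Since $\pi_{a,r,r'}\colon\lambda\to\lambda$ and $\lambda=\crit(F)$, one has $i^F(\pi_{a,r,r'})\restrict\lambda=\pi_{a,r,r'}$, and hence by elementarity $\set{a'\mid \pi_{a',r,r'}=\pi_{a,r,r'}\restrict\card{\domain(a')}}\in\ufFromExt{F}{a}$. For such $a'$, writing $\lambda'=\card{\domain(a')}$, one computes directly that $\sigma_{a',r}\circ\sigma_{a,r}^{-1}=\sigma_{a',r'}\circ\sigma_{a,r'}^{-1}$ on $\sigma_{a,r}[\lambda']$, which gives $f_{a,a',r}=f_{a,a',r'}$. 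The point is not that the enumerations agree, but that the \emph{change-of-enumeration} permutation for $a$ restricts to the one for $a'$, and this is enough to make the composite $\sigma_{a',r}\circ\sigma_{a,r}^{-1}$ independent of $r$. Your argument for clause~(2) has the same defect and needs the analogous repair.
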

  \begin{proof}
    For the first clause, note that the elementarity of $i^{F}$
    implies that $\set{a'\mid R_{a'}=R_a}\in \ufFromExt{F}{a}$.  Let
    $r$ and $r'$ be members of $R_a$.  To see that  $\set{(a,a')\mid
      f_{a,a',r}=f_{a,a',r'}}\in\ufFromExt{F}{(a,a')}$, set 
    $\pi_{a,r,r'}=\sigma^{-1}_{a,r'}\circ\sigma_{a,r}$
    and
    $\pi_{a',r,r'}=\sigma^{-1}_{a',r'}\circ\sigma_{a',r}$.  
    Then by elementarity $\set{a'\mid \pi_{a',r,r'}=\pi_{a,r,r}\restrict\card{\domain(a')}}\in
    \ufFromExt{F}{a}$, and if  $a'$ is any member of this set, then (letting
    $\lambda'=\card{\domain(a')}$ and  letting 
    $\xi\in\sigma_{a,r}[\lambda']$ be arbitrary),  
    \begin{align*}
      f_{a,a',r}(\xi)=\sigma_{a',r}\circ\sigma_{a,r}^{-1}(\xi)&=(\sigma_{a',r'}\circ\pi_{a',r,r'})\circ
                                                (\sigma_{a,r'}\circ\pi_{a,r,r'})^{-1}(\xi)\\
      &=(\sigma_{a',r'}\circ\pi_{a,r,r'}\restrict\lambda')\circ
                                                (\pi^{-1}_{a,r,r'}\circ
                                \sigma^{-1}_{a,r'})(\xi)\\
      &=\sigma_{a',r'}\circ\sigma^{-1}_{a',r'}(\xi)=f_{a,a',r'}(\xi).
    \end{align*}
    This completes the proof of Clause~(1) of the Proposition, and a similar
    argument proves Clause~(2).
  \end{proof}

\begin{definition}[The one-step extension]
  \label{def:one-step}
  Suppose that $w\in A^{s,\tau}_{\gamma}$ where
  $\gamma\notin\domain(s)$ and 
  $\tau=\min(\domain(s)\setminus\gamma)$.  Then $s'=\add(s,w)$ is the
  condition with $\domain(s')=\domain(s)\cup\sing{\gamma}$ defined as
  follows:

  \begin{compactenum}
  \item
      $s'(\gamma)=(\forceKappa_\gamma^{w},\vec F^{w},
      z^{w}\restrict\,[\gamma_0,\gamma], \vec A^{w})$.

  \item
    $s'(\tau)=(\forceKappa_\tau^{s},\vec F^{s',\tau}, z^{s',\tau},
    \vec A^{s',\tau})$ where
    \begin{enumerate}
    \item $\forceKappa_\tau^{s'}=\forceKappa_\tau^{s}$ and
      $\vec F^{s',\tau{}}=\vec F^{s,\tau}\restrict(\gamma,\tau)$,
    \item $z^{s',\tau}$ is obtained from
      $z^{s,\tau}\restrict(\gamma,\tau]$ by using
      Definition~\ref{def:a2f} to replace $f^{s,\tau}_{\nu\nu'}$ with
      $f^{s',\tau}_{\nu,\nu'}=f^{s,\tau}_{\nu,\nu'}\cup f_{a^{s,\tau}_{\nu,\nu'},a^{w}_{\nu,\nu'}}$
      whenever $\tau\geq\nu>\gamma\geq\nu'\geq\gamma_0$, and
    \item\label{item:AddA} if $\gamma<\nu<\tau$ then
      $A^{s',\tau}_{\nu}=\set{\sigma(w')\mid w'\in
        A^{s,\tau}_{\nu}\land
        \forceKappa_\gamma^{w}<\forceKappa_\nu^{w'}}$, where
      \begin{align}
        \label{eq:Addw}
        \sigma(w')\restrict[\gamma,\nu]
        &=\add(w'\restrict[\gamma_0,\nu],w\scutdown\nu)\\
        \sigma(w')\restrict(\nu,\tau)&=w'\restrict(\nu,\tau).\notag
      \end{align}
    \end{enumerate}
  \item
    $s'(\gamma')=s(\gamma')$     for all
    $\gamma'\in\domain(s')\setminus\sing{\gamma,\tau}$. 
  \end{compactenum}
\end{definition}

Note that Equation~\eqref{eq:Addw} uses recursion on the pair
$(\gamma,\tau)$, along with the fact that 
$w'\restrict[\gamma_0,\nu]\in P^*_{\nu}$.

If any part of the definition of $\add(s,w)$ cannot be carried out as
described, then  $\add(s,w)$ is undefined.   Note that the set of $w$
for which it is defined is a member of $U^{s,\tau}_{\gamma}$, so that
we can assume without loss of generality that $\add(s,w)$ is defined
for all $w\in A^{s,\tau}_{\gamma}$.

This completes the definition of the one-step extension.
\subsubsection{The direct extension order $\le^*$.}
The direct extension order $\leq^*$  is the restriction of the
forcing order $\le$ to the pairs $(s',s)$ such that
$\domain(s)=\domain(s')$.
Again, the definition uses  recursion on $\tau$:

\begin{definition}
  \label{def:star-order} 
  If $s',s\in P(\vec F)$ then $s'\leq^* s$ if $\domain(s')=\domain(s)$
  and $s'(\tau)\le^* s(\tau)$ for all $\tau\in\domain(s)$.   The
  ordering $s'(\tau)\leq^* s(\tau)$ on $P^*_{\tau}$ holds if and only
  if the following   conditions hold:
  \begin{enumerate}
  \item $\forceKappa^{s',\tau}=\forceKappa^{s,\tau}$ and $\vec
    F^{s',\tau}=\vec F^{s,\tau}$. 
  \item \label{item:leq-star-a-fctns-extend}
    $a^{s',\tau}_{\gamma,\gamma'}\supseteq
    a^{s,\tau}_{\gamma,\gamma'}$ for each pair $(\gamma,\gamma')$ for 
    which they are defined. 
  \item\label{item:leq-star-change-w-in-A} 
    For each
    $\gamma\in(\gamma_0,\tau)$ and each $w'\in A^{s',\tau}_{\gamma}$
    there is $w\in A^{s,\tau}_{\gamma}$ such that
    \begin{enumerate}
    \item\label{item:SD-recursion} $w'\restrict[\gamma_0,\gamma]\leq^*
      w\restrict[\gamma_0,\gamma]$ in $P^*_{\gamma}$.      
    \item\label{item:a-extends} $a^{w'}_{\nu,\nu'}\supseteq a^{w}_{\nu,\nu'}$ for
      $\tau\geq\nu>\gamma\geq\nu'\geq\gamma_0$.
    \item\label{item:f-extends} For all pairs $(\nu,\nu')$ with
      $\tau\geq\nu>\nu'\geq\gamma_0$ we have 
     $     f_{a_{\nu,\nu'}^{s',\tau},\,a^{w'}_{\nu,\nu'}}\supseteq
     f_{a^{s,\tau}_{\nu,\nu'},\, a^{w}_{\nu,\nu'}}$, where these two
     functions are as defined in Definition~\ref{def:a2f}.
    \end{enumerate}
  \item\label{item:z} $f^{s',\tau}_{\nu,\nu'}\supseteq f^{s,\tau}_{\nu,\nu'}$ for each
    pair $\nu,\nu'$ for which they are defined.
  \end{enumerate}
\end{definition}

Clause~\ref{item:leq-star-change-w-in-A} implies that $\add(s',w')\leq^*
\add(s,w)$.   This clause
 corresponds to the requirement
in Prikry forcing that $A^{s'}\subseteq A^{s}$; however the
ultrafilters $U^{s,\tau}_{\gamma}$ used in this forcing vary with
$s$.  Gitik \cite{Gitik2005No-bound-for-th} also has varying
ultrafilters, but takes them from a predefined set and uses
predefined witnesses to a Rudin-Keisler order on the ultrafilters.
Our definition could also be stated in terms of the Rudin-Keisler
order,
however the  ultrafilters would have to be defined on the
complete Boolean algebra induced by the ordering $(P^*_{\tau,\gamma},\leq^*)$.

\medskip{}

This completes the definition of  the forcing
$(P(\vec F), {\leq^*}, {\leq})$.

\subsection{Properties of the forcing $P(\vec F)$}
\label{sec:PFproperties}

\begin{definition}
  If $\vec w$ is a  sequence of length $n$, then we write
  $\add(s,\vec w)$ for the condition defined by recursion as
  $\add(s,\vec w)=s$ if $n=0$, and $\add(s,\vec w) =\add(\add(s,\vec
  w\restrict (n-1)), w_{n-1})$ if $n>0$.
\end{definition}

\begin{proposition}\label{thm:one-stepFirst}
  Suppose that $s\leq t$. Then
  there is $\vec z$ such that
  $s\leq^*\add(t,\vec z)\leq t$
\end{proposition}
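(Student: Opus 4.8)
The plan is to prove Proposition~\ref{thm:one-stepFirst} by induction on the number of ordinals added to the domain in passing from $t$ to $s$. Since $\le$ is by definition the smallest transitive relation containing $\le^*$ and all pairs $\add(s,w)\le s$, the relation $s\le t$ witnesses a finite chain $s=s_k\le^* \add(s_{k-1},w_{k-1})$, then $s_{k-1}\le^* \add(s_{k-2},w_{k-2})$, and so on down to $s_0\le^* t$, where each $w_j$ is drawn from some $A^{s_j,\tau_j}_{\gamma_j}$. Equivalently, $\domain(s)\setminus\domain(t)=\set{\gamma_0,\dots,\gamma_{n-1}}$ for some $n$, and I want to produce a single sequence $\vec z$ with $\add(t,\vec z)$ having domain $\domain(s)$ and with $s\le^*\add(t,\vec z)$.

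First I would handle the base case $n=0$: then $\domain(s)=\domain(t)$, so $s\le t$ reduces to $s\le^* t$ and we take $\vec z$ empty. For the inductive step, I would peel off the \emph{first} new coordinate rather than the last: let $\gamma=\min(\domain(s)\setminus\domain(t))$ and $\tau=\min(\domain(t)\setminus\gamma)$. The key observation is that the $\gamma$-th coordinate $s(\gamma)$ of $s$, restricted appropriately, is (up to $\le^*$) of the form guaranteed to lie in $A^{t,\tau}_{\gamma}$ after we pass to the right initial segment: tracing through Definition~\ref{def:one-step}, when we formed $s$ from $t$ along the chain, the data $s\restrict[\gamma_0,\gamma]$ was obtained by repeatedly applying $\add$ and $\le^*$ starting from some $w\in A^{t,\tau}_\gamma$. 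So I would argue that there is $w\in A^{t,\tau}_\gamma$ with $s\restrict[\gamma_0,\gamma]\le^* w\restrict[\gamma_0,\gamma]$ (in $P^*_\gamma$) and, coherently, with the effect on the $\tau$-coordinate matching: that is, $\add(t,w)$ agrees with $t$ outside $\set{\gamma,\tau}$, sets the $\gamma$-coordinate to (the relevant restriction of) $w$, and modifies the $\tau$-coordinate by merging Prikry data via $f_{a,a'}$. Then $s\restrict(\domain(t)\cup\set{\gamma})\le^*\add(t,w)$ by Definition~\ref{def:star-order}, checking clauses (1)--(4) — clause (1) is immediate since critical points and extender sequences are unchanged by $\le$, clause (2) and (4) track the growth of the $a$- and $f$-functions, and clause (3) is exactly the statement that every $w'\in A^{s,\tau}_\gamma$ has a $\le^*$-predecessor in $A^{\add(t,w),\tau}_\gamma$, which follows from how $\add$ transformed the $A$-sets.

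Having absorbed the coordinate $\gamma$, I would apply the induction hypothesis to the pair $s'\le \add(t,w)$, where $s'$ is $s$ viewed over the smaller "gap" $\domain(s)\setminus\domain(\add(t,w))=\set{\gamma_1,\dots,\gamma_{n-1}}$; this is legitimate because $s\le t$ together with $s\le^*\cdot$ the coordinate-$\gamma$-enlargement of $t$ gives $s\le\add(t,w)$ with one fewer new coordinate. The induction hypothesis yields $\vec z'$ with $s\le^*\add(\add(t,w),\vec z')$, and then $\vec z = w{}^\frown\vec z'$ works: $\add(t,\vec z)=\add(\add(t,w),\vec z')$ by the recursive definition of $\add(s,\vec w)$, and stringing the $\le$-inequalities gives $s\le^*\add(t,\vec z)\le t$. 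The main obstacle I anticipate is the bookkeeping in the inductive step showing that the single $w\in A^{t,\tau}_\gamma$ "summarizing" the first block of the chain actually exists and that $\add(t,w)$ interacts correctly with the remaining additions — in particular, that the order in which the $\gamma_j$'s were originally added (which may not be increasing) can be rearranged into increasing order without changing the resulting condition, which requires a commutativity lemma for $\add$ on disjoint/ordered coordinates that is implicit in Definition~\ref{def:one-step} (note the recursion there on the pair $(\gamma,\tau)$ is set up precisely to make this work). I would isolate that commutativity as a separate sublemma if the details prove delicate, but expect it to follow routinely from the explicit formulas in \eqref{eq:Addw} and Definition~\ref{def:a2f}.
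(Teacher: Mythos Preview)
Your inductive setup and base case are fine, but the inductive step has a genuine gap that you partly anticipate. You choose to peel off the \emph{minimal} new coordinate $\gamma=\min(\domain(s)\setminus\domain(t))$ and then claim there is $w\in A^{t,\tau}_\gamma$ with $s\le\add(t,w)$. The trouble is that in the chain witnessing $s\le t$, the coordinate $\gamma$ need not be the one added first. If some coordinate $\gamma'\in(\gamma,\tau)$ was added earlier, then at the moment $\gamma$ enters the domain the relevant one-step extension uses some $w'\in A^{p,\tau'}_\gamma$ with $\tau'\le\gamma'<\tau$ and $p$ an intermediate condition whose domain already properly contains $\domain(t)$. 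There is no direct route from such a $w'$ to an element of $A^{t,\tau}_\gamma$; producing one is precisely the commutativity statement you defer to a sublemma. In the paper's development, that commutativity (Proposition~\ref{thm:one-step-commute}) is proved \emph{using} Proposition~\ref{thm:one-stepFirst}, so appealing to it here is circular.

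The paper's proof avoids this by inducting along the chain rather than by coordinate. The single step it isolates is: given $t'\le^* t$ and $s=\add(t',w')$ with $w'\in A^{t',\tau}_\gamma$, find $w\in A^{t,\tau}_\gamma$ with $s\le^*\add(t,w)$. This is exactly what Clause~(\ref{item:leq-star-change-w-in-A}) of Definition~\ref{def:star-order} is designed to deliver --- it hands you $w\in A^{t,\tau}_\gamma$ witnessing the required compatibilities, and one checks $\add(t',w')\le^*\add(t,w)$ coordinatewise. Iterating this along the normalized chain $p_0\le^* t$, $p_1\le^*\add(p_0,v_0)$, \dots\ gives $s\le^*\add(t,\vec z)$ with the $z_i$ appearing in the order the coordinates were \emph{added} (which need not be increasing). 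No reordering is required; commutativity is then obtained afterward as a consequence. If you simply replace ``minimal coordinate'' by ``first-added coordinate'' in your induction, your argument collapses to the paper's.
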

\begin{proof}
  The proposition will  follow by an easy
  induction on the length of $\vec z$ once we show that for any $t'\leq^* t$ and
  $s=\add(t',w')\leq t'$, where $w'\in A^{t',\tau}_{\gamma}$,  there is  $ w\in A^{t,\tau}_{\gamma}$ such
  that $s\leq^* \add(t, w)<t$.    Clause~3 of
  the Definition~\ref{def:star-order} of  
  the direct ordering $\leq^*$ is designed to provide such a $w$:
  \begin{align*}
    s(\tau)&\le^* \add(t, w)(\tau)&&\text{by Clauses~(3b,c),}\\
    s(\gamma)&\leq^*\add(t,  w)(\gamma)&&\text{by Clause~(3a), and}\\
    s(\gamma')&=\add(t',w')(\gamma')=t'(\gamma')\\
    &\qquad\leq^*t(\gamma')=\add(t,w)(\gamma')                                  &&\text{for
                                     $\gamma'\in\domain(s')\setminus\sing{\tau,\gamma}$.}    
  \end{align*}
\end{proof}

\begin{proposition}\label{thm:one-step-commute}
  Suppose $s\leq t$ and  $\gamma\in\domain(s)\setminus\domain(t)$, and
  let 
  $\tau=\min(\domain(t)\setminus\gamma)$.   Then there is $w\in
  A^{t,\tau}_{\gamma}$  such that $s\leq\add(t,w)<t$.
\end{proposition}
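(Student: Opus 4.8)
The plan is to peel the relation $s\leq t$ apart into one-step extensions, move the step that adjoins the coordinate $\gamma$ to the front, and read off $w$. By Proposition~\ref{thm:one-stepFirst} fix $\vec w=\seq{w_0,\dots,w_{n-1}}$ with $s\leq^{*}\add(t,\vec w)\leq t$. Since a direct extension preserves the domain and a one-step extension adjoins exactly one new ordinal to it, $\domain(\add(t,\vec w))=\domain(s)$, the coordinates adjoined by $w_0,\dots,w_{n-1}$ are distinct and exhaust $\domain(s)\setminus\domain(t)$ (so in particular $n=\card{\domain(s)\setminus\domain(t)}$), and $\gamma$ is adjoined by exactly one of them.

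I would argue by induction on $n$. If $n=1$ then $w_0$ adjoins $\gamma$, so by the definition of the one-step extension $w_0\in A^{t,\tau}_{\gamma}$ with $\tau=\min(\domain(t)\setminus\gamma)$, and $s\leq^{*}\add(t,w_0)<t$ is the conclusion with $w=w_0$. Suppose $n>1$. If $w_0$ adjoins $\gamma$ the same works, since then $s\leq^{*}\add(t,\vec w)\leq\add(t,w_0)<t$ with $w=w_0\in A^{t,\tau}_\gamma$. Otherwise $w_0$ adjoins some $\gamma_1\neq\gamma$; put $t_1=\add(t,w_0)$, so $t_1<t$ and $s\leq^{*}\add(t_1,\seq{w_1,\dots,w_{n-1}})\leq t_1$, hence $s\leq t_1$. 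Since $\gamma\notin\domain(t_1)=\domain(t)\cup\sing{\gamma_1}$ and $\card{\domain(s)\setminus\domain(t_1)}=n-1$, the induction hypothesis applied to the pair $(s,t_1)$ gives $w^{*}\in A^{t_1,\tau^{*}}_{\gamma}$, where $\tau^{*}=\min(\domain(t_1)\setminus\gamma)$, with $s\leq\add(t_1,w^{*})<t_1<t$.

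It then remains to commute the two one-step extensions $t\mapsto t_1=\add(t,w_0)$ and $t_1\mapsto\add(t_1,w^{*})$, which adjoin the distinct coordinates $\gamma_1$ and $\gamma$. The key claim is that there are $w\in A^{t,\tau}_{\gamma}$, with $\tau=\min(\domain(t)\setminus\gamma)$ as in the statement, and $\hat w_0$ in the $A$-set appropriate to adjoining $\gamma_1$ to $\add(t,w)$, such that $\add(\add(t,w_0),w^{*})=\add(\add(t,w),\hat w_0)$. Granting this, $s\leq\add(\add(t,w),\hat w_0)\leq\add(t,w)<t$, which is the conclusion and closes the induction. The claim is to be verified by tracing through Definition~\ref{def:one-step}, keeping careful account of how adjoining $\gamma_1$ changes the data at and below the coordinate $\min(\domain(t)\setminus\gamma_1)$ into which it is merged: the index $\gamma_0=\max(\domain(\cdot)\cap\tau)+1$ governing the tableaux at $\tau$, the Cohen functions $f^{s,\tau}_{\nu,\nu'}$ and their replacements $f^{s,\tau}_{\nu,\nu'}\cup f_{a^{s,\tau}_{\nu,\nu'},a^{w}_{\nu,\nu'}}$, and the transformation of the sets $A^{\cdot,\tau}_{\nu}$ given by clause~(2c) and Equation~\eqref{eq:Addw}. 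One checks, by cases on the position of $\gamma_1$ relative to $(\gamma,\tau)$, that the composite effect of first adjoining $\gamma_1$ and then $\gamma$ agrees with that of first adjoining $\gamma$ (now reaching up to $\tau$) and then $\gamma_1$, with $w$ recovered as the preimage of $w^{*}$ and $\hat w_0$ as the image of $w_0$ under the relevant $\sigma$-maps of Equation~\eqref{eq:Addw}.

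I expect this two-step commutation to be the main obstacle; everything else — the reduction via Proposition~\ref{thm:one-stepFirst} and the induction on $n$ — is routine. As is typical for Gitik-style forcings, the delicate case is $\gamma<\gamma_1<\tau$, where in one order $\gamma$ is adjoined ``just beneath'' $\gamma_1$ while in the other it is adjoined reaching all the way up to $\tau$ with $\gamma_1$ interpolated between $\gamma$ and $\tau$; there one must check that the indices $\gamma_0$ and the $\sigma$-maps (which depend on the data $w\scutdown\nu$) line up correctly. Once that case is settled the proposition follows at once.
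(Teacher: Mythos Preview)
Your approach is essentially the same as the paper's: both reduce via Proposition~\ref{thm:one-stepFirst} to a sequence of one-step extensions and then argue that consecutive one-step extensions can be swapped so that the one adjoining $\gamma$ can be moved to the front. The paper organizes the reduction as ``permute $\vec w$'' rather than as an induction on $n$, but this is cosmetic.

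The one place where the paper goes further than you is the commutation itself: you correctly flag it as the main obstacle and sketch what must be checked, but the paper actually carries out the case analysis. In the paper's notation (first step adjoins $\nu_0$, second adjoins $\nu_1$) there are three cases: (1) $\nu_0<\nu_1<\tau_0$, where the needed $w'_1\in A^{t,\tau_0}_{\nu_1}$ is provided directly by Definition~\ref{def:star-order}(\ref{item:leq-star-change-w-in-A}) and one takes $w'_0=\sigma_{\nu_0}(w_0)$ from clause~(2c) of Definition~\ref{def:one-step}; (2) $\nu_1<\nu_0=\tau'_1$, where $w_1=\sigma_{\nu_1}(w'_1)$ for some $w'_1\in A^{t,\tau_0}_{\nu_1}$ by Definition~\ref{def:one-step}(2c), and one then uses $w_1\restrict(\nu_0,\tau_0]$; and (3) the disjoint case, where the two extensions already commute literally. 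Your ``delicate case'' $\gamma<\gamma_1<\tau$ corresponds to case~(2), and the mechanism you point to (the $\sigma$-maps of Equation~\eqref{eq:Addw}) is exactly right; the paper just makes explicit which clause supplies the preimage in each direction.
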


\begin{proof}
  By using Proposition~\ref{thm:one-stepFirst}, we can find $\vec w$
  so that $s\leq^* \add(t,\vec w)\leq t$ for some sequence $\vec w$.
  Thus it only remains to show that the order of the sequence $\vec w$
  can be   permuted, that is, that there is $\vec w'$ such that $\add(s,\vec
  w)=\add(s,\vec w')$ and $w'_{0}   \in A^{s,\tau}_{\gamma}$.   

  This will follow by an easy induction once we show that the order
  of two consecutive one-step extensions can be reversed.    
  Thus suppose that 
  $s=\add(\add(t,w_0),w_1)$, with  $w_0\in
  A^{t,\tau_0}_{\nu_0}$ and $w_1\in A^{\add(t,w_0),\tau_1}_{\nu_1}$.
  We want to find  $w'_1\in A^{t,\tau'_1}_{\nu_1}$ and $w'_0\in A^{\add(t,w'_1),\tau'_0}_{\nu_1}$
  so that $s=\add(\add(t, w'_1),w'_0)$.
  We have three cases:

  \begin{case}
    {1}{$\nu_0<\nu_1<\tau_0$}
    In this case $\tau_1=\tau_0$, and 
    by definition~\ref{def:star-order}, there is $w'_1\in
    A^{t,\tau_0}_{\nu_1}$ 
    such that $w_1=
    (w'_1)\scutdown\nu_1$.
    Then $s=\add(\add(t,w_1'),\sigma_{\nu_0}(w_0))$, where
    $\sigma_{\nu_0}$ is as defined in Clause~3 of Definition~\ref{def:one-step}.
  \end{case}
  \begin{case}{2}{$\nu_1<\tau'_1=\nu_0$}
    By Definition~\ref{def:one-step}, $w_1=\sigma_{\nu_1}(w'_1)$ for some
    $ w'_1\in A^{t,\tau_0}_{\nu_1}$.   Then $s=\add(\add(t,w'_1),w_1\restrict (\nu_0,\tau_0])$.
  \end{case}
  \begin{case}{3}{$\nu_1>\tau_0$ or $\tau'_1<\nu_0$}
    In this case $\add(\add(t, w_0),w_1)=\add(\add(t,w_1),w_0)$ so we
    can take $w'_0=w_0$ and $w'_1=w_1$.
  \end{case}
\end{proof}
\begin{comment}
\begin{lemma}
  \label{thm:apaOK}
  Suppose $a_1\supset a_0$.  Then $\set{(a'_1,a'_0)\mid
    f_{a_1,a'_1}\supset f_{a_0,a'_0}}\in F_{a_1,a_0}$.
\end{lemma}
  \begin{proof}
    Let $\pi_0\colon\kappa\to\domain(a_0)$ and
    $\pi_1\colon\kappa\to\domain(a_1)$ be the associated enumerations
    and set $\sigma=\pi_1^{-1}\circ\pi_0 \colon\kappa\to\kappa$ and
    $A=\set{\lambda<\kappa\mid \sigma[\lambda]\subseteq\lambda}$.
    Then
    $B=\set{(a'_1,a'_0)\mid \exists \lambda\in
      A\;(\pi'_0=\pi_1'\circ(\sigma\restrict\lambda)}\in F_{a_1,a_0}}$,
    and for any $(a'_0,a'_1)\in B$ and
    $\xi\in\domain(a_0)\cap \pi_0[\lambda]$ we have
    \begin{align*}
      f_{a_0,a'_0}(\xi)&=\pi_{0}'\circ \pi_0^{-1}(\xi)\\
                       &=\pi_{1}'\circ\sigma\circ
                         (\pi_1\circ\sigma)^{-1}(\xi)\\
                       &=\pi_1'\circ \sigma \circ
                         \sigma^{-1}\circ  \pi_1^{-1}(\xi)\\
                       &=\pi'_{1}\circ \pi_{1}^{-1}(\xi)
                         =f_{a_1,a'_1}(\xi).
    \end{align*}
  \end{proof}
\end{comment}

 We write $\below{P(\vec F)} s$ for $\set{s'\in P(\vec F)\mid s'\leq
  s}$.   The proof of the following proposition is straightforward.

\begin{proposition}[Factorization]  Suppose $s\in P(\vec F)$ and
  $\gamma\in\domain(s)$ for some $\gamma<\zeta$.  Then
  \label{thm:factorization}
    \begin{equation}
      \label{eq:factorizationexact}
      \below{P(\vec F)}{s} \text{ is a regular suborder
        of }\below{P(\vec F^{s,\gamma})}{s\restrict\gamma+1}\times
      P'
  \end{equation}
    where $P'=\set{q\restrict(\gamma,\zeta]\mid q\leq s}$.
    Thus  $\below{P(\vec F)}{s}$ can be written in the form
    \begin{equation}\label{eq:factorizationstar}
      \below{P(\vec F)}{s}\equiv
      \below{P(\vec   F^{s,\gamma})}{(s\restrict\gamma+1)}    \star\dot R
    \end{equation}
    where  $\dot R$ is a $\below{P(\vec
      F^{s,\gamma})}{s\restrict\gamma+1}$-name for a  Prikry style forcing order.
    \qed
\end{proposition}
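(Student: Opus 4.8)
Fix $s\in P(\vec F)$ and $\gamma\in\domain(s)$ with $\gamma<\zeta$. Since any $q\le s$ has $\domain(q)\supseteq\domain(s)\ni\gamma$, every such $q$ decomposes as the disjoint union $q=q\restrict(\gamma+1)\,\cup\,q\restrict(\gamma,\zeta]$, with $q\restrict(\gamma+1)\in\below{P(\vec F^{s,\gamma})}{s\restrict(\gamma+1)}$ and $q\restrict(\gamma,\zeta]\in P'$; the plan is to show that $q\mapsto\bigl(q\restrict(\gamma+1),\,q\restrict(\gamma,\zeta]\bigr)$ is the required regular embedding and that $P'$ is a name for a Prikry-style quotient. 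The key is a locality observation about the orderings of Subsection~\ref{sec:PForder}. A one-step extension $\add(t,w)$ with $w\in A^{t,\tau}_{\gamma''}$ and $\tau=\min(\domain(t)\setminus\gamma'')$ alters only the coordinates $\gamma''$ and $\tau$ of $t$ --- the modification to $t(\tau)$ prescribed in Definition~\ref{def:one-step} merges or thins data internal to coordinate $\tau$ --- and each clause of the direct extension order of Definition~\ref{def:star-order} concerns a single coordinate $\tau$ together with the measure-one sets $A^{t,\tau}_{\gamma'}$, which are internal to $\tau$. Because $\gamma\in\domain(t)$ for every $t\le s$, the index $\gamma''$ inserted by a one-step extension is never $\gamma$, so either $\gamma''<\gamma$, whence $\tau\le\gamma$ and only $t\restrict(\gamma+1)$ is touched, or $\gamma''>\gamma$, whence $\tau>\gamma$ and only $t\restrict(\gamma,\zeta]$ is touched. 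Hence the maps $t\mapsto t\restrict(\gamma+1)$ and $t\mapsto t\restrict(\gamma,\zeta]$ are order-preserving, a chain of one-step and direct extensions witnessing $q'\le q$ splits into a low-block chain followed by a high-block chain, and conversely such chains recombine; this yields order reflection: $q'\le q$ iff $q'\restrict(\gamma+1)\le q\restrict(\gamma+1)$ and $q'\restrict(\gamma,\zeta]\le q\restrict(\gamma,\zeta]$.

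The complementary point is that the two blocks recombine freely. Inspecting Definitions~\ref{def:overview}, \ref{def:tableau} and~\ref{def:A} coordinate by coordinate, the only dependence of the data at a coordinate $\tau>\gamma$ on the block $[0,\gamma]$ is through the fixed critical point $\forceKappa_\gamma$ --- which equals $\forceKappa^{s,\gamma}$ for every condition below $s$, since neither the direct extension order nor a one-step extension changes the critical point of an already-present coordinate --- and through the purely formal occurrences $h_{\gamma',\nu'}(\xi')$ with $\gamma'\le\gamma$ in the Cohen functions of the high block, which impose no constraint on the low block since they are evaluated, as in~\eqref{eq:fpeculiareval}, only against the generic. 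Consequently, for every $r\in\below{P(\vec F^{s,\gamma})}{s\restrict(\gamma+1)}$ and every $t\in P'$ the union $r\cup t$ is again a condition, with $(r\cup t)\restrict(\gamma+1)=r$, $(r\cup t)\restrict(\gamma,\zeta]=t$, and $r\cup t\le s$ by order reflection. Combining this with order reflection, two conditions of $\below{P(\vec F)}{s}$ are compatible there iff their low parts are compatible in $\below{P(\vec F^{s,\gamma})}{s\restrict(\gamma+1)}$ and their high parts are compatible in $P'$; preservation and reflection of compatibility is exactly what makes $\below{P(\vec F)}{s}$ a regular suborder of the product, giving~\eqref{eq:factorizationexact}.

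For~\eqref{eq:factorizationstar}, note that $r\mapsto r\cup\bigl(s\restrict(\gamma,\zeta]\bigr)$ regularly embeds $\below{P(\vec F^{s,\gamma})}{s\restrict(\gamma+1)}$ into $\below{P(\vec F)}{s}$ (again by locality, $s\restrict(\gamma,\zeta]$ being the $\le$-largest element of $P'$), so $\below{P(\vec F)}{s}$ is forcing-equivalent to $\below{P(\vec F^{s,\gamma})}{s\restrict(\gamma+1)}\star\dot R$ for the canonical quotient name $\dot R$; concretely $\dot R$ is realized as the set $P'$ with each Cohen value $h_{\gamma',\nu'}(\xi')$, $\gamma'\le\gamma$, replaced by the ordinal the low generic assigns to it, carrying the induced ordering --- which may identify conditions the formal ordering kept apart, so that $\dot R$ is genuinely a name and not merely $\check{P'}$. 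Finally $\dot R$ is a Prikry-style forcing: its conditions are functions on finite subsets of $(\gamma,\zeta]$ with quadruple values built from the extenders $F^{s,\tau}_\nu$ at coordinates $\nu>\gamma$, with a direct extension order and one-step extensions defined exactly as in Subsection~\ref{sec:PForder} restricted to that interval, so the Prikry property and the other structural features of $P(\vec F)$ carry over unchanged. The only real work is the bookkeeping behind the locality observation --- verifying from Definitions~\ref{def:one-step} and~\ref{def:star-order} that every operation is confined to one block and that each recombination $r\cup t$ is a legitimate condition --- and once that is in place the rest is routine.
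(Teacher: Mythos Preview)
The paper itself gives no proof of this proposition beyond the remark that it is ``straightforward'' and the discussion in the paragraph following it; your write-up is essentially the argument the author has in mind. You have correctly isolated the two points that make the factorization work: the locality of both the direct-extension order and the one-step extension (each operation is confined to one side of the split at $\gamma$, since $\gamma\in\domain(t)$ forces the pair $(\gamma'',\tau)$ of any one-step extension to lie entirely in $[0,\gamma]$ or entirely in $(\gamma,\zeta]$), and the fact that the only cross-block interaction is through the formal Cohen values $h_{\gamma',\nu'}(\xi')$ with $\gamma'\le\gamma$ appearing in the high-block tableau, which are resolved only against the generic. Your handling of this second point --- keeping these as formal symbols in $P'$ and noting that $\dot R$ is obtained by evaluating them against the low-block generic, so that $\dot R$ is a genuine name rather than $\check{P'}$ --- matches exactly the paper's own explanation following the proposition of why~\eqref{eq:factorizationexact} is only a regular suborder and not an equality.

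One small point worth tightening: your order-reflection claim ``$q'\le q$ iff the two restrictions are each $\le$'' is slightly delicate because of the identification in Definition~\ref{def:tableau} of the formal value $h_{\gamma',\nu'}(\xi')$ with the ordinal $\xi''$ once the condition forces $h_{\gamma',\nu'}(\xi')=\xi''$. A condition $q\in\below{P(\vec F)}{s}$ is really an equivalence class under this identification, and two representatives may have high-block restrictions that differ as formal objects in $P'$. This does not damage the argument --- it only means the map into the product is defined on representatives rather than on classes, and regularity still follows --- but it is precisely why the paper says ``regular suborder'' rather than ``isomorphic to a suborder''. Otherwise your proof is correct and complete.
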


This factorization property is an important property of
this Magidor-Radin style of Prikry forcing.   
Typically, equation~\eqref{eq:factorizationexact} would be an equality
rather than a subalgebra; however that is not true here
because  
of the peculiar form of the 
Cohen conditions $f^z_{\nu,\nu'}(\xi)=h_{\nu'',\nu'}(\xi'')$
in Clause~(\ref{item:fpeculiar}) of Definition~\ref{def:tableau}.  
When $\nu>\gamma\ge\nu''$,  the determination via Definition~\ref{def:a2f}
of the ultimate value of $h_{\nu,\nu'}$ depends on both
$\below{P(\vec   F^{s,\gamma})}{s \restrict\gamma+1}$ and $R$.
The generic $G\subseteq P(\vec F)$  obtained from a generic
$G_0\times G_1\subseteq P(\vec F^{s,\gamma})\times P'$ is obtained by resolving,
as specified in
equation~\eqref{eq:fpeculiareval}, the values of the
Cohen conditions  in $G_1$ which have the form described in
Definition~\ref{def:tableau}(\ref{item:fpeculiar}): that is, 
$f_{\nu,\nu'}(\xi)=h_{\nu'',\nu'}(\xi'')$ for some $\nu$,
$\nu''$ and $\nu'$ with 
$\nu>\gamma\ge\nu'$. 

Note that the forcing $P'$ in equation~\eqref{eq:factorizationexact}
is in fact identical to $P(\vec F)$ except that the domain of the
conditions is contained in the interval $[\gamma+1,\zeta]$  instead of
$[0,\zeta]$, and $\gamma+1$ is used instead of $0$ as the default
value of $\gamma_0$  in the definition of
$P^*_{\tau}$ when 
$\domain(s)\cap\tau=\emptyset$ (but the tableau of
figure~\ref{fig:1} retains all of its columns, starting with $0$).
Thus all of the properties proved of $P(\vec F)$ are also true of $P'$.
This 
factorization  will frequently
be used in proofs, sometimes implicitly, to justify simplifying
notation by proving that the result holds for the case when
$\domain(s)=\sing{\zeta}$.
The result then follows for arbitrary $s$ by a simple induction on
$\zeta$:  If $s$ is an arbitrary condition in $P(\vec F)$ and
$\gamma=\max(\domain(s)\cap\zeta)$ then the induction step uses 
the induction 
hypothesis for $P(\vec F^{s,\gamma})$ and the special case
$\domain(s)=\sing{\zeta}$ for $R$.

\begin{lemma}[Closure]
  \label{thm:closure}
    Suppose that $\seq{s_{\nu}\mid\nu<\beta}$ is a
  $<^{*}$-descending sequence of conditions in $P(\vec F)$.  
  \begin{enumerate}
  \item\label{item:closuresmall}($\kappa$ closure)
    If $\beta<\forceKappa^{s_0,\min(\domain(s_0))}$ then the infimum
    $\bigwedge_{\nu<\beta}s_\nu$ of this sequence exists.
  \item (Diagonal closure)\label{item:closurediagonal}
    Suppose that  $\beta=\forceKappa^{s_0,\min(\domain(s_0))}$.
    Then there is $s=\bigtriangleup_{\nu<\beta}s_\nu\leq^* s_0$ such that 
    $s\forces\forall\nu<\dot\forceKappa_0\;s_{\nu}\in\dot G$.
    \end{enumerate}
\end{lemma}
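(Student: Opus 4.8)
The plan is to handle the two clauses in parallel, building the infimum (or diagonal infimum) coordinate-by-coordinate in $\domain(s_0)$, using recursion on $\tau\in\domain(s_0)$ exactly as in the definition of the forcing. By the factorization Proposition~\ref{thm:factorization} it suffices to treat the case $\domain(s_0)=\sing{\zeta}$; the general case then follows by the standard induction on $\zeta$, using the induction hypothesis for $\below{P(\vec F^{s,\gamma})}{s\restrict\gamma+1}$ and the $\domain=\sing{\zeta}$ case for the Prikry-style remainder $\dot R$, noting that $<^*$-descending sequences factor into $<^*$-descending sequences in each factor. So fix a $<^*$-descending sequence $\seq{s_\nu\mid\nu<\beta}$ with each $s_\nu$ a single quadruple $(\forceKappa,\vec F,z^{s_\nu},\vec A^{s_\nu})$; note the first two coordinates are constant along the sequence by Clause~(1) of Definition~\ref{def:star-order}, and $\forceKappa=\forceKappa^{s_0,\zeta}$ is regular in $M$.

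For Clause~(\ref{item:closuresmall}), with $\beta<\forceKappa$: I would define $s=\bigwedge_{\nu<\beta}s_\nu$ by setting $\forceKappa^{s,\zeta}=\forceKappa$, $\vec F^{s,\zeta}=\vec F$, and taking unions in the remaining coordinates. Concretely, $f^{s,\zeta}_{\gamma,\gamma'}=\bigcup_{\nu<\beta}f^{s_\nu,\zeta}_{\gamma,\gamma'}$ and $a^{s,\zeta}_{\gamma,\gamma'}=\bigcup_{\nu<\beta}a^{s_\nu,\zeta}_{\gamma,\gamma'}$; these are well-defined functions because the sequences are $\subseteq$-increasing by Clauses~(2) and~(4) of Definition~\ref{def:star-order}, and they still satisfy the cardinality bounds of Definition~\ref{def:tableau} since $\card{\domain}\le\forceKappa$ and $\beta<\forceKappa=\cof(\forceKappa)$. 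For the sets $A^{s,\zeta}_{\gamma}$ I would use the recursion hypothesis: each $w'\in A^{s_\nu,\zeta}_\gamma$ has its restriction $w'\restrict[\gamma_0,\gamma]$ sitting in $P^*_\gamma$, and Clause~(3) of Definition~\ref{def:star-order} gives, for each $\nu'<\nu$, a map sending $w'\in A^{s_{\nu'},\zeta}_\gamma$ to a $\le^*$-larger member of $A^{s_\nu,\zeta}_\gamma$ with matching $a$- and $f$-data; applying the recursively-constructed infimum inside $P^*_\gamma$ along these coherent threads of length $\beta$ produces the desired $A^{s,\zeta}_\gamma\subseteq U^{s,\zeta}_\gamma$ (membership in the ultrafilter is preserved because $U^{s,\zeta}_\gamma=\ufFromExt{F^{s,\zeta}_\gamma}{s(\zeta)\scutdown\gamma}$ is $\forceKappa$-complete, being derived from an extender with critical point $\forceKappa>\beta$). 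One must check that $s\le^* s_\nu$ for each $\nu$, which is immediate from the construction, and that $s$ is a genuine infimum, which follows because any lower bound must contain all these unions.

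For Clause~(\ref{item:closurediagonal}), with $\beta=\forceKappa$: now a naive union of the $f$'s and $a$'s can have domain of size $\forceKappa$ but need not satisfy the required properties, and more seriously the $A$-coordinates need not stabilize. The fix, as in Magidor--Radin and Gitik's diagonal-intersection arguments, is to define $s=\bigtriangleup_{\nu<\forceKappa}s_\nu\le^* s_0$ by keeping $s_0$'s Cohen and $a$-data and taking, in each coordinate $\gamma$, the \emph{diagonal intersection} $A^{s,\zeta}_\gamma=\set{w\in A^{s_0,\zeta}_\gamma\mid \forall\nu<\forceKappa^{w}\; (\text{the appropriate restriction of }w)\in A^{s_\nu,\zeta}_\gamma}$, using again the coherence maps from Clause~(3) of Definition~\ref{def:star-order} and the recursively-defined diagonal intersection inside $P^*_\gamma$; this set lies in $U^{s,\zeta}_\gamma$ because that ultrafilter is normal (it is $\ufFromExt{F^{s,\zeta}_\gamma}{s(\zeta)\scutdown\gamma}$ for an extender, hence concentrates below any regressive function, the critical point being $\forceKappa$). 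Then if $t\le s$ is obtained by a one-step extension $\add(s,w)$ with $w\in A^{s,\zeta}_\gamma$, the defining property of the diagonal intersection forces $w$ (suitably restricted) to lie in $A^{s_\nu,\zeta}_\gamma$ for all $\nu<\forceKappa^w$, so iterating this observation along a generic $G\ni s$ one gets $s\forces \forall\nu<\dot\forceKappa_0\; s_\nu\in\dot G$: any condition in $G$ below $s$ differs from $s$ only through finitely many one-step extensions, each of which lands in a late enough tail of the sequence, so only the ``threshold'' $\forceKappa^w$ matters and it exceeds any fixed $\nu$ once we go low enough in $G$.

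The main obstacle, and the part deserving the most care, is the bookkeeping for the $A$-coordinates: because the ultrafilters $U^{s,\tau}_\gamma$ themselves \emph{change} as one strengthens $s$ (this is exactly the subtlety flagged after Definition~\ref{def:star-order}, the reason Gitik's Rudin--Keisler witnesses must be carried along), one cannot simply intersect the $A^{s_\nu,\zeta}_\gamma$ — they live over different sets $P^*_{\tau,\gamma}$ up to the coherence maps of Clause~(3). The correct statement is that these maps form a coherent system of $\le^*$-embeddings, and the recursion hypothesis (the lemma for smaller $\tau$, resp.\ $\gamma$) produces infima/diagonal intersections \emph{inside} $P^*_\gamma$ which are then pushed forward; verifying that this is well-defined, respects $\le^*$, and keeps everything in the relevant ultrafilter is the technical heart of the argument, and it is precisely here that the $\forceKappa$-completeness (Clause~1) and normality (Clause~2) of the extender-derived ultrafilters $\ufFromExt{F^{s,\zeta}_\gamma}{s(\zeta)\scutdown\gamma}$ get used.
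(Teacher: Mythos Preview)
Your approach is essentially the paper's: induction on $\zeta$, reduction to $\domain(s_0)=\sing{\zeta}$ via factorization, unions for the tableau $z$, and (for the one nontrivial coordinate) defining $A^{s,\zeta}_\eta$ as the set of infima of $\le^*$-descending threads through the $A^{s_\nu,\zeta}_\eta$.  The paper's verification that $A^{s,\zeta}_\eta\in U^{s,\zeta}_\eta$ is more concrete than your appeal to $\forceKappa$-completeness: since the ultrafilter varies with the seed, what one actually checks is that the new seed $s(\zeta)\scutdown\eta$ \emph{is} the infimum $\bigwedge_{\nu<\beta}\bigl(s_\nu(\zeta)\scutdown\eta\bigr)$ (which exists by the induction hypothesis applied inside the ultrapower), and this infimum lies in $i^{F^{s,\zeta}_\eta}(A^{s,\zeta}_\eta)$ because it is itself the infimum of a thread.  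You correctly flag exactly this subtlety in your final paragraph, but your earlier one-line invocation of completeness is not sufficient on its own.

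One small slip in Clause~(\ref{item:closurediagonal}): keeping $s_0$'s Cohen and $a$-data does not work.  If $a^{s,\zeta}_{\gamma,\gamma'}=a^{s_0,\zeta}_{\gamma,\gamma'}$, then for $\nu>0$ you fail Clause~(\ref{item:leq-star-a-fctns-extend}) of Definition~\ref{def:star-order} relative to $s_\nu$, and a one-step extension $\add(s,w)$ need not be $\le s_\nu$ even when $w$ lands in $A^{s_\nu,\zeta}_\gamma$.  You should take unions of the $a$'s and $f$'s just as in Clause~(\ref{item:closuresmall}) --- the unions still have cardinality $\le\forceKappa$ since $\forceKappa$ is regular and there are $\forceKappa$-many terms each of size $\le\forceKappa$ --- and then use the diagonal intersection on the $A$-coordinates exactly as you describe.
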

Note that for the factorization forcing $P'$ of
Proposition~\ref{thm:factorization}, $\forceKappa_0$ can be replaced
by $\forceKappa_{\gamma+1}$.
\begin{proof}
   The proof is by
  induction on $\zeta$, using Proposition~\ref{thm:factorization}.
  Thus we can  assume that $\domain(s_0)=\sing{\zeta}$.
  Since the first two coordinates of $s_{\nu}(\zeta)$ are fixed and
  the third, $z^{s,\nu}$, is $\kappa^+$-closed, the fourth coordinate,
  $\vec A^{s_\nu,\zeta}$, is the only problem.   
  
  If $w',w\in P_{\zeta,\eta}^*$ then we write $w'\leq^* w$ if the
  conditions of
  Definition~\ref{def:star-order}(\ref{item:leq-star-change-w-in-A}) hold.
  If $\zeta>\gamma>\eta$ then the induction hypothesis trivially
  extends to  sequences
  in $P^*_{\gamma,\eta}$, since only subclause~(\ref{item:leq-star-change-w-in-A}a) is problematic.

  Now, to prove Clause(\ref{item:closuresmall}) of the Lemma we need to define
  $A^{s,\zeta}_{\eta}$ for each $\eta<\zeta$.  We can assume that
  $\beta<\forceKappa^{w}_{\eta}$ for all $w\in
  A^{s,\zeta}_{\eta}$.  Set
  \begin{equation*}
    A^{s,\zeta}_{\eta}=\set{\bigwedge_{\nu<\beta}w_{\nu}\mid
    (\forall\nu<\beta)\;w_\nu\in A^{s_\nu,\zeta}_{\eta}\land
    (\forall\nu'<\nu<\beta)\; w_{\nu'}\leq^*w_{\nu}}.
  \end{equation*}
  To see that  $A^{s,\zeta}_{\eta}\in U^{s,\zeta}_{\eta}$ note that
  the induction hypothesis implies that  
  the infimum
  $w=\bigwedge_{\nu<\beta}(i^{F^{s,\zeta}_\eta}(s_\nu))\scutdown\eta$
  exists,  and
  $w\in i^{F^{s,\zeta}_\eta}(A^{s,\zeta}_{\eta})$.  

  This concludes the proof of Clause~(\ref{item:closuresmall}), and
  the proof of Clause~(\ref{item:closurediagonal}) is similar. 
\end{proof}

\begin{lemma}
  \label{thm:diagonal-closure}
  Suppose that $s\in P(\vec F)$ and for all $w\in
  A^{s,\zeta}_{\gamma}$ the set $D$ is open and dense  in
  $(P(\vec F),\leq^*)$ below $\add(s,w)$.   Then there is  a condition
  $s'\leq^* s$ such that   $s''\in D$ for all $s''<s'$ having $\gamma\in\domain(s'')$.
\end{lemma}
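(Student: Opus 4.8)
The plan is to prove Lemma~\ref{thm:diagonal-closure} by a diagonal-closure argument of the usual Prikry-forcing type, using the ``Diagonal closure'' Clause~(\ref{item:closurediagonal}) of Lemma~\ref{thm:closure} as the main engine, together with the factorization Proposition~\ref{thm:factorization} to reduce to the case $\domain(s)=\{\zeta\}$. As in the proof of Lemma~\ref{thm:closure}, by Proposition~\ref{thm:factorization} and induction on $\zeta$ we may assume $\domain(s)=\sing{\zeta}$, so that the only coordinate that can vary as we pass to direct extensions is $\vec A^{s,\zeta}$, and the relevant $\gamma$ lies in $(\gamma_0,\zeta)=(0,\zeta)$. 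The key observation is that, after shrinking $A^{s,\zeta}_{\gamma}$ if necessary, we may assume that $\add(s,w)$ is defined for every $w\in A^{s,\zeta}_{\gamma}$ (this is noted after Definition~\ref{def:one-step}), and that the hypothesis gives us, for each such $w$, an open dense set $D$ below $\add(s,w)$ in $(P(\vec F),\le^*)$.

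First I would use the $\kappa$-closure of $(P(\vec F),\le^*)$ below a fixed condition (Clause~(\ref{item:closuresmall}) of Lemma~\ref{thm:closure}, together with the fact that the first two coordinates of $s(\zeta)$ are fixed and the tableau coordinate is $\kappa^+$-closed, so $\le^*$-descending sequences of length $<\forceKappa^{w}_{\gamma}$ have infima) to build, for each $w\in A^{s,\zeta}_{\gamma}$, a single condition $q_w\le^* \add(s,w)$ with $q_w\in D$: one simply meets $D$ once, which suffices since $D$ is open. Moreover, since $\gamma\in\domain(\add(s,w))$, Proposition~\ref{thm:one-stepFirst} or direct inspection lets us arrange $q_w=\add(s_w,w^*)$ for some $s_w\le^* s$ and some $w^*\in A^{s_w,\zeta}_{\gamma}$ with $w^*\restrict[\gamma_0,\gamma]\le^* w\restrict[\gamma_0,\gamma]$; absorbing the ``upper part'' of $q_w$ back into a direct extension of $s$, the content is: for each $w$ there is a direct extension of $\add(s,w)$ lying in $D$, witnessed by replacing $w$ by a stronger $w^*$ and shrinking the data above $\gamma$.

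The heart of the argument is then a single application of the Diagonal closure, Lemma~\ref{thm:closure}(\ref{item:closurediagonal}), applied not to a sequence indexed by an ordinal $<\forceKappa$ but ``diagonally over $A^{s,\zeta}_{\gamma}$'': one forms $s'=\bigtriangleup_{w}\, q_w \le^* s$, where the infimum is taken in the sense that $A^{s',\zeta}_{\gamma}$ consists (essentially) of the $w^*$'s obtained above — or more precisely one applies the ultrapower characterization from the proof of Lemma~\ref{thm:closure}, noting that $w=\bigl(i^{F^{s,\zeta}_{\gamma}}(s)\bigr)\scutdown\gamma$ is the canonical generic element of $A^{s,\zeta}_{\gamma}$, and that the map $w\mapsto q_w$ is in $M$ (this uses Proposition~\ref{thm:EnuinM}: everything in sight lives in $\ult(M,E_{\gamma})$, so the construction of $q_w$ uniformly over $w$ can be carried out inside $M$). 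For the other coordinates $\eta\neq\gamma$, and for the coordinates $f^{s,\zeta}_{\nu,\nu'}$, one simply takes unions/infima exactly as in Lemma~\ref{thm:closure}(\ref{item:closuresmall}); the point is that $\le^*$-decreasing below $s$ on these coordinates is $\kappa$-closed. The resulting $s'\le^* s$ then has the property that for any $s''<s'$ with $\gamma\in\domain(s'')$, writing $\tau'=\min(\domain(s'')\setminus(\gamma+1))$ and using Proposition~\ref{thm:one-step-commute} to see that $s''\le \add(s',w)$ for some $w\in A^{s',\zeta}_{\gamma}$ (after possibly permuting one-step extensions), we get $s''\le \add(s',w)\le^* q_{w}\in D$ — wait, more carefully, $s''$ extends $\add(s',w)$ and $\add(s',w)$ was built to lie in $D$, so by openness of $D$ (upward under $\le^*$? no, $D$ open dense means downward-closed and dense), we need $s''\le^* \add(s',w)$; but $s''<\add(s',w)$ need not be a \emph{direct} extension. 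This is the subtlety, and it is resolved exactly as in standard Prikry arguments: $D$ being open dense in $(P(\vec F),\le^*)$ below $\add(s,w)$ means $s''\in D$ whenever $s''\le^* t$ for some $t\in D$ with $t\le \add(s,w)$; so since $\add(s',w)\in D$ and $\add(s',w)\le \add(s,w)$, and $s''\le \add(s',w)$ decomposes (by Proposition~\ref{thm:one-step-commute}) as a one-step extension adding $\gamma$ followed by a direct extension, the direct-extension part lands in $D$, hence $s''\in D$; the only thing to check is that the one-step extension adding $\gamma$ to $s'$ already lies in $D$ below $\add(s,w)$, which is exactly how $q_w$ was chosen.

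The step I expect to be the main obstacle is the bookkeeping in the diagonal infimum: one must verify that $A^{s',\zeta}_{\gamma}=\set{w^*\mid w\in A^{s,\zeta}_{\gamma}}$ genuinely lies in $U^{s',\zeta}_{\gamma}$, which (as in Lemma~\ref{thm:closure}) reduces to checking that the canonical element $\bigl(i^{F^{s',\zeta}_{\gamma}}(s')\bigr)\scutdown\gamma$ is a member of $i^{F^{s',\zeta}_{\gamma}}(A^{s',\zeta}_{\gamma})$, i.e.\ that the map $w\mapsto q_w$ (equivalently $w\mapsto w^*$) commutes appropriately with $i^{F^{s',\zeta}_{\gamma}}$; and, compatibly with this, that the modifications this map makes to the rows above $\gamma$ — via the operator $\sigma$ of Definition~\ref{def:one-step}(\ref{item:AddA}) and the Cohen-condition merging of Definition~\ref{def:a2f} — cohere into a single well-defined $\le^*$-extension $s'$ of $s$, rather than conflicting between different $w$'s. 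This is where $\kappa$-closure (so that the $\le^*$-decreasing chain of modifications of length $\le\kappa$ has an infimum), Proposition~\ref{thm:faa_exists} (so that $f_{a,a'}$ exists on a $\ufFromExt{F}{a}$-large set), and the ``without loss of generality $\add(s,w)$ is always defined'' remark are all used; none of these steps is deep, but assembling them correctly is the delicate part. Everything else is a routine adaptation of the proof of Lemma~\ref{thm:closure}.
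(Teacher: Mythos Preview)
Your overall plan matches the paper's --- reduce to $\domain(s)=\sing{\zeta}$, arrange that $\add(s',w)\in D$ for all $w\in A^{s',\zeta}_\gamma$, and conclude via Proposition~\ref{thm:one-step-commute}. The gap is in how you combine the choices $q_w\le^*\add(s,w)$ into a single $s'\le^* s$. You propose to pick the $q_w$ independently and then take a ``diagonal infimum'' or pass to the ultrapower; but each $q_w$ modifies the $\zeta$-coordinate as well (the functions $a^{\cdot,\zeta}_{\nu,\nu'}$, $f^{\cdot,\zeta}_{\nu,\nu'}$, and the sets $A^{\cdot,\zeta}_\eta$ for $\eta>\gamma$), and these upper-part modifications for distinct $w$ need not be $\le^*$-compatible. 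An ultrapower of $w\mapsto q_w(\zeta)$ lands in $\ult(M,F^{s,\zeta}_\gamma)$, not in $P^*_\zeta$; and $\kappa$-closure produces infima only of chains that are already $\le^*$-decreasing. You correctly flag this as the main obstacle but do not resolve it.

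The paper's resolution is a sequential recursion of length $\kappa$. Enumerate $A^{s,\zeta}_\gamma=\set{w_\nu\mid\nu<\kappa}$ and build a $\le^*$-decreasing sequence $\seq{s_\nu\mid\nu<\kappa}$ together with a map $\sigma\colon w_\nu\mapsto\sigma(w_\nu)$, maintaining the invariant $s_\nu\scutdown\gamma=s\scutdown\gamma$ and $\vec A^{s_\nu,\zeta}\restrict(\gamma+1)=\vec A^{s,\zeta}\restrict(\gamma+1)$. This invariant guarantees $\add(s_\nu,w_\nu)\le^*\add(s,w_\nu)$, so the hypothesis yields $t\le^*\add(s_\nu,w_\nu)$ in $D$; one then splits $t$ into an upper part, which becomes $s_{\nu+1}$ (discarding the changes $t$ made at or below $\gamma$, so the invariant survives), and a lower part $\sigma(w_\nu)$. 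Only after this recursion produces $s_\kappa=\bigtriangleup_\nu s_\nu$ does one use the ultrapower --- applied to $\sigma$, not to $w\mapsto q_w$ --- obtaining $\bar w=[\sigma]_{U^{s,\zeta}_\gamma}$, which is then merged with $s_\kappa$ to form $s'$. The missing idea in your sketch is precisely this invariant and the upper/lower separation it enables.
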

\begin{proof}{}
  By Proposition~\ref{thm:one-step-commute} it will be enough to show
  that there is $s'\leq^* s$ such that $\add(s',w)\in D$ for all
  $w\in A^{s',\zeta}_{\gamma}$.   In order to simplify notation,
  we assume that $\domain(s)=\sing{\zeta}$.

  By proposition~\ref{thm:enoughAC} we can assume that
  $A^{s,\zeta}_{\gamma}$ can be enumerated as $\set{w_{\nu}\mid
    \nu<\kappa}$  
  so that $\nu'\leq\nu$ implies
  $\forceKappa^{w_{\nu'}}_{\gamma}\leq\forceKappa^{w_{\nu}}_{\gamma}$.
  We will define  by recursion on $\nu$ a 
  $\leq^*$-decreasing sequence of conditions
  $\seq{s_{\nu}\mid\nu<\kappa}$ in $R$ so that
  $\add(s_{\nu},w_\nu)\leq^* \add(s,w_\nu)$ for all $\nu<\kappa$.
  At the same time we will define a function $\sigma\colon
  A^{s,\zeta}_{\gamma}\to P^{*}_{\zeta,\gamma}$ so that $s_\nu$ and
  $\sigma(w_\nu)$ satisfy the following conditions:
  \begin{enumerate}
  \item $s_0=s$,
  \item $s_{\nu}\scutdown\gamma=s\scutdown\gamma$ and $\vec
    A^{s_\nu,\zeta}\restrict\gamma+1=\vec
    A^{s,\zeta}\restrict\gamma+1$ for all $\nu<\kappa$, 
  \item $\add(s_{\nu+1},\sigma(w_\nu))\in D$, and 
  \item $s_{\nu'}\leq^*s_{\nu}$ for all $\nu'<\nu<\kappa$.
  \end{enumerate}
  Note that clause~(2) implies that $\add(s_{\nu},w)$ exists for all
  $\nu<\kappa$ and all $w\in A^{s,\zeta}_{\gamma}$.   Also,  clauses~(2)
  and~(4) imply that $\add(s_{\nu'},w_{\nu})\le^*
  \add(s_{\nu},w_\nu)\leq^* \add(s,w_\nu)$ for all $\nu<\nu'<\kappa$.
  
  To define the sequence, set $s_0=s$, and 
  if $\nu$ is a limit ordinal then set
  $s_{\nu}=\bigwedge_{\nu'<\nu}s_{\nu'}$.  For a successor ordinal
  $\nu+1$, since $\add(s_{\nu},w_\nu)\leq^* \add(s,w_\nu)$, the
  hypothesis implies that there is $t\leq^*\add(s_\nu,w_\nu)$ such
  that $t\in D$.

  Define $\sigma(w_\nu)$ by
  \begin{align*}
    \sigma(w_\nu)\restrict[\gamma_0,\gamma] &=
                                              (t\scutdown\gamma)\restrict[\gamma_0,\gamma]
    \text{, and}\\
    \sigma(w_\nu)\restrict(\gamma,\zeta] &=
                                           w_\nu\restrict(\gamma,\zeta].
  \end{align*}
  
  By clause~(2) we have $s_{\nu+1}\scutdown\gamma=s\scutdown\gamma$ and
  $\vec A^{s_{\nu+1},\zeta}\restrict\gamma+1=\vec A^{s}\restrict\gamma+1$.
  The remainder of $z^{s_{\nu+1}}$ is taken from $t$; that is:
  \begin{align*}
    a^{s_{\nu+1},\zeta}_{\eta,\eta'}&=a^{t,\zeta}_{\eta,\eta'}&&\text{if
                                                     $\eta'>\gamma$,}\\
    f^{s_{\nu+1},\zeta}_{\eta,\eta'}&=f^{t,\zeta}_{\eta,\eta'}&&\text{if
                                                                 $\eta'>\gamma$,
                                                           and}\\
    f^{s_{\nu+1},\zeta}_{\eta,\eta'}&=f^{t,\zeta}_{\eta,\eta'}\restrict(\kappa^{+}\setminus\domain(a^{s,\zeta}_{\eta,\eta'}))&&\text{if $\eta>\gamma\geq\gamma'$.} 
      \end{align*}

  The definition of $A_{\eta} ^{s_{\nu+1},\zeta}$ for
  $\zeta>\eta>\gamma$ is by recursion on $\gamma$.  For $w\in
  A^{s_\nu,\zeta}_{\eta}$ and $w'\in A^{t,\zeta}_{\eta}$, let us  write $w'\leq^* w$ if
  they satisfy
  Definition~\ref{def:star-order}(\ref{item:leq-star-change-w-in-A}),
  in which case  let $\pi_{w'}(w)$ be given by
  \begin{enumerate}
  \item
    $\pi_{w'}(w)\restrict[\gamma+1,\zeta]=w\restrict[\gamma+1,\zeta]$,
    and 
  \item 
    $\pi_{w'}(w)\restrict[\gamma_0,\gamma]$ is defined 
    in the same way as  $s_{\nu+1}$, but with  $w'\restrict[\gamma_0,\gamma]$, $w\restrict[\gamma_0,\gamma]$ and $\eta$ in
    place of  $t,s_{\nu}$ and  $\zeta$. 
  \end{enumerate}
  Then
  \begin{multline*}
    A_{\eta}^{s_{\nu+1},\zeta}=\set{\pi_{w'}(w)\mid 
      w'\in  A^{t}_{\eta}\land 
      \forceKappa^{w'}>\forceKappa^{w_\nu}\land
      w\in A^{s_\nu}_{\eta}
      \land w'\leq^* w}.
  \end{multline*}
  \medskip

  Now set $s_{\kappa}=\bigtriangleup_{\nu<\kappa}s_{\nu}$,  and set
  $\bar w   =[\sigma ]_{U^{s,\zeta}_{\gamma}}= 
  i^{F^{s,\zeta}_{\gamma}}(\sigma)(s\scutdown\gamma)$.
  Then clause~(2) of the initial conditions on $s_{\nu}$ allow $\bar
  w$ to be merged into $s_{\kappa}$, giving the desired extension
  $s'\leq^*s$.   We can assume without loss of generality that
  $w'\in A^{s_\kappa,\zeta}_{\eta}$ whenever $w\in A^{s_\kappa,\zeta}_{\eta}$ and $w'\leq^*w$ in the
  sense of Definition~\ref{def:star-order}(\ref{item:leq-star-change-w-in-A}).
  \begin{todoenv}
    (7/13/15) --- This might be simplified by assuming that $w'\leq^*
    w\in A^{s,\tau}_{\nu}$ implies that $w'\in A^{s,\tau}_{\nu}$.
    This would make the ultrafilter really an ultrafilter on a Boolean algebra.
  \end{todoenv}

  \begin{align*}
    A^{s',\zeta}_{\eta}&=
    \begin{cases}
      A^{\bar w}_{\eta}&\text{if $\eta<\gamma$},\\
      \set{\sigma(w)\mid w\in A^{s}_{\gamma}}&\text{if
        $\eta=\gamma$ and}
      \\
      A^{s_\kappa,\zeta}_{\eta}&\text{if $\eta>\gamma$};
    \end{cases}
    \\
    z^{s',\zeta}\scutdown\gamma &= \bar w\restrict[\gamma_0,\gamma],
    \\
    f^{s',\zeta}_{\eta,\eta'}&=f^{s_\kappa,\zeta}_{\eta,\eta'}&&\text{if
                                                     $\eta>\gamma$, and}
    \\
    a^{s',\zeta
}_{\eta,\eta'}&= a^{s_\kappa,\zeta}_{\eta,\eta'}&&\text{if $\eta'>\gamma$.}
  \end{align*}
\end{proof}

\subsection{The Prikry property}
\label{sec:prikry}
\begin{lemma}\label{thm:prikry}
  \begin{enumerate}
  \item\label{item:Prikrythm-decide} Let $\phi$ be a sentence and $s$ a condition in $P(\vec F)$.
    Then there is an $s'\leq^{*}s$ such that $s'$ decides $\phi$.
  \item\label{item:Prikrythm-inD} Let $D$ be a dense subset of $P(\vec F)$, and suppose $s\in
    P(\vec F)$. Then there is an $s'\leq^{*}s$ and a finite
    $b\subseteq\zeta+1$ such that any $s''\leq s'$ with
    $b\subseteq\domain(s'')$ is a member of $D$.
  \end{enumerate}
\end{lemma}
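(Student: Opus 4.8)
**The plan is to prove the Prikry property by the standard "thinning out" argument adapted to this Magidor--Radin style forcing, using the closure and commutativity lemmas already established.** First I would reduce to the case $\domain(s)=\{\zeta\}$ via the Factorization Proposition~\ref{thm:factorization}: if $\gamma=\max(\domain(s)\cap\zeta)$, then $\below{P(\vec F)}{s}$ factors as $\below{P(\vec F^{s,\gamma})}{s\restrict\gamma+1}\star\dot R$, where $\dot R$ is forced to be a Prikry-style forcing of the same form but with domain in $[\gamma+1,\zeta]$; an induction on $\zeta$ then lets me assume the condition we are shrinking lives entirely at the top coordinate. I would also note that (1) follows from (2) in the usual way (apply (2) to the dense set of conditions deciding $\phi$, then use that a condition deciding $\phi$ together with the $\leq^*$-extension below which every $b$-extension decides $\phi$ gives, after one more application of the closure lemma, a direct extension deciding $\phi$), so the real work is clause~(\ref{item:Prikrythm-inD}).

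**For clause (2), the core is a one-coordinate "find a good direct extension" step, iterated along the finite domain.** Fix $s$ with $\domain(s)=\{\zeta\}$ and let $D$ be dense. I would first treat the atomic case: show that there is $s'\leq^* s$ and a finite set $b'$ such that for every $w\in A^{s',\zeta}_{\gamma}$ (for the relevant $\gamma$'s indexed by $b'$), every $\leq^*$-extension of $\add(s',\vec w)$ lies in $D$ — this is where Lemma~\ref{thm:diagonal-closure} does the heavy lifting. Concretely, for a single coordinate $\gamma$: the set $D_\gamma=\{t : t\leq^* \text{something},\ \exists t'\leq^* t\ (t'\in D)\}$ restricted below each $\add(s,w)$ is open dense in the $\leq^*$-order (using density of $D$ and Proposition~\ref{thm:one-stepFirst} to absorb non-direct extensions into direct ones), so Lemma~\ref{thm:diagonal-closure} yields $s'\leq^* s$ with $\add(s',w)$ having a direct extension in $D$ for all $w\in A^{s',\zeta}_{\gamma}$ — and by iterating the diagonal-closure argument (using Lemma~\ref{thm:closure}(\ref{item:closurediagonal}), the diagonal closure, to glue the $\kappa$-many successive shrinkings) one gets $\add(s',w)\in D$ itself for all such $w$, or at least a uniform finite $b$ after which membership is guaranteed. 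Then I would induct on $|\domain(s'')\setminus\domain(s)|$: given the one-coordinate result, handle several new coordinates by repeatedly applying it, using Proposition~\ref{thm:one-step-commute} to permute the order in which the new ordinals are added so that the outermost (least) new coordinate can always be treated first, and using the $\kappa$-closure and diagonal closure to take infima of the $\leq^*$-decreasing sequences of partial results that arise.

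**The main obstacle I anticipate is bookkeeping the non-standard Cohen conditions of the form $f^z_{\nu,\nu'}(\xi)=h_{\nu'',\nu'}(\xi'')$ through the one-step extension.** Because of Clause~(\ref{item:fpeculiar}) of Definition~\ref{def:tableau}, the value that $\add(s,w)$ assigns to a generator can depend on coordinates strictly below the one being added — this is exactly why Factorization~\ref{thm:factorization} gives only a regular suborder and not an equality — so when I permute one-step extensions or take direct-extension infima I must check that the "resolved" values (as in equation~\eqref{eq:fpeculiareval}) remain consistent, and that the sets $A^{s',\tau}_\nu$ produced by the shrinking process still lie in the correct ultrafilters $U^{s',\tau}_\nu$. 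The key technical inputs controlling this are Proposition~\ref{thm:faa_exists} (that $f_{a,a'}$ exists on a measure-one set, and respects inclusion $a_0\subseteq a_1$) and the fact, already noted after Definition~\ref{def:one-step}, that $\{w : \add(s,w)\text{ is defined}\}\in U^{s,\tau}_\gamma$; with these in hand each thinning stays inside the ultrafilters, and the recursion on $\zeta$ built into the definitions of $\add$ and $w\scutdown\gamma$ is well-founded so the argument closes. I expect the remaining steps — taking $\kappa$-infima, diagonalizing, and the final induction on the size of the new domain — to be routine given Lemmas~\ref{thm:closure}, \ref{thm:diagonal-closure} and Propositions~\ref{thm:one-stepFirst}, \ref{thm:one-step-commute}.
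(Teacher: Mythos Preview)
Your high-level setup is right: induction on $\zeta$, reduction via Proposition~\ref{thm:factorization} to $\domain(s)=\{\zeta\}$, and heavy use of Lemmas~\ref{thm:closure} and~\ref{thm:diagonal-closure}. But the core of your argument has a genuine gap.

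You claim that below each $\add(s,w)$ the set of conditions having a $\leq^*$-extension in $D$ is $\leq^*$-dense, ``using density of $D$ and Proposition~\ref{thm:one-stepFirst} to absorb non-direct extensions into direct ones.'' This is not true in general: if every member of $D$ below $\add(s,w)$ has domain strictly containing $\{\gamma,\zeta\}$, then no $\leq^*$-extension of $\add(s,w)$ lies in $D$ at all. Proposition~\ref{thm:one-stepFirst} only tells you that a full extension factors as one-step extensions followed by a direct extension; it does not let you eliminate the one-step extensions. Consequently your proposed induction on $|\domain(s'')\setminus\domain(s)|$ is circular: the whole difficulty is producing the finite set $b$, and you have no mechanism for discovering which coordinates must go into it or why their number is bounded.

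The paper supplies exactly this missing mechanism via Claim~\ref{thm:Prikry-sublemma}. For each $\gamma<\zeta$ it defines the dichotomy sets $D^{+}_{\gamma}$, $D^{-}_{\gamma}$ (conditions forcing that $D$ can, respectively cannot, be entered with domain contained in $(\gamma+1)\cup\{\zeta\}$) together with the set $E_\gamma$ (conditions $t$ for which entering $D$ with such a domain can be done without touching $t(\zeta)$). The \emph{induction hypothesis} on $\zeta$---the already-proved Prikry property for the shorter forcing $P(\vec F^{t,\gamma})$---is what shows $(D^{+}_{\gamma}\cup D^{-}_{\gamma})\cap E_\gamma$ is $\leq^*$-dense below any $t$ with $\gamma\in\domain(t)$. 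Lemma~\ref{thm:diagonal-closure} then uniformizes over all $\gamma$ and $w$, and density of $D$ forces the $D^{+}_{\gamma}$ alternative for some $\gamma$. This dichotomy-and-induction step is the heart of the proof, and it is absent from your proposal.

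Finally, your concern about the nonstandard Cohen values $f^{z}_{\nu,\nu'}(\xi)=h_{\nu'',\nu'}(\xi')$ is largely a red herring here: those are already absorbed by the factorization and by the recursive definitions of $\add$ and $\leq^*$, and they play no special role in the Prikry argument itself.
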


\begin{proof}[Proof of Lemma~\ref{thm:prikry}]
 
  The proof of Lemma~\ref{thm:prikry} is by induction on the length
  $\zeta$ of $\vec F$.   By the induction hypothesis and Proposition
  \ref{thm:factorization}
  we can simplify the notation by assuming that $\domain(s)=\sing{\zeta}$.
  The main part of the proof is the following claim:
  \begin{claim}
    \label{thm:Prikry-sublemma}
    Suppose that $D\subseteq P(\vec F)$ is dense and $s\in P(\vec F)$
    has domain $\sing{\zeta}$.  Then there is $s'\leq^* s$ such
    that either $s'\in D$ or for some $\gamma<\zeta$
    \begin{multline}
      \label{eq:Prikry-sublemma}
      s'\forces (\exists w\in A^{s',\zeta}_{\gamma})(\exists t\in \dot
      G\cap D)\;\Bigl(\domain(t)\subseteq
      (\gamma+1)\cup\sing\zeta\\
      \land t\leq\add(s',w)\land 
      t(\zeta)=\add(s',w)(\zeta)\Bigr)
    \end{multline}
  \end{claim}

  \begin{proof}
    For each $\gamma<\zeta$, define
    \begin{align*}
      D^{+}_{\gamma}&=\set{t\in P(\vec F)\mid t\forces(\exists t'\in\dot G\cap
                      D)\,\domain(t')\subseteq(\gamma+1)\cup\sing{\zeta}}
      \\
      D^{-}_{\gamma}&=\set{t\in P(\vec F)\mid t\forces\lnot(\exists t'\in\dot G\cap
                      D)\,\domain(t')\subseteq(\gamma+1)\cup\sing{\zeta}}
      \\
      E_{\gamma}&=
                  \set{t\in P(\vec F)\mid
                  (\forall t'\leq t)\;
                  \bigl( t'\in
                  D\land\domain(t')\subseteq(\gamma+1)\cup\sing\zeta
      \\&\hspace{5cm}  \implies (t'\restrict(\gamma+1)\cup 
          t\restrict\sing{\zeta})\in D\bigr)}.
    \end{align*}

    First, suppose that for all  $\gamma<\zeta$ the set 
    $(D^{+}_{\gamma}\cup D^{-}_{\gamma})\cap E_{\gamma}$ is $\leq^*$-dense below
    any condition $t\leq s$ with $\domain(t)=\sing{\gamma,\zeta}$.
    Then by Lemma~\ref{thm:diagonal-closure} there is $s'\leq^* s$
    such that for each $\gamma<\zeta$ and $w\in A^{s',\zeta}_{\gamma}$ we have  $\add(s,w)\in
    (D^{+}_{\gamma}\cup D^{-}_{\gamma})\cap E_{\gamma}$.   By shrinking the sets
    $A^{s',\zeta}_{\gamma}$ we can assume that for each $\gamma$,
    $\set{\add(s',w)\mid w\in A^{s',\zeta}_{\gamma}}$ is contained in one
    of $D^{+}_{\gamma}\cap E_\gamma$ or $D^{-}_{\gamma}\cap E_{\gamma}$.  Since $D$ is
    dense it follows that $\set{\add(s',w)\mid w\in
      A^{s',\zeta}_{\gamma}}\subseteq D^{+}_{\gamma}$ for some
    $\gamma<\zeta$, and it follows by
    Proposition~\ref{thm:factorization} that $s'$ satisfies the formula~\eqref{eq:Prikry-sublemma}.

    Now fix $\gamma<\zeta$ and $t\leq s$ with $\gamma\in\domain(t)$.   We will show that 
    $(D^{+}_{\gamma}\cup D^{-}_{\gamma})\cap E_{\gamma}$ is
    $\leq^*$-dense below $t$.
    First, note that by Proposition~\ref{thm:factorization}, the set $E_{\gamma}$
    is $\leq^*$-dense below any condition $t$ with
    $\gamma\in\domain(t)$. Now for $t\in E_{\gamma}$, consider the following
    formula  in the forcing language of $P(\vec F^{t,\gamma})$:
    \begin{equation}
      \exists
      t'\in \dot G\, (t'\cup t\restrict\sing\gamma\in D).\label{eq:Prikry.sublemma.a}
    \end{equation}
    By the induction hypothesis of
    Lemma~\ref{thm:prikry}(\ref{item:Prikrythm-decide}) there is
    $t''\leq^* 
    t\restrict\gamma+1$ which decides, in $P(\vec F^{t,\eta})$, the truth of
    formula~\eqref{eq:Prikry.sublemma.a}.   Then
    $t''\cup t\restrict\sing{\zeta}$ is in either $D^{+}_{\gamma}$ or
    in $D^{-}_{\gamma}$. 
  \end{proof}
  
  To complete the proof of
  Lemma~\ref{thm:prikry}(\ref{item:Prikrythm-decide}),  apply
  Claim~\ref{thm:Prikry-sublemma}  with  $D=\set{t\mid t\decides\phi}$.
  Since we are done if there is  $s'\leq^*
  s$ in $D$ we can assume by  
 Claim~\ref{thm:Prikry-sublemma}  that there is $s'\leq^* s$ and
  $\gamma<\zeta$ such that \eqref{eq:Prikry-sublemma} holds.

  By the induction hypothesis, for each $w\in A^{s',\zeta}_{\gamma}$
  there is $t_w\le^* \add(s',w)\restrict(\gamma+1)$ in $P(\vec F^{w})$ such that
  $t_w\cup\add(s',w)\restrict\sing{\zeta} 
  \decides\phi$.  Then  either $\set{w\in
    A^{s',\zeta}_{\gamma}\mid t_w\cup
    \add(s',w)\restrict\sing{\gamma}\forces\phi} \in
  U^{s',\zeta}_{\gamma}$ or $\set{w\in
    A^{s',\zeta}_{\gamma}\mid t_w\cup
    \add(s',w)\restrict\sing{\gamma}\forces\lnot\phi} \in
  U^{s',\zeta}_{\gamma}$.   Now reduce $A^{s',\zeta}_{\gamma}$ to
  whichever  set is in $U^{s',\zeta}_{\gamma}$, and apply
  Lemma~\ref{thm:diagonal-closure} to obtain $s''\leq^* s'$ such that
  $s''$ decides $\phi$.

 Lemma~\ref{thm:prikry}(\ref{item:Prikrythm-inD}) is proved similarly,
 applying Claim~\ref{thm:Prikry-sublemma} using the set $D$ given in
 the hypothesis.
\end{proof}

If $\gamma<\zeta$ and $G\subseteq P(\vec F)$ is generic, then set
$G\restrict \gamma+1=\set{s\restrict\gamma+1\mid
  \gamma\in\domain(s)\land s\in G}$.  Then
$G\restrict\gamma+1$ is a generic subset of $P(\vec F^{s,\gamma})$.

\begin{corollary}[No new bounded sets]
  \label{thm:noBoundedSets}
  Suppose  $x\in M[G]\setminus M$ and
  $x\subset\lambda<\forceKappa^{M[G]}_{\gamma+1}$. 
  Then $x\in M[G\restrict\gamma'+1]$ for some $\gamma'<\gamma$. 
\end{corollary}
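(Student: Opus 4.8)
The plan is to run the standard ``no new bounded sets'' argument for Prikry‑type forcing: first use the factorization of Proposition~\ref{thm:factorization} to split off a Prikry‑style tail whose least critical point lies strictly above $\lambda$, and then combine the Prikry property (Lemma~\ref{thm:prikry}(\ref{item:Prikrythm-decide})) with the $\forceKappa$‑closure of $\le^*$ (Lemma~\ref{thm:closure}(\ref{item:closuresmall})) to conclude that this tail adds no new subset of $\lambda$.

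The preliminary step is to note that for any $\gamma'\le\zeta$ the set $D_{\gamma'}=\set{t\in P(\vec F)\mid\gamma'\in\domain(t)}$ is dense: if $\gamma'\notin\domain(t)$, put $\tau=\min(\domain(t)\setminus\gamma')$ and pick $w\in A^{t,\tau}_{\gamma'}$ (a nonempty set, being a member of $U^{t,\tau}_{\gamma'}$, and one for which $\add(t,w)$ is defined); then $\add(t,w)\le t$ lies in $D_{\gamma'}$. Fixing a $P(\vec F)$‑name $\dot x$ with $\dot x^{G}=x$, I would then choose $s\in G$ with $s\forces\dot x\subseteq\check\lambda$ and $\gamma'\in\domain(s)$ for some $\gamma'<\gamma$ picked so that $\forceKappa_{\gamma'+1}>\lambda$; such a $\gamma'$ exists because the critical points $\forceKappa_\delta$ increase strictly along $\domain(s)$ (immediate from the definition of $\add$, Definition~\ref{def:one-step}) together with the hypothesis $\lambda<\forceKappa_{\gamma+1}$.

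Next I would apply Proposition~\ref{thm:factorization} at $\gamma'$ to write $\below{P(\vec F)}{s}\equiv\below{P(\vec F^{s,\gamma'})}{s\restrict\gamma'+1}\star\dot R$, with $\dot R$ forced to be a Prikry‑style forcing whose conditions have domain a finite subset of $(\gamma',\zeta]$. As noted after Proposition~\ref{thm:factorization} and after Lemma~\ref{thm:closure}, all the properties proved for $P(\vec F)$ carry over to $\dot R$, with the role of $\forceKappa_0$ played by the least critical point occurring in $\dot R$, which is $\forceKappa_{\gamma'+1}>\lambda$; in particular $\dot R$ has the Prikry property and its direct extension order is $\forceKappa_{\gamma'+1}$‑closed. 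Working over $N=M[G\restrict\gamma'+1]$, I would take an $\dot R$‑name $\dot y$ (over $N$) for a subset of $\check\lambda$ and a condition $r_{0}\in\dot R$, and construct a $\le^*$‑descending chain $\seq{r_{\alpha}\mid\alpha\le\lambda}$ with $r_{0}$ as given, $r_{\alpha+1}\le^* r_{\alpha}$ deciding ``$\check\alpha\in\dot y$'' by Lemma~\ref{thm:prikry}(\ref{item:Prikrythm-decide}), and $r_{\delta}=\bigwedge_{\alpha<\delta}r_{\alpha}$ for limit $\delta\le\lambda$ by Lemma~\ref{thm:closure}(\ref{item:closuresmall}), which is legitimate since $\delta\le\lambda<\forceKappa_{\gamma'+1}$. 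Then $r_{\lambda}$ decides every statement ``$\check\alpha\in\dot y$'', so $r_{\lambda}\forces\dot y=\check z$ for some $z\in N$; as such $r_{\lambda}$ are dense below $r_{0}$, the $\dot R$‑realization of $\dot y$ lies in $N$. Applying this to the $\dot R$‑name for $x$ that $s$ provides via $s\forces\dot x\subseteq\check\lambda$ yields $x\in N=M[G\restrict\gamma'+1]$, as desired.

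The step I expect to be the main obstacle is the very first reduction: producing $s\in G$ with $\gamma'\in\domain(s)$, $s\forces\dot x\subseteq\check\lambda$, and $\forceKappa_{\gamma'+1}$ strictly above $\lambda$ while keeping $\gamma'<\gamma$, and then verifying that the factorization tail $\dot R$ genuinely has least critical point $\ge\forceKappa_{\gamma'+1}$. This requires tracking the relationship between $\lambda$, $\forceKappa_\gamma$ and $\forceKappa_{\gamma+1}$ and, for limit suitable $B$, the interaction with the gaps of $B$, where the enumeration $\seq{\forceKappa_\delta\mid\delta\le\zeta}$ of $B\cup\set{\ords}$ is not continuous; the cases where $\gamma$ is a limit of $\domain(s)$ or where $\domain(s)\cap\gamma=\nothing$ would be dealt with by rearranging the intervening one‑step extensions via Propositions~\ref{thm:one-stepFirst} and~\ref{thm:one-step-commute}. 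The rest — the Prikry property and the closure of $\le^*$ — is machinery already in hand.
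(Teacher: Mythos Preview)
Your proposal is correct and is exactly the argument the paper has in mind; the paper's own proof is a two-line sketch citing factorization (Proposition~\ref{thm:factorization}) and closure (Lemma~\ref{thm:closure}), with the Prikry property implicit since the corollary immediately follows Lemma~\ref{thm:prikry}. Where you argue uniformly by choosing $\gamma'<\gamma$ with $\forceKappa_{\gamma'+1}>\lambda$ and then running the Prikry-plus-closure density argument on the tail, the paper simply splits into the two cases $\gamma=\gamma'+1$ (take that $\gamma'$) and $\gamma$ limit (take $\gamma'$ least with $\forceKappa^{G}_{\gamma'}>\lambda$), leaving the routine details you wrote out to the reader; your explicit identification of the ``find the right $\gamma'$'' step as the only delicate point is precisely what the paper's case split is addressing.
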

\begin{proof}
  If $\gamma=\gamma'+1$ then Propositions~\ref{thm:factorization}
  and~\ref{thm:closure} imply that $\gamma'$ is as required.
  If $\gamma$ is a limit ordinal then take $\gamma'$ least such that
  $\forceKappa^{G}_{\gamma'}>\lambda$. 
\end{proof}

\begin{corollary}\label{thm:approx}
  If $\vec F$ is a suitable sequence with critical point $\kappa$ then
  $P(\vec F)$ has the $\kappa$-approximation property: if $G\subseteq
  P(\vec F)$ is $M$-generic then for any function $f\in M[G]$ with
  $\domain(f)=\kappa$ there is a set $A\in M$ with $\card A\leq\kappa$
  and $\range(f)\subseteq A$.
\end{corollary}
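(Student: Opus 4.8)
The plan is to prove the corollary by induction on $\zeta=\len(\vec F)$. Following the remark after Proposition~\ref{thm:factorization}, I would first use the factorization $\below{P(\vec F)}{p}\equiv\below{P(\vec F^{p,\gamma})}{p\restrict\gamma+1}\star\dot R$ to reduce, for the condition $p$ under consideration, to the case $\domain(p)=\sing\zeta$: the general case then follows since a set of at most $\kappa$ ordinals in $M[G]$ is covered by one in $M[G\restrict\gamma+1]$ (the special case applied to $\dot R$) and hence by one in $M$ (the induction hypothesis applied to $P(\vec F^{p,\gamma})$). So fix a name $\dot f$ and a condition $p$, with $\domain(p)=\sing\zeta$, such that $p\forces\dot f\colon\check\kappa\to\ords$; it is enough to find below $p$ a condition forcing $\range(\dot f)$ to be contained in a set of $M$ of size at most $\kappa$.

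The first step is the Prikry reduction, applied $\kappa$ times. For $\xi<\kappa$ let $D_\xi$ be the dense set of conditions deciding the value of $\dot f(\check\xi)$ (that is, forcing it to equal some specific ordinal). I would build a $\leq^*$-decreasing sequence $\xseq{s_\xi}{\xi<\kappa}$ with $s_0=p$: at a successor stage apply Lemma~\ref{thm:prikry}(\ref{item:Prikrythm-inD}) below $s_\xi$ to $D_\xi$ to obtain $s_{\xi+1}\leq^* s_\xi$ and a finite $b_\xi\subseteq\zeta+1$ — which we may take to contain $\zeta$ — such that every $t\leq s_{\xi+1}$ with $b_\xi\subseteq\domain(t)$ decides the value of $\dot f(\check\xi)$; at a limit stage $\xi<\kappa$ put $s_\xi=\bigwedge_{\eta<\xi}s_\eta$, which exists by the $\kappa$-closure of Lemma~\ref{thm:closure}(\ref{item:closuresmall}). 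Finally let $s^*=\bigtriangleup_{\xi<\kappa}s_\xi$ be as provided by Lemma~\ref{thm:closure}(\ref{item:closurediagonal}), so $s^*\leq^* p$ and $s^*\forces\check s_\xi\in\dot G$ for all $\xi<\check\kappa$. (Here one uses that $M$ may be taken to satisfy the axiom of choice, so that the recursion defining $\xseq{s_\xi}{\xi<\kappa}$ can be carried out inside $M$.)

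Next I would read off the covering set in $M$. For $\xi<\kappa$ let $A_\xi$ be the set of ordinals $\alpha$ for which there is a finite tuple $\vec w$ such that $\add(s_\xi,\vec w)$ is defined, has domain exactly $b_\xi$, and forces $\dot f(\check\xi)=\check\alpha$; and let $A=\bigcup_{\xi<\kappa}A_\xi$. Such an extension $\add(s_\xi,\vec w)$ adds precisely the coordinates of $b_\xi\setminus\sing\zeta$ in some order, and the key size estimate is that at each of these finitely many steps there are at most $\kappa$ choices: any set $A^{s,\tau}_\gamma$ occurring in a condition belongs to the ultrafilter $U^{s,\tau}_\gamma=\ufFromExt{F^{s,\tau}_\gamma}{s(\tau)\scutdown\gamma}$, which by its very definition consists of subsets of $H^M_{\crit(F^{s,\tau}_\gamma)}$; since every extender appearing in a condition has critical point at most $\kappa$, such a set lies in $H^M_\kappa$, and $\card{H^M_\kappa}^M=\kappa$ because $\kappa=\crit(E)$ is measurable, hence inaccessible, in $M$. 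Thus there are at most $\kappa$ admissible tuples $\vec w$ for each $\xi$, so $\card{A_\xi}^M\le\kappa$, and as $A_\xi$ and $A$ are sets of ordinals, $\card A^M\le\kappa$ and $A\in M$.

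Finally I would verify that $s^*\forces\range(\dot f)\subseteq\check A$. Let $G\ni s^*$ be generic and fix $\xi<\kappa$. Then $s_\xi\in G$, so by density there is $t\in G$ with $t\leq s_\xi$ and $b_\xi\subseteq\domain(t)$; by the choice of $s_\xi$, $t$ decides the value of $\dot f(\check\xi)$, and since $t\in G$ this value is $f(\xi)$. By Proposition~\ref{thm:one-stepFirst} we may write $t\leq^*\add(s_\xi,\vec z)\leq s_\xi$, and by Proposition~\ref{thm:one-step-commute} we may reorder $\vec z$ so that the coordinates of $b_\xi\setminus\sing\zeta$ are added first; taking $\vec w$ to be that initial part of the reordered tuple, $\add(s_\xi,\vec w)$ has domain exactly $b_\xi$ and $\add(s_\xi,\vec z)\leq\add(s_\xi,\vec w)$, so $t\leq\add(s_\xi,\vec w)$. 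Hence $\add(s_\xi,\vec w)$ decides $\dot f(\check\xi)$, and as $t$ extends it and forces the value $f(\xi)$, we get $f(\xi)\in A_\xi\subseteq A$. Therefore $\range(f)\subseteq A$, which completes the proof. The one place requiring genuine care is the size computation $\card{A^{s,\tau}_\gamma}^M\le\kappa$ — i.e.\ noticing that the constraint sets in conditions are forced by the definition of the ultrafilters to live in $H^M_\kappa$ — together with keeping the bookkeeping of the one-step extensions straight; the rest is the standard interaction of the Prikry property with the closure and factorization lemmas already established.
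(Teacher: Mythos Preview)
Your proof has a genuine gap in the use of diagonal closure. You claim that $s^*=\bigtriangleup_{\xi<\kappa}s_\xi$ forces $s_\xi\in\dot G$ for all $\xi<\kappa$, but Lemma~\ref{thm:closure}(\ref{item:closurediagonal}) only yields $s^*\forces\forall\nu<\dot\forceKappa_0\;s_\nu\in\dot G$, where $\dot\forceKappa_0$ names the \emph{first} generic indiscernible --- an ordinal strictly below $\kappa$. So your final verification breaks down for $\xi\geq\forceKappa_0^G$: you cannot conclude $s_\xi\in G$, and hence cannot produce $t\in G$ below $s_\xi$ to factor through some $\add(s_\xi,\vec w)$ of domain $b_\xi$. (There is also an off-by-one slip --- the deciding property attaches to extensions of $s_{\xi+1}$, not $s_\xi$ --- but correcting it does not repair the diagonal-limit problem.) Your size estimate $\card{A_\xi}\le\kappa$, via $A^{s,\tau}_\gamma\subseteq H_\kappa^M$, is correct.

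The paper's argument is organized differently and thereby avoids the difficulty. Rather than iterating over the $\kappa$ many arguments of $f$, it iterates over the levels $\gamma<\zeta$ of the forcing; since $\zeta$ is countable, a $\leq^*$-descending $\zeta$-sequence has a genuine infimum by $\kappa$-closure, and no diagonal limit is needed. Concretely the paper splits on whether $\zeta$ is a successor or a limit. If $\zeta=\gamma+1$ then, after factoring at $\gamma$, the upper factor has empty extender sequence and is $\kappa^+$-closed, so a single $\leq^*$-extension handles all $\alpha<\kappa$ at once, reducing the problem to the small lower factor. If $\zeta$ is a limit, one builds $s_\gamma\leq^* s$ for each $\gamma<\zeta$ so that any witness to $\dot f(\alpha)=\beta$ with domain contained in $(\gamma+1)\cup\sing{\zeta}$ is captured by a one-step extension at level $\gamma$; the covering set is then $\bigcup_{\gamma<\zeta}A_\gamma$, each $A_\gamma$ bounded by counting the $\kappa$ many $w\in A^{s',\zeta}_\gamma$ together with the conditions in the small factor below.
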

\begin{proof}
    Let $\dot f$ be the name of a function
  $f\colon \kappa=\forceKappa_{\zeta}\to\kappa^{+}$, and let $s$ be a
  condition, which  we will assume has domain $\sing{\zeta}$.   

  If $\zeta=\gamma+1$ then, for any condition $s$ with
  $\gamma\in\domain(s)$, factor $\below{P(\vec F)}{s}$ as
  $\below{P(\vec F^{s,\gamma})}{s\restrict\gamma+1}\times P'$.   Then
  $P'$ is $\kappa^{+}$-closed since $\vec F^{s,\gamma+1}=\emptyset$,
  so there is $s'\leq 
  s\restrict\sing{\zeta}$ such that for all $\alpha<\kappa$ there are
  $\beta$ and $t\in G\restrict\gamma+1$ such that $t\cup s'\forces
  \dot f(\alpha)=\beta$.  Thus we can take
  \begin{equation*}
    A=\set{\beta\mid (\exists\alpha<\kappa)(\exists
      t\in \below{P(\vec F^{s,\gamma})}{s\restrict\gamma+1})\;
      t\cup s'\forces \dot f(\alpha)=\beta}.
\end{equation*}
  If $\zeta$ is a limit ordinal then use
  Lemma~\ref{thm:closure} to define a $\leq^*$-decreasing
  sequence of conditions $s_\gamma\leq^* s$ such that $s_{\gamma}$ forces
  the following formula:
  \begin{multline*}
    (\forall\alpha<\dot{\forceKappa}_{\gamma})\forall\beta\;
    \bigl((\exists t\in \dot 
    G)\;\domain(t)\subseteq(\gamma+1\cup\sing\zeta)\land
    t\forces\dot f(\alpha)=\beta
    \implies
    \\
    (\exists w\in A^{s_{\gamma},\zeta}_{\gamma})(\exists t\in \dot
    G\restrict\gamma+1)\;
    (t\leq\add(s_{\gamma},w)\land
    \\
    t\cup\add(s_{\gamma},w)\restrict\sing{\zeta}\forces\dot f(\alpha)=\beta 
    \bigr).
  \end{multline*}
  Set $s'=\bigwedge_{\nu<\zeta} s_{\nu}$ and 
  \begin{multline*}
    A_{\gamma}=\set{\beta\mid \exists\alpha(\exists w\in
    A^{s',\zeta}_{\gamma})(\exists
    t<\add(s',w))\;\\t\restrict(\gamma,\zeta]=\add(s,w)\restrict(\gamma,\zeta]\land
    t\forces \dot f(\alpha)=\beta}.
  \end{multline*}
  Then $s'\forces\range(\dot f)\subseteq \bigcup_{\gamma<\zeta}A_{\gamma}$.
\end{proof}

\begin{corollary} \label{thm:PF-nocollapse}
  Forcing with $P(\vec F)$ does not collapse any cardinal which is not
  in the set $\bigcup_{\gamma\leq\zeta}[\forceKappa_{\gamma}^{++},   \forceKappa^{+\gamma+1}_{\gamma}]$.
\end{corollary}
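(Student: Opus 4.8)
The plan is to prove the corollary by induction on $\zeta=\len(\vec F)$, peeling off one block at a time with the factorization of Proposition~\ref{thm:factorization} and combining it with the closure of the direct extension order (Lemmas~\ref{thm:closure} and~\ref{thm:diagonal-closure}) and the Prikry property (Lemma~\ref{thm:prikry}). Fix a cardinal $\mu$ of $M$ that does not lie in the set named in the statement, let $G\subseteq P(\vec F)$ be generic, and suppose toward a contradiction that some condition $s\in G$ forces a name $\dot f$ to be a surjection of an ordinal $\mu'<\mu$ onto $\mu$. Using Proposition~\ref{thm:factorization} I would reduce to the case $\domain(s)=\sing\zeta$, so that $\forceKappa_\zeta=\kappa$ is the common critical point of the extenders of $\vec F$ and the ordinals $\forceKappa_\gamma$ for $\gamma<\zeta$ are exactly the generic critical points introduced below $\kappa$ by the one-step extensions.

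The argument then splits according to the position of $\mu$ relative to the generic sequence $\seq{\forceKappa_\gamma\mid\gamma\le\zeta}$, but in each case follows one of two templates: either decide $\dot f$ coordinate by coordinate along a $\le^*$-descending chain and take an infimum, or push $\dot f$ into an initial block of the forcing and apply the inductive hypothesis. If $\mu\le\forceKappa_0$, then below a condition of $G$ that already specifies $\forceKappa_0$ the order $\le^*$ is $\forceKappa_0$-closed (Lemma~\ref{thm:closure}(\ref{item:closuresmall})), so by Lemma~\ref{thm:prikry}(\ref{item:Prikrythm-decide}) one builds a $\le^*$-descending chain of length $\mu'<\forceKappa_0$ that decides every value $\dot f(\alpha)$, whose infimum forces $\range(\dot f)$ into a subset of $M$ of size $\mu'<\mu$ --- a contradiction. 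If $\mu$ lies above $\forceKappa_\gamma^{+\omega_1}$ but at or below $\forceKappa_{\gamma+1}$ for some $\gamma<\zeta$ (or, with $\gamma=\zeta$, above $\kappa^{+\omega_1}=\len(E)$), I factor $\below{P(\vec F)}{s}$ as $\below{P(\vec F^{s,\gamma})}{s\restrict\gamma+1}\star\dot R$: the first factor has length $\gamma<\zeta$ and hence preserves $\mu$ by the inductive hypothesis, while $\dot R$ has direct extension order closed past $\forceKappa_{\gamma+1}$ (the remark following Lemma~\ref{thm:closure}), and by Corollary~\ref{thm:noBoundedSets} every fact $\dot f$ can express about ordinals below $\mu$ already lives in the first factor, so $\dot R$ adds no surjection $\mu'\to\mu$. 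Finally, if $\mu=\forceKappa_\gamma$ or $\mu=\forceKappa_\gamma^+$ for some $\gamma\le\zeta$, Corollary~\ref{thm:noBoundedSets} again pulls any purported collapsing map into the forcing of length $\gamma$ --- using Corollary~\ref{thm:approx} in the top block to rule out a surjection of $\forceKappa_\gamma$ onto $\forceKappa_\gamma^+$ --- and the inductive hypothesis finishes the case. Since every cardinal outside the displayed union is covered, the corollary follows.

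The main obstacle --- where the routine-looking bookkeeping must actually be carried out with care --- is the middle regime, and within it the limit-index case: one has to verify that the Prikry property (Lemma~\ref{thm:prikry}(\ref{item:Prikrythm-inD})) really does confine each value $\dot f(\alpha)$ to a set of size $<\mu$ once a $\le^*$-condition and finitely many one-step extensions are fixed, and this uses that the Cohen components $f^{s,\tau}_{\nu,\nu'}$ of the tableau (not merely the extender components) pin those values down, as well as that the diagonal-closure construction of Lemma~\ref{thm:diagonal-closure} can be iterated $\mu'$ times without leaving $P(\vec F)$. The cardinal arithmetic required is supplied by the hypotheses on $M$ in Definition~\ref{def:Nsequence}, but it must be organized separately according to whether $\gamma$ is $0$, a successor, or a limit, and whether $\mu$ has the form $\forceKappa_\delta^+$; getting all of these sub-cases to line up is the real content of the proof.
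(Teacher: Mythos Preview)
Your strategy—induction on $\zeta$ using Corollary~\ref{thm:noBoundedSets} and Corollary~\ref{thm:approx}—is the paper's, but you are missing the one ingredient that makes the argument short: the cardinality of the forcing. The paper simply observes $\card{P(\vec F)}\le\kappa^{+(\zeta+1)}$, so every cardinal above $\kappa^{+(\zeta+1)}$ is automatically preserved. With that in hand the proof is two lines: for $\lambda<\kappa=\forceKappa_\zeta$, the collapsing map is a bounded subset of $\kappa$, so by Corollary~\ref{thm:noBoundedSets} it already lies in $M[G\restrict\gamma'+1]$ for some $\gamma'<\zeta$ and induction finishes; for $\lambda\ge\kappa$, the cardinality bound forces $\lambda\le\kappa^{+(\zeta+1)}$ and Corollary~\ref{thm:approx} rules out $\kappa^{+}$, so $\lambda\in[\kappa^{++},\kappa^{+(\zeta+1)}]$, the $\gamma=\zeta$ term of the union.

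Without the cardinality bound, your case analysis has gaps. You set the boundary in your middle case at $\forceKappa_\gamma^{+\omega_1}$, but the excluded interval in the statement only reaches $\forceKappa_\gamma^{+\gamma+1}$, so cardinals in $(\forceKappa_\gamma^{+\gamma+1},\forceKappa_\gamma^{+\omega_1}]$ are left untreated. For $\gamma<\zeta$ this is patchable—your factorization argument works once you move the boundary down to $\forceKappa_\gamma^{+\gamma+1}$, since the inductive hypothesis already says the first factor preserves everything above that. But at the top, with $\gamma=\zeta$, there is no $\forceKappa_{\zeta+1}$ and no further closure to invoke; the preservation of cardinals above $\kappa^{+(\zeta+1)}$ really does come from the size of the poset, not from any closure or Prikry argument. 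Add that single observation and most of your sub-cases collapse into the dichotomy $\lambda<\kappa$ versus $\lambda\ge\kappa$.
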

\begin{proof}
  Suppose $\lambda$ is a cardinal of $M$ which is collapsed in $M[G]$,
  where 
  $G\subseteq P(\vec F)$ is  $M$-generic.    If $\lambda<\kappa=\forceKappa_{\zeta}$ then
  Corollary~\ref{thm:noBoundedSets} implies that the collapsing
  function is in $M[G\restrict\gamma+1]$ for some $\gamma<\zeta$.
  Thus we can assume without loss of generality that  $\gamma=\zeta$
  and $\lambda\geq\kappa$.   Also\,  $\lambda\leq\card{P(\vec
  f)}\leq\kappa^{+(\zeta+1) }$.   
  Finally, Lemma~\ref{thm:approx} implies that $\lambda\not=\kappa^{+}$.
\end{proof}

In the forcing of Gitik from which this forcing is derived, a
preliminary forcing is used to define a morass-like structure
which guides the main forcing so that no cardinals are collapsed.
We omit this preliminary forcing as unnecessary for the proof
of the main theorem; however as a
consequence we do not know whether the cardinals of $M_\ords$ which
are excepted in Lemma~\ref{thm:PF-nocollapse} are cardinals
in the Chang model. 

\subsection{Introducing the equivalence relation}\label{sec:gkeqDef}

We now proceed to the second part of the definition of the forcing by
adding a variant of Gitik's equivalence relation $\gkeq$ on $P(\vec
F)$.    Recall that if $F$ is an extender on $\lambda$ then $\ufFromExt{F}{b}$ is
the ultrafilter $\set{x\in V_\lambda\mid b\in i^{F}(x)}$.

\begin{definition}\label{def:gkeq-a-set}
  Suppose that $\vec F$ is a suitable sequence of extenders of length
  at least $\gamma+1$ on a cardinal $\lambda$, and $a,a'\colon x\to 
  \supp(F_{\gamma})$ for some $x\subseteq[\lambda,\lambda^{+})$ of
  size $\lambda$.    Set $Y=\bigcup_{\gamma<\gamma'}\supp(F_\gamma')$.
  \begin{enumerate}
  \item 
    $a\gkeq_{0}a'$ if 
    $\ufFromExt{F_\gamma}{y\cup\sing{a}}=\ufFromExt{F_\gamma}{y\cup\sing{a'}}$
    for all $y\in[Y]^{<\omega} $.
  \item 
    If $n\geq0$ then  $a\gkeq_{n+1}a'$ if for all $b\supseteq a$
    there is $b'\supseteq a'$ such that $b\gkeq_{n}b'$, and for all
    $b'\supseteq a'$ there is $b\supseteq a$ such that $b\gkeq_{n}b'$.
  \end{enumerate}
\end{definition}

\begin{definition}\label{def:gkeq-a-seq}
    We write $\mathcal{N}$ for the set of sequences
    $\vec n\in{^{\zeta}{\omega}}$ such that
    $\set{\iota<\zeta\mid n_\iota< m}$ is finite for each
    $m\in \omega$.
    Suppose that $\vec F$ is a suitable sequence of extenders on $\lambda$
    and $\vec a$ and $\vec a'$ are sequences with $\domain(\vec
    a)=\domain(\vec a')=\domain(\vec F)\subseteq\zeta$.
    \begin{enumerate}
    \item If $\vec n\in\mathcal{N}$ then $\vec a\gkeq_{\vec n}\vec a'$
      if $a_{\nu}\gkeq_{n_\nu}a'_{\nu}$ for all $\nu\in\domain(\vec
      F)$.
    \item $\vec a\gkeq\vec a'$ if there is some $\vec n\in\mathcal{N}$
      such that $\vec a\gkeq_{\vec n}\vec a'$.
    \end{enumerate}
  \end{definition}

\begin{definition}\label{def:gkeq}
  The extension of $\gkeq_{\vec n}$ to $P^*_{\gamma}$ is by recursion on
  $\gamma$:   we assume that its restriction to  $P^*_{\eta}$ is defined for all
  $\eta<\gamma$.
  
  If $\eta<\gamma$ and $w,w'\in P_{\gamma,\eta}^{*}$ then  $w\gkeq_{\vec n}w'$ if
  \begin{myinparaenum}
  \item
    $w\restrict[\gamma_0,\eta]\gkeq_{\vec n} w'\restrict[\gamma_0,\eta]$,
    as members of $P^*_{\eta}$,  and
  \item
    $w\restrict[\eta+1,\gamma)=w'\restrict[\eta+1,\gamma)$.
  \end{myinparaenum}

  Suppose $t,t'\in P^*_{\gamma}$.   Then $t\gkeq_{\vec n}t'$ if the
  following conditions hold:
  \begin{enumerate}
  \item  $\forceKappa^{t}=\forceKappa^{t'}$ and  $\vec F^{t}=\vec
    F^{t'}$.
    
  \item $f^{t}_{\nu,\nu'}=f^{t'}_{\nu,\nu'}$ for all $\nu,\nu'$  for
    which they are defined.
  \item     $a^{t}_{\mu,\nu}\gkeq_{n_\nu}a^{t'}_{\mu,\nu}$ for all $\gamma\geq\mu>\nu$.
  \item\label{item:gkeq_Aeq} $ [A^{t}_{\nu}]_{\vec n}=[A^{t'}_{\nu}]_{\vec n}$ for all
    $\nu\in\domain(\vec F^{t})$, where $[A]_{\vec
      n}=\set{[w]_{\gkeq_{\vec n}}\mid w\in A}$.
  \end{enumerate}
  Finally, 
  $s\gkeq_{\vec n} s'$ for conditions $s,s'\in P(\vec F)$ if
  $\domain(s)=\domain(s')$ and 
  $s(\gamma)\gkeq_{\vec n}s'(\gamma)$ for all $\gamma$ in their
  common domain.
\end{definition}

It is easy to see that $\gkeq$ is an equivalence relation.  

\begin{proposition}
  \label{thm:one-step-gkeq}
  Suppose that $\add(s, \vec z)\leq s\gkeq_{\vec n} t$.  Then
  there is $ \vec w$ such that $\add(s, \vec z)\gkeq_{\vec n}\add(t,  \vec w)\leq t$.
\end{proposition}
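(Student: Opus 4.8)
The plan is to induct on the length $n$ of $\vec z$. The case $n=0$ is trivial, and since $\add(s,\vec z)=\add(\add(s,\vec z\restrict(n-1)),z_{n-1})$, after applying the induction hypothesis to $\vec z\restrict(n-1)$ the inductive step reduces to the following single‑step claim: if $s\gkeq_{\vec n}t$ and $\add(s,w')$ is defined with $w'\in A^{s,\tau}_{\gamma}$, $\gamma\notin\domain(s)$ and $\tau=\min(\domain(s)\setminus\gamma)$, then there is $w\in A^{t,\tau}_{\gamma}$ with $\add(s,w')\gkeq_{\vec n}\add(t,w)\leq t$. Since $\domain(s)=\domain(t)$, the same $\gamma,\tau$ make sense on the $t$‑side, and $\gamma\in[\gamma_0,\tau)=\domain(\vec F^{s,\tau})$. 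Applying clause~(\ref{item:gkeq_Aeq}) of Definition~\ref{def:gkeq} to $s(\tau)\gkeq_{\vec n}t(\tau)$ gives $[A^{s,\tau}_{\gamma}]_{\vec n}=[A^{t,\tau}_{\gamma}]_{\vec n}$, so we may pick $w\in A^{t,\tau}_{\gamma}$ with $w\gkeq_{\vec n}w'$; after shrinking $A^{t,\tau}_{\gamma}$ we may assume $\add(t,w)$ is defined, and concatenating the witnesses across the $n$ steps yields the desired $\vec w$.

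Granting this, one verifies $\add(s,w')\gkeq_{\vec n}\add(t,w)$ coordinatewise on $\domain(s)\cup\{\gamma\}$ from Definition~\ref{def:one-step}. At coordinates $\gamma'\notin\{\gamma,\tau\}$ neither condition is changed, so $\add(s,w')(\gamma')=s(\gamma')\gkeq_{\vec n}t(\gamma')=\add(t,w)(\gamma')$. At $\gamma$, Definition~\ref{def:one-step}(1) shows $\add(s,w')(\gamma)=w'\restrict[\gamma_0,\gamma]$ and $\add(t,w)(\gamma)=w\restrict[\gamma_0,\gamma]$, and $w\gkeq_{\vec n}w'$ in $P^*_{\tau,\gamma}$ unwinds (by the clause of Definition~\ref{def:gkeq} for $P^*_{\gamma,\eta}$) to exactly $w'\restrict[\gamma_0,\gamma]\gkeq_{\vec n}w\restrict[\gamma_0,\gamma]$ in $P^*_{\gamma}$. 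At $\tau$: the first two coordinates agree since $\forceKappa^{s,\tau}=\forceKappa^{t,\tau}$ and $\vec F^{s,\tau}=\vec F^{t,\tau}$; the sets $A^{s',\tau}_{\nu}$ formed in Definition~\ref{def:one-step}(\ref{item:AddA}) via the map $\sigma$ require $[A^{s',\tau}_{\nu}]_{\vec n}=[A_\nu^{\add(t,w),\tau}]_{\vec n}$, which is handled by an inner induction on the rank of the tableau, since $\sigma$ is assembled from one‑step extensions $\add(w''\restrict[\gamma_0,\nu],w\scutdown\nu)$ on strictly smaller tableaux and so falls under a lower instance of the present claim; and the Cohen functions $f^{s',\tau}_{\nu,\nu'}=f^{s,\tau}_{\nu,\nu'}\cup f_{a^{s,\tau}_{\nu,\nu'},a^{w'}_{\nu,\nu'}}$ must equal their $t$‑analogues, which is the delicate point.

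I expect the agreement of the Cohen functions to be the main obstacle. Since $w\gkeq_{\vec n}w'$ and $s\gkeq_{\vec n}t$ force $\domain(a^{w}_{\nu,\nu'})=\domain(a^{w'}_{\nu,\nu'})$ and $\domain(a^{t,\tau}_{\nu,\nu'})=\domain(a^{s,\tau}_{\nu,\nu'})$, the sets $R_a$ and the enumerations $\sigma_{a,r}$ of Definition~\ref{def:a2f} depend only on these domains and therefore coincide on the two sides; in the case $\nu'<\gamma$ the value $f_{a,a',r}(\xi)=h_{\gamma,\nu'}(\sigma_{a',r}\circ\sigma_{a,r}^{-1}(\xi'))$ depends only on domains, so $f_{a^{s,\tau}_{\nu,\nu'},a^{w'}_{\nu,\nu'}}=f_{a^{t,\tau}_{\nu,\nu'},a^{w}_{\nu,\nu'}}$ at once. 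In the case $\nu'=\gamma$ the value is read directly off $a^{w'}_{\nu,\gamma}$, so plain $\gkeq_{\vec n}$‑equivalence of $w$ and $w'$ is not enough: I would strengthen the choice of $w$, using that $[A^{s,\tau}_{\gamma}]_{\vec n}=[A^{t,\tau}_{\gamma}]_{\vec n}$ together with Definition~\ref{def:gkeq-a-set}, to pull $w'$ back to a representative $w\in A^{t,\tau}_{\gamma}$ whose column‑$\gamma$ functions $a^{w}_{\nu,\gamma}$ agree with $a^{w'}_{\nu,\gamma}$ on the initial segment of the domain that is actually consulted. With that refinement the Cohen functions match on the nose, clause~(\ref{item:gkeq_Aeq}) follows from the inner induction, and the verification is complete.
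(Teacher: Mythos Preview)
Your overall architecture is the same as the paper's: reduce by induction to a single one-step extension, use clause~(\ref{item:gkeq_Aeq}) of Definition~\ref{def:gkeq} to find $w\in A^{t,\tau}_{\gamma}$ with $w\gkeq_{\vec n}z$, and verify $\add(s,z)\gkeq_{\vec n}\add(t,w)$ coordinatewise. The treatment of coordinates other than $\tau$ is fine.

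The gap is at coordinate $\tau$, precisely where you flag the ``delicate point.'' You overlooked clause~(ii) in the definition of $\gkeq_{\vec n}$ on $P^*_{\tau,\gamma}$: for $z,w\in P^*_{\tau,\gamma}$, the relation $z\gkeq_{\vec n}w$ requires not merely some equivalence but \emph{literal equality} $z\restrict[\gamma+1,\tau)=w\restrict[\gamma+1,\tau)$ of the upper rows. In particular $a^{z}_{\nu,\nu'}=a^{w}_{\nu,\nu'}$ for every $\tau\ge\nu>\gamma\ge\nu'\ge\gamma_0$. Since $f_{a,a'}$ depends on $a$ only through $\domain(a)$ (and $\domain(a^{s,\tau}_{\nu,\nu'})=\domain(a^{t,\tau}_{\nu,\nu'})$ by $s\gkeq_{\vec n}t$), while $a'$ is literally the same on both sides, the Cohen functions $f_{a^{s,\tau}_{\nu,\nu'},\,a^{z}_{\nu,\nu'}}$ and $f_{a^{t,\tau}_{\nu,\nu'},\,a^{w}_{\nu,\nu'}}$ agree on the nose---including in your troublesome case $\nu'=\gamma$. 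This is exactly how the paper handles it: one sentence citing $z\restrict[\gamma+1,\tau)=w\restrict[\gamma+1,\tau)$.

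So your worry that ``plain $\gkeq_{\vec n}$-equivalence of $w$ and $w'$ is not enough'' is unfounded, and the proposed workaround (choosing $w$ so that $a^{w}_{\nu,\gamma}$ agrees with $a^{w'}_{\nu,\gamma}$ on an initial segment) is both unnecessary and not obviously extractable from the hypotheses: clause~(\ref{item:gkeq_Aeq}) only hands you a $\gkeq_{\vec n}$-equivalent $w$, and you have no leverage to impose further constraints. Once you read clause~(ii) correctly, the verification at $\tau$---including the $[A^{\add(s,z),\tau}_{\nu}]_{\vec n}=[A^{\add(t,w),\tau}_{\nu}]_{\vec n}$ check via the recursive description in Definition~\ref{def:one-step}(\ref{item:AddA})---goes through by the inner induction you already set up, since $z\scutdown\nu$ and $w\scutdown\nu$ inherit the same upper-row equality.
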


\begin{proof}
  We show that this is true when $\vec z$ has length one.   An
  induction will then show that it is true in general.     .

  Suppose that $\add(s,z)\leq s\gkeq_{\vec n} t$, with $z\in
  A^{s,\tau}_{\gamma}$.  By 
  definition~\ref{def:gkeq}(\ref{item:gkeq_Aeq})
  there is  
  $w\in A^{t,\tau}_{\gamma}$ such that $z\gkeq_{\vec n}w$.    Then
  the condition $z\restrict[\gamma_0,\gamma]\gkeq_{\vec
    n}w\restrict[\gamma_0,\gamma]$ implies that 
  $\add(s,z)(\gamma)\gkeq_{\vec n}\add(t,w)(\gamma)$, and the
  condition that $z\restrict[\gamma+1,\tau)=
  w\restrict[\gamma+1,\tau)$ implies that the Cohen functions induced in
  $\add(s,z)(\tau)$ and $\add(t,w)(\tau)$ 
  by Definition~\ref{def:a2f} are equal.  Therefore
  $\add(s,z)(\tau)\gkeq_{\vec n}\add(t,w)(\tau)$.  

  Since these are the only values of $s$ and $t$ which are changed in
  the extensions, 
  it follows that $\add(s,z)\gkeq_{\vec n}\add(t,w)$.
\end{proof}

\begin{proposition}
  \label{thm:direct-gkeq}
  Suppose $s'\leq^* s\gkeq_{\vec n} t$, and that $n_{\nu}>0$ for all
  $\nu\notin\domain(s)$.  Then there is $t'\leq^* t$ such
  that $s'\gkeq_{\vec m} t'$ for all
  $\nu<\zeta$, where $m_{\nu} = n_{\nu} - 1$ if
  $n_{\nu}>0$, and $m_{\nu}=0$ otherwise, 
\end{proposition}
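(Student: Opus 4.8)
The plan is to treat $\gkeq_{\vec n}$ as an approximate bisimulation between the direct extension orders: given the witness $\vec n$ for $s\gkeq t$, I will copy the move $s'\le^* s$ over to a move $t'\le^* t$, at the cost of decrementing the witness at exactly the coordinates where the copying forces the extension of a Prikry function. The proof runs by induction on the length $\zeta$ of $\vec F$, using the factorization of Proposition~\ref{thm:factorization} to reduce to $\domain(s)=\sing\zeta$, and within that case by the recursion on tableau depth already built into Definitions~\ref{def:A} and~\ref{def:gkeq}; the coordinates of $\domain(s)$ below $\zeta$, and the ``lower parts'' $w\restrict[\gamma_0,\gamma]\in P^*_\gamma$ of members of the sets $A^{s,\zeta}_\gamma$, are handled by the induction hypothesis for shorter sequences. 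The first two components of $t'(\zeta)$ are forced to equal those of $t(\zeta)$, hence of $s(\zeta)$ and of $s'(\zeta)$.

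For the Cohen coordinates of the tableau $z^{t',\zeta}$ I would simply put $f^{t',\zeta}_{\nu,\nu'}:=f^{s',\zeta}_{\nu,\nu'}$. This extends $f^{t,\zeta}_{\nu,\nu'}$, since $f^{t,\zeta}_{\nu,\nu'}=f^{s,\zeta}_{\nu,\nu'}\subseteq f^{s',\zeta}_{\nu,\nu'}$ by $s\gkeq_{\vec n}t$ and $s'\le^* s$, and it makes clause~(2) of Definition~\ref{def:gkeq} automatic. For the Prikry coordinates I would use the defining property of $\gkeq_{n_{\nu'}}$: every column index $\nu'$ of an $a$ lies in $[\gamma_0,\zeta)$ and hence outside $\domain(s)=\sing\zeta$, so the hypothesis gives $n_{\nu'}\ge 1$; from $a^{s',\zeta}_{\nu,\nu'}\supseteq a^{s,\zeta}_{\nu,\nu'}\gkeq_{n_{\nu'}}a^{t,\zeta}_{\nu,\nu'}$, Definition~\ref{def:gkeq-a-set} produces a function $a^{t',\zeta}_{\nu,\nu'}\supseteq a^{t,\zeta}_{\nu,\nu'}$ with $a^{s',\zeta}_{\nu,\nu'}\gkeq_{n_{\nu'}-1}a^{t',\zeta}_{\nu,\nu'}$, and since $m_{\nu'}=n_{\nu'}-1$ at these coordinates this gives clause~(3) of Definition~\ref{def:gkeq}. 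These choices must be made coherently down each column, by recursion on the row index, so that the nesting condition~(\ref{item:asubset}) of Definition~\ref{def:tableau} and the disjointness from $\domain(f^{t',\zeta}_{\nu,\nu'})$ are maintained; this is the one place where slightly more than the bare definition of $\gkeq_{n}$ is used.

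The sets $A^{t',\zeta}_\gamma$ are obtained by the same recipe one level deeper. Given $w'\in A^{s',\zeta}_\gamma$, clause~(\ref{item:leq-star-change-w-in-A}) of Definition~\ref{def:star-order} names a witness $w\in A^{s,\zeta}_\gamma$; since $[A^{s,\zeta}_\gamma]_{\vec n}=[A^{t,\zeta}_\gamma]_{\vec n}$ there is $\tilde w\in A^{t,\zeta}_\gamma$ with $w\gkeq_{\vec n}\tilde w$; applying the induction hypothesis to $w'\restrict[\gamma_0,\gamma]\le^* w\restrict[\gamma_0,\gamma]\gkeq_{\vec n}\tilde w\restrict[\gamma_0,\gamma]$ in $P^*_\gamma$, and the construction of the previous paragraph to the rows of $w'$ above $\gamma$, yields $\tilde w'\le^*\tilde w$ in $P^*_{\zeta,\gamma}$ with $\tilde w'\gkeq_{\vec m}w'$, chosen (with a further appeal to Proposition~\ref{thm:faa_exists}) so that $\tilde w$ witnesses $\tilde w'$ in the sense of Definition~\ref{def:star-order}(\ref{item:leq-star-change-w-in-A}). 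Let $A^{t',\zeta}_\gamma$ be the set of all such $\tilde w'$, closed up under $\gkeq_{\vec m}$-equivalence inside the corresponding shrinking of $A^{t,\zeta}_\gamma$. This gives $[A^{s',\zeta}_\gamma]_{\vec m}=[A^{t',\zeta}_\gamma]_{\vec m}$, i.e.\ clause~(\ref{item:gkeq_Aeq}) of Definition~\ref{def:gkeq}, while the assignment $\tilde w'\mapsto\tilde w$ witnesses clause~(\ref{item:leq-star-change-w-in-A}) of Definition~\ref{def:star-order} for $t'\le^* t$; the remaining clauses~(1), (\ref{item:leq-star-a-fctns-extend}) and~(\ref{item:z}) of that definition are immediate from the construction. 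Finally, $A^{t',\zeta}_\gamma\in U^{t',\zeta}_\gamma=\ufFromExt{F^{t,\zeta}_\gamma}{t'(\zeta)\scutdown\gamma}$ is checked by applying $i^{F^{t,\zeta}_\gamma}$ to the whole construction, exactly as in the proof of Lemma~\ref{thm:closure}: from $A^{t,\zeta}_\gamma\in U^{t,\zeta}_\gamma$ one gets the corresponding element of $i^{F^{t,\zeta}_\gamma}(A^{t',\zeta}_\gamma)$, which is $t'(\zeta)\scutdown\gamma$.

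The main obstacle is bookkeeping: threading the induction on $\zeta$ through the inner recursion on tableau depth; making the choices of the $a^{t',\zeta}_{\nu,\nu'}$ and of the members of $A^{t',\zeta}_\gamma$ uniform (definable) enough that they commute with the extender embeddings $i^{F^{t,\zeta}_\gamma}$, which is what the ultrafilter-membership verification needs; and, most delicately, extending the Prikry functions coherently down an entire nested column while holding the $\gkeq_{m_{\nu'}}$-relation to the corresponding column of $s'$. The role of the hypothesis is precisely to make the last point affordable: the decrement $n_{\nu'}\mapsto n_{\nu'}-1$ is charged only at column indices $\nu'\notin\domain(s)$, where $n_{\nu'}>0$, whereas the coordinates in $\domain(s)$ carry resolved quadruples whose $a$'s are never extended, so $m_\nu=0$ there is harmless.
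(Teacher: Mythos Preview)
Your proposal is correct and follows essentially the same approach as the paper. The paper streamlines two of your steps: it chooses only $a^{t',\gamma}_{\nu+1,\nu}$ (the largest $a$ in column $\nu$, by clause~(\ref{item:asubset}) of Definition~\ref{def:tableau}) directly from the definition of $\gkeq_{n_\nu}=\gkeq_{m_\nu+1}$ and sets $a^{t',\gamma}_{\mu,\nu}=a^{t',\gamma}_{\nu+1,\nu}\restrict\domain(a^{s',\gamma}_{\mu,\nu})$ for $\mu>\nu+1$, so no separate row recursion or ``slightly more than the bare definition'' is needed; and it defines $A^{t',\gamma}_\nu$ declaratively as the set of all $w'$ with $w'\le^* w$ for some $w\in A^{t,\gamma}_\nu$ and $w'\gkeq_{\vec m} v'$ for some $v'\in A^{s',\gamma}_\nu$, which sidesteps the definable-choice and ultrafilter-membership bookkeeping you flag.
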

\begin{proof}
  We will show by induction on $\gamma$ that, under the hypotheses of the Proposition, if   $\gamma\in\domain(s)=\domain(t)$ then there is  $t'(\gamma)$ such that
  $t'(\gamma)\leq^* t(\gamma)$ and $t'(\gamma)\gkeq_{m_{\nu}}s'(\gamma)$.    By the definition of $\gkeq$, 
  the sequence  $\vec F^{t',\gamma}$ and the functions
  $f^{t',\gamma}_{\nu,\nu'}$ must  be the same as $\vec
  F^{s',\gamma}$ and $f^{s',\gamma}_{\nu,\nu'}$.    This leaves the
  functions $a^{t',\gamma}_{\nu',\nu}$ and sets $A^{t'}_{\nu}$ to be
  defined.  

  To define $\vec a^{t',\gamma}$, pick for each $\nu$ in the interval
  $\gamma_0\leq\nu<\gamma$ some $b\supseteq a^{t,\gamma}_{\nu+1,\nu}$ such that
  $a^{s',\gamma}_{\nu+1,\nu}\gkeq_{m_{\nu}} b$.   This is possible by
  the definition of $\gkeq_{m_{\nu}+1}$, since $n_\nu\not=0$.   Now set
  $a^{t',\gamma}_{\nu+1,\nu}=b$.   By clause~(\ref{item:asubset}) of 
  the Definition~\ref{def:tableau} of the tableau, this determines
  $a^{t',\gamma}_{\mu,\nu}=a^{t',\gamma}_{\nu+1,\nu}\restrict\domain(a^{s',\gamma}_{\mu,\nu})$
  for $\mu>\nu+1$.  

  Finally, set $A^{t',\gamma}_{\nu}$ equal to  the set of all $w'$ such
  that $w'\leq^* w$ for for some $w\in A^{t,\gamma}_{\nu}$ and
  $w'\gkeq_{\vec m} v'$ for some $v'\in A^{s',\gamma}_{\nu}$.     Then
  $ [A^{t',\gamma}_{\nu}]=[A^{s',\gamma}_{\nu}]$ since for all
  $v'\in A^{s',\gamma}_{\nu}$ there is $v\in A^{s,\gamma}_{\nu}$ and
  $w\in A^{t,\gamma}_{\nu}$ such that $v'\leq^* v\gkeq_{\vec m}w$, and
  then the induction hypothesis implies that there is $w'\leq^* w$ with
  $w'\gkeq _{\vec m} v'$.     
\end{proof}
\begin{definition}
  \label{def:modgekq}
  We will write $[s]$ for $[s]_{\gkeq}=\set{t\mid s\gkeq t}$.   The
  ordering on $P(\vec F)\mgkeq$ is the smallest transitive relation such
  that $[s]\leq[t]$ holds if either $s\leq t$  or  $s\gkeq t$.
\end{definition}

\begin{proposition}
  \label{thm:leq_trans1}
  Suppose $[t]= [s]$ and $t'\leq t$.  Then there are $s''\leq s$ and
  $t''\leq t'$ such that $[s'']=[t'']$.
\end{proposition}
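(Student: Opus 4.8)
The plan is to peel $t'\le t$ apart into a one‑step (non‑direct) part and a $\le^*$‑part, transport each part across the equivalence $s\gkeq t$ using Propositions~\ref{thm:one-step-gkeq} and~\ref{thm:direct-gkeq} respectively, and then read off the common lower bound modulo $\gkeq$. First I would fix $\vec n\in\mathcal N$ witnessing $s\gkeq_{\vec n}t$ and set $F=\set{\nu<\zeta\mid n_\nu=0}$, which is finite because $\vec n\in\mathcal N$. Since every set $A^{q,\tau}_{\gamma}$ is nonempty (it lies in the ultrafilter $U^{q,\tau}_{\gamma}$, and by the remark after Definition~\ref{def:one-step} we may assume $\add(q,w)$ is always defined on it), a finite sequence of one‑step extensions produces a condition $t_1\le t'$ with $F\subseteq\domain(t_1)$; in particular $t_1\le t$. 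By Proposition~\ref{thm:one-stepFirst} there is $\vec z$ with $t_1\le^*\add(t,\vec z)\le t$, and since $\le^*$ preserves domains we have $\domain(\add(t,\vec z))=\domain(t_1)\supseteq F$.

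Next I would apply Proposition~\ref{thm:one-step-gkeq} to $\add(t,\vec z)\le t\gkeq_{\vec n}s$ (using symmetry of $\gkeq_{\vec n}$), obtaining $\vec w$ with $\add(s,\vec w)\le s$ and $\add(t,\vec z)\gkeq_{\vec n}\add(s,\vec w)$. Now $t_1\le^*\add(t,\vec z)\gkeq_{\vec n}\add(s,\vec w)$, and because $\domain(\add(t,\vec z))\supseteq F$, every $\nu$ outside that domain has $n_\nu>0$; this is exactly the hypothesis needed for Proposition~\ref{thm:direct-gkeq}. That proposition then yields $s''\le^*\add(s,\vec w)$ with $t_1\gkeq_{\vec m}s''$, where $\vec m$ is the shifted sequence of that proposition; one checks $\vec m\in\mathcal N$ since $\set{\iota\mid m_\iota<k}\subseteq\set{\iota\mid n_\iota<k+1}$. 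Setting $t''=t_1$ we get $s''\le\add(s,\vec w)\le s$, $t''\le t'$, and $s''\gkeq t''$, i.e. $[s'']=[t'']$, as required.

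The routine verifications are that $F$ is finite, that one‑step extensions can enlarge $\domain(t')$ to cover $F$, and that the shifted sequence $\vec m$ remains in $\mathcal N$; these are all immediate from the definitions. The only real subtlety—and the reason for first replacing $t'$ by $t_1$—is the hypothesis of Proposition~\ref{thm:direct-gkeq} that $n_\nu>0$ off the relevant domain: absorbing into the domain the finitely many coordinates where $\vec n$ vanishes is precisely what makes the second transport step legal, and without it the chain breaks. So I expect that bookkeeping, rather than any new idea, to be where care is needed.
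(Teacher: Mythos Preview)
Your proof is correct and follows essentially the same route as the paper's: extend $t'$ by finitely many one-step extensions so that its domain contains the finite set $\set{\nu\mid n_\nu=0}$, factor via Proposition~\ref{thm:one-stepFirst} as $t''\leq^*\add(t,\vec z)$, transport the one-step part across $\gkeq_{\vec n}$ using Proposition~\ref{thm:one-step-gkeq}, and then transport the $\leq^*$-part using Proposition~\ref{thm:direct-gkeq}, whose hypothesis is now satisfied. Your write-up is in fact slightly more explicit than the paper's about why the hypothesis of Proposition~\ref{thm:direct-gkeq} holds and why the shifted sequence $\vec m$ remains in $\mathcal N$.
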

\begin{proof}
  Suppose that $t\gkeq_{\vec n}s$.   By using a
  further extension $t''=\add(t',\vec w)$ we can arrange that
  $\set{\nu\mid n_{\nu}=0}\subseteq\domain(t'')$.  By
  Proposition~\ref{thm:one-stepFirst} there is $\vec z$ so that
  $t''\leq^* \add(t,\vec z)\leq t$.  By
  Proposition~\ref{thm:one-step-gkeq} it follows that there is $\vec
  w$ so that $\add(t,\vec z)\gkeq_{\vec n}\add(s,\vec w)\leq s$.  Finally it   follows by Proposition~\ref{thm:direct-gkeq} that there is
  $s''\leq^* \add(s,\vec w)$ so that $s''\gkeq t''$.
\end{proof}
% \begin{figure}[ht]
%   \centering
% %   \includegraphics[scale=.5]{./images/prop-equiv-normal}
%   \caption{Proof of Proposition~\ref{thm:equivnormal}}
%   \label{fig:equivnormal}
% \end{figure}
\begin{proposition}\label{thm:equivnormal}
  Suppose that $[t]\leq [s]$.  Then there is a condition $q\leq s$
  such that $[q]\leq [t]$.
\end{proposition}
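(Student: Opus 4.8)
The plan is to unwind the definition of the quotient order $P(\vec F)\mgkeq$ into a finite chain and then argue by induction on the length of that chain, using Proposition~\ref{thm:leq_trans1} to absorb the equivalence-relation links. First, since $\gkeq$ is an equivalence relation and the generating pairs of the quotient order are the classes of pairs $(a,b)$ with $a\leq b$ or $a\gkeq b$, the hypothesis $[t]\leq[s]$ yields a finite sequence of conditions $t=d_0,d_1,\dots,d_m=s$ such that for each $i<m$ either $d_i\leq d_{i+1}$ or $d_i\gkeq d_{i+1}$; any ``mixed'' link arising from choosing different representatives can be split into a $\gkeq$-link, a $\leq$-link, and another $\gkeq$-link, so no generality is lost. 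Composing the links gives $[t]=[d_0]\leq[d_1]\leq\dots\leq[d_m]=[s]$, which is the content of $[t]\le[s]$.

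I would then prove, by induction on $m$, the statement: for every such chain there is $q\leq d_m$ with $[q]\leq[d_0]$. The base case $m=0$ is immediate, taking $q=d_0=d_m$. For the inductive step, the chain $d_0,\dots,d_{m-1}$ has length $m-1$, so the induction hypothesis supplies a condition $q'\leq d_{m-1}$ with $[q']\leq[d_0]$; it remains to push $q'$ past the final link. If $d_{m-1}\leq d_m$, then $q'\leq d_{m-1}\leq d_m$ by transitivity of the forcing order $\leq$ on $P(\vec F)$, and $q=q'$ works. If instead $d_{m-1}\gkeq d_m$, then $[d_{m-1}]=[d_m]$, and since $q'\leq d_{m-1}$ I would apply Proposition~\ref{thm:leq_trans1} (with $d_{m-1}$, $d_m$, $q'$ in the roles of $t$, $s$, $t'$) to obtain $s''\leq d_m$ and $q''\leq q'$ with $[s'']=[q'']$. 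Then $[s'']=[q'']\leq[q']\leq[d_0]$, so $q=s''$ is as required. Applying this to the chain for $[t]\leq[s]$, where $d_0=t$ and $d_m=s$, yields $q\leq s$ with $[q]\leq[t]$, which is Proposition~\ref{thm:equivnormal}.

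The only genuine content is the $\gkeq$-link in the inductive step, and this is exactly what Proposition~\ref{thm:leq_trans1} is designed to handle: it lets one slide a condition lying below one member of an equivalence class down below any other member of that class, at the cost of a further extension on the original side. Everything else is bookkeeping --- transitivity of $\leq$ and of the quotient order, and the fact that $\gkeq$ is an equivalence relation --- so I do not expect a serious obstacle; the main care needed is to interpolate $\gkeq$-representatives correctly when converting $[t]\leq[s]$ into the chain $t=d_0,\dots,d_m=s$, and to keep the direction of the induction (peeling off the link nearest $s$) straight so that the composed inequalities point the right way.
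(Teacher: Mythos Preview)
Your proof is correct and is essentially identical to the paper's: both unwind $[t]\le[s]$ into a finite chain alternating $\le$ and $\gkeq$ links, induct on its length by peeling off the link nearest $s$, handle the $\le$-link by transitivity, and handle the $\gkeq$-link via Proposition~\ref{thm:leq_trans1}. The only difference is cosmetic notation.
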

\begin{proof}
  \newcommand{\ltgkeq}{\mathbin{\genfrac{}{}{0pt}{}{\raisebox{-3pt}{$<$}}{\raisebox{1pt}{$\scriptstyle\gkeq$}}}}
  If $[t]\leq[s]$ then there is a sequence 
  $t=t_0\ltgkeq t_1\ltgkeq\cdots\ltgkeq t_{k-1}\ltgkeq t_{k}=s$, 
  where we write $s\ltgkeq s'$ to mean that  either $s\le s'$  or $s\gkeq s'$.
  We prove the
  proposition by induction on the length of the shortest such
  sequence, assuming as an induction hypothesis that there is $\bar
  q\le t_{k-1}$ such that $[\bar q]\leq[t]$.
  
  If $t_{k-1}\leq s$, then it follows that  $\bar q\leq s$ and we
  can take $q=\bar q$.   Otherwise $\bar q\le t_{k-1}\gkeq s$, and
  Proposition~\ref{thm:leq_trans1} asserts that there is $q\leq s$
  and $q'\le\bar q$ such that $q\gkeq q'$.  But then $[q]=[q']\leq
  [t]$, as required.
\end{proof}

\begin{corollary}\label{thm:iterated}
  $P(\vec F)$ is forcing equivalent to $(P(\vec F)\mgkeq)*\dot R$ where 
  $\dot R$ is a $P(\vec F)\mgkeq$-name  for a partial order.\qed
\end{corollary}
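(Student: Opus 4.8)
The plan is to recognize the quotient map $\pi\colon s\mapsto[s]$ as a \emph{projection} of $P(\vec F)$ onto $P(\vec F)\mgkeq$, and then to invoke the standard fact that a forcing notion factors through any of its projections. Explicitly, I would let $\dot R$ be the canonical $P(\vec F)\mgkeq$-name for the suborder
\[
  P(\vec F)/\dot G=\set{s\in P(\vec F)\mid[s]\in\dot G}
\]
of $(P(\vec F),{\le})$, where $\dot G$ names the generic filter on $P(\vec F)\mgkeq$, and define $e\colon P(\vec F)\to(P(\vec F)\mgkeq)*\dot R$ by $e(s)=([s],\check s)$. The corollary then reduces to checking that $e$ is a dense embedding (note that $\pi$ is onto, so $P(\vec F)\mgkeq$ certainly has dense range).

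First I would dispose of the routine half. That $e$ is order preserving is immediate from Definition~\ref{def:modgekq}: if $s'\le s$ then $[s']\le[s]$, while $[s']$ forces $[s]\in\dot G$ and $\check s'\le\check s$ in $P(\vec F)$, so $e(s')\le e(s)$. Incompatibility is preserved because the incompatibility relation of $P(\vec F)$ is absolute: if $s\perp s'$ in $P(\vec F)$ but $(q,\dot r)\le e(s),e(s')$, then refining $q$ so that it forces $\dot r=\check t$ for some fixed $t\in P(\vec F)$ gives $q\forces\check t\le\check s\wedge\check t\le\check{s'}$ in $P(\vec F)$, hence $t\le s$ and $t\le s'$, contradicting $s\perp s'$.

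The one substantive point is density of $e[P(\vec F)]$, and this is exactly what Proposition~\ref{thm:equivnormal} supplies. Given $(q,\dot r)$, I would pass to some $[t]\le q$ forcing $\dot r=\check s$ for a fixed $s\in P(\vec F)$; then $[t]\forces[s]\in\dot G$, so $[t]$ is compatible with $[s]$, say $[t_0]\le[t]$ and $[t_0]\le[s]$. Applying Proposition~\ref{thm:equivnormal} to $[t_0]\le[s]$ yields $q_0\le s$ in $P(\vec F)$ with $[q_0]\le[t_0]$; then $[q_0]\le q$, and since $q_0\le s$ and $[q_0]$ forces $\dot r=\check s$ we also get $[q_0]\forces\check{q_0}\le\dot r$ and $[q_0]\forces\check{q_0}\in\dot R$, so $e(q_0)\le(q,\dot r)$. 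Thus $e$ is a dense embedding and $P(\vec F)\equiv(P(\vec F)\mgkeq)*\dot R$. The only step calling for real combinatorics of the forcing is this density argument, and it has already been carried out in Proposition~\ref{thm:equivnormal} (which itself rests on Proposition~\ref{thm:leq_trans1} and the commutation lemmas, Propositions~\ref{thm:one-step-gkeq} and~\ref{thm:direct-gkeq}); so proving the corollary amounts to assembling these facts through the general projection lemma.
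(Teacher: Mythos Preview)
Your proposal is correct and matches the paper's intent: the corollary is stated with a bare \qed\ immediately after Proposition~\ref{thm:equivnormal}, signaling that it is the routine consequence of that proposition via the standard projection/quotient-forcing lemma, which is precisely what you have spelled out. The density argument you give is exactly the one Proposition~\ref{thm:equivnormal} is designed to supply.
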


\begin{corollary}\label{thm:nocollapse-PFgkeq}
  Forcing with $P(\vec F)\mgkeq$ does not collapse any cardinal which is not in the set $\bigcup_{\gamma\leq\zeta}[\forceKappa_{\gamma}^{++},
  \forceKappa^{+\omega_1}_{\gamma})$.
\end{corollary}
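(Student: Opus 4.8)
The plan is to deduce this directly from the two preceding corollaries together with the fact that $\zeta$ is countable. First I would record the elementary inclusion of exceptional sets: since $B$ is limit suitable, $\zeta=\otp(B)$ is a countable ordinal, so every $\gamma\leq\zeta$ occurring in the union of Corollary~\ref{thm:PF-nocollapse} satisfies $\gamma+1<\omega_1$, whence $\forceKappa_{\gamma}^{+\gamma+1}<\forceKappa_{\gamma}^{+\omega_1}$ and $[\forceKappa_{\gamma}^{++},\forceKappa_{\gamma}^{+\gamma+1}]\subseteq[\forceKappa_{\gamma}^{++},\forceKappa_{\gamma}^{+\omega_1})$. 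Thus the set of cardinals exempted in Corollary~\ref{thm:PF-nocollapse} is contained in the set exempted here.

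Next I would invoke Corollary~\ref{thm:iterated}, which gives the forcing equivalence $P(\vec F)\equiv (P(\vec F)\mgkeq)*\dot R$. Since the first factor of a two-step iteration embeds completely into the iteration, any $M$-generic extension by $P(\vec F)\mgkeq$ sits inside an $M$-generic extension by $P(\vec F)$; in particular, any cardinal of $M$ collapsed by $P(\vec F)\mgkeq$ is already collapsed by $P(\vec F)$. Applying Corollary~\ref{thm:PF-nocollapse}, such a cardinal must lie in $\bigcup_{\gamma\leq\zeta}[\forceKappa_{\gamma}^{++},\forceKappa_{\gamma}^{+\gamma+1}]$, and hence, by the first step, in $\bigcup_{\gamma\leq\zeta}[\forceKappa_{\gamma}^{++},\forceKappa_{\gamma}^{+\omega_1})$ — which is exactly the assertion to be proved. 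Contrapositively, a cardinal outside this set survives the forcing.

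I do not expect a genuine obstacle here: the substance has already been absorbed into Corollary~\ref{thm:PF-nocollapse} (via the Prikry property, the factorization lemma, and the $\kappa$-approximation property) and into the reduction to the $\gkeq$-quotient (Proposition~\ref{thm:equivnormal} and Corollary~\ref{thm:iterated}), which together show that passing from $P(\vec F)$ to $P(\vec F)\mgkeq$ costs only a further iterand. The only point I would take care to state explicitly is the inclusion of exceptional intervals, which uses $\zeta<\omega_1$; this is automatic in the setting of the Main Lemma and is in any case implicit in the definition of $\mathcal{N}$, since ${}^{\zeta}\omega$ admits a sequence $\vec n$ with $\set{\iota<\zeta\mid n_\iota<m}$ finite for every $m$ only when $\zeta$ is countable.
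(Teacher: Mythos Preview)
Your argument is correct and is essentially the paper's own proof: invoke Corollary~\ref{thm:iterated} to write $P(\vec F)\equiv(P(\vec F)\mgkeq)*\dot R$, observe that any cardinal collapsed by the first factor is collapsed by the full iteration, and apply Corollary~\ref{thm:PF-nocollapse}. The paper compresses this into two sentences and leaves the inclusion of exceptional intervals implicit; your explicit treatment of that inclusion via $\zeta<\omega_1$ is a harmless elaboration.
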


\begin{proof}
  By Corollary~\ref{thm:PF-nocollapse}  this is true in the extension by $P(\vec F)=
  (P(\vec F)\mgkeq)*\dot R$; 
  hence it is certainly true in the extension by $P(\vec F)\mgkeq$.
\end{proof}

\subsection{Constructing a generic set}
\label{sec:generic_set}

Much of the argument in this subsection  is basically the same as
Carmi Merimovich's first genericity argument 
\cite[Theorem 5.1]{Merimovich2007Prikry-on-exten}.  
In order to construct a $M_B$-generic set we need to move outside of
$M_B$: we work in $V[h]$,  where $h$ is a generic collapse of
$\reals$ onto $\omega_1$ so that $\card{M[h]}=\omega_1$.   Since this Levy
collapse does not add countable sequences of ordinals  the Chang
model is unchanged, the ordering
$\le^*$ of $P(\vec 
N\restrict\zeta)$ is still countably complete, and  $M$ is still closed under
countable sequences.   Furthermore, since $h$ is generic over $M$,
$M[h]\supseteq M(\reals)$ and $M[h]$ is mouse over $h$ which has all
of the required
properties of $M$. 

\begin{lemma}[Generic set construction]\label{thm:generic_in_V_1}
  Let $h$ be a generic collapse of $\reals$ onto $\omega_1$ with
  countable conditions, and    
  let $B$ be a countable subset of $I$ with $\otp(B)=\zeta$. 
  Then there is, in $V[h]$,  an $i_{\ords}(M_B)$-generic set $G\subseteq
  i_{\ords}(P(\vec E\restrict\zeta)\mgkeq)$  such that
  every countable subset of $M_{B}$ is contained in $M_B[G]$.
\end{lemma}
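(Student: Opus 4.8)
The plan is to build the generic set $G$ for $i_{\ords}(P(\vec E\restrict\zeta)\mgkeq)$ by an $\omega_1$-length recursion inside $V[h]$, meeting one dense set at each stage while simultaneously arranging that the finitely many ``one-step'' pieces forced into $G$ by the construction account for exactly the generators $i_{\nu}(\bar\beta)$ for $\kappa_\nu\in B$ and $\bar\beta\in[\kappa,\kappa^{+\omega_1})$. First I would fix, using $h$, an enumeration $\seq{D_\xi\mid\xi<\omega_1}$ of all dense subsets of $i_{\ords}(P(\vec E\restrict\zeta)\mgkeq)$ that lie in $M_B$ — possible since $\card{M_B}=\card{\reals}$ and $h$ collapses $\reals$ to $\omega_1$ — together with an enumeration $\seq{(\nu_\xi,\bar\beta_\xi)\mid\xi<\omega_1}$ of all the required generator-demands. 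The recursion produces a $\le^*$-decreasing (in the $\gkeq$-quotient) sequence $\seq{[s_\xi]\mid\xi<\omega_1}$ of conditions: at a successor step I first use Proposition~\ref{thm:equivnormal} together with the Prikry property (Lemma~\ref{thm:prikry}(\ref{item:Prikrythm-inD}), transported to the quotient via Corollary~\ref{thm:iterated}) to find $[s_{\xi+1}]\le[s_\xi]$ deciding membership in $D_\xi$ up to a finite $b\subseteq\zeta+1$, and then I perform a one-step extension $\add(\cdot,w)$ whose new coordinate $\gamma$ is chosen so that the associated Cohen function $h_{\zeta,\nu_\xi}$ receives, at the relevant argument $i_{\ords}(\bar\xi)$, the value $i_{\nu_\xi}(\bar\beta_\xi)$; the equivalence relation $\gkeq$ (via Definitions~\ref{def:gkeq-a-set}--\ref{def:gkeq}) is exactly what lets me slide an arbitrary generator in $[\kappa,\kappa^{+\omega_1})$ into the slot of a generator in $\bigcup_{\nu<\zeta}\supp(E_\nu)$, as explained in the design overview preceding Definition~\ref{def:overview}. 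At limit stages $\xi<\omega_1$ of countable cofinality I invoke the countable completeness of $\le^*$ (which survives the Levy collapse, as noted just before the Lemma) and diagonal closure (Lemma~\ref{thm:closure}(\ref{item:closurediagonal}), transported to the quotient) to get a lower bound $[s_\xi]$; here I must be careful that $\cof(\ords\cap M)=\omega_1$ and that $\zeta$ is countable, so that at each stage only countably many coordinates of $\zeta+1$ have been touched and the infima genuinely exist.

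Once the sequence is built, I let $G$ be the filter generated by $\set{[s_\xi]\mid\xi<\omega_1}$ together with all one-step pieces below them; genericity over $i_{\ords}(M_B)$ is immediate from the fact that every dense set in $M_B$ appears as some $D_\xi$ and was met. The remaining content is the ``covering'' clause: every countable subset of $M_B$ lies in $M_B[G]$. For this I would argue that $M_B[G]$ contains every countable sequence of generators for members of $B$: given such a sequence $\vec\beta=\seq{i_{\nu_n}(\bar\beta_n)\mid n\in\omega}$, each entry was explicitly placed into some Cohen function $h_{\zeta,\nu_n}$ during the construction, so the sequence is read off from $G$ via the ``standard forcing names'' described in the overview; since $M$ (hence $M_B$, and hence $M_B[G]$) is closed under countable sequences of its own elements by Proposition~\ref{thm:Mcc}, and since every element of $M_B$ is $i_{\ords}(f)(b)$ for $f\in M$ and $b\in[\gen_B]^{<\omega}$, a countable subset of $M_B$ is coded by a countable sequence of (function, finite-tuple-of-generators) pairs, the function part lying in $M$ and the generator part being recoverable from $G$. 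Thus the countable subset is in $M_B[G]$.

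The main obstacle — and the part deserving the most care — is the bookkeeping at successor stages that reconciles the two jobs: meeting $D_\xi$ and realizing the generator-demand $(\nu_\xi,\bar\beta_\xi)$. After meeting $D_\xi$ I have committed to a finite set $b$ of coordinates, and I then need a one-step extension at a fresh coordinate $\gamma\notin\domain(s_{\xi+1})$ that (i) respects the tableau constraints of Definition~\ref{def:tableau} and the nesting $a^z_{\gamma,\nu}\subseteq a^z_{\gamma',\nu}$, (ii) via Definition~\ref{def:a2f} converts the relevant $a$-function into a Cohen condition whose value at the designated argument is the demanded generator, and (iii) is compatible modulo $\gkeq$ with all earlier commitments — which is where Propositions~\ref{thm:one-step-gkeq} and~\ref{thm:direct-gkeq} are used, the latter forcing me to spend one unit of the $\vec n$-budget each time, so I must track that the sequences $\vec n$ I use stay in $\mathcal{N}$ (cofinitely large on $\zeta$) over the whole $\omega_1$ recursion. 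Handling an initial segment demand when $\kappa=\kappa_0\notin B$ is deferred, as the excerpt says, to Subsection~\ref{sec:finite-exceptions}; here I assume $\kappa\in B$ or simply carry the construction through for $B$ as given, since the statement of Lemma~\ref{thm:generic_in_V_1} is unconditional and the $\kappa\notin B$ refinement is a separate matter of indexing.
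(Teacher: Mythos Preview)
Your approach has a genuine gap at the limit stages.  You propose to build a $\leq^*$-descending sequence $\seq{[s_\xi]\mid\xi<\omega_1}$ in the quotient $i_{\ords}(P(\vec E\restrict\zeta)\mgkeq)$ and to take lower bounds at countable limits by ``invoking the countable completeness of $\leq^*$\dots transported to the quotient.''  But the quotient ordering is \emph{not} countably closed: as the paper remarks just before Lemma~\ref{thm:Rcomplete}, one can easily produce an $\omega$-sequence $\seq{[s_n]\mid n<\omega}$ in $(P(\vec E\restrict\zeta)\mgkeq,\leq^*)$ for which any lower bound would require an ultrafilter concentrating on non-wellfounded sets.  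The closure Lemma~\ref{thm:closure} applies to $P(\vec F)$ itself, where the functions $a^{s,\tau}_{\gamma,\nu}$ are fixed objects that simply grow; in the quotient, passing from $[s_n]$ to $[s_{n+1}]$ may replace $a^{s_n}_{\gamma+1,\gamma}$ by a $\gkeq$-equivalent function with different range inside $\supp(E_\gamma)$, and there is no reason the limit of these ranges stays inside $\supp(E_\gamma)$ or even remains coherent.

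The paper (following Merimovich) circumvents this by introducing an auxiliary countably closed order $R$ in $M$: a condition is a pair $([s],b)$ where $b=\seq{b_\gamma\mid\gamma<\zeta}$ is a system of functions with $\range(b_\gamma)\subseteq[\kappa,\kappa^{+\omega_1})$ satisfying $a^{s,\zeta}_{\gamma+1,\gamma}\gkeq b_\gamma$.  The point is that the $b$-components, living in the full model, \emph{do} have well-defined unions at countable limits, and the elementarity of the $N_\gamma$ (via Proposition~\ref{thm:gkeq0inNgamma}) lets one then choose new $a$-functions in $\supp(E_\gamma)$ that are $\gkeq$-equivalent to the limiting $b$'s.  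One then builds an $M$-generic $H\subseteq R$ in $V[h]$, and the generic $G$ over $M_B$ is obtained by \emph{applying the iteration maps} $i_{\gamma_i}$ to the appropriate pieces of conditions in $H$ (Definition~\ref{def:G}); genericity of $G$ is a separate argument (Claim~\ref{thm:Ggeneric-claim}).  Your bookkeeping worries about the $\vec n$-budget are real but secondary---the missing idea is the guiding second coordinate $b$ that makes the recursion survive limits.
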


\subsubsection{Proof of Lemma~\ref{thm:generic_in_V_1}}
\label{sec:generic_in_V_1_proof}
  Since $M_B\cong M_{B(\zeta)}$, where $B(\zeta)=\set{\kappa_\nu\mid
    \nu<\zeta}$,  containing the first $\zeta$ members of $I$, it will be
  sufficient to prove this for the case where $B=B(\zeta)$.
  This will simplify notation, since then $M_{B}\cut\ords$ is
  transtive and $\forceKappa_\nu^{G}$ is equal to both the $\nu$th
  member $\kappa_\nu$ of $I$ and the $\nu$th member of $B$.

  We define a partial order $R$.   Our assumptions on $M$ are
  sufficiently generous that the definition of $R$ can be made inside
  $M$, using $\seq{N_{\xi}\cap H^{M}_{\tau}\mid \xi<\omega_1}$, for
  some sufficiently large cardinal $\tau$ of $M$, instead
  of $\seq{N_{\xi}\mid\xi<\omega_1}$.

  \begin{definition}
    $R=\bigcup_{\xi<\omega_1} R_{\xi}$, where $R_{\xi}$ is defined as
    follows: The members of $R_{\xi}$ are the pairs $([s],b)$ such that
    $[s]\in P(\vec E\restrict\delta)\mgkeq$ is a condition with $\domain(s)=\sing{\zeta}$ and
    $b=\seq{b_\gamma:\gamma<\zeta}$ where each $b_{\gamma{}}$ is a
    function in $N_{\xi}$ satisfying the following three conditions:
    \begin{enumerate}
    \item $\domain(b_\gamma)=\domain(a^{s,\zeta}_{\gamma+1,\gamma})$ for each
      $\gamma<\zeta$,
    \item $\range(b_\gamma)\subset [\kappa, \kappa^{+\omega_1})$ for each
      $\gamma<\zeta$, and
    \item \label{item:Rgkeq}
      $\seq{a^{s,\zeta}_{\gamma+1,\gamma}\mid \gamma<\zeta}\gkeq  b_{\gamma}$.
    \end{enumerate}
    The ordering of $R$ is $(s',b')\leq(s,b)$ if $[s']\leq [s]$ in $P(\vec
    N)\mgkeq$ and $(\forall\gamma<\zeta)\; b'_\gamma\supseteq
    b_\gamma$.  
  \end{definition}

  Clause~(\ref{item:Rgkeq}) requires some explanation, since
  $\range(b_{\gamma})\nsubset \supp(E_\gamma)=\supp(E)\cap N_\gamma$.
  The Definition~\ref{def:gkeq-a-set}   of  the relation
  $a\gkeq_n a'$ uses the parameter $\gamma$ in two ways.
  The first use is in the definition of $a\gkeq_{0}a'$, where the set
  $Y=\bigcup_{\gamma'<\gamma}\supp(F_{\gamma'})$ is used as the set of
  $y$ in the requirement
  $(F_{\gamma})_{y\cup\sing{a}}=(F_{\gamma})_{y\cup\sing{a'}}$.
  Here the same set $Y$ is used, and since
  $\ufFromExt{E_\gamma}{y\cup\sing{a}}=\ufFromExt{E}{y\cup\sing a}$
  the requirement can be altered to 
  $\ufFromExt{E_{\gamma}}{y\cup\sing{a}}=\ufFromExt{E}{y\cup
    \sing{b}}$. 

  The second way in which the parameter $\gamma$ is used is in the
  domain of the quantifiers.   In Clause~(\ref{item:Rgkeq}) the
  extensions $a'\supseteq a^{s,\zeta}_{\gamma+1,\gamma}$ are in
  $M_{\gamma}$, while the extension $b'\supseteq b_{\gamma}$ are in
  $M$.   We reconcile these demands by using the elementarity of
  $N_{\gamma}$, and this requires expressing Clause~(\ref{item:Rgkeq})
  as a first order statement.  This is achieved by the following
  Proposition, which is the reason for the requirement in
  Definition~\ref{def:Nsequence} that
  $\card{\bigcup_{\gamma'<\gamma}N_{\gamma'}}^{++}\subseteq N_\gamma$.  

  \begin{proposition}
    \label{thm:gkeq0inNgamma}
    For any $b\colon x\to [\kappa,\kappa^{+\omega_1})$ with $x\in
    [\kappa^{+}\setminus\kappa]^{\kappa}$, there is a
    formula $\phi(n,a)$, with parameters from $N_{\gamma}$, such that if 
    $a\colon x\to\supp(E_{\gamma})$ then 
    $a\gkeq _n b$ if and only if $N_\gamma\models \phi(n,a)$.
  \end{proposition}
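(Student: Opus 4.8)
The plan is to recast the recursive Definition~\ref{def:gkeq-a-set} of $\gkeq_n$ as a single formula over $N_\gamma$, replacing the function $b$ — which is not in $N_\gamma$, since $\range(b)$ meets $\supp(E)\setminus\supp(E_\gamma)$ — by a parameter in $N_\gamma$ that codes the $\gkeq$-behaviour of $b$ and of all of its extensions. First I would collect the relevant closure properties of $N_\gamma$. Since $(N_\gamma,E_\gamma)\prec(M,E)$, for any $c$ with $\range(c)\subseteq\supp(E_\gamma)$ and any finite $y\subseteq\supp(E_\gamma)$ the ultrafilter $\ufFromExt{E_\gamma}{y\cup\sing{c}}=\ufFromExt{E}{y\cup\sing{c}}$ is the same object computed over $N_\gamma$ or over $M$; since $\ps(H^{M}_\kappa)\subseteq H^{M}_{\kappa^{+}}\subseteq N_\gamma$ it lies in $N_\gamma$; and by the $\kappa$-closure of $N_\gamma$ (clause~(4) of Definition~\ref{def:Nsequence}) so does every such $c$ with $\card{\domain(c)}\le\kappa$. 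Moreover, by the cardinality requirements of clauses~(5) and~(6) — which are exactly there to guarantee that $\card{\bigcup_{\gamma'<\gamma}N_{\gamma'}}^{++}\subseteq N_\gamma$, and which I use together with the fact that $M$ satisfies the GCH at cardinals above $\kappa$ — every $\kappa$-complete ultrafilter on $H^{M}_\kappa$ is in $N_\gamma$, the set $Y=\bigcup_{\gamma'<\gamma}\supp(E_{\gamma'})\subseteq\supp(E_\gamma)$ is in $N_\gamma$, and $\ps(Z)^{M}\subseteq N_\gamma$ for every $Z\in N_\gamma$ of $M$-cardinality at most $\card{\bigcup_{\gamma'<\gamma}N_{\gamma'}}^{+}$.

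Next I would assemble the parameter. For $x'$ with $x\subseteq x'\subseteq[\kappa,\kappa^{+})$ and $\card{x'}\le\kappa$ — all such $x'$ lie in $N_\gamma$ — and for $c'$ with $\domain(c')=x'$ and $\range(c')\subseteq\supp(E)$, call $\seq{\ufFromExt{E}{y\cup\sing{c'}}\mid y\in[Y]^{<\omega}}$ the \emph{$Y$-type} of $c'$; by the previous paragraph every $Y$-type, and every set of $Y$-types, is a member of $N_\gamma$. Let $p$ record the $Y$-type of $b$ itself and, for each admissible $x'$, the set $S(x')$ of $Y$-types of functions $c'\supseteq b$ with $\domain(c')=x'$. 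Then $p\in N_\gamma$, and the base of the recursion is captured: $a\gkeq_0 b$ iff the $Y$-type of $a$ equals the recorded $Y$-type of $b$; and for $c\supseteq a$ with $\range(c)\subseteq\supp(E_\gamma)$ the statement ``there is $c'\supseteq b$ with $\domain(c')=\domain(c)$ and $c\gkeq_0 c'$'' is equivalent to ``the $Y$-type of $c$ belongs to $S(\domain(c))$''. Both are expressible over $N_\gamma$ from $p$ and $E_\gamma$.

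The remaining work is the recursion on $n$, carried out inside $N_\gamma$: from $p$ I would define, $N_\gamma$-uniformly, a sequence $\seq{p_k\mid k<\omega}$ with $p_0=p$ and $p_{k+1}$ recording, for each admissible $x'$, the collection of $\gkeq_k$-types realised by the extensions $c'\supseteq b$ with $\domain(c')=x'$ — an operation that again keeps each $p_k$ in $N_\gamma$ by the closure facts of the first paragraph applied one level up. One checks by induction on $k$, using Definition~\ref{def:gkeq-a-set}, that $p_k$ faithfully decides $\gkeq_k$ between functions into $\supp(E_\gamma)$ and extensions of $b$ into $\supp(E)$. Then $\phi(n,a)$ is the formula stating that the initial segment $\seq{p_0,\dots,p_n}$ of this recursion exists and that $a$ is $\gkeq_n$-related to $b$ as read off from $p_n$; its only parameters are $p$ and $E_\gamma$.

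The routine parts are unwinding Definition~\ref{def:gkeq-a-set} and matching the type-level step $p_k\mapsto p_{k+1}$ with the extension-quantifier clause in the definition of $\gkeq_{k+1}$. The genuine obstacle is the first paragraph: certifying that the $Y$-types of the (non-$N_\gamma$) extensions of $b$, the sets $S(x')$, and their higher-level analogues $p_k$ all belong to $N_\gamma$ even though $b$ itself does not — and this is precisely what clauses~(4)–(6) of Definition~\ref{def:Nsequence}, together with the cardinal arithmetic of $M$ above $\kappa$, are designed to provide.
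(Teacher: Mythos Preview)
Your approach has the right shape---replace $b$ by a parameter in $N_\gamma$ that records enough about $b$'s extension behaviour---but the specific parameter you choose is too thin, and the recursion you describe cannot be run from it.

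The gap is at the step $p_k\mapsto p_{k+1}$. Your $p_{k}$ records, for each admissible $x'$, the set of $\gkeq_{k-1}$-types realised by extensions $c'\supseteq b$ with domain $x'$. To form $p_{k+1}$ you need the $\gkeq_{k}$-type of each such $c'$, and by Definition~\ref{def:gkeq-a-set} that type is determined by the $\gkeq_{k-1}$-types of the extensions of $c'$, not of $b$. Nothing in $p$ or in $p_k$ tells you, for a given $\gkeq_{k-1}$-type $t\in S_k(x')$, which $\gkeq_{k-1}$-types are realised above a representative of $t$. So ``from $p$, $N_\gamma$-uniformly'' fails already at $p_2$: two extensions $c',c''$ of $b$ with the same $Y$-type may have different sets of $Y$-types of further extensions, and $p$ cannot distinguish them. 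Your phrase ``applied one level up'' also suggests the cardinality of the type spaces climbs one powerset per step; with only $\card{Y}^{++}\subset N_\gamma$ available, that bound runs out after two steps.

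The paper's fix is exactly to record the missing branching information all at once: it takes as parameter the subtree $T_b$ of the tree $T$ whose nodes are finite $\subseteq$-increasing sequences of $\gkeq_0$-classes. Then $a\gkeq_n b$ becomes ``the first $n$ levels of $(T_a)^{N_\gamma}$ and of $T_b$ agree,'' a single first-order statement over $N_\gamma$ with parameter $T_b$. The tree has at most $\card{\ps(Y)}$ branching and hence size $\leq\card{\ps^2(Y)}$, so clauses~(\ref{item:cardNnuSubsetNnu}) and~(\ref{item:Nseq-doublepluss}) put $T_b\in N_\gamma$; the cardinality is bounded uniformly in $n$, not growing level by level. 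Your $p_k$ is essentially the first $k$ levels of $T_b$, so the repair is to take $T_b$ (equivalently, the whole coherent tower of $p_k$'s packaged together) as the parameter rather than just $p=p_0$.
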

  \begin{proof}
    For $n=0$, note that 
    the sequence of ultrafilters $\seq{\ufFromExt{E}{y\cup\sing{b}}\mid y\in
      [Y]^{<\omega}}$ can be coded as a subset of $[Y]^{<\omega}\times
    \ps(\kappa)$, which has cardinality $\card
    Y=\card{\bigcup_{\gamma'<\gamma}N_{\gamma'}}$.

    Working in $M$, define $T$ to be the tree
    of finite sequences of the form $\seq{[b_i]_{\gkeq_0}\mid i<k}$
    where $\seq{b_i\mid i<k}$ is a $\subseteq$-increasing sequence of
    functions $b_i\colon x_i\to[\kappa,\kappa^{+\omega_1})$ with  $x_i\in
    [\kappa^{+}\setminus\kappa]^{\kappa}$. 
    Since $T$ is at most $\card{\ps(Y)}$-branching,
    it has cardinality at most $\card {\ps^{2}(Y)}$, so
    Clauses~(\ref{item:cardNnuSubsetNnu}) 
    and~(\ref{item:Nseq-doublepluss}) of
    Definition~\ref{def:Nsequence} ensure that $T\in N_{\gamma}$.

    Write $T_{b}$ for the portion of $T$ above $\seq{[b]_{\gkeq_0}}$.
    Then  the conclusion of the proposition is satisfied by the
    formula $\phi(n,a)$, with parameter $T_{b}$, 
    which asserts that the first $n$ levels of $T_{b}$ and
    $(T_{a})^{N_\gamma}$ are equal.   Since
    $[\supp(E_\gamma)]^{\kappa}\cap N_\gamma\in N_\gamma$, this is a
    first order formula over $N_\gamma$.
  \end{proof}
  \begin{todoenv}
    (7/21/15) --- for future --- Note that the $\le^*$ forcing order
    is homogeneous: In particular to conditions with function $f$ and
    $a$ are essentially independent of the domain of $f$ and $a$
    except for terms that explicitly ask for $f_{\zeta,\nu}(\xi)$ for
    $\xi$ in the domain of one of the affected functions.
  \end{todoenv}
  \begin{todoenv}
    (7/3/15) --- For future --- Note that the $\gkeq_{n}$-class is
    essentially a property of $[\supp(E)]^{\kappa}$.    We can, for
    example, assume that $c_0=\emptyset$ and $c_{i+1}\setminus c_i$
    has domain 
    $[\nu_i,\nu_{i+1})$ where $\nu_0=\kappa$ and $\nu_{i+1}=\nu_i
    +\otp(\range(c_{i+1}\setminus\range(c_{i}))$ and $c_{i+1}\setminus
    c_{i}$ is the increasing enumeration.   Then the $\gkeq_{n}$-types
    of a sequence of $c'_i$ with the same ranges is determined by that
    of $\seq{c_i}_i$ together with the function $\rho$ such that
    $c'_i=c_i\circ \rho$.
  \end{todoenv}

  \begin{lemma}\label{thm:density-in-R}
    \begin{enumerate}
    \item \label{item:x} $\xset{([s],b)}{s\in D}$ is dense in $R$ for
      each $\leq^*$-dense set $D\subseteq P(\vec E\restrict\zeta)$ in
      $M$.
    \item
      \label{item:y}
      Suppose $\gamma<\zeta$ and $\beta\in [\kappa,\kappa^{+\omega_1})$.
      Then there is a dense subset of conditions $([s],b)\in R$ such
      that $b(\xi)=\beta$ for some $\xi\in\domain(a^{s,\zeta}_{\zeta,\gamma})$.
    \end{enumerate}
  \end{lemma}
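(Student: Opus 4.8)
The plan is to prove the two density claims separately, each by the same two-step pattern: first reduce to a statement about a single condition (using the one-step commutation and Prikry machinery already developed), and then obtain the required extension by an appeal to $\leq^*$-density together with the countable completeness of $\leq^*$, working in $V[h]$ where $R$ itself lives.

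For part~(\ref{item:x}), given a $\leq^*$-dense $D\subseteq P(\vec E\restrict\zeta)$ in $M$ and an arbitrary $([s],b)\in R$, I would first apply Lemma~\ref{thm:prikry}(\ref{item:Prikrythm-inD}) (the Prikry property, in the $\leq^*$-dense form) to find $s'\leq^* s$ and a finite $b_0\subseteq\zeta+1$ such that any $s''\leq s'$ with $b_0\subseteq\domain(s'')$ lies in $D$. Since $D$ is $\leq^*$-dense we may in fact shrink so that $s'\in D$ directly, or alternatively extend $s'$ by finitely many one-step extensions $\add(s',\vec w)$ to absorb $b_0$ into the domain; in either case Proposition~\ref{thm:one-step-gkeq} and Proposition~\ref{thm:direct-gkeq} let us carry the $\gkeq$-witnessing functions $b$ along, extending each $b_\gamma$ to a $b'_\gamma$ with the same range constraint and with $\seq{a^{s',\zeta}_{\gamma+1,\gamma}}\gkeq b'_\gamma$ still holding (using that $n_\gamma$ can be taken large outside the finite damaged set, as in the proof of Proposition~\ref{thm:leq_trans1}). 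This produces $([s'],b')\leq([s],b)$ in $R$ with $s'\in D$, which is what part~(\ref{item:x}) demands.

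For part~(\ref{item:y}), fix $\gamma<\zeta$ and $\beta\in[\kappa,\kappa^{+\omega_1})$ and start from an arbitrary $([s],b)\in R$. The point is to enlarge $\domain(a^{s,\zeta}_{\zeta,\gamma})$ by one new point $\xi$ and to set $b_\gamma(\xi)=\beta$. By a direct extension $s'\leq^* s$ using clause~(\ref{item:leq-star-a-fctns-extend}) of Definition~\ref{def:star-order} we may add a fresh ordinal $\xi\in[\kappa,\kappa^{+})$ to $\domain(a^{s',\zeta}_{\zeta,\gamma})$ with $a^{s',\zeta}_{\zeta,\gamma}(\xi)$ chosen to be any generator $\alpha\in\supp(E_\gamma)$ having $\ufFromExt{E_\gamma}{y\cup\sing\alpha}=\ufFromExt{E}{y\cup\sing\beta}$ for all $y\in[\bigcup_{\gamma'<\gamma}\supp(E_{\gamma'})]^{<\omega}$ — such $\alpha$ exists because $\supp(E_\gamma)$ is, by elementarity of $N_\gamma\prec M$, unbounded among the realizations of that ultrafilter pattern (this is exactly the genericity feature of Gitik's construction, and it is here that suitability of $\vec E\restrict\zeta$ and the submodel chain are used). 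Then extend $b_\gamma$ to $b'_\gamma$ by setting $b'_\gamma(\xi)=\beta$; the $n_\gamma=0$ clause of $\gkeq$ is satisfied at the new point by the choice of $\alpha$, and at the old points by hypothesis, so after possibly adjusting the $n_\gamma$-levels upward (again as in Proposition~\ref{thm:leq_trans1}) we get $\seq{a^{s',\zeta}_{\gamma'+1,\gamma'}}\gkeq b'$. The collection of such $([s'],b')$ is dense below $([s],b)$, and since $([s],b)$ was arbitrary it is dense in $R$.

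The main obstacle is part~(\ref{item:y}): one must verify that the single-point extension of $a^{s,\zeta}_{\zeta,\gamma}$ putting a specified $\beta\in[\kappa,\kappa^{+\omega_1})$ into the range of $b_\gamma$ can always be carried out while preserving clause~(\ref{item:Rgkeq}), i.e.\ that every such $\beta$ is $\gkeq_0$-equivalent, relative to the parameter $\gamma$, to some genuine generator in $\supp(E_\gamma)$. This is precisely the disassociation mechanism sketched in Subsection~\ref{sec:absolutelyfinal}, and it relies on Proposition~\ref{thm:gkeq0inNgamma} to express the relevant $\gkeq_0$-pattern as a first-order property over $N_\gamma$, so that elementarity $N_\gamma\prec M$ delivers the witness $\alpha$ below $\sup(\supp(E_\gamma))$. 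Once that is in hand, the coherence of the $a^{z}_{\mu,\nu}$ across rows (clause~(\ref{item:asubset}) of Definition~\ref{def:tableau}) and the closure lemmas make the rest routine bookkeeping.
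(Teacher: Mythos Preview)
Your overall strategy is right, but there is a real gap in part~(\ref{item:y}), and the same underlying issue is glossed over in part~(\ref{item:x}). The relation $a\gkeq_0 b$ of Definition~\ref{def:gkeq-a-set} compares the ultrafilter pattern of the \emph{entire} function $a$ against that of $b$; it is not a conjunction of pointwise conditions. So knowing $a_\gamma\gkeq_0 b_\gamma$ together with the single-ordinal equality $\ufFromExt{E_\gamma}{y\cup\sing{\alpha}}=\ufFromExt{E}{y\cup\sing{\beta}}$ does \emph{not} give $a_\gamma\cup\{(\xi,\alpha)\}\gkeq_0 b_\gamma\cup\{(\xi,\beta)\}$: the ultrafilter $(E_\gamma)_{y\cup\{a_\gamma\cup\{(\xi,\alpha)\}\}}$ encodes correlations between $\alpha$ and the other values of $a_\gamma$ that the two marginals do not determine. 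Your sentence ``the $n_\gamma=0$ clause of $\gkeq$ is satisfied at the new point by the choice of $\alpha$, and at the old points by hypothesis'' is exactly this pointwise misreading.

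The missing move, which the paper uses for both clauses, is a preliminary upgrade at the finitely many coordinates $\gamma$ where only $a_\gamma\gkeq_0 b_\gamma$ holds: replace $a_\gamma$ by some $a'_\gamma$ with $a'_\gamma\gkeq_0 a_\gamma$ (so $[s]$ is unchanged) and $a'_\gamma\gkeq_1 b_\gamma$. Such $a'_\gamma$ with range in $\supp(E_\gamma)$ exists by Proposition~\ref{thm:gkeq0inNgamma} and elementarity of $N_\gamma\prec M$, since $b_\gamma$ itself is a witness in $M$. Once every coordinate has $a_\gamma\gkeq_1 b_\gamma$, the back-and-forth clause built into $\gkeq_1$ does the real work: for part~(\ref{item:x}), pick $s'\leq^* s$ in $D$ directly (no Prikry lemma is needed, as $D$ is already $\leq^*$-dense), and then $\gkeq_1$ yields $b'_\gamma\supseteq b_\gamma$ matching each enlarged $a^{s',\zeta}_{\gamma+1,\gamma}$; for part~(\ref{item:y}), extend $b_\gamma$ first to $b'_\gamma$ with $b'_\gamma(\xi)=\beta$, and $\gkeq_1$ then supplies $a'_\gamma\supseteq a_\gamma$ with range in $\supp(E_\gamma)$ and $a'_\gamma\gkeq_0 b'_\gamma$. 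Your appeal to Proposition~\ref{thm:leq_trans1} and to one-step extensions is misplaced here, since conditions in $R$ always have $\domain(s)=\{\zeta\}$ and no one-step extension is available to absorb the damaged coordinates.
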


  \begin{proof}
    For clause~(\ref{item:x}), 
    let
    $([s],b)\in R$ be arbitrary and set $\vec
    a=\xseq{a^{s,\zeta}_{\gamma+1,\gamma}}{\gamma<\zeta}$.  We may assume that
    $a_{\gamma}\gkeq_{1}b_{\gamma}$ for each $\gamma<\zeta$; if not,
    then replace each such $a_{\gamma}$ with some $a'_{\gamma}$ such that
    $a'_{\gamma}\gkeq_{0}a_{\gamma}$ and
    $a'_{\gamma}\gkeq_{1}b_{\gamma}$.  This is possible by
    Proposition~\ref{thm:gkeq0inNgamma} and the
    elementarity of the structures $N_{\xi}$, since $b_{\gamma}$ has
    the desired properties.  This change
    only involves finitely many of the functions $a_{\gamma}$, so the
    condition obtained from $s$ by making this substitution is still
    in $[s]$.

    Now pick $s'\leq^* s$ in $D$.  Because of the assumption we made
    on $s$, Proposition~\ref{thm:direct-gkeq} implies that there is
    $b'\gkeq a^{s',\zeta}$ such that
    $([s'],b')\leq([s],b)$.
    \smallskip{}

    The proof for clause(\ref{item:y}) is 
    similar.  Fix $([s],b)\in R$, and assume that
    $a^{s,\zeta}_{\gamma+1,\gamma}\gkeq_{1}b_{\gamma}$ for all $\gamma<\zeta$. 
    Now fix $\mu<\omega_1$ so that $\sing{b,\eta}\subset N_{\mu}$ and 
    extend $b$ to $b'\in N_\mu$ by setting $b'_\nu(\xi)=\eta$ for some 
    $\xi$ which is not  in the domain of any function in~$s$.    Then there
    is $ a'_{\nu}\supset  a_{\nu}$ so that 
    $ a'_{\nu}\gkeq_{0} b_{\nu}$.   Now extend $s$ to $s'$ by
    setting $a^{s',\zeta}_{\gamma',\gamma}=a'(\xi)$ for all $\gamma'\in(\gamma,\zeta]$.
  \end{proof}

  The ordering $(P(\vec N)\mgkeq, \leq^*)$ is not countably complete:
  it is easy to find an infinite descending sequence of conditions
  $\seq{[s_n]\mid n<\omega}$ such that any lower bound would require an ultrafilter
  concentrating on non-well founded sets of ordinals.  However the
  partial order $R$ is countably complete due to the guidance of the
  second coordinate $b$:

  \begin{lemma}\label{thm:Rcomplete}
    The partial order $R$ is countably closed.
  \end{lemma}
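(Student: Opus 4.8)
The plan is to prove that every $\le$-decreasing sequence $\seq{([s_n],b_n):n<\omega}$ in $R$ has a lower bound, combining the $\kappa$-closure of $(P(\vec E\restrict\zeta),\le^*)$ (Lemma~\ref{thm:closure}) with the ``guidance'' provided by the second coordinates $b_n$. \emph{First} I assemble the limit guidance: for each $\gamma<\zeta$ the functions $b_{n,\gamma}$ are $\subseteq$-increasing in $n$ with ranges in the genuinely well-ordered set $[\kappa,\kappa^{+\omega_1})$, so $b_\gamma:=\bigcup_{n<\omega}b_{n,\gamma}$ is a well-defined function with $\domain(b_\gamma)=\bigcup_n\domain(b_{n,\gamma})$. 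Fixing $\xi<\omega_1$ above all the (countably many) $\xi_n$ for which $([s_n],b_n)\in R_{\xi_n}$, the array $\seq{b_{n,\gamma}:n<\omega,\ \gamma<\zeta}$ is a sequence of length $<\omega_1\le\kappa$ of elements of $N_\xi$ and lies in $M$ since $M$ is closed under countable sequences (Proposition~\ref{thm:Mcc}); hence it lies in $N_\xi$ by the closure requirement ${}^{\kappa}N_\xi\cap M\subseteq N_\xi$ of Definition~\ref{def:Nsequence}, and so does $\vec b:=\seq{b_\gamma:\gamma<\zeta}$. It therefore suffices to produce $[s]\in P(\vec E\restrict\zeta)\mgkeq$ with $\domain(s)=\sing\zeta$ such that $[s]\le[s_n]$ for all $n$, $\domain(a^{s,\zeta}_{\gamma+1,\gamma})=\domain(b_\gamma)$ for all $\gamma<\zeta$, and $\seq{a^{s,\zeta}_{\gamma+1,\gamma}:\gamma<\zeta}\gkeq\vec b$ in the reconciled sense of clause~(\ref{item:Rgkeq}) of the definition of $R$; for then $([s],b)\in R_\xi$ is the desired lower bound.

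\emph{Second}, I construct a $\le^*$-thread of representatives. Since every condition in $R$ has domain $\sing\zeta$, and since among the generating relations of the ordering on $P(\vec E\restrict\zeta)\mgkeq$ both $\le^*$ and $\gkeq$ preserve the domain while only the $\add$-steps in $\le$ enlarge it, each relation $[s_{n+1}]\le[s_n]$ is witnessed by a zigzag of $\le^*$- and $\gkeq$-steps among conditions all of domain $\sing\zeta$. Using Proposition~\ref{thm:gkeq0inNgamma} and the elementarity of the $N_\gamma$ exactly as in the proof of Lemma~\ref{thm:density-in-R}, I replace each $s_n$ by a $\gkeq$-equivalent condition of domain $\sing\zeta$ whose generator functions $a^{s_n,\zeta}_{\gamma+1,\gamma}$ are $\gkeq_1$-equivalent to $b_{n,\gamma}$; this alters, for each $n$, only finitely many $\gamma$ (finiteness by the definition of $\mathcal N$), so the condition stays in $[s_n]$. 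I then straighten the zigzags, using Propositions~\ref{thm:one-stepFirst}, \ref{thm:one-step-gkeq}, \ref{thm:direct-gkeq} and~\ref{thm:leq_trans1}, to build by recursion on $n$ a $\le^*$-decreasing sequence $\seq{t_n:n<\omega}$ of conditions of domain $\sing\zeta$ with $[t_n]\le[s_n]$, $a^{t_n,\zeta}_{\gamma+1,\gamma}\gkeq b_{n,\gamma}$, and $\domain(a^{t_n,\zeta}_{\gamma+1,\gamma})=\domain(b_{n,\gamma})$: the extra room afforded by $\gkeq_1$ (rather than $\gkeq_0$) is precisely what keeps the $\gkeq$-indices arising in the straightening positive off a finite set, so that Proposition~\ref{thm:direct-gkeq} applies and no coordinate ever has to be added to the domain. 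Along $\seq{t_n}$ the functions $a^{t_n,\zeta}_{\gamma+1,\gamma}$ are $\subseteq$-increasing by Definition~\ref{def:star-order}(\ref{item:leq-star-a-fctns-extend}), with ranges in the set of ordinals $\supp(E_\gamma)$.

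\emph{Third}, I take the infimum of $\seq{t_n}$ along $\le^*$ as in the proof of Lemma~\ref{thm:closure}. The first two coordinates of $t_n(\zeta)$ are fixed; the tableau coordinate is $\kappa^{+}$-closed, so the unions of the Cohen functions and of the functions $a^{t_n,\zeta}_{\mu,\nu}$ give a legitimate limit tableau whose generator functions are the genuine functions $a^{\infty}_\gamma:=\bigcup_n a^{t_n,\zeta}_{\gamma+1,\gamma}$ into $\supp(E_\gamma)$, with $\domain(a^{\infty}_\gamma)=\domain(b_\gamma)$ and $a^{\infty}_\gamma\gkeq b_\gamma$ (the $\gkeq$ passing to the limit through the tree coding of Proposition~\ref{thm:gkeq0inNgamma}). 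It is exactly here that the guidance is used: without it the functions $a^{t_n,\zeta}_{\gamma+1,\gamma}$ could wander within their $\gkeq$-classes and fail to converge, which is the obstruction to $\le^*$-closure of $P(\vec E\restrict\zeta)\mgkeq$ noted just before the Lemma. Since $a^{\infty}_\gamma$ is an honest function into a set of ordinals, the limiting ultrafilter $\ufFromExt{E_\gamma}{w^{\infty}\scutdown\gamma}$, where $w^{\infty}$ is the limiting quadruple, is an honest ultrafilter on $P^*_{\zeta,\gamma}$; one takes $A^{s,\zeta}_\gamma$ to consist of the infima $\bigwedge_n w_n$ with $w_n\in A^{t_n,\zeta}_\gamma$ and $w_{n'}\le^* w_n$ for $n'<n$, and checks $A^{s,\zeta}_\gamma\in\ufFromExt{E_\gamma}{w^{\infty}\scutdown\gamma}$ exactly as in Lemma~\ref{thm:closure}. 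The resulting $s$ has domain $\sing\zeta$ and $s\le^* t_n$, hence $[s]\le[t_n]\le[s_n]$, and satisfies the two remaining requirements, so $([s],b)$ is below every $([s_n],b_n)$.

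The hard part will be the second step: $(P(\vec E\restrict\zeta)\mgkeq,\le^*)$ is not countably closed on its own, so Lemma~\ref{thm:closure} cannot be applied directly; one must use the guidance coordinate $b$ to select, among the many $\gkeq$-representatives, a $\le^*$-decreasing thread of genuine conditions whose generator functions genuinely increase, while keeping every condition of domain $\sing\zeta$ and keeping the $\gkeq$-indices large enough to run the straightening via Proposition~\ref{thm:direct-gkeq}. Once that thread is in hand, the third step is just a rerun of the proof of Lemma~\ref{thm:closure}.
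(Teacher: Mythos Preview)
Your approach is substantially more elaborate than the paper's, and the second step---building a $\le^*$-decreasing thread $\seq{t_n}$ by ``straightening the zigzags''---has a genuine gap. The commutation in Proposition~\ref{thm:direct-gkeq} requires $n_\nu>0$ for every $\nu$ outside the domain; since all your conditions have domain $\sing\zeta$, this means $n_\nu>0$ for every $\nu<\zeta$. The $\gkeq$-indices appearing in the zigzag that witnesses $[s_{n+1}]\le[s_n]$ are arbitrary members of $\mathcal N$ and may vanish at finitely many coordinates; the standard remedy (as in the proof of Proposition~\ref{thm:leq_trans1}) is to add those coordinates to the domain via one-step extensions, which is exactly what you want to avoid. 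Your $\gkeq_1$ adjustment only controls the relation between $a^{s_n}$ and $b_n$, not the indices in the zigzag between $s_{n+1}$ and $s_n$; and even if you try to use $\gkeq_1$ to extend $a^{t_n}_\gamma$ along $b_{n+1,\gamma}\supseteq b_{n,\gamma}$, you obtain only $a^{t_{n+1}}_\gamma\gkeq_0 b_{n+1,\gamma}$, so the recursion stalls at the next step. The further claim that ``$\gkeq$ passes to the limit through the tree coding'' is also not justified: $\gkeq_0$ is a statement about ultrafilters indexed by the whole function, and there is no reason it should be preserved under increasing unions.

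The paper sidesteps all of this by not building a $\le^*$-thread at all. It simply observes that $b_{\omega,\nu}=\bigcup_n b_{n,\nu}$ itself satisfies, for every $n$ simultaneously, $b_{\omega,\nu}\restrict\domain(a^n_\nu)=b_{n,\nu}\gkeq_{k_{n,\nu}}a^n_\nu$ (where $a^n_\nu\gkeq_{k_{n,\nu}}b_{n,\nu}$), together with $b_{\omega,\nu}\gkeq_{n_\nu}b_{\omega,\nu}$ for any chosen $\vec n\in\mathcal N$. By Proposition~\ref{thm:gkeq0inNgamma} these conditions are first-order over $N_\nu$ with parameters in $N_\nu$, so elementarity yields a single $a^\omega_\nu\in N_\nu$ satisfying all of them at once. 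Then for each fixed $n$ the relation $[s_\omega]\le[s_n]$ is witnessed by a two-step chain: one $\le^*$-step from $s_\omega$ down to the condition obtained by restricting $a^\omega_\nu$ to $\domain(a^n_\nu)$, followed by one $\gkeq_{\vec k_n}$-step to $s_n$. No recursion, no straightening, no limit of $\gkeq$-relations---the guidance coordinate $b_\omega$ is used once, as a witness for an existential statement, rather than repeatedly as a target for approximation.
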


  \begin{proof}
    Suppose that $\seq{([s_n], b_n)\mid n<\omega}$ is a descending
    sequence in $R$.  We define a lower bound
    $([s_{\omega}],b_{\omega})$ for this sequence.  The definition of $R$ determines 
    $b_{\omega,\nu}=\bigcup_{n<\omega}b_{n,\nu}$, and determines all of 
    $s_{\omega}$  except for the functions $a^\omega_\nu=a^{s_{\omega},\zeta}_{\nu+1,\nu}$.
    It also determines  $\domain(a^\omega_\nu)=
    \domain(b_{\omega,\nu})= \bigcup_{n<\omega}\domain(a^{s_n,\zeta}_{\zeta,\nu})$. 
    Pick any  $\vec n=\seq{n_\nu\mid\nu<\zeta}\in\mathcal{N}$, and for each
    $\nu<\zeta$ pick $a^\omega_\nu\in N_\nu$ so that
    \begin{equation*}
      a^\omega_\nu\restrict\domain(a^n_\nu)\gkeq_{k_{n,\nu}}a^n_\nu
      \quad\text{and}\quad 
      a^\omega_\nu\gkeq_{n_{\nu}}b_{\omega,\nu}
    \end{equation*}
    where $a^n_\nu\gkeq_{k_{n,\nu}} b_{n,\nu}$.  This
    is possible by the elementarity of the models $N_{\xi}$, since
    $b_{\omega,\nu}$ satisfies these conditions.    Then
    $([s_{\omega}],b_{\omega})\in R$ and
    $([s_{\omega}],b_{\omega})\le ([s_{n}],b_{n})$ for each $n\in\omega$.
  \end{proof}
  
  We are now ready to construct the desired $M_B$-generic set 
  $G\subset i_{\ords}(P(\vec E\restrict\zeta)\mgkeq)$, where $\zeta=\otp(B)$.
    
  \begin{definition}[The generic set $G$]
    \label{def:G}
    Let  $H\subset R$ in $V[h]$ be an
    $M$-generic set.   Such a set can be constructed in $V[h]$ using 
    Lemma~\ref{thm:Rcomplete},  since   $\card M^{V[h]}=\omega_1$ and
    and $^{\omega}M\subseteq   M$. 
    
    We set
    \begin{equation}\label{eq:a1}
      G=\set{[s']\mid (\exists
      ([s],b)\in H)(\exists\vec \gamma\in[\zeta]^{<\omega} )\, 
      s'\geq^* \add(i_{\ords}(s),\vec w(s,b,\vec\gamma))}
  \end{equation}
  where  $\vec w(s,b,\vec \gamma)$ is defined as follows: Set
  $n=\len(\vec\gamma)$.   Then 
  $\vec w(s,b,\vec\gamma)=\seq{i_{\gamma_i}(w_i)\mid  i<n}$ , where 
  \begin{align}
    w_i\restrict[0,\gamma_i]&=\add(s,w_i)\restrict[0,\gamma_i]
    && \text{and}\notag\\ 
    a^{w_i}_{\gamma,\gamma_i}&=
                               b_{\gamma,\gamma_i}\restrict\domain(a^{s,\gamma_i}_{\gamma,\gamma_i})&
                             &\text{for $\zeta\geq\gamma>\gamma_i$.}
                               \label{eq:a2}
  \end{align}
  \end{definition}

  Note that $w_i\gkeq s\scutdown\gamma_i$ and therefore
  $[\add(i_{\ords}(s), i_{\gamma_i}(w(s,b,\vec\gamma)))]\leq [i_{\ords}(s)]$.
  The effect of the substitution used  in equation~\eqref{eq:a2} to define $w_i$ is that
  \begin{equation*}
  [\add(i_{\ords}(s), i_{\gamma_i}(w_i))]\forces
  h_{\zeta,\gamma_i}(\xi)=b_{\gamma_i}(\xi)\qquad \text{for all}\qquad
  \xi\in\domain(a^{s,\zeta}_{\zeta,\gamma_i}). 
\end{equation*}
  In looking at the Chang model inside of $M_B[G]$, it is important to
  recall that the set $T$ terms specified for the sharp of $\chang$
  provides a set, inside $M$, of names for the members of $\chang_B$.   
  Definition~\ref{def:standard-names} below makes this more specific,
  and provides a set of names inside $M$ for the members of $M_B$ and
  for $\chang^{M_B}$, and then provides standard \emph{forcing} names
  which are useful inside $M_B[G]$; however the notation in the next
  definition is sometimes useful.
  \begin{definition}
    \label{def:namenotation}
    We write $\bar i_{\gamma}$ for the embedding $i_{\gamma'}$ where
    $\gamma'$ is the ordinal such that the
    $\gamma$th member $\forceKappa_{\gamma}$ of $B$ is equal to
    $\kappa_{\gamma'}$.   

    If $\tau$ is an expression then we write $\gn{\tau}$ to indicate
    that $\tau$ is being used as a name for the value of the expression.
  \end{definition}

  \begin{definition}
    \label{def:standard-names}
    A  \emph{standard name} for a member of $M_B$ is a term
    obtained recursively as follows:
    \begin{enumerate}
    \item\label{item:stdname_MB_gen}
      If $\gamma\leq\zeta$ and $\bar\beta\in
      [\kappa,\kappa^{+\omega_1})$ then $\gn{\bar
        i_{\gamma}(\bar\beta)}$ is a standard name for the generator
      $\beta=\bar i_{\gamma}(\bar\beta)$ belonging to $\forceKappa_{\gamma}$.
      
    \item \label{item:stdname_MB_fctn}
      If $f\in M$ and $x$ is a finite sequence  of standard
      names of  generators $\beta_i$ in $M_B$, then  $\gn{i_{\ords}(f)(x)}$ is 
      a standard name for the value $i_{\ords}(f)(\vec{\beta})$.
    \end{enumerate}
    
    A standard name for a member of $\chang$ is a term obtained
    recursively using clause~(1) above and the following two operations:
    \begin{enumerate}
      \setcounter{enumi}{2}

      \item[$\arabic{enumi}'$.]  \label{item:stdname_C_fctn}
      If $\alpha$ is an ordinal, then a standard name for
      $\alpha\in M_B$ from clause~(2) above is also a standard name for $\alpha\in\chang$.
      
    \item\label{item:stdname_C_recur}
     Suppose that $i$ is a standard name for an ordinal $\iota$ and
     that  $\vec\tau$ is a countable sequence of standard forcing
     names for ordinals $\vec\beta=\seq{\beta_k\mid k\in\omega}$.
       Then 
      $\gn{\set{x\in\chang_{i}\mid
          C_i\models\phi(x,\vec\tau)}}$ is a standard name for $\set{x\in
        C_{\iota}\mid C_\iota\models \phi(x,\vec\beta)}$. 
    \end{enumerate}
    
    The definition of a  \emph{standard forcing name} is identical in
    both cases, 
    except that clause~1 is replaced with the following:
    \begin{enumerate}
    \item[$1'$.]
      Suppose $([s],b)\in H$,
      $\xi\in\domain(a^{s,\gamma}_{\zeta,\gamma})$ and
      $b_{\gamma}(\xi)=\bar \beta$, so 
      that
      \begin{equation*}
        ([s],b)\forces_{R} M_B[G]\models h_{\zeta,\gamma}(\xi)=i_{\gamma}(\bar\beta).
      \end{equation*}
      Then $\gn{h_{\zeta,\gamma}(\xi)}$ is a standard forcing name for
      $\beta=i_{\gamma}(\bar\beta)$, and is said to be
      \emph{established} by the condition $([s],b)$.
    \end{enumerate}
    An arbitrary  standard forcing name $\tau$ is established by $([s],b)$ if
    this condition establishes all names $\gn{h_{\zeta,\gamma}(\xi)}$ occurring in $\tau$.
  \end{definition}

  \begin{claim}\label{thm:Ggeneric-claim}
    $G$ is $M_B$-generic.
  \end{claim}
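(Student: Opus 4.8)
The plan is to verify genericity by taking an arbitrary dense set $D\subseteq i_{\ords}(P(\vec E\restrict\zeta)\mgkeq)$ in $M_B$ and showing that $G$ meets it. The key observation is the factorization set up in Definition~\ref{def:IFromM} and the standard-names machinery: every element of $i_{\ords}(P(\vec E\restrict\zeta)\mgkeq)$ is of the form $i_{\ords}(\dot D)$ for some $\dot D\in M$, since $M$ carries (by assumption (b) and closure under countable sequences) names for all the relevant objects. So it suffices to show that for each $\leq^*$-dense $D\subseteq P(\vec E\restrict\zeta)$ lying in $M$, the set $\{[s']\in G\mid s'\in D\}$ is nonempty, and similarly that the sets from Lemma~\ref{thm:density-in-R}(\ref{item:y}) are met. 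First I would recall that $H\subseteq R$ is $M$-generic and that, by Lemma~\ref{thm:density-in-R}(\ref{item:x}), $\{([s],b)\mid s\in D\}$ is dense in $R$; hence $H$ contains some $([s],b)$ with $s\in D$. Then $[i_{\ords}(s)]\in G$ by taking $\vec\gamma=\emptyset$ in equation~\eqref{eq:a1} (so $\vec w(s,b,\emptyset)$ is empty and $s'\geq^* i_{\ords}(s)$ can be taken to be $i_{\ords}(s)$ itself).

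The main subtlety is that a dense set in $i_{\ords}(P(\vec E\restrict\zeta)\mgkeq)$ need not be of the form $i_{\ords}(D)$ for a set $D$ dense in $P(\vec E\restrict\zeta)$ over all of $M$: we only have $i_{\ords}"M\subseteq M_{B}$, and the image of a dense set need not be dense in the image forcing because $M_B$ is not transitive and $i_{\ords}(P(\vec E\restrict\zeta)\mgkeq)$ has conditions using generators that are not in the range of $i_{\ords}$. This is precisely where the one-step extension in equation~\eqref{eq:a1}—the substitution of $\vec w(s,b,\vec\gamma)$—is needed. So the heart of the argument is: given a dense $D\in M_B$ and a condition $[t]$ which we may assume (by a density argument inside $V[h]$, using countable closure of $R$ from Lemma~\ref{thm:Rcomplete}) lies in $G$ with $[t]=[\add(i_{\ords}(s),\vec w(s,b,\vec\gamma))]$ for some $([s],b)\in H$, we must find an extension in $D\cap G$. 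One writes $D=i_{\ords}(\dot D)$ where $\dot D$ is an $M$-name (or, after using Lemma~\ref{thm:prikry}, a set of conditions), pulls back along $i_{\gamma_i}$ to get a dense set $\dot D_i$ in $P(\vec E\restrict\zeta)$ over $N_{\gamma_i}$, and uses the density lemmas to extend $([s],b)$ within $H$ so that the corresponding one-step extension lands in $D$. By genericity of $H$ one can arrange this.

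The step I expect to be the main obstacle is exactly this translation between dense sets in the class-sized image forcing $i_{\ords}(P(\vec E\restrict\zeta)\mgkeq)$ over the non-transitive $M_B$ and dense sets in $P(\vec E\restrict\zeta)$ over $M$ (or over the $N_{\gamma}$). The point is that any dense $D\subseteq i_{\ords}(P(\vec E\restrict\zeta)\mgkeq)$ in $M_B$ is, by the standard representation of elements of $M_B$ as $i_{\ords}(f)(\vec\beta)$, determined by finitely many generators $\vec\beta$ belonging to members of $B$; writing each such $\beta=i_{\gamma}(\bar\beta)$ with $\bar\beta\in[\kappa,\kappa^{+\omega_1})$, one can use Proposition~\ref{thm:gkeq0inNgamma} together with the substitution built into $\vec w(s,b,\vec\gamma)$ to reduce meeting $D$ to meeting a $\leq^*$-dense set of the form covered by Lemma~\ref{thm:density-in-R}. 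The remaining work—verifying that $\vec w(s,b,\vec\gamma)$ is a legitimate one-step extension (Definition~\ref{def:one-step}) and that the Prikry property Lemma~\ref{thm:prikry} lets us absorb the ``non-direct'' part of $D$ into finitely many coordinates $\vec\gamma$—is routine once this dictionary is in place. Finally, one checks clause by clause that $G$ as defined is closed upward under $\geq^*$, is directed (using Proposition~\ref{thm:leq_trans1} and Proposition~\ref{thm:equivnormal} to handle the $\gkeq$-quotient), and meets every dense set, completing the proof that $G$ is $M_B$-generic.
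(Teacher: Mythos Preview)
Your plan is the right one and matches the paper's strategy: represent the dense set $D\in M_B$ via finitely many generators, use the substitution $\vec w(s,b,\vec\gamma)$ built into the definition of $G$, and invoke the Prikry property together with genericity of $H\subset R$. However, your sketch is vague at exactly the point where the real work happens, and one step is not quite right.

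The phrase ``reduce meeting $D$ to meeting a $\leq^*$-dense set of the form covered by Lemma~\ref{thm:density-in-R}'' is not what the paper does, and it is not clear it can be made to work directly. The paper instead represents $D$ by a standard \emph{forcing} name (clause~$1'$ of Definition~\ref{def:standard-names}, not the plain standard names you allude to): one writes $D=i_{\ords}(d)\bigl(\seq{h_{\zeta,\gamma_i}(i_{\ords}(\xi_i))\mid i<k}\bigr)$, established by some $([s],b)\in H$. The point is that each one-step extension $\add(s,\vec w)$ with $\vec w\in\prod_{i<k}A^{s,\zeta}_{\gamma_i}$ evaluates these names and hence determines a specific subset $d_s(\vec w)\subseteq P(\vec E\restrict\zeta)\mgkeq$, which is dense for measure-one many $\vec w$. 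So you do not get a single $\leq^*$-dense set but a \emph{family} of dense sets indexed by $\vec w$. The Prikry property Lemma~\ref{thm:prikry}(\ref{item:Prikrythm-inD}) is then applied to each $d_s(\vec w)$, and Lemma~\ref{thm:closure}(\ref{item:closurediagonal}) (diagonal closure) is used to find a single $s'\leq^*s$ that works simultaneously for all $\vec w$; the resulting finite sets $e=e(\vec w)$ are then uniformized using the countability of $[\zeta]^{<\omega}$. Only after all of this does one return to $R$ and use genericity of $H$ to find $(s',b')\leq(s,b)$ in $H$, concluding that $\add(i_{\ords}(s'),\vec w(s',b',\vec\gamma))\in D\cap G$ for any $\vec\gamma\supseteq e$.

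Two further remarks. Your appeal to Proposition~\ref{thm:gkeq0inNgamma} is misplaced: that proposition is about the first-order definability of $\gkeq_n$ and plays no role in the genericity argument. And the closure/directedness checks you mention at the end are unnecessary: the definition of $G$ via equation~\eqref{eq:a1} already builds in upward $\geq^*$-closure, and compatibility follows immediately from the fact that any two members of $H$ have a common extension in $H$.
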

  \begin{proof}
    Let $D\subseteq i_{\ords}(P(\vec E\restrict\zeta)\mgkeq)$ be
    dense, and let \[\dot D=i_{\ords}(d) (\seq{h_{\zeta,\gamma_i}(i_{\ords}(\xi_i))\mid i<k})\] be
    a standard forcing name for $D$, established by a condition
    $(s,b)\in R$.    Thus for any $\vec
    w\in\prod_{i<k}A^{s,\zeta}_{\gamma_i}$,  the condition
    $\add(s,\vec w)$ decides the values  of each of the $(P(\vec
    E\restrict\zeta)\mgkeq$)-names
    $h_{\zeta,\gamma_i}(\xi_i)$ and hence determines the value of
    $d(\seq{h_{\zeta,\gamma_i}(\xi_i)\mid i<k})\subseteq P(\vec
    E\restrict\zeta)\mgkeq$. 
    We write $d(\vec
    w)$ to denote this value.     

    Since $D$ is dense, 
    \begin{align*}
      A=\set{\vec w\in\prod_{i<k}A^{s,\zeta}_{\gamma_i}\mid d_{s}(\vec w)
      \text{ is dense in }P(\vec E\restrict\zeta)\mgkeq}\in\prod_{i<k}U^{s,\zeta}_{\gamma_i}
    \end{align*}
    so we can assume that $d_{s}(\vec w)$ is dense in $P(\vec
    E\restrict\zeta)\mgkeq$ for all $\vec w\in\prod_{i<k}A^{s,\zeta}_{\gamma_i}$.
    Using Lemma~\ref{thm:prikry}(\ref{item:Prikrythm-inD})
    and Lemma~\ref{thm:closure}(\ref{item:closurediagonal}), it can be
    shown that there is $s'\leq^*s$ such that 
    \begin{equation*}
(\forall \vec 
    w\in\prod_{i<k}A^{s',\zeta}_{\gamma_i})(\exists e\in[\zeta]^{<\omega})(\forall t\leq
    s')\;\bigl(e\subseteq\domain(t)\implies [t]\in d_{s}(\vec w)\bigr). 
  \end{equation*}

  Since $[\zeta]^{<\omega}$ is countable, 
  we can furthermore assume that $e$ does not depend on $\vec w$.
  But     now we are done, for if $b'$ is such that $(s',b')\leq(s,b)$ 
  in $R$ and $e\subseteq\vec\gamma $ then 
  $\add(i_{\ords}(s'), \vec w(s',b',\vec\gamma))\in D\cap G$.
\end{proof}

This completes the proof of Lemma~\ref{thm:generic_in_V_1}.
\subsubsection{Defining $\chang_{B}$ in  $M[G]$}

It follows immediately from the genericity of $G$ that 
  \begin{corollary}
    \label{thm:suitableCB}
   $\chang_{B}=\chang^{M_B[G]}$  for any suitable sequence $B$.\qed
  \end{corollary}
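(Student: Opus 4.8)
The plan is to observe that $\chang_B$ and $\chang^{M_B[G]}$ are each produced by running the defining recursion of the Chang hierarchy over exactly the same data: the ordinals of $M_B$, together with all countable sequences of those ordinals. Two facts supply this. First, forcing adds no ordinals, so $M_B[G]$ has the same ordinals as $M_B$. Second --- and here the genericity of $G$ enters --- for every ordinal $\lambda$ of $M_B$ the collection $([\lambda]^{<\omega_1})^{M_B[G]}$ of subsets of $\lambda$ that lie in $M_B[G]$ and are countable there coincides with the collection $[\lambda]^{<\omega_1}$ of all countable subsets of $\lambda$: the inclusion $\subseteq$ holds because $M_B[G]\subseteq V[h]$ and the collapse $h$, having countable conditions, adds no new countable sets of ordinals; while for $\supseteq$ any countable subset $x$ of $\lambda$, together with a surjection $\omega\to x$ witnessing its countability, is a countable subset of $M_B$, hence lies in $M_B[G]$ by the conclusion of Lemma~\ref{thm:generic_in_V_1} that every countable subset of $M_B$ is contained in $M_B[G]$.

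To make the second fact precise one must check that ``countable'' is computed the same way in $V$, in $V[h]$ and in $M_B[G]$. The first two agree because $h$ is countably closed; $\omega_1^{M_B}=\omega_1^V$ because $M_B$ is well founded and contains every real; and $\omega_1$ is not collapsed in passing from $M_B$ to $M_B[G]$, by Corollary~\ref{thm:nocollapse-PFgkeq}, since $\omega_1<\forceKappa_0=\kappa$ lies below every interval $[\forceKappa_\gamma^{++},\forceKappa_\gamma^{+\omega_1})$ of possibly collapsed cardinals. (As in the proof of Lemma~\ref{thm:generic_in_V_1} one may, purely for notational convenience, first replace $B$ by $B(\zeta)$ with $\zeta=\otp(B)$, so that $M_B\cut\ords$ becomes transitive.)

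With these two facts I would conclude by a routine induction on $\alpha\in M_B\cap\ords$, showing $(\chang^{M_B[G]})_\alpha=(\chang_B)_\alpha$: the base case is trivial, the successor step uses only that $\DEF^{X}(X)$ depends on $X$ alone, and the limit step uses the second fact to identify the two copies of $[\lambda]^{<\omega_1}$ adjoined at stage $\lambda$. Taking the union over $M_B\cap\ords$ --- which by the first fact is precisely $M_B[G]\cap\ords$ --- yields $\chang_B=\chang^{M_B[G]}$. I do not expect a genuine obstacle, since all the real work is already packaged in Lemma~\ref{thm:generic_in_V_1}; the only points that need attention are the absoluteness of ``countable'' across the three models and, if one does not pass to $B(\zeta)$, the harmless identification of the ordinals of the nontransitive model $M_B$ with genuine ordinals.
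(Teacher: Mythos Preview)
Your argument is correct and is precisely the unpacking of what the paper means by ``follows immediately from the genericity of $G$'' (the paper gives no further proof). The key inputs you identify---that $M_B[G]$ has the same ordinals as $M_B$, that Lemma~\ref{thm:generic_in_V_1} puts every countable subset of $M_B$ into $M_B[G]$, and that $\omega_1$ is computed identically in $V$, $V[h]$, $M_B$ and $M_B[G]$---are exactly what is needed, and your level-by-level induction is the obvious way to conclude.
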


  Here $\chang^{M_B[G]}=\chang_B^{M_B[G]}$ is the set defined inside
  $M_B[G]$ using the definition of $\chang$ given in the first
  paragraph of this paper.  
  The more important case of a limit suitable set $B$ is more delicate since
  $M_{\tilde B}$ is not definable inside $M_B[G]$ for suitable $\tilde
  B\subset B$.   The following is the promised precise definition of
  $\chang_B$:
  \begin{definition}
    \label{def:changBdef}
    Suppose $B$ is a \LS\ set,  and let $B'\subset B$ be the set of heads of
    gaps in $B$.   Call a countable set $v\in M_B$ of ordinals
    \emph{$B$-bounded} if for all $\lambda\in B'$ and 
    $f\colon[\ords]^{<\omega}\to\ords$ in $M_B$,  the set
    $f[\,[v]^{<\omega}]\cap\lambda$ is bounded in $M_B\cap\lambda$.
    Let $\mathcal{C}$ be the set of $B$-bounded sets.
    Then $\chang_B$ is the set $L_{\ords}^{M_B[G]}(\mathcal{C})$,
    constructed by recursion over the
    ordinals in $M_B\cap\ords$ as in the first paragraph of this paper
    using countable sequences from 
    $\mathcal{C}$.
  \end{definition}
  Note that $\chang_{B}$ is definable inside $M_B[G]$.   The following
  Proposition implies that  Definition~\ref{def:changBdef}
  is equivalent to the more
  informal one given in section~\ref{sec:proof-start}.

\begin{proposition}
  \label{thm:limitSuitableC_Bdefinable}
  A countable sequence $\vec\nu$ of ordinals in $M_B$ is $B$-bounded
  if and only  if there is a suitable $\tilde B\subset B$ such that
  $\vec\nu\in M_{\tilde B}$.
\end{proposition}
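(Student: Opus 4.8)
The plan is to prove the two implications separately. Throughout I use the following features of a gap of a limit suitable $B$: writing it as $[\mu,\lambda)$ with cap $\lambda\in B'$ (allowing also the ``top gap'', with $\lambda=\ords$ and $\mu=\sup(B)$), the cap $\lambda=\kappa_\delta$ is a limit point of $I$ of uncountable cofinality, the foot $\mu=\kappa_{\eta+\omega}$ has cofinality $\omega$, and $B\cap\lambda$ is cofinal in $\mu$ but bounded below $\lambda$ as a subset of $I$; in the top-gap case $\sup(M_B\cap\ords)=\mu$. I also use the ``peeling'' property of the iteration $\seq{i_\nu\mid\nu\in\ords}$: if $x=i_{\ords}(g)(c)$ is an ordinal below $\lambda$ and every generator in the finite set $c$ belongs to a member of $I$ of index at most some $\xi^*<\delta$ (so $c\subseteq\kappa_{\xi^*+1}$), then, factoring $i_\delta=i_{\xi^*+1,\delta}\circ i_{\xi^*+1}$ and using $\crit(i_{\delta,\ords})=\kappa_\delta$, $\crit(i_{\xi^*+1,\delta})=\kappa_{\xi^*+1}$ and $\kappa_\delta=i_{\xi^*+1,\delta}(\kappa_{\xi^*+1})$, one sees that the generators of $c$ lying at or above $\lambda$ contribute nothing and $x=i_{\xi^*+1}(g)(c)<\kappa_{\xi^*+1}$.

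For $(\Leftarrow)$ I would argue as follows. Assume all entries of $\vec\nu$ lie in $M_{\tilde B}$ for a suitable $\tilde B\subset B$, and fix a gap $[\mu,\lambda)$ of $B$ and a function $f\in M_B$. The first point is that $\tilde B\cap\lambda$ is bounded \emph{strictly} below $\mu$: otherwise $\mu$ would be a limit point of $\tilde B$ of countable cofinality, forcing $\mu\in\tilde B\subseteq B$, contrary to $\mu\notin B$. Now every element of $[\vec\nu]^{<\omega}$ is a finite set of ordinals of the form $i_{\ords}(g)(b)$ with $b\in[\gen_{\tilde B}]^{<\omega}$, so each value $f(e)$ has the form $i_{\ords}(h)(c)$ with $c$ a finite set of generators, each of whose members that belong to members of $I$ below $\lambda$ belongs to one of index at most a fixed $\xi^*<\eta+\omega$ — fixed because it is determined by the finitely many parameters of $f$ together with the bound on $\tilde B\cap\lambda$. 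By the peeling property, any such $f(e)$ that is below $\lambda$ is below $\kappa_{\xi^*+1}<\kappa_{\eta+\omega}=\mu$; since $B\cap\lambda$ is cofinal in $\mu$, there is $\kappa_\rho\in B\cap\lambda\subseteq M_B\cap\lambda$ with $\kappa_\rho>\kappa_{\xi^*+1}$, so $f[[\vec\nu]^{<\omega}]\cap\lambda$ is bounded in $M_B\cap\lambda$. The top-gap case is identical, with $\ords$ in place of $\lambda$ and $\mu=\sup(B)$. Hence $\vec\nu$ is $B$-bounded.

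For $(\Rightarrow)$ I would proceed in two steps. Each entry $\nu_n\in M_B$ is built from finitely many generators belonging to a finite $B_n\subset B$, so the countable set $B_\infty=\bigcup_nB_n\subset B$ supports all of $\vec\nu$, i.e. $\nu_n\in M_{B_\infty}$ for every $n$. \emph{Step 1 (the key step):} show that $B$-boundedness forces $B_\infty\cap\lambda$ to be bounded strictly below $\mu$ for every gap $[\mu,\lambda)$ of $B$. Contrapositively, if $B_\infty\cap\lambda$ were cofinal in $\mu$, infinitely many entries of $\vec\nu$ would carry generators belonging to members $\kappa_{\xi_k}$ of $B$ with $\kappa_{\xi_k}$ cofinal in $\mu$, and from these one builds a single $f\in M_B$ whose action on finite subsets of $\vec\nu$ produces ordinals below $\lambda$ cofinal in $\sup(M_B\cap\lambda)=\mu$, contradicting $B$-boundedness. \emph{Step 2:} given Step~1, a routine closure argument produces a countable suitable $\tilde B\subseteq B$ with $B_\infty\subseteq\tilde B$: iterate the operations of adjoining immediate $I$-predecessors (which stays inside $B$ because every gap cap of $B$ is a limit point of $I$, so a member of $B$ with successor index has its immediate predecessor in $B$), adjoining below each member that is a countable-cofinality limit point of $I$ an $\omega$-sequence of members of $B$ cofinal in it (available because such a point, having countable cofinality, is not a gap cap, so $B$ is cofinal below it), and taking topological closure; since the construction only ever adds points below points already present, Step~1 guarantees it never reaches a gap foot $\kappa_{\eta+\omega}\notin B$ as a new limit point, hence never leaves $B$. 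The resulting $\tilde B$ is closed (its limit points all have countable cofinality and were adjoined), countable, closed under immediate predecessors, and contains each of its countable-cofinality limit-point-of-$I$ members as a limit point; so $\tilde B$ is suitable and $\vec\nu\in M_{B_\infty}\subseteq M_{\tilde B}$.

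The step I expect to be the main obstacle is Step~1 of $(\Rightarrow)$: manufacturing the witnessing $f\in M_B$ when $B_\infty\cap\lambda$ is cofinal in $\mu$. The difficulty is that the natural candidate — sending an ordinal of $M_B$ to (an ordinal comparable with) the largest $\kappa_\xi<\lambda$ occurring in its support — is not available as a function over $M_{\ords}$, since the past critical points are not definable there. A sufficient approximation is recovered inside $M_B$ by working with the increasing sequence $\seq{N_\nu\mid\nu<\omega_1}$ of submodels of $M$ from Definition~\ref{def:Nsequence}; this is exactly the role of clauses~\ref{item:cardNnuSubsetNnu} and~\ref{item:Nseq-doublepluss} there, which are designed to ensure that the relevant ``locating'' maps, built from the $N_\nu$, are captured by functions lying in $M_B$.
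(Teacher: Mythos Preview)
Your $(\Leftarrow)$ argument via peeling is correct (and more explicit than the paper's one-line ``easy to see''), and Step~2 of $(\Rightarrow)$ is fine. The gap is exactly where you place it, in Step~1, but your suggested route through the $N_\nu$ filtration is not how the paper proceeds. Note also that Step~1 as you state it is slightly off: the supports $B_n$ are not canonical, so without taking \emph{minimal} supports the claim can fail --- pad each $B_n$ arbitrarily and $B_\infty$ becomes cofinal in $\mu$ even for constant $\vec\nu$.

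The missing idea is the \emph{pseudoinverse}. Choose $g_k\in M$ and a $\prec$-minimal finite set $e_k$ of $B$-generators with $\nu_k=i_{\ords}(g_k)(e_k)$, where $e'\prec e$ iff $\max(e\mathbin{\triangle} e')\in e$. Define $f_k\in M$ by letting $f_k(\nu)$ be the $\prec$-least finite $e$ with $g_k(e)=\nu$. One checks, using the minimality of $e_k$, that every member of $i_{\ords}(f_k)(\nu_k)$ is in fact a $B$-generator. Since $M$ is closed under countable sequences, the whole sequence $\seq{f_k\mid k<\omega}$ lies in $M$, so the function $g(\nu)=\sup_k\bigl(i_{\ords}(f_k)(\nu)\cap\lambda\bigr)$ lies in $M_B$ (the only non-$M$ parameter is $\lambda\in B\subset M_B$). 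If some $f\in M_B$ witnessed that $\vec\eta:=\bigcup_k i_{\ords}(f_k)(\nu_k)$ is unbounded below $\lambda$, then a finite-set version of $f\circ g$ would witness the same for $\vec\nu$, contradicting $B$-boundedness. Hence $\vec\eta$ is $B$-bounded; taking $f=\max$ bounds the elements of $\vec\eta$ below $\lambda$ strictly below $\mu$, and therefore also the members of $B$ owning them --- this is your Step~1 for the minimal supports. The conceptual point is that you never need to locate \emph{which} $\kappa_\xi$ owns a given generator; you only need to recover the generator itself as an ordinal from $\nu_k$, and the pseudoinverse of $g_k$ does that inside $M$ for free. (The paper's remark after Definition~\ref{def:Nsequence} that clauses~5--6 are needed for this proposition may have steered you toward the $N_\nu$; the proof as written does not appear to use them.)
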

\begin{proof}
  It is easy to see that if $\tilde B$ is suitable then every
  countable  $\vec\nu\subset M_{\tilde B}$ is $B$-bounded.    For the
  converse, suppose that $\vec\nu$ is $B$-bounded and take for
  each $\nu_k\in \vec\nu$ a function $g_k\in M$ and finite sequence of
  generators $e_k$ for cardinals in $B$ such that
  $\nu_k=i_{\ords}(g_k)(e_k)$; taking for each $k$ the least possible
  sequence $e_k$ in the 
  usual well order of finite sets of  ordinals: 
  $e'\prec e\iff\max((e\cup e')\setminus (e\cap e'))\in e$.   
  
  Now let $f_k$ be the pseudoinverse of $g_k$ defined by setting
  $f_k(\nu)$ equal to 
  the $\prec$-least finite sequence $e$ such that $\nu=g_k(e)$.
  Then every member of $i_{\ords}(f_k)(\nu_k)$ is a generator for some
  member of $B$, for otherwise let $\xi$ be the largest counerexample, 
  $\xi=\max(i_{\ords}(f_k)(\nu_k)\setminus B)$.  Then there is a
  function $h\in M$ 
  and a set $e''\subset\xi$ of generators for members of $B$ such
  that 
  $\xi=i_{\ords}(h)(e'')$, but
  $e''\cup  f_{k}(\nu_k)\setminus\sing{\xi}\prec f_k(\nu_k)\preceq
  b_k$,   contradicting the minimality of $b_k$.
  
  Now $\vec \eta=\bigcup_{k\in\omega}f_{k}(\nu_i)$ is $B$-bounded: suppose to
  the contrary that $f[\vec\eta]$ is unbounded in $\lambda\cap M_B$ where
  $\lambda$ is the head of a gap  in $B$. Then $f\circ g[ \vec\nu]$ is
  also unbounded in $\lambda$, where
  $g(\nu)=\sup_{k\in\omega}(f_k(\nu)\cap\lambda)$, and this contradicts
  the assumption that $\vec\nu$ is $B$-bounded.   Finally, the set of
  $\lambda\in B$ which have a generator in $\vec\eta$ is also
  $B$-bounded, and it follows that it is contained in a suitable
  subset $\tilde B\subset B$.  
\end{proof}

\subsection{Proof of the Main Lemma}
\label{sec:proof-main-lemma}

The purpose of this subsection is to prove 
Lemma~\ref{thm:mainlemma} under the  additional assumption that
$\kappa=\kappa_0$ is a member of the \LS{} set $B$.   The following
Subsection~\ref{sec:finite-exceptions} will  complete the proof of
Lemma~\ref{thm:mainlemma}, and hence of 
Theorem~\ref{thm:main}, by removing this
assumption.   
In the process it wil indicate the technique used to prove the
stronger result Theorem~\ref{thm:modified-suitable}.

Before beginning the proof, we state two general facts about iterated
ultrapowers.    Both are well known facts, but we need to verify that
they are valid in the context in which they will be used.

A full statement of the conditions under which these properties hold
is somewhat delicate, so we will restrict consideration to the
iterated ultrapowers needed here. If $k$ and $k'$ are iterated
ultrapowers, then we write $k[k']$ for the copy map, that is, the
direct limit of the maps $i^{k(U)}$ where $U=(F)_b$ for some extender $F$ used
in the iteration $k'$ and some generator $b$ for $F$.  Every extender 
$F$ used satisfies $k(F) = k[F]$ for any iteration $k$ such that  $\crit(F)$ is not
moved.   In the following the term \emph{extender} means an extender
with this property which does not overlap any measurable cardinals.

\begin{lemma}
  \label{thm:commute}
  Suppose $\kappa'\leq\kappa$, $E'$ is an extender on $\kappa'$, and
  $E$ is an extender on $\kappa$.
  Suppose further that if $\kappa'=\kappa$ then $E'\tless E$.
  Then the following diagram commutes:
\begin{equation}
        \label{eq:diagram_commute_1}
        \begin{tikzpicture}[>=angle 90,baseline]
         \matrix(m)[matrix of math nodes,
         row sep =2em, 
         column sep=4.5em, text height = 1.5ex, text depth=0.25ex
         ]
           {\ult(V,E)&\ult(V,E\times E')\\ V&\ult(V,E') \\};
          \path[->] (m-2-1)  edge node[auto, swap] {$i^{E'}$} (m-2-2) 
                                               edge node  [auto] {$i^E$}    (m-1-1) % $i^{i^{E}(E')}\circ i^{U}$}(m-1-1) 
                             (m-2-2)     edge node  [auto] {$i^{E}$}   (m-1-2)
                             (m-1-1)     edge node[above] {$i^{E}[i^{E'}]$}(m-1-2.west);
           \path[->, dashed] (m-2-1) edge node[above] {$i^{E\times E'}$}(m-1-2);
        \end{tikzpicture}
    \end{equation}
  \end{lemma}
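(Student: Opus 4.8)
The plan is to verify the commutativity of the diagram by checking that both composite maps $V\to\ult(V,E\times E')$ act identically on every element, using the standard fact that elements of an ultrapower are of the form $i(f)(b)$ for $f$ a function in the ground model and $b$ a finite sequence of generators. Here $E\times E'$ denotes the product (or sum) of the two extenders, so that $\ult(V,E\times E')$ is the ultrapower whose generators are those of $E$ together with those of $E'$, and the hypothesis $\crit(E')=\kappa'\le\kappa=\crit(E)$ (with $E'\tless E$ in the case $\kappa'=\kappa$) guarantees that this product is well defined and that neither extender moves the critical point of the other in the relevant sense — this is exactly the ``non-overlapping'' hypothesis stated just before the lemma.

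First I would fix the two factorizations of $i^{E\times E'}$. Going around the bottom and right: $i^{E\times E'} = i^{E}\circ i^{E'}$ when this is read as ``first apply $i^{E'}$, landing in $\ult(V,E')$, then apply the extender $i^{E'}(E)$ to that model''; here one uses that $i^{E'}(E)=i^{E'}[E]$ because $\crit(E)=\kappa$ is not moved by $i^{E'}$ (as $\kappa'\le\kappa$ and, when $\kappa'=\kappa$, $E'\tless E$ forces the generators of $E'$ to lie below those relevant to $E$, so $i^{E'}$ is continuous at $\kappa$ or fixes it appropriately). Going around the left and top: $i^{E\times E'}=i^{E}[i^{E'}]\circ i^{E}$, where $i^{E}[i^{E'}]$ is the copy map applied to $\ult(V,E)$. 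The content of the lemma is that these two routes to $\ult(V,E\times E')$ agree. I would prove this by taking an arbitrary $x\in\ult(V,E\times E')$, write $x=i^{E\times E'}(f)(a,a')$ with $a$ a finite sequence of $E$-generators and $a'$ a finite sequence of $E'$-generators, and compute the image of $f$, $a$, and $a'$ under each composite, checking they match. The key algebraic identity is the Łoś-style computation $i^{E}(i^{E'}(g))(a')(a) = i^{E'}(i^{E}(g))(a)(a')$ for $g\in V$, which follows from the fact that membership in $(E\times E')_{(a,a')}$ is equivalent, by Fubini for the two commuting systems of measures, to membership in the iterated ultrafilter regardless of the order in which $E$ and $E'$ are applied.

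The main obstacle — really the only delicate point — is justifying the Fubini interchange, i.e. that the measures derived from $E$ and from $E'$ commute. This is where the hypothesis is used in an essential way: if $\kappa'<\kappa$, then each ultrafilter $(E)_a$ concentrates on sets that are, from the point of view of $(E')_{a'}$, simply ground-model sets above $\kappa'$ that are untouched, and vice versa; if $\kappa'=\kappa$ then $E'\tless E$ means $E'$ appears earlier in the relevant ordering of extenders on $\kappa$ and $E$ does not overlap $\kappa'$, so applying $i^{E'}$ first does not disturb $E$ other than by the copy map. I would spell this out as the statement that for $X\subseteq V_\kappa\times V_{\kappa'}$ in $V$, $(a,a')\in i^{E\times E'}(X)$ iff $a\in i^{E'}(i^{E})(\{u : (u,a')\in i^{E'}(X)\})$ iff $a'\in i^{E}(i^{E'})(\{u' : (a,u')\in i^{E}(X)\})$, all three being equivalent by the non-overlapping hypothesis. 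Granting this, the equality of the two composite maps on a general element $i^{E\times E'}(f)(a,a')$ is a routine application of Łoś's theorem in each ultrapower, and the diagram commutes.
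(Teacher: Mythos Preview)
Your approach is essentially correct and is in fact a spelled-out version of what the paper does. The paper's proof is a single sentence: the diagram is the direct limit, over finite sets $a$ of $E$-generators and $b$ of $E'$-generators, of the corresponding diagram for the component ultrafilters $\ufFromExt{E}{a}$ and $\ufFromExt{E'}{b}$. Your Fubini computation is precisely what verifies commutativity at the ultrafilter level, and your observation that an arbitrary element has the form $i^{E\times E'}(f)(a,a')$ is what reduces the extender statement to the ultrafilter one. So the two arguments are the same in substance; the paper simply packages the reduction as ``take the direct limit'' rather than tracking a generic element.

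One correction: your justification in the case $\kappa'=\kappa$ is garbled. You write that $i^{E'}$ ``is continuous at $\kappa$ or fixes it appropriately,'' but $\kappa$ is the critical point of $E'$, so $i^{E'}$ certainly moves it. The hypothesis $E'\tless E$ is not used to keep $\kappa$ fixed; rather, it ensures (via coherence) that the relevant measures are compatible so that the Fubini interchange at the ultrafilter level goes through and the two-step iterations in either order yield the same model. Once you are at the level of single ultrafilters $U=\ufFromExt{E}{a}$ and $U'=\ufFromExt{E'}{b}$, the commutativity is the classical fact about commuting iterated ultrapowers, and that is all the paper is invoking.
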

  \begin{proof}
      The diagram~\eqref{eq:diagram_commute_1} is the direct limit of the same diagram for the 
      ultrafilters $\ufFromExt{E}{a}$ and  $\ufFromExt{E'}{b}$, where $a$ and $b$ are generators of $E$ and $E'$ respectively. 
    \end{proof}

    \begin{corollary}
      \label{thm:reorderiteration}
      Any iteration  can be rearranged to an equivalent iteration with strictly increasing critical points.\qed
    \end{corollary}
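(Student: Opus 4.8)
The plan is to deduce the corollary from Lemma~\ref{thm:commute} by the standard transposition (``bubble sort'') argument, treating Lemma~\ref{thm:commute} as the elementary move that interchanges two consecutive ultrapowers of an iteration.

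First I would fix bookkeeping for a (possibly transfinite) iteration $\langle M_\alpha, i_{\alpha\beta}, E_\alpha\mid \alpha\le\beta\le\theta\rangle$, with $M_{\alpha+1}=\ult(M_\alpha,E_\alpha)$ and $M_\lambda$ the direct limit at limits $\lambda$. Call $\alpha$ an \emph{inversion} if $\crit(E_{\alpha+1})<\crit(E_\alpha)$, or $\crit(E_{\alpha+1})=\crit(E_\alpha)$ and $E_{\alpha+1}\tless E_\alpha$. Since no ordinal is moved below the critical point of the extender acting on it these comparisons are unambiguous; and since the extenders do not overlap measurable cardinals, when $\alpha$ is an inversion the extender $E_{\alpha+1}\in M_{\alpha+1}$ is of the form $i^{E_\alpha}[E']$ for a unique extender $E'\in M_\alpha$ with $\crit(E')=\crit(E_{\alpha+1})$. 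Applying Lemma~\ref{thm:commute} inside $M_\alpha$ with this $E'$ and with $E_\alpha$ in the role of $E$ gives $M_{\alpha+2}=\ult(\ult(M_\alpha,E'),\, i^{E'}[E_\alpha])$, with the composite embedding $M_\alpha\to M_{\alpha+2}$ unchanged. Hence I may replace the consecutive steps $(E_\alpha,E_{\alpha+1})$ by $(E',\, i^{E'}[E_\alpha])$, now in the correct order, altering none of the models $M_\beta$ for $\beta\le\alpha$ or $\beta\ge\alpha+2$ and none of the embeddings into $M_\theta$.

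For a finite iteration it then remains to check termination: the number of pairs $\beta<\gamma$ for which $(\crit(E_\gamma),E_\gamma)$ precedes $(\crit(E_\beta),E_\beta)$ in the ordering ``by $\crit$, ties broken by $\tless$'' is a natural-number monovariant that strictly drops when the least inversion is transposed, so finitely many moves yield an iteration with no inversion, whose critical points are non-decreasing with ties resolved by $\tless$. For a transfinite iteration the transposition process need not have length $\omega$, so instead I would argue directly that the normal iteration $J$ built from the same data — the extenders of $\langle E_\alpha\rangle$ well-ordered by $(\crit,\tless)$, each carried to the appropriate model of $J$ by the earlier steps of $J$ — is well defined and has the same final model and embedding as the original; here Lemma~\ref{thm:commute}, applied cofinally (exactly as in its own proof, where the square is a direct limit of the corresponding squares for ultrafilters $\ufFromExt{E}{a}$, $\ufFromExt{E'}{b}$), is used to match every finite sub-iteration of the original with one of $J$ and conclude that the two direct limits agree. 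Finally, since in each iteration to which this corollary is applied no two extenders used share a critical point, the sorted iteration has strictly increasing critical points.

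I expect the transfinite bookkeeping to be the main obstacle: verifying that ``the same data, re-sorted'' is a legitimate iteration (each extender, transported to its new host model, is still an extender there and is still the one dictated by the sort) and that the cofinal use of Lemma~\ref{thm:commute} genuinely forces the direct limits to coincide rather than merely embed one into the other. All of the finitary content, however, is already packaged in Lemma~\ref{thm:commute}.
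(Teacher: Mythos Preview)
Your proposal is correct and matches the paper's intent: the paper gives no proof at all (the corollary carries only a \qed), treating it as an immediate consequence of Lemma~\ref{thm:commute}, and your transposition argument is precisely the standard way to unpack that ``immediate''. You have simply supplied the details the paper omits.
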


The second statement is a variant of Kunen's result in 
\cite{Kunen1973A-model-for-the} that for any ordinal $\alpha$ there
are at most finitely many cardinals having a measure $U$ such that
$i^U(\alpha)>\alpha$.  The statement of the  following lemma is tailored to its use
in the proof of the main Lemma:
\begin{todoenv}
  (7/13/15) --- 
  Note that a more general version of this next lemma is actually
  used.   I can't think of a good way to state it.
\end{todoenv}
    \begin{lemma}\label{thm:M_bDoesntmove}
      Suppose that $b$ is a finite subset of $I$, $B\subset I$ is suitable, and
      $k$ is an iteration in $M_\ords[B]$ of length less than
      $\omega_2$ which  uses only
      extenders of the form $i_{\nu}(\ufFromExt{E}{\alpha})$ where
      $\kappa_\nu\in B\setminus b$ and $\alpha<\omega_1$.   Then
      $k\restrict (\ords\cap M_b)$ is the identity.    .
    \end{lemma}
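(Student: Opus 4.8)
The plan is to reduce the statement to the classical Kunen observation by a counting argument on the sizes of the hulls involved. First I would recall that $M_b = \{\, i_{\ords}(f)(\vec\beta) \mid f\in M \land \vec\beta\in[\gen_b]^{<\omega}\,\}$, so $\ords\cap M_b$ is the set of ordinals of the form $i_{\ords}(f)(\vec\beta)$ for $f\in M$ and $\vec\beta$ a finite set of generators belonging to the finitely many $\kappa_{\nu}\in b$. Since $b$ is finite and $\card M = \card\reals$, the set $\ords\cap M_b$ has, in $M_{\ords}[B]$, cardinality at most $\card\reals$; in particular, having collapsed $\reals$ to $\omega_1$ as in Subsection~\ref{sec:generic_set}, we may regard $\ords\cap M_b$ as being covered by an increasing $\omega_1$-chain of sets each of which is, in $M_{\ords}$, a set of ordinals of size at most $\kappa^{+\omega}$ or so — the exact bound does not matter, only that each piece is ``small'' relative to the critical points $\kappa_{\nu}$ of the extenders used by $k$, which are cofinal in $\ords$.

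The key step is the following variant of Kunen's lemma from \cite{Kunen1973A-model-for-the}: if $X$ is a set of ordinals and $\seq{\kappa_{\nu_\xi}\mid\xi<\delta}$ is a sequence of distinct points of $I\setminus b$ with each $\kappa_{\nu_\xi}$ a generator-critical point and $k$ the corresponding iteration using the ultrafilters $i_{\nu_\xi}(\ufFromExt{E}{\alpha_\xi})$ with $\alpha_\xi<\omega_1$, then $\{\xi<\delta \mid k_{\xi}\restrict X \neq \id_X \text{ at stage }\xi\}$ is finite. I would prove this exactly as Kunen does: for each $x\in X$ that is moved, the stage $\xi$ at which it is first moved is determined, and the ultrapower at that stage pushes $x$ strictly upward across $\kappa_{\nu_\xi}$; since the critical points $\kappa_{\nu_\xi}$ are increasing and each $x$ can be moved at only one ``first'' stage, and at that stage the image lands above $\kappa_{\nu_\xi}$, a straightforward regressive-function argument (or direct counting, using that the $\kappa_{\nu_\xi}$ eventually exceed every $x\in X$ since $X$ is a set) shows only finitely many stages move $X$. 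Applying this with $X = \ords\cap M_b$, and using that $\ords\cap M_b$ is a genuine set (bounded below $\ords$ on each piece of the $\omega_1$-chain, hence contained in some $i_{\ords}(f)$-image below a fixed $\kappa_{\nu_0}$), gives that only finitely many stages of $k$ move $\ords\cap M_b$; since the iteration has strictly increasing critical points by Corollary~\ref{thm:reorderiteration}, those finitely many stages lie below some $\kappa_{\nu^*}$, and by choosing $\nu^*$ large enough we conclude $k\restrict(\ords\cap M_b)=\id$. Here I use the hypothesis that $k$ has length less than $\omega_2$ only to guarantee that $\ords\cap M_b$, which has size $\aleph_1$ in $M_{\ords}[B]$ after the collapse, is bounded below the supremum of the critical points used — equivalently, that the iteration eventually passes every ordinal in $M_b$.

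The main obstacle I anticipate is bookkeeping around the fact that $M_{\ords}\cut\ords$ and $M_b$ are \emph{not} transitive and that $\ords\cap M_b$ is not an initial segment of $\ords$, so ``$k$ is the identity on $\ords\cap M_b$'' must be read pointwise; I would handle this by noting that each $x\in\ords\cap M_b$ equals $i_{\ords}(f)(\vec\beta)$ with $\vec\beta$ drawn from the finite set $b$, so $x$ lies below $i_{\ords}(f)(\kappa_{\nu_0}^{<\omega})$ for the least $\kappa_{\nu_0}\in b$ — hence below a fixed ordinal $i_{\ords}(\sup_{\xi}f_{\xi})(\dots)$ that can be bounded once we fix countably many of the $f$'s — and then apply Kunen's argument to that bound. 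A second, minor point is that the ultrafilters $i_{\nu}(\ufFromExt{E}{\alpha})$ for $\alpha<\omega_1$ are the \emph{normal-like} ultrafilters derived from $E$ with generator $\alpha$ below $\kappa^{+}$, so their critical point is exactly $\kappa_{\nu}$ and Kunen's ``at most finitely many measurables move $\alpha$'' applies verbatim once we observe that distinct $\kappa_{\nu}$'s give distinct critical points; the restriction $\kappa_{\nu}\in B\setminus b$ then guarantees the critical points avoid the (finitely many) points whose generators actually generate $M_b$, so the first moved element must sit strictly above them, closing the argument.
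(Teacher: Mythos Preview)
Your argument has a genuine gap at its core. The ``variant of Kunen's lemma'' you propose --- that only finitely many stages of $k$ move $X=\ords\cap M_b$ --- even if it were true, would not yield the conclusion: if finitely many stages move $X$, then $X$ \emph{is} moved. Your sentence ``by choosing $\nu^*$ large enough we conclude $k\restrict(\ords\cap M_b)=\id$'' is a non sequitur; there is nothing to choose, since $k$ is given.

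The proof sketch of your Kunen variant is also incorrect. You write that when $x$ is first moved at stage $\xi$, ``the ultrapower at that stage pushes $x$ strictly upward across $\kappa_{\nu_\xi}$.'' But the ultrapower at stage $\xi$ has critical point $\kappa_{\nu_\xi}$, so it fixes every $x<\kappa_{\nu_\xi}$; the only $x$ it can move satisfy $x\ge\kappa_{\nu_\xi}$ already, and there is no ``crossing.'' Since elements of $M_b$ can lie above many members of $B\setminus b$ (for instance, any $\alpha\in M_b$ with $\alpha>\min(B)$), a single such $\alpha$ could in principle be moved at every stage whose critical point lies below it. Kunen's theorem bounds the number of \emph{measurable cardinals} that move a fixed $\alpha$, not the number of stages in an iteration; the two are not the same once images of $\alpha$ and images of the measures interact.

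The paper's proof proceeds quite differently. It first proves a helper lemma: for each $\alpha\in M_b$ there is a sequence $\seq{\nu_\lambda\mid\lambda\in b\cup\sing{0}}$ in $M_b$ (so each $\nu_\lambda$ lies below the next member of $I$ after $\lambda$) such that any \emph{short} iteration in $M_\ords$ whose critical points avoid all intervals $[\lambda,\nu_\lambda]$ fixes $\alpha$. This is where Kunen's descending-sequence technique is actually used, but it requires a careful factoring of iterations into pieces above and below a threshold and a commuting-square argument --- not a counting argument. The point of $\vec\nu$ lying in $M_b$ is that critical points drawn from $I\setminus b$ automatically avoid these intervals. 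The main lemma then bootstraps from short iterations (length $<\kappa_0$) to the long ones (length $<\omega_2$) in the hypothesis: assuming a minimal counterexample $\alpha$, one uses the generic-set construction of Subsection~\ref{sec:generic_set} to replace $k$ by a related iteration whose critical points below the relevant threshold lie in $M_b$, and then factor it as $\ell_1\circ\ell_0$ with $\ell_0\in M_b$ short enough for the helper lemma and $\ell_1$ contradicting the minimality of $\alpha$. Neither of these two steps is captured by your cardinality argument.
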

    The proof uses the following lemma.   We write $\Crit(k)$ for the set of  critical
    points of the extenders in the iteration $k$.  Note that the
    hypothesis implies that $k'[k]=k'(k)$ for any iteration $k'$ which is
    the identity on $\Crit(k)$.

   \begin{lemma}\label{thm:M_bDoesntmove_helper}
      Suppose $b\subseteq I$ is finite and $\alpha\in M_b$.   Then
      there is a sequence
      $\vec\nu=\seq{\nu_\lambda\mid \lambda\in b\cup\sing{0}}$ in
      $M_b$ satisfying 
%      \begin{equation*}
$
        (\forall\lambda\in
        b)\;\lambda\leq\nu_\lambda<\min(\sing{\ords}\cup
        b\setminus\lambda+1)      
$
%      \end{equation*} 
      which has the following property:  Let  $k\in
      M_\ords$ be any iteration of length less than $\kappa_0$ such
      that $\Crit(k)\cap[\lambda,\nu_\lambda]=\emptyset$ for all
      $\lambda\in\sing{0}\cup b$.  Then $k(\alpha)=\alpha$.
    \end{lemma}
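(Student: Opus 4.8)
\textbf{Proof proposal for Lemma~\ref{thm:M_bDoesntmove_helper}.}
The plan is to use the representation of every element of $M_b$ via the iteration $i$ together with the ``at most finitely many measures move an ordinal'' phenomenon of Kunen. First I would fix $\alpha\in M_b$ and write $\alpha = i_{\ords}(g)(\vec\beta)$ for some $g\in M$ and a finite sequence $\vec\beta$ of generators for members of $b$. Each $\beta_j\in\vec\beta$ belongs to some $\kappa_{\xi_j}$ with $\kappa_{\xi_j}\in b$, so $\beta_j = i_{\xi_j}(\bar\beta_j)$ for some $\bar\beta_j\in[\kappa,\kappa^{+\omega_1})$. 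The point is that $\alpha$ lies in the range of $i_{\vec\xi}$ where $i_{\vec\xi}$ is the iteration using only the indices $\xi_j$; by Corollary~\ref{thm:reorderiteration} we may assume the $\xi_j$ are increasing and that the critical points $\kappa_{\xi_j}$ are exactly the members of $b$ involved. Thus, modulo a bounded piece, $\alpha$ is built from ordinals below $\kappa$ together with the images $i_{\xi_j}(\text{small})$, and its only ``sensitivity'' to further iteration comes from how an iteration $k$ acts on the finitely many generators used to express it.

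Next I would define the sequence $\vec\nu$. For each $\lambda\in b\cup\sing{0}$, write $\lambda=\kappa_{\xi}$ (or $\lambda=0$, $\xi=0$), and consider the finitely many generators $\beta_j$ of $\alpha$ that belong to $\lambda$; let $\bar\beta_j\in[\kappa,\kappa^{+\omega_1})$ be the pre-images. Inside $M_b$ I would choose $\nu_\lambda$ large enough — but still below $\min(\sing{\ords}\cup b\setminus(\lambda+1))$ — that the ordinal $i_{\xi}(\sup_j(\bar\beta_j+1))$ is below $\nu_\lambda$; more precisely I want $\nu_\lambda$ to bound, in $M_b$, all the ordinals of the form $i_{\xi}(\bar\gamma)$ for $\bar\gamma$ in the (countable, hence by closure available in $M_b$) set of ordinals needed to name $\alpha$ from the $\lambda$-block. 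Such a $\nu_\lambda$ exists in $M_b$ because the relevant set is countable and $M_b$ is closed under countable sequences (Proposition~\ref{thm:Mcc} applied to $M_b\cong M_{\otp(b)}$), and because $\kappa_\xi$ has uncountable cofinality in $M_b$ when $\lambda$ is a limit point of $I$ — and when $\lambda$ is a successor point of $I$ the block $[\lambda,\min(I\setminus(\lambda+1)))$ is itself the interval of generators belonging to $\lambda$, so the bound fits. This is where clauses (5) and (6) of Definition~\ref{def:Nsequence}, via Proposition~\ref{thm:limitSuitableC_Bdefinable}, are really being used.

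Then I would verify the required property: let $k\in M_\ords$ be an iteration of length $<\kappa_0$ with $\Crit(k)\cap[\lambda,\nu_\lambda]=\emptyset$ for all $\lambda\in\sing0\cup b$. Since $\alpha$ is expressible using only ordinals $<\kappa$ (which are below every critical point of $k$, as $\kappa=\kappa_0\le\Crit(k)$) and the ordinals $i_{\xi_j}(\bar\beta_j)$, each of which lies in some interval $[\lambda,\nu_\lambda]$ avoided by $\Crit(k)$, the copy-map/commutation Lemma~\ref{thm:commute} together with the fact that $k$ fixes every ordinal below its least critical point and every generator lying strictly between consecutive critical points gives $k(\alpha)=\alpha$. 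Concretely: rearrange $k$ to have increasing critical points (Corollary~\ref{thm:reorderiteration}); since $\alpha$ is in the Skolem hull of $\kappa\cup\set{i_{\xi_j}(\bar\beta_j)\mid j}$ inside $M_\ords$ under functions from $M$, and $k$ moves no element of that generating set — the sub-$\kappa$ part because $\crit(k)\ge\kappa_0=\kappa$, the generators because they lie in the forbidden intervals and $k$ is generated by extenders whose generators interleave trivially with them — elementarity of $i^{k}$ (which equals $k$ by the no-overlap hypothesis) yields $k(\alpha)=\alpha$.

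The main obstacle I anticipate is the bookkeeping in the second paragraph: making the choice of $\nu_\lambda$ genuinely inside $M_b$, uniformly over the finite set $b$, while respecting the constraint $\nu_\lambda<\min(\sing{\ords}\cup b\setminus(\lambda+1))$. This is delicate precisely when $\lambda$ is a \emph{successor} point of $I$, since then there is very little room above $\lambda$ in $b$, and one must check that the generators of $\alpha$ belonging to $\lambda$ that matter are themselves confined to an initial segment of the single $I$-block above $\lambda$; the closure of $M_b$ under countable sequences and the fact that $\alpha$ needs only countably much information to be named are what make this work, but the argument has to be assembled carefully. A secondary subtlety is ensuring that "$k$ fixes the listed generators" really does follow from $\Crit(k)$ avoiding the intervals — one uses here that the extenders of $k$ do not overlap measurables (the standing hypothesis on "extender"), so that a generator belonging to $\kappa_\xi$ with $\kappa_\xi\notin\Crit(k)$ and no critical point of $k$ in $[\kappa_\xi,\nu_{\kappa_\xi}]$ is genuinely untouched.
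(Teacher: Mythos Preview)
Your approach has a genuine gap at the crucial step. You argue that if $\Crit(k)$ avoids each interval $[\lambda,\nu_\lambda]$ then $k$ fixes the generators $\beta_j=i_{\xi_j}(\bar\beta_j)$ lying in those intervals, and hence fixes $\alpha=i_\ords(g)(\vec\beta)$. But an iteration can move an ordinal without having a critical point \emph{near} it: any extender in $k$ with critical point $\mu<\beta_j$ will in general move $\beta_j$, and the hypothesis allows $k$ to have critical points in the gaps $(\nu_\lambda,\lambda')$ between consecutive members of $\{0\}\cup b$, all of which lie below the generators belonging to $\lambda'$ and above. So ``$\Crit(k)\cap[\kappa_\xi,\nu_{\kappa_\xi}]=\emptyset$'' does not imply that $k$ fixes the ordinals in $[\kappa_\xi,\nu_{\kappa_\xi}]$, and your sentence ``$k$ fixes\ldots\ every generator lying strictly between consecutive critical points'' is simply false. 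The non-overlapping hypothesis on extenders controls how iterations \emph{factor}, not which ordinals they fix. (Also, $k(\alpha)=k(i_\ords(g))(k(\vec\beta))$, so even fixing $\vec\beta$ would not suffice without controlling $k(i_\ords(g))$.)

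This is exactly why the paper does \emph{not} try to identify $\vec\nu$ directly. Following Kunen, it argues by contradiction: if no $\vec\nu$ works, one can choose an infinite sequence of iterations $k_n$, each moving $\alpha$ and with critical points sliding ever higher within each block. Composing them via copy maps yields a well-founded direct limit $N_\omega$, but the claim $k'_{n,n+1}(\alpha)>\alpha$ (proved by a careful factoring argument using Lemma~\ref{thm:commute}) gives a strictly descending sequence $\seq{k'_{n,\omega}(\alpha)}$, a contradiction. The point of Kunen's method is precisely that there is no direct description of which ordinals a given iteration fixes; the finiteness is extracted from well-foundedness. Your invocations of Definition~\ref{def:Nsequence}(5,6) and Proposition~\ref{thm:limitSuitableC_Bdefinable} are also misplaced---those concern the forcing and the definition of $\chang_B$, not this lemma.
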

    Note that the statement of this lemma is first order, and hence
    it is also valid (using the image of the same sequence $\vec\nu$)
    in any iterated ultrapower of $M_{\ords}$.
\begin{proof}
  The proof closely follows that of Kunen.  We will work inside
  $M_{\ords}$, but the fact that  $M_b\prec M_\ords$ ensures that the ordinals
  $\nu_\lambda$ are members of $M_b$.

  We will suppose that the lemma is false for
  $b$ and $\alpha$.
  Set $\bar b=\sing{0}\cup b\cap\tau$, where $\tau\in b$ is
  least such that there is no sequence
  $\seq{\nu_{\lambda'}\mid\lambda\in \sing{0}\cup b\cap\tau}$ which satisfies the
  conclusion for iterations $k$ with $\Crit(k)\subset\tau$. Note
  that $\tau\leq\max(b\cap\alpha)$, since $\nu_{\max(b\cap\alpha)}$
  can be $\alpha$.  Set $\bar\tau=\max(\bar b)$, 
  let
  $\seq{\nu^{0}_{\lambda}\mid \lambda\in\bar b\cap\bar\tau}$ witness that
  $\tau$ is minimal, and set $\nu^0_{\bar\tau}=\cof(\alpha)$ if
  $\max(\bar b)\leq\cof(\alpha)\leq\max(b)$, and 
  $\nu^0_{\bar\tau}=\bar\tau$ otherwise.   Following Kunen, the
  failure of the lemma implies that there is  an infinite sequence
  $\seq{\kappa_n\mid n\in\omega}$ of iterations such that
  \begin{gather*}
    (\forall n\in\omega)\;k_n(\alpha)>\alpha,\\
    (\forall\lambda\in\bar
    b)\;\min(\Crit(k_0))\setminus\lambda>\nu^{0}_{\lambda},\qquad\text{and}\\
    (\forall\lambda\in\bar b)(\forall 
    n\in\omega)
    \;\min\bigl(\Crit(k_{n+1})\setminus\lambda\bigr)>\sup\bigl(\Crit(k_{n})\cap\min(b\setminus\lambda+1)\bigr).   
  \end{gather*}
  Now set $k'_{0,1}=k_0\colon M_{\ords}=N_0\to N_1$ and
  $k'_{n,n+1}=k'_{n}[k_{n}]\colon N_{n}\to N_{n+1}$.   Then the direct
  limit $N_{\omega}$ of these iterations is well founded; however the
  following claim implies that $\seq{k'_{n,\omega}(\alpha)\mid
    n\in\omega}$ is strictly descending.  This contradiction will
  complete the proof of Lemma~\ref{thm:M_bDoesntmove_helper}.
  \begin{claim}\label{thm:M_bDoesntmove_claim}
    $k'_{n,n+1}(\alpha)>\alpha$ for each $n\in\omega$.
  \end{claim}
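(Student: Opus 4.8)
The plan is to prove Claim~\ref{thm:M_bDoesntmove_claim} by induction on $n$, showing that at each stage the copied iteration $k'_{n,n+1}$ genuinely moves $\alpha$ upward. The base case $n=0$ is immediate: $k'_{0,1}=k_0$ and by hypothesis $k_0(\alpha)>\alpha$. For the inductive step, I would use the fact that $k'_{n,n+1}=k'_n[k_n]$ is the copy of $k_n$ under the map $k'_{0,n}\colon M_\ords=N_0\to N_n$, together with the separation of critical points guaranteed by the construction: the critical points of $k_{n+1}$ lie strictly above $\sup(\Crit(k_n)\cap\min(b\setminus\lambda+1))$ for each relevant $\lambda\in\bar b$. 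The key point is that $k'_{0,n}$ is the identity on an initial segment of the ordinals large enough to include $\alpha$ and everything below the critical points being used at stage $n$, so that $k'_{0,n}(\alpha)=\alpha$ and hence $k'_{n,n+1}$, being the copy of $k_n$, acts on $\alpha$ exactly as $k_n$ does — namely, moves it strictly up.

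The heart of the matter is to verify that the critical points stay separated in the right way so the copy maps do not interfere with each other. Concretely, I would argue: by Corollary~\ref{thm:reorderiteration} we may assume each iteration $k_n$ has strictly increasing critical points, and by the choice of $\nu^0_\lambda$ (in particular $\nu^0_{\bar\tau}$ chosen in terms of $\cof(\alpha)$) combined with the growth condition on $\min(\Crit(k_{n+1})\setminus\lambda)$, the critical points used by $k_{m}$ for $m<n$ are all below the critical points used by $k_n$, relative to the relevant blocks of $I$. Since $k'_{0,n}$ is the composite of the copies of $k_0,\dots,k_{n-1}$, its critical points are contained in $\bigcup_{m<n}\Crit(k_m)$ (the copy maps do not introduce new critical points when the original iteration is applied to a model agreeing below the relevant point), hence $k'_{0,n}$ fixes $\alpha$ and everything up to $\crit(k_n)$. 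Then $k'_{n,n+1}(\alpha)=k'_n[k_n](\alpha)=k_n(\alpha)>\alpha$, using that the copy of an iteration acts the same way on points fixed by the copying map.

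Having established the claim, the rest of Lemma~\ref{thm:M_bDoesntmove_helper} follows as indicated in the excerpt: the direct limit $N_\omega$ is well founded (being a direct limit of wellfounded models along an $\omega$-indexed system of embeddings), but $\seq{k'_{n,\omega}(\alpha)\mid n\in\omega}$ is then a strictly descending $\omega$-sequence of ordinals in $N_\omega$ — since each $k'_{n,n+1}(\alpha)>\alpha$ and applying $k'_{n+1,\omega}$ (which is order preserving) shows $k'_{n,\omega}(\alpha)=k'_{n+1,\omega}(k'_{n,n+1}(\alpha))>k'_{n+1,\omega}(\alpha)$ provided $k'_{n+1,\omega}$ does not collapse the gap, which again follows from the critical-point separation ensuring $\alpha<\crit(k'_{n+1,\omega})$ is false but $k'_{n,n+1}(\alpha)$ and $\alpha$ straddle no relevant critical point of the tail. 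This contradiction proves the helper lemma, and Lemma~\ref{thm:M_bDoesntmove} then follows because $M_b\prec M_\ords$ places the witnessing sequence $\vec\nu$ inside $M_b$, and the hypothesis of Lemma~\ref{thm:M_bDoesntmove} (that $k$ uses only extenders $i_\nu(\ufFromExt{E}{\alpha})$ with $\kappa_\nu\in B\setminus b$) guarantees exactly the disjointness $\Crit(k)\cap[\lambda,\nu_\lambda]=\emptyset$ for $\lambda\in\sing{0}\cup b$.

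I expect the main obstacle to be the bookkeeping around the copy maps: making precise that $k'_{0,n}$ fixes $\alpha$, which requires carefully tracking that no critical point used in $k_0,\dots,k_{n-1}$ (or their copies) falls in the interval between $\alpha$ and the supremum of the relevant ordinals, and correctly handling the case distinction in the definition of $\nu^0_{\bar\tau}$ depending on whether $\cof(\alpha)$ lies in the block $[\max(\bar b),\max(b)]$. The delicate point flagged by the author's own \texttt{todoenv} — that a more general version of Lemma~\ref{thm:M_bDoesntmove} is actually needed — suggests the statement may need to be applied with $k$ an iteration not literally of the restricted form but reducible to it; I would handle this by factoring any such $k$ through its restriction to the relevant extenders and absorbing the rest into a copy map that fixes $M_b\cap\ords$ by the argument just given.
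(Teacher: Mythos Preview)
Your central claim---that $k'_{0,n}$ fixes $\alpha$---is false, and the argument collapses immediately at $n=1$. Indeed $k'_{0,1}=k_0$ and $k_0(\alpha)>\alpha$ by hypothesis, so $k'_{0,1}(\alpha)\neq\alpha$. More generally, if $k'_{0,n}(\alpha)=\alpha$ and $k'_{n,n+1}(\alpha)>\alpha$ then $k'_{0,n+1}(\alpha)=k'_{n,n+1}(k'_{0,n}(\alpha))>\alpha$, so your two assertions are mutually inconsistent for consecutive $n$. The related claim that ``the critical points used by $k_m$ for $m<n$ are all below the critical points used by $k_n$'' misreads the growth condition: the hypothesis
\[
\min\bigl(\Crit(k_{n+1})\setminus\lambda\bigr)>\sup\bigl(\Crit(k_n)\cap\min(b\setminus\lambda+1)\bigr)
\]
is \emph{block-wise}, asserting only that within each interval $[\lambda,\min(b\setminus(\lambda+1)))$ the critical points of $k_{n+1}$ lie above those of $k_n$. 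Critical points of the various $k_n$ may well interleave across different blocks, and this is exactly why the proof cannot proceed by the simple copy-map argument you sketch.

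The paper's proof addresses this by decomposing both $\ell=k'_n$ and $\ell'=k_{n+1}$ into a lower part (critical points below $\bar\tau$) and an upper part (above $\bar\tau$), writing $\ell=\ell_1\circ\ell_0$ and $\ell'=\ell'_1\circ\ell'_0$. The choice of the sequence $\seq{\nu^0_\lambda\mid\lambda\in\bar b}$ guarantees that the combined lower map $h_0$ fixes $\alpha$; this is the only ``fixing'' available, and it applies to the below-$\bar\tau$ parts, not to $k'_{0,n}$ as a whole. One then shows $\ell_0[\ell'_1](\alpha)>\alpha$ via a commutative diagram (Corollary~\ref{thm:reorderiteration}), and finally constructs an explicit embedding of $\ell_0[\ell'_1](\alpha)$ into the relevant image to conclude $k'_{n,n+1}(\alpha)>\alpha$. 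This interplay between the two levels is the substance of Kunen's argument and cannot be bypassed.
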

  \begin{proof}[Proof of Claim~\ref{thm:M_bDoesntmove_claim}]
    Set $\ell=k'_{n}$ and $\ell'=k_{n+1}$, and write $\ell=\ell_{1}\circ
    \ell_{0}$ and $\ell'=\ell_{1}'\circ \ell_{0}'$, where $\ell_0$ and
    $\ell_0'$ use the extenders below $\bar\tau$, while $\ell_1$
    and $\ell'_1$ use the extenders above $\bar{\tau}$.   Now
    consider the following diagram,  which is obtained using
    Corollary~\ref{thm:reorderiteration}.
    \begin{equation}
      \label{eq:zzzz}
        \begin{tikzpicture}[>=angle 90,baseline]
         \matrix(m)[matrix of math nodes,
         row sep =1.5em, 
         column sep=4.4em, text height = 1.5ex, text depth=0.25ex
         ]
           {
             			&  M^{\ell_0}  &		   		&  M^{\ell_1,h_0}				\\
             M		&			&M^{h_0}		&				&M^{h_1 }	\\
             			& M^{\ell'_0}	&				& M^{\ell'_1,  h_0}				\\
                                &			& M^{\ell'}										\\
         };
          \path[->] 
                         (m-1-2) edge node[auto,sloped,pos=.2]{$\ell_0[\ell'_0]$} (m-2-3)
                         (m-1-4) edge node[auto,sloped,pos=.2] {$\ell[\ell'_1]$} (m-2-5)
                         (m-2-1) edge node[auto]{$\ell_0$} (m-1-2)
                                       edge  node[auto,swap]{$\ell'_0$}(m-3-2)
                                       edge[dotted] node[auto]{$h_0$} (m-2-3)
                         (m-2-3) edge node[auto,sloped,pos=.8] {$ \ell'_0[\ell_1]$} (m-1-4)
                                       edge node[auto,swap,sloped,pos=.8]  {$ \ell_0[\ell'_1]$} (m-3-4)
                                       edge[dotted] node[auto]{$h_1$} (m-2-5)
                         (m-3-2) 	edge node[auto,pos=.2,swap,sloped]{$\ell'_0[\ell_0]$} (m-2-3)
                         		edge node[auto,swap]{$\ell'_1$} (m-4-3)
                         (m-3-4) edge node[auto,swap,sloped,pos=.2] {$\ell'[\ell_1]$} (m-2-5)
                         (m-4-3) edge node[auto,swap,sloped,pos=.2] {$\ell'[\ell_0]$} (m-3-4);
        \end{tikzpicture}
      \end{equation}

      The choice of $\seq{\nu^{0}_{\lambda}\mid \lambda\in\bar b}$
      implies that $h_0(\alpha)=\alpha$,  so
      \begin{equation*}
        \ell_0[\ell'_1](\alpha)=
        \ell_0[\ell'_1]\circ\ell'_0[\ell_0](\alpha)=\ell'[\ell_0]\circ\ell'_1(\alpha)
        \geq\ell'_1(\alpha)=\ell'(\alpha)>\alpha. 
      \end{equation*}
    
        We will embed $\ell_0[\ell'_1](\alpha)$ into
      $\ell'_0[\ell_1](\ell'_1)(\alpha)$, showing that the latter is also
      greater than $\alpha$.  To this end, let $g$ and $\gamma$ be a
      function in $M^{h}$ and a generator of $\ell_0[\ell'_1]$ such
      that $\ell_0[\ell_1'](g)(\gamma)<\ell_0[\ell_1'](\alpha)$.   We will define
      a function $\bar g\in M^{\ell_1,h_0}$, and the desired embedding will
      be given by $\ell_0[\ell_1'](g)(\gamma)\mapsto
      \ell_1[\ell'_1](\bar g)( \ell'_0[\ell_1](\gamma))$.

      For each $\nu\in\domain(g)$, let the function $f_{\nu}$ and the
      generator $\beta_\nu$ of $\ell'_0[\ell_1]$ be
      such that $g(\nu)= \ell'_0[\ell_1](f_{\nu})(\beta_\nu)$.  Note
      that  $\ell'_0[\ell_1]\in M^{h_0}$, so 
      the function $h(\nu,\xi)=f_{\nu}(\xi)$ is also in $M^{h_0}$.
      Also $\seq{\beta_\nu\mid \nu\in\domain(g)}\in M^{h_0}$, and
      since $\sup(\Crit(\ell'_0[\ell_1]))<\min(\crit(\ell_0[\ell'_1])$, there is
      some $\beta$ such that $\beta_\nu=\beta$ for almost all $\nu$;
      that is, $\gamma\in \ell_0[\ell'_1](\set{\nu\mid \beta_\nu=\beta})$.    

      Now set $\bar g(\xi)= \ell'_0[\ell_1](h)(\beta,\xi)$, so 
      $\bar g(\ell'_0[\ell_1](\nu))=g(\nu)$ for almost all $\nu$.

      This completes the proof of Claim~\ref{thm:M_bDoesntmove_claim}
      and hence of Lemma~\ref{thm:M_bDoesntmove_helper}.
\end{proof}
\let\qed\relax
\end{proof}

\begin{proof}[Proof of Lemma~\ref{thm:M_bDoesntmove}]
  We will show that for any finite $b\subseteq I$ and $\alpha\in M_b$
  the sequence $\vec\nu$ given by Lemma~\ref{thm:M_bDoesntmove_helper} is also valid for
  iterations $k$ as in Lemma~\ref{thm:M_bDoesntmove}.  Note that such
  $k$, having all critical points in $M_{B\setminus b}$, satisfy the
  constraint given by  $\vec\nu$.

  Supposing the contrary, let $b$ be a sequence for which the claim
  fails, let
  $\alpha$ the least ordinal for which it fails, and let $k\in M_B$
  witness this failure.
  Set $\zeta=\otp(B)$, and let $G\subset i_{\ords}(P(\vec E\restrict\zeta)\mgkeq)$ 
  be the generic set 
  constructed in Subsection~\ref{sec:generic_set}, so that $k\in
  M_\ords[G]$. 
  Then there is a condition $s\in G$ such that
  $\set{\forceKappa^{s,\nu}\mid\nu\in\domain(s)}\subseteq
  b\cup\sing{\ords}$ which forces that $\alpha$ is the least
  counterexample and that $\dot k$ is a name for a witness to this failure. 
  
  The choice of $\vec\nu^{0}$ ensures that $k$ is continuous at $\alpha$,
  and therefore there is some $\alpha'<\alpha$ such that
  $k(\alpha')\geq\alpha$. 
  By Lemma~\ref{thm:prikry}(\ref{item:Prikrythm-inD}) there is a
  condition $s''\leq^*s$ in $G$ and a finite $e\subseteq\zeta$ such that any
  $s'\leq s$ with $e\subseteq\domain(s'')$  determines 
  $\alpha'$.   Fix $s'\leq s''$ in $M_b$ with $e\subseteq\domain(s')$
  and $\nu_{\lambda}<\forceKappa^{s',\nu}$ whenever $\lambda\in
  b\cup\sing0$ and $\lambda<\forceKappa^{s',\nu}$.

  Now let $ j\colon M_{\ords}\to M^{ j}$ be the iteration of $M_{\ords}$
  by the extenders
  \begin{equation}
    \label{eq:www}
    \seq{F^{s',\nu}_{\xi}\mid\nu\in\domain(s')\setminus\sing\ords\land
      \forceKappa^{s',\nu}\notin\lim(B)
      \land \xi\in\domain(\vec F^{s',\nu})}.
  \end{equation}

  Now  construct, as in
  Subsection~\ref{sec:generic_set} (except that the second component
  $\vec b$ of the conditions 
  of $R$ is modified
  appropriately),  $G'\subseteq  j\circ i_{\ords}(P(\vec 
  E\restrict\otp(B))\mgkeq)$ with $s'\in G'$.      Instead of taking all
  indiscernibles from $I$, this construction uses the iteration
  $j\circ i_{\ords}$, substituting the critical
  point of $F^{s,\nu}_{\xi}$ for the corresponding member of $B$
  whenever $F^{s,\nu}_{\xi}$ is  in the sequence~\eqref{eq:www}.

  Now factor $\dot k^{G'}$ as $\ell_1\circ\ell_0$ where $\ell_0$ uses the
  extenders of $\dot k^{G'}$ which are
  in $M_b$ and $\ell_1$ uses the remainder.
  Note that
  since $M_b$ is closed under countable sequences, $\ell_0\circ j\in
  M_b$, and since $\ell_0\circ j$ obeys $\vec\nu$ it follows that $\ell_0\circ j(\alpha)=\alpha$.    
  Therefore $\ell_0\circ j(\alpha')<\alpha$, but  $(\ell_1\circ
  \ell_0)(j(\alpha'))\geq j(\alpha)=\alpha$, so
  $\ell_1(\ell_0\circ j(\alpha'))>\ell_0\circ j (\alpha')$.    Since the map $j$ is
  elementary, this contradicts the minimality of $\alpha$.  
\end{proof}

\medskip

This concludes the preliminary observations, and 
we are now ready to continue with the proof of the Main 
Lemma,~\ref{thm:mainlemma}.  As was stated earlier, this proof is an
induction on the lexicographic ordering of pairs 
$(\iota, \phi)$ in order to prove that for all  \LS\ sequences $B$ and
all $x$ in $\chang_\iota\cap  M_B$,
\begin{equation}\label{eq:indeq}
  \chang_{B}\modelsCi \phi(x) \quad\text{if and only
    if}\quad \modelsCi\phi(x).
\end{equation}
Here and for the remainder of the paper we write $P\modelsCi\sigma$ to
mean that  $(\chang_{\iota})^{P}\models\sigma$.

The statement~\eqref{eq:indeq} uses the induction hypothesis: $\chang_{B}$ is not, by
its definition, a subset of $\chang$; however by the induction
hypothesis there is an embedding $\pi\colon
(\chang_{\iota})^{\chang_B}\to \chang_{\iota} $, which is the identity
on ordinals and is defined in general  by
setting $\pi(\set{y\in (\chang_{\iota'})^{M_B}\mid
  (\chang_{\iota'})^{M_B}\models \phi(y, a)})=
\set{y\in \chang_{\iota'}\mid
  \chang_{\iota'}\models \phi(y, \pi(a))})$.    For the rest of
this section we will identify $(\chang_{\iota})^{M_B}$ with the range
of $\pi$.

We will need an additional induction hypothesis in order to carry out the
proof of Lemma~\ref{thm:mainlemma}.
This hypothesis is rather
technical and uses notation which will be developed during the proof of the induction
step for Lemma~\ref{thm:mainlemma}, so we defer its statement, as
Lemma~\ref{thm:mainlemma2}, until it is needed to complete that proof.

By standard arguments, the only problematic part of the proof of the induction step
for Lemma~\ref{thm:mainlemma}  is the assertion 
that the existential quantifier is
preserved downwards:   We assume that $\psi(x,y)$ is a formula which
satisfies~\eqref{eq:indeq}, and want to prove that
\begin{equation}\label{eq:existsImplies}
  \forall x\in \chang_{B} \, \bigl(\modelsCi\exists
  y\,\psi(x,y)\implies \chang_{B}\modelsCi\exists y\,\psi(x,y)\bigr).
\end{equation}

Since the basic problem in the proof is dealing with gaps in $B$, it
will be helpful to  introduce some terminology to describe their
structure.   A \emph{gap} of $B$ is a 
maximal nonempty  interval of $I\setminus B$.     
For a \LS{} set $B$, the gap
will be a half open interval $[\sigma,\delta)$ where $\sigma$ is the
supremum of an $\omega$-sequence of members of $B$, and $\delta$ is
either $\min(B\setminus\sigma)$ or $\ords$.     We call $\delta$ the
\emph{head} of the gap.

Let $\delta'=\sup(\sigma\cap I\setminus B)$, or $\delta'=0$ if $I\cap
\sigma\subseteq B$.   Then $[\delta',\sigma)\cap I\subseteq B$; we
refer to this interval as the \emph{block} of $B$ corresponding to
the gap, and to $\delta'$ (which either is $0$ or is also the head of
a gap below $\delta'$)  as the foot of the block.  
If $\sigma'=\sup((B\cap\lim(I))\cap \delta)$ then
$B\cap(\sigma',\sigma)=I\cap(\sigma',\delta)$
is an $\omega$ sequence of successor members
of $B$; we will refer to this interval as  \emph{the tail} of the
gap.   If $\gamma$ is any member of this tail then we will refer
to the interval $[\gamma,\sigma)\cap B$ as \emph{the tail  of $B$ above $\gamma$}.

Call a set $b\subseteq B$ a \emph{tail traversal} of $B$ if it contains
exactly one point from the tail of each gap in $B$.  
Then $b$ determines a suitable subsequence $\tilde B\subseteq B$ as follows: let
$\delta$ be the head of a block in $B$, let $\delta'$ be the foot of the
associated block, and let $\gamma$ be the unique member of
$b\cap[\delta',\delta)$.   Then we regard $\gamma$ as dividing this
block of $B$ into three parts: the closed interval
$[\delta',\gamma)\cap B$, which we will call a \emph{closed block of $B$
  below $\gamma$},  
the singleton $\sing{\gamma}$, and the tail $(\gamma,\delta)\cap B$,
which we will call the \emph{tail above $b$}.
The suitable subsequence $\tilde B$ determined by $b$ is the union of
the closed blocks of $B$ below the members of $b$.

The maximal suitable subsequences of $B$ are those which are
determined by some tail traverse of $B$.    Note that any suitable
subsequence of $B$ is contained in a maximal subsequence, and hence in
dealing with $\chang_B$ we only need to consider maximal suitable subsequences.

\medskip{}
We are now ready to begin the proof of  the induction step for Lemma~\ref{thm:mainlemma}.
Suppose that $\phi(x)$ is the formula $\exists y\,\psi(x,y)$ and
is true in $\chang_{\iota}$, and that $B$ is a \LS\ sequence with
$x\in \chang_B$. 
Fix a tail traversal $b$ of $B$ such that
$\sing{x,\iota}\subseteq\chang_{\tilde B}$, 
where $\tilde B$ is the suitable subsequence of $B$ determined by $b$.
Pick $y$ so $\modelsCi\psi(x,y)$ and let $B'\supseteq B$ be a
\LS\ sequence with $y\in \chang_{B'}$.   By the induction hypothesis
$\chang_{B'}\modelsCi\psi(x,y)$.

We will define an iteration map $k$ and an isomorphism
$\sigma$ as in   Diagram~\eqref{eq:diagram_1}. 
\begin{equation}
  \label{eq:diagram_1}
  \begin{tikzpicture}[>=angle 90,baseline]
    \matrix(m)[matrix of math nodes, row sep =2.6em, column sep=2.8em, text height = 1.5ex, text depth=0.25ex]
    {%
      &&M_{B'}&&M_{B'}\xre\\
      M&M_{\tilde B}&M_B&M_k&M_k\xre\\
    };
    \path  [right hook->]  (m-2-2) edge (m-2-3)
    (m-2-3) edge (m-1-3);
    \path  [left hook->]   (m-1-5) edge (m-1-3)
    (m-2-5) edge (m-2-4);
    \path[->] (m-1-5) edge[decorate,decoration={snake,segment
      length=1.2mm, amplitude=.2mm}]  node[auto]{$\sigma$}  (m-2-5); 
    \path  [->]                        (m-2-3) edge  node[auto]{$k$}       (m-2-4)
    (m-2-1) edge  node[auto]{$i_\ords$}  (m-2-2);
  \end{tikzpicture}
\end{equation}

The map $k$ will be an iterated ultrapower using iterated extenders with critical points in $b$.  It has length greater than $\omega_1$, but is definable in $M_B[c]$ from
a countable sequence $c\in M_B$ of ordinals.  
The iteration $k$ has two purposes:
\begin{enumerate}\item 
  It includes one iteration step for each member of $B'\setminus\tilde B$
  (excluding a tail in $B'$ of  each gap of $B$).

\item 
  For each gap in $B'$ which does not correspond to a gap of $B$, it
  includes an
  $\omega_1$-sequence of iteration steps  inserted in order to  emulate
  this gap inside $M_k$.
\end{enumerate}
The submodel $M_{k}\xre$ of $M_k$ will be obtained by using
only the iterations from clause~1, omitting those from clause~2.   The
isomorphism $\sigma$ will map members of $B'\setminus B$ to the
corresponding critical points of ultrapowers in clause~1, and the
submodel $M_{B'}\xre$ of $M_{B'}$ will be obtained by taking only the
generators belonging to members of $B'\setminus B$ which correspond to
generators of 
extenders used in the iteration steps from clause~1.

The iteration $k$ will be such that Lemma~\ref{thm:M_bDoesntmove}
implies that the restrictions of $k$ and $\sigma$ to ordinals in
the suitable submodel $M_{\tilde B}$ are the identity.
The iteration  $k$  can be defined in $M_B[c]$, for a countable
sequence $c$ of ordinals, and thus is definable in the extension $M_B[G]$. 
The models $M_B$ and $M_k$ have the same ordinals and the 
same associated Chang model $\chang_B=\chang_k$.
Thus  Diagram~\eqref{eq:diagram_1}  induces  the following diagram:

\begin{equation}
  \label{eq:diagram_2}
  \begin{tikzpicture}[>=angle 90, baseline]
    \matrix(m)[matrix of math nodes, row sep =2.6em, column 
    sep=2.8em, text height = 1.5ex, text depth=0.25ex]
    {%
      &\chang_{B'}&&\chang_{B'}\xer\\
      \chang_{\tilde B}&\chang_B&\chang_k=\chang_B&\chang_{k}\xer\\
    };{
      \path  [right hook->] (m-2-1) edge (m-2-2)
      (m-2-2) edge (m-1-2);
      \path  [left hook->]  (m-1-4) edge (m-1-2)
      (m-2-4) edge (m-2-3);
      \path[->] (m-1-4) edge    
      [decorate,
      decoration={snake,segment length=1.2mm, 
        amplitude=.2mm}
      ] node[auto]{$\sigma$}  (m-2-4);
      \path  [->]   (m-2-2) edge  node[auto]{$k$} (m-2-3) ;
    }\end{tikzpicture}
\end{equation}
Once this machinery has been put into place, we will be able to  complete the
proof of the induction step for Lemma~\ref{thm:mainlemma}: we are
assuming $\modelsCi\psi(x,y)$, with $x$ and $y$ in $\chang_{B'}$, so by
the 
induction hypothesis $\chang_{B'}\modelsCi\exists y\psi(x,y)$.
An easy proof will give
Lemma~\ref{thm:M-B-eta-elem}, which implies that 
$\chang_{B'}\xre\prec \chang_{B'}$, so $\chang_{B'}\xre
\modelsCi\exists y\psi(x,y)$.   Fix $y\in \chang_{B'}\xre$ so that
$\chang_{B'}\xre\modelsCi\psi(x,y)$.   Since $\sigma$ is an
isomorphism, $\chang_{k}\xre\modelsCi\psi(x,\sigma(y))$.    

Now we want to conclude that  $\chang_{B}\models \psi(x,\sigma(y))$,
but unlike the case in the previous paragraph, we don't know of a direct proof that
$\chang_{B}\xre\prec 
\chang_{B}$.    Instead we will state a slightly generalized form of the
needed fact  as Lemma~\ref{thm:mainlemma2}, and with this as an
additional induction hypothesis conclude the
proof of the induction step for Lemma~\ref{thm:mainlemma}.        We
then use the induction hypothesis (including the just proved fact that
Lemma~\ref{thm:mainlemma} holds for the pair $(\iota,\phi)$) to prove that
Lemma~\ref{thm:mainlemma2} holds for $(\iota,\phi)$; this will
complete the proof of Lemmas~\ref{thm:mainlemma}
and~\ref{thm:mainlemma2}, and thus of Theorem~\ref{thm:main}, except
for the assumption that $\kappa_0\in B$.
\medskip{}

We now give the details of the construction of Diagram~\eqref{eq:diagram_1}.  
We already have the four models on the left of the diagram: $B$ is the
given \LS\ sequence, $\tilde B\subset B$ is a suitable subsequence with
$x\in M_{\tilde B}$ which is characterized by a tail traversal $b$ of $B$, and
$B'\supseteq B$ is a \LS\ sequence with a witness $y$ to $\exists
y\;\psi(x,y)$.  
The following definition is more general than needed here.  The added
generality is used in the proof of Lemma~\ref{thm:mainlemma2}.

\begin{definition}\label{def:vgsequence}
  A \emph{\vg{} sequence} for $B$ is a triple $(b, \vec\eta,g)$
  satisfying the following four conditions:
  \begin{enumerate}
  \item The set $b$ is a tail traversal sequence of $B$.
  \item $\vec\eta$ is a function with $\domain(\vec\eta)=
    \set{(\lambda,\xi)\mid \lambda\in b\land
      \xi<\nu_\lambda}$, where  $\nu_\lambda$ is a countable ordinal
    for each $\lambda\in b$.
  \item $g\subset\domain(\vec\eta),$ and if $(\lambda,\xi)\in g$ then
    $\xi$ is a limit ordinal.
  \item \label{item:vg_forcing}
    Define an order $\precdot$ on $B\cup\domain(\vec\eta)$ using the
    ordinal order on   $B$, the  lexicographic order on
    $\domain(\vec\eta)$, and setting 
    $\lambda'\precdot(\lambda,\xi)\precdot\lambda$ when $\lambda'<\lambda\in B$
    and $(\lambda,\xi)\in\domain( \vec\eta)$.

    Then $\eta_{\lambda,\xi}>\otp(\set{z\in B\cup\domain(\eta)\mid
      z\precdot(\lambda,\xi)})$.
  \end{enumerate}

  We will say that  $(b,\vec\eta,g)$ is a \vg\ sequence for $B'$ over  $B$
  if in addition the following four conditions hold:
  \begin{myinparaenum}
  \item 
    $B'$ and $B$ are \LS\ sequences with $B'\supset B$.
  \item\label{item:vgover-tauiso} $B'$ has the
    same order type as $(B\cup\domain(\eta),\precdot)$.   In the
    following, we write
    $\tau\colon (B\cup\domain(\eta))\to B'$ for  the order isomorphism. 
  \item\label{item:vgover-identity}
    $\tau$ is the identity on the suitable subsequence $\tilde B$ of
    $B$ determined by $b$.
  \item \label{item:vgover-gisright}
    if $\gamma\in B'\setminus B$ then $\tau(\gamma)\in g$ if and
    only if $\gamma$ is the head of a gap in $B'$.
  \end{myinparaenum}
\end{definition}
Note that if  $(b,\vec\eta,g)$ is a \vg\ sequence for $B'$ over  $B$
then $b'=\tau^{-1}[b]$ is a traversal of the
tails in $B'$ of the gaps of $B$, and that if $\lambda\in b'$ then
$\tau$ maps the tail above $\lambda$ in $B'$ to the tail above
$\tau(\lambda)$ in $B$.

For the construction of Diagram~\eqref{eq:diagram_1},  we use the following \vg\ sequence
$(b,\vec\eta,g)$ for $B'$ over $B$:   The function $\vec\eta$ is 
a constant function, with the constant value $\eta$ to be specified
later.  Fix a traversal $b'$ of the tails in $B'$ belonging to  gaps of
$B$.  Then
\begin{myinparaenum}
\item $\domain(\vec\eta)=\set{(\lambda,\xi)\mid \lambda\in b\land
    \xi\leq\otp(B'\cap[\lambda,\lambda')}$, where $\lambda'$ is
  the member of $b'$ in the tail in $B'$ of the same gap as
  $\lambda$, and 
\item $g=\set{\tau(\gamma)\mid \gamma\in B'\setminus B\land \gamma
    \text{ is the head of a gap in }B'}$
\end{myinparaenum}

\begin{definition}
  \label{def:Bprime_vg}
  If $(b,\vec\eta,g)$ is a \vg\ sequence for $B'$ over $B$, then
  $M_{B'}\xre=\set{j_{\ords}(f)(a)\mid f\in M\land a\in
    [\mathcal{G}]^{<\omega}}$ where $\mathcal{G}$ is the following set
  of generators:  Let $\kappa_\nu$ be a member of $B'$ and let $\beta=i_{\nu}(\bar\beta)$
  be a generator belonging to $\kappa_{\nu}$.      Then
  \begin{equation*}
    \beta\in\mathcal{G} \iff \bigl(
    \tau(\kappa_\nu)\in B  \lor
    (\tau(\kappa,\nu)=(\lambda,\xi)\in\domain(\vec\eta)\land \bar\beta\in\supp(E_{\eta_{\lambda,\xi}}))\bigr).
  \end{equation*}
\end{definition}

Note that $M_{B'}\xre\prec M_{B'}$,  that $M_{\tilde B}\subseteq
M_{B'}\xre$ and, that if $\eta$ is chosen sufficiently large then $y\in
M_{B'}\xre$.    This is the first of two criteria for the choice of
$\eta$; the other is that $\eta>\omega^{\omega}\cdot \otp(B')$.

\begin{lemma}
  \label{thm:M-B-eta-elem}
  If $(b,\eta_{\lambda,\xi},g)$ is a \vg\ sequence for $B'$ over $B$
  then $\chang_{B'}\xre\prec\chang_{B'}$. 
\end{lemma}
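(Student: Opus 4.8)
The plan is to reduce the elementarity statement $\chang_{B'}\xre\prec\chang_{B'}$ to the structure of the \vg\ sequence together with the fact that $M_{B'}\xre\prec M_{B'}$, which was already noted after Definition~\ref{def:Bprime_vg}. First I would observe that $M_{B'}\xre$ and $M_{B'}$ have, by construction, the same ordinals up through $\ords$: the set $\mathcal G$ of retained generators contains all generators belonging to members $\kappa_\nu$ of $B'$ with $\tau(\kappa_\nu)\in B$, and these alone already suffice to name every ordinal of $M_{\ords}\cut\ords$, by the usual argument that every ordinal below $\ords$ is $i_{\ords}(f)(\vec\beta)$ for a finite sequence $\vec\beta$ of generators belonging to members of $B$. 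Consequently $M_{B'}\xre\cut\ords = M_{B'}\cut\ords$, so the two Chang models $\chang_{B'}\xre$ and $\chang_{B'}$ are built by the same recursion over the same ordinals; the only possible difference is which countable sequences of ordinals are available at limit stages.

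The key step is therefore to show that $\chang_{B'}\xre$ and $\chang_{B'}$ admit the same countable sequences of ordinals --- equivalently, that every countable sequence of ordinals lying in $M_{B'}$ that is ``legal'' for $\chang_{B'}$ (i.e. bounded below the head of each gap of $B'$ in the sense of Definition~\ref{def:changBdef}) already lies in $M_{B'}\xre$. This is where the condition $\eta>\omega^\omega\cdot\otp(B')$ on the choice of $\eta$ is used. A $B'$-bounded countable sequence $\vec\nu$ lies, by Proposition~\ref{thm:limitSuitableC_Bdefinable}, in $M_{\tilde B'}$ for some suitable $\tilde B'\subset B'$; I would argue that each such suitable $\tilde B'$ can be taken to meet each tail of a gap of $B'$ in a bounded (indeed finite-below-each-block) set, so that the generators needed to express $\vec\nu$ belong to members $\kappa_\nu\in B'$ whose $\tau$-images are either in $B$ or are pairs $(\lambda,\xi)$ with $\xi$ small relative to $\eta$; the largeness of $\eta$ then guarantees $\bar\beta\in\supp(E_{\eta_{\lambda,\xi}})$, so every such generator is in $\mathcal G$ and $\vec\nu\in M_{B'}\xre$. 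Once the countable sequences agree and the ordinals agree, an induction on the stages $\iota$ of the Chang hierarchy gives $\chang_\iota^{M_{B'}\xre}=\chang_\iota^{M_{B'}}$ at limit stages and, at successor stages, uses $M_{B'}\xre\prec M_{B'}$ (in particular reflection of the $\DEF$ operation, which is first order over the previous stage together with a real coding a wellorder) to conclude $\chang_{B'}\xre\prec\chang_{B'}$.

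The main obstacle I expect is the interaction between the tails of gaps of $B'$ and the notion of $B'$-boundedness: a suitable subsequence $\tilde B'\subset B'$ witnessing $\vec\nu\in M_{\tilde B'}$ may a priori pick up generators from arbitrarily high in a tail, and one must check that this does not force $\tau$-images $(\lambda,\xi)$ with $\xi$ too large for the fixed $\eta$. Handling this cleanly requires using clause~(\ref{item:vgover-gisright}) of Definition~\ref{def:vgsequence} together with the structural description of tails given just before Definition~\ref{def:vgsequence} (that each tail is an $\omega$-sequence of successor members, so a suitable subsequence meets it in a set of order type $\le\omega$), and then bounding the relevant $\xi$ by $\omega^\omega\cdot\otp(B')$; I would isolate this as the one genuinely delicate counting argument, with the remaining steps being routine reflection and induction. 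An alternative, perhaps smoother, route is to note that $M_{B'}\xre = M_{B''}$ for the \LS\ (or at least suitable-closed) sequence $B''$ whose generator data is exactly $\mathcal G$, reducing the claim to the already-established fact that $M_{B''}\cut\ords\prec M_{B'}\cut\ords$ and that $\chang_{B''}$ contains the needed countable sequences; I would try this reformulation first and fall back on the direct argument if the identification of $M_{B'}\xre$ with an honest $M_{B''}$ does not go through verbatim.
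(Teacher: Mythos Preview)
Your approach rests on a claim that is false: you assert that $M_{B'}\xre\cut\ords = M_{B'}\cut\ords$, arguing that the generators belonging to members $\kappa_\nu$ of $B'$ with $\tau^{-1}(\kappa_\nu)\in B$ already name every ordinal of $M_{B'}$. But those generators only name the ordinals of $M_B$, and $M_B$ is a \emph{proper} elementary substructure of $M_{B'}$ with strictly fewer ordinals. More directly, if $\kappa_\nu\in B'$ corresponds under $\tau$ to some $(\lambda,\xi)\in\domain(\vec\eta)$, then every $\bar\beta\in[\kappa,\kappa^{+\omega_1})\setminus\supp(E_{\eta_{\lambda,\xi}})$ gives a generator $i_\nu(\bar\beta)$ which is an ordinal in $M_{B'}$ but not in $M_{B'}\xre$. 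So $M_{B'}\xre$ is a genuinely smaller structure, $\chang_{B'}\xre$ is built over a smaller set of ordinals than $\chang_{B'}$, and the lemma is asserting a proper elementary embedding, not an equality. Once this step fails, the rest of your argument---the reduction to matching countable sequences, the induction on $\iota$, the counting argument about tails---has nothing to stand on.

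The paper's proof goes in a completely different direction: it uses the forcing construction of Subsection~\ref{sec:generic_set}. One builds an $M_{B'}\xre$-generic $G\subseteq i_{\ords}(P(\vec E\restrict\otp(B'))\mgkeq)$ by running the same construction of the auxiliary order $R$, except that at each $\gamma$ with $\kappa_\gamma$ corresponding to $(\lambda,\xi)$ one restricts the range of the second coordinate $b_\gamma$ to $\supp(E_{\eta_{\lambda,\xi}})$. Condition~(\ref{item:vg_forcing}) of Definition~\ref{def:vgsequence} is exactly what guarantees that enough of the extender survives to carry this out. The point is then that any condition $([r],b)$ in the restricted $R$ which establishes the parameters of a formula $\phi$ and forces $\phi$ in $\chang_{B'}\xre$ is \emph{also} a condition in the unrestricted $R$ for $M_{B'}$, establishes the same parameters, and forces the same $\phi$ in $\chang_{B'}$. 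Elementarity follows immediately. You should re-attempt the lemma along these lines; the forcing machinery is doing the work, not a comparison of ordinals.
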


\begin{proof}
  The construction of Subsection~\ref{sec:generic_set} can
  be carried out to obtain a $M_{B'}\xre$-generic subset $G\in
  i_{\ords}(P(\vec E\restrict\otp(B')\mgkeq)$.  The only change needed is
  that the range of the coordinate $b_{\gamma}$ in a condition of $R$ is restricted
  to $\supp(E_{\eta_{\lambda,\xi}})$ whenever
  $(\lambda,\xi)\in\domain(\vec\eta)$ and $\kappa_\gamma$ is the
  $\xi$th member of $B'$ above $\lambda$.

  Now let  $\phi$ be a formula which is true in $\chang_{B'}\xre$. 
%  By Lemma~\ref{thm:std-forcing-name-decides} 
  Then there is a condition $([r],b)$ in the forcing $R$ for $M_{B'}\xre$
  which establishes the parameters of $\phi$ and forces $\phi$ to be
  true.   This condition is also a condition in the forcing $R$ for
  $M_{B'}$, it
  establishes the parameters in the same way, and it forces that $\phi$
  holds in $\chang_{B'}$.
\end{proof}

Note that condition~\ref{item:vg_forcing} of
Definition~\ref{def:vgsequence}
is used here to ensure that the enough of the image of $E$ is
present at each of the $\kappa_\nu\in B'\setminus B$ to construct
the generic set as in section~\ref{sec:generic_set}. 

\begin{figure}[t]
  \[      % Center diagram.
  %%%% {l}{.7\textwidth}
  \begin{tikzpicture} [>=angle 90, baseline,xscale=.5]
    % styles
    \tikzstyle{viz}=[]   % [shape=circle,fill=blue,anchor=west]
    \tikzstyle{math}=[]
    \tikzstyle{mapping}=[dotted,thick,black!75!white]
    \tikzstyle{left-label}=[black,math,left=8]
    \tikzstyle{right-label}=[black,math,right=8]
    \tikzstyle{mydot}=[fill=black,circle,inner sep=0,minimum size=3pt]
    \tikzstyle{inmodel}=[thick,black!50!white]
    \tikzstyle{modeled}=[black!50!white]
    \def\colsep{4}
    \newcommand{\mytail}[1]{#1 node[mydot]{} +(0,.3) node[mydot]{} ++(0,.6) node[mydot]{}}
    % ----------------------------------------------------------------
    % GRID --- Uncomment the next line to show a grid useful in working on
    % the diagram.
    % \draw [help lines,blue!10!white] (-1,-1) grid (3 * \colsep + 1,6);
    % ----------------------------------------------------------------
    % Top and bottom dots of left column
    \node[viz,mydot](a1) at (0,0) {};
    \node[viz,mydot](a9) at (0,6) {};
    % Top and bottom dots of 2nd column
    \node[viz,mydot](b1) at (\colsep,0) {};
    \node[viz,mydot](b9) at (\colsep,6) {};
    % Top and bottom dots of 3rd column
    \node[viz,mydot](c1) at (2*\colsep,0) {};
    \node[viz,mydot](c9) at (2*\colsep,6) {};
    % Top and bottom dots of last column
    \node[viz,mydot](d1) at (3*\colsep,0) {};
    \node[viz,mydot](d9) at (3*\colsep,6) {};
    % Labeling of maps at the bottom of the diagram
    \node[math](Bp)  [below of = a1] {$M_{B'}\xre$};
    \node[math](Bph) [below of = b1] {$M_{k}\xre$};
    \node[math](Bh)   [below of = c1] {$M_k$};
    \node[math](Bk)  [below of = d1] {$M_B$};
    \draw[->] (Bp) edge [decorate,decoration={snake,segment length=1.2mm,amplitude=.2mm}] 	     
    node[auto]{$\sigma$} (Bph);
    \draw[->]  (Bk) edge node[auto,swap]{$k$}(Bh);
    \draw[right hook->]   (Bph)  edge (Bh);
    % 
    %% top and bottom nodes and mapping lines
    % top line
    \draw[mapping]   (a1) node[mydot]{} node[left-label]{$\delta'$}   --  (d1) node[mydot]{}
    (a9) node[left-label]{$\delta$}  -- (d9);
    % bottom line
    \draw (a1)  +(0,1) node(a2){}     +(0,1.5) node(a3){}  +(0,4) node(a4){}   +(0,4.5) node(a5){}
    +(0,5) node(a6){};
    % max(\tilde B) (2nd line from bottom)
    \draw[mapping]  (a2)   node[left-label]{$\max(\tilde B\cap\delta)$} node[mydot]{} --
    ++ (3*\colsep,0)  node[mydot](d2){} ;
    % line and dots starting at \lambda.   (Node d3 is \lambda)
    \draw[mapping]  (a3) node[left-label]{$\lambda$} node[mydot]{} -- + (2*\colsep, 0)
    +(3*\colsep,0) node[mydot](d3){};
    % line and dots  starting at \lambda' 
    \draw[mapping] (a4)   node[left-label]{$\lambda' \in b'$}
    node[mydot]{} -- ++(\colsep,-1) node[mydot](b4){}  
    -- ++(\colsep,0) node[mydot](c4){}
    -- (d3) node[right-label]{$\lambda\in b$} ;
    % Vertical lines -- Left column
    \draw               (a3)  +(0, .6) node(a3c){}            % for broken line symbol
    +(0,1.2) node(a3a){}             % bottom of gap
    ++(0,1.7) node[mydot](a3b){};   % top of gap.
    \newcommand\brokenline{ +(-.3,-.02) -- +(-.1,+.05) -- +(.1, -.02) -- +(0.3, +.02) +(0,0)}
    \draw[inmodel] (a3.center) --   (a3c.center) 
    \brokenline   ++(0,0.08) \brokenline 
    --  (a3a.center)  +(0,.25)   node[left-label]{typical gap in $B'\setminus B$}   (a3b) -- (a4.center); 

    % Vertical lines -- Other columns
    \draw[modeled] (b1 |- a3)node(b3)[mydot]{} -- ++(0,0.4)node(b3a){}       ++(0,0.3) node[mydot](b3b){} -- (b4)
    (c1 |- a3)node[mydot]{}       --  (c1 |- b4);
    % Ends of gap in B'\B
    \draw[mapping]   (a3b) -- (b3b) -- (b3b -| c1) node[mydot]{}
    (a3a.center) -- (b3a.center) -- (b3a -| c1);                              
    \draw[inmodel] (a1) edge (a2.center) (b1)-- (b1 |- a2)node[mydot]{}      (c1) -- (c1 |- a2)node[mydot]{}   (d1) -- (d2);
    %% Draw tails   
    % Tail of B'
    \draw   (a4) \mytail{++(0,0.3)} +(0,-.3) node[left-label]{$\text{tail of }B'$};
    \draw[snake=brace] (a4)   +(.3, 1.05) -- +(.3, 0.15);
    % Tail in Middle columns
    \draw              (b4) \mytail{++(0,0.3)}     (c4)  \mytail{++(0,0.3)};
    % Tail of B.
    \draw   (c4) ++(\colsep,0) \mytail{++(0,0.3)} +(0,-.3)
    node[right-label]{$\text{tail of }B$}; 
    \draw[snake=brace] (c4) ++(\colsep,0)  +(-.3, 0.15) -- +(-.3, 1.05);
    % Mapping from tail of B' to tail of B
    \draw[mapping]  (a4) ++(.4,0.62) -- ++(\colsep - .3, -1.0) --
    ++(\colsep, 0) -- +(\colsep - 0.45, 0); 
  \end{tikzpicture}
  \]
  \caption{The maps $\sigma$ and $k$ inside the block between $\delta'$
    and $\delta$ which is associated
    with the gap in $B$ headed  by $\delta$.    The dotted lines represent the
    maps $\sigma$ and $k$; the heavier vertical lines represent intervals of $I$
    contained in  the indicated models and the lighter ones represent the indiscernibles added by the iteration $k$.}
  \label{fig:k_def}
\end{figure}

We can now complete the construction of the elements of
Diagram~\eqref{eq:diagram_1} by defining $k$ and $\sigma$.    This
construction is illustrated in Figure~\ref{fig:k_def}.

\begin{definition}
  \label{def:k}
  We define by recursion on $z\in (B\cup\domain(\vec\eta),\precdot)$
  a sequence of embeddings         $k_{z}\colon M_{B}\to M^*_{z}$.  
  We will describe the construction on one of the blocks of $B$. 
  Thus, suppose that $\delta\in B$ is the head of a gap and
  $\delta'\in B\cup\sing{0}$ is the foot of the block of $B$ below it.
  We assume that $k_z\colon M_B\to M^*_z$ has been defined for all $z\precdot\delta'$. 
  Let $\lambda$ be the unique member of $b\cap[\delta',\delta)$.     
  \begin{compactenum}[(i)] 
  \item 
    $M^*_0=M_B$, and if $\delta'>0$ then 
    $M^*_{\delta'}=\dirlim\seq{(M_z^*;k_z):z\precdot \delta'}$.
  \item If $\nu\in \tilde B\cap
    [\delta',\delta)=B\cap[\delta',\lambda)$ then
    $M^*_\nu=M^*_{\delta'}$. 
  \item If $\nu\in B\cap(\lambda,\delta)$ then
    $M^*_\nu$  is the
    direct limit of the embeddings $k_z$ for $z\precdot
    \lambda$. 
  \item If $(\lambda,\xi)\in \domain(\vec\eta)\setminus g$ and
    $\xi$ is a limit ordinal then
    $M^*_\nu$ is the direct limit of the embeddings $k_z$ for
    $z\precdot(\lambda,\xi)$.  
  \item If $z=(\lambda,\xi+1)\in \domain(\vec\eta)$, or if
    $z=\lambda$ and $(\lambda,\xi)$ is its predecessor in
    $\precdot$, then $M^*_z=\ult(M^*_{(\lambda,\xi)}, 
    E^*_{\eta_{\lambda,\xi}})$ where, letting $\gamma$ be such that
    $\delta'=\kappa_{\gamma}$, we write $E^*_{\alpha}$ for
    $k_{\lambda,\xi}\circ i_{\gamma}(E_\alpha)$.
  \item If $z=(\lambda,\xi)\in g$, then set $\bar k^*_z\colon M_B\to
    M^{**}_z=\dirlim_{z'\precdot z}M^*_{z'}$.
    Then $M^*_z$ 
    is an iterated ultrapower of $M^{**}_z$
    of length $\omega_1$, using extenders $\bar k^{*}_z(i_{\gamma}(\vec
    F))$ where $\lambda=\kappa_{\gamma}$ and $\vec F\in M$ is an
    arbitrary but fixed cofinal subsequence of the 
    sequence of extenders below $E$ on $\kappa$ in $M$.
  \end{compactenum}
  If $\gamma\in B'$ and $\tau(\gamma)=(\lambda,\xi)\in \domain(\vec\eta)$, then
  $\sigma(\gamma)$ is equal to  the critical point of the ultrapower of 
  $M^*_{\tau(\gamma)}$.
\end{definition}

\begin{definition}  
  \label{def:sigma}
  The restriction of $\sigma$ to $B'$ is determined by the map
  $\tau$ specified in the Definition~\ref{def:vgsequence} of a \vg\
  sequence for $B'$ over $B$: if $\tau(\gamma)\in B$ then
  $\sigma(\gamma)=k(\tau(\gamma))$, and if
  $\tau(\gamma)=(\lambda,\xi)$ then $\sigma(\gamma)$ is the
  $\xi$th critical point of the iteration steps of $k$ using  extenders on $\lambda$.
  The restriction of $\sigma$ to $B'$ determines its restriction
  to generators of $M_{B'}\xre$, and this restriction  determines
  the remainder of $\sigma$.
\end{definition}

The particular choice of the sequence $\vec F$ of extenders will not
matter; a suitable choice for $F_\nu$ would be the least
$\kappa^{+(\nu+1)}$-strong   extender on $\kappa$.
It is important that $\vec F\in M$, for that implies that
$M_k$ is in $M_B[B,\vec\eta]$ and hence is in the generic
extension $M_B[G]$ of $M_B$ described in
section~\ref{sec:generic_set}; we  use this fact to identify the ordinals
of $M_k$ with those of $M_B$.    It is also important that
$\vec F$
is cofinal among the extenders below $E$  in $M$, and hence
$i_{\gamma}(\vec F)$ is cofinal among  the extenders on $\lambda$
in $M_B$:  this fact ensures (using
Lemma~\ref{thm:M_bDoesntmove})
that the restriction of $k$ to  the ordinals of $M_B$ is
independent of the choice of $\vec F$.    

\begin{todoenv}
{6/18/15  --- This actually goes beyond
    Lemma~\ref{thm:M_bDoesntmove}.}
\end{todoenv}
This completes the definition of the elements of
Diagram~\eqref{eq:diagram_1}, and the extension to the Chang model in 
Diagram~\eqref{eq:diagram_2} is straightforward.   
We have already observed that the Chang model $\chang_k$ built on
$M_k$ is the same as $\chang_B$, giving the identity  on the
bottom.    Lemma~\ref{thm:M-B-eta-elem} asserts that $C_{B'}\xre$ is
an elementary substructure of $\chang_{B'}$, and $\sigma\colon
C_{B'}\xre\to \chang_{k}\xre$ is an isomorphism.    It follows that
$\chang_{k}\xre\modelsCi \psi(x,\sigma(y))$, and
we will be finished if we can conclude from this that  that
$\chang_{B}\modelsCi \psi(x,\sigma(y))$.   This is implied by the case
$(\iota,\psi)$ of 
Lemma~\ref{thm:mainlemma2}, which is the promised addition to the induction
hypothesis to be used in  the proof of Lemma~\ref{thm:mainlemma}.   Thus this
concludes the proof of the induction step for
Lemma~\ref{thm:mainlemma}.

\begin{lemma}\label{thm:mainlemma2}
  Suppose that $B\subseteq B'$ are \LS\ sequences and $\vec \eta$ is a virtual gap
  construction sequence for $B'$ over $B$ such that
  $\eta_{\lambda,\xi}\geq\omega^{n}\cdot\otp(B\cup\domain(\vec\eta),
  \precdot)$ for all $(\lambda,\xi)\in\domain(\vec\eta)$ and $n<\omega$.
  Let $k\colon M_B\to  M_k$ be 
  the  \vg{} iteration, and    let $\chang_k\xre\subseteq 
  \chang_k$ be as given in Diagram~\eqref{eq:diagram_2}.
  Then $\chang_{k}\xre\prec \chang$.
\end{lemma}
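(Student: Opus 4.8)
The plan is to deduce $\chang_{k}\xre\prec\chang$ from the corresponding statement on the $B'$-side, transported across the isomorphism $\sigma$ of Diagram~\eqref{eq:diagram_2}. Since $B'$ is a \LS\ sequence and, by the stage of the induction we have reached, the Main Lemma (Lemma~\ref{thm:mainlemma}) holds at the pair $(\iota,\phi)$ and at all smaller pairs for \emph{every} \LS\ sequence, we have $\chang_{B'}\prec\chang$ up to the current stage; combining this with Lemma~\ref{thm:M-B-eta-elem}, which gives $\chang_{B'}\xre\prec\chang_{B'}$, we obtain that the natural term-preserving embedding $e_{B'}\colon\chang_{B'}\xre\to\chang$ is elementary up to $(\iota,\phi)$. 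Composing with $\sigma^{-1}$ therefore furnishes \emph{an} elementary embedding $\chang_{k}\xre\to\chang$. What is left is to see that the \emph{canonical} embedding $e_{k}\colon\chang_{k}\xre\hookrightarrow\chang_{k}=\chang_{B}\to\chang$ — the restriction to $\chang_{k}\xre$ of the Main Lemma embedding for $B$ — is itself elementary; by the preceding sentence it suffices to show that for every $z\in\chang_{k}\xre$ the images $e_{k}(z)$ and $e_{B'}(\sigma^{-1}(z))$ satisfy the same restricted formulas in $\chang$, up to $(\iota,\phi)$.

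To compare the two images I would expand $z=\tau^{\chang_{k}\xre}(\vec\gamma)$ with $\tau\in T$ and $\vec\gamma$ a countable sequence of generators belonging, via the iteration $k$, to members of $M_{k}$. Then $e_{B'}(\sigma^{-1}(z))=\tau^{\chang}(\vec\delta)$, where $\vec\delta=\sigma^{-1}(\vec\gamma)$ is a countable sequence of genuine $I$-generators belonging to members of $B'$; whereas $e_{k}(z)=\rho^{\chang}(\vec\beta)$, where $\vec\beta$ is a countable sequence of $I$-generators belonging to members of $B$ and $\rho\in T$ is obtained from $\tau$ by rewriting each ordinal $\gamma_{j}$ of $M_{k}=M_{B}$ in the form $i_{\ords}(f_{j})(\vec\beta^{j})$ — so that $\rho$ absorbs the iteration $k$. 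By Lemma~\ref{thm:M_bDoesntmove}, which is first order and hence persists to iterates of $M_{\ords}$, both $k$ and $\sigma$ fix the ordinals of $M_{\tilde B}$, so the $\tilde B$-parts of $\vec\beta$ and $\vec\delta$ agree; the whole question thus reduces to the indiscernibility of $\rho^{\chang}(\vec\beta)$ and $\tau^{\chang}(\vec\delta)$ in $\chang$, up to $(\iota,\phi)$.

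This last point I expect to be the main obstacle, and here is how I would attack it. One re-runs the generic-set construction of Subsection~\ref{sec:generic_set} — exactly as in the proof of Lemma~\ref{thm:M-B-eta-elem} — on the submodel $M_{k}\xre$ (equivalently, on $M_{B'}\xre$ pushed across $\sigma$), so that both representations of the element in question occur as values of a single standard forcing name in that extension; the largeness hypothesis $\eta_{\lambda,\xi}\ge\omega^{n}\cdot\otp(B\cup\domain(\vec\eta),\precdot)$ for every $n<\omega$ is precisely what ensures that at every ordinal level below $\iota$ enough of the image of $E$ is present at the virtual indiscernibles to carry this out and to let $\chang$ emulate the gap structure of $B'$. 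The desired indiscernibility at complexity $\le(\iota,\phi)$ then follows from the second induction hypothesis (Lemma~\ref{thm:mainlemma2} at all smaller pairs) together with the instance of the Main Lemma at $(\iota,\phi)$ already proved, applied to $B$ and to $B'$. The delicacy — and the reason restricted formulas and the largeness of $\vec\eta$ are needed — is that by Corollary~\ref{thm:counterexample} the model $\chang$ genuinely can tell apart parameter sequences with different gap patterns, so the argument must use that $k$ was designed (Definitions~\ref{def:k} and~\ref{def:sigma}) so that the gap pattern visible to $M_{k}\xre$ agrees with that of $B'$ outside a finite set, and that only this information survives into restricted formulas.
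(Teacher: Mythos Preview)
Your reduction is circular. Trace the equivalence you want --- $\chang\models\phi(e_k(z))\iff\chang\models\phi(e_{B'}(\sigma^{-1}(z)))$ --- through the Main Lemma for $B$ and for $B'$, then Lemma~\ref{thm:M-B-eta-elem}, then the isomorphism $\sigma$: it unwinds exactly to $\chang_B\models\phi(z)\iff\chang_k\xre\models\phi(z)$, i.e.\ to $\chang_k\xre\prec\chang_B$ at level $(\iota,\phi)$, which (composed with the Main Lemma for $B$) is the statement you are trying to prove. The appeal to ``re-running the generic-set construction'' does not break the circle: a generic over $M_k\xre$ is, via $\sigma$, the same as one over $M_{B'}\xre$, and that recovers $\chang_{B'}\xre\prec\chang_{B'}$ --- which you already have --- but says nothing about the inclusion $\chang_k\xre\hookrightarrow\chang_B$, whose gap pattern is that of $B$, not $B'$.

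The concrete obstacle your argument never meets is this: from the Main Lemma at $(\iota,\phi)$ for $B$ you obtain a witness $y_0\in\chang_B$ with $\modelsCi\psi(x,y_0)$, but $y_0$ need not lie in $\chang_k\xre$, and nothing you wrote produces a witness there. The paper supplies the missing move: \emph{extend} the \vg\ sequence $\vec\eta$ to a longer $\vetap$ by inserting an $\omega$-block of new coordinates below each virtual gap $(\lambda,\xi)\in g$, choosing the new value $\eta'$ large enough that $y_0\in\chang_B\xrep$. The growth hypothesis $\eta_{\lambda,\xi}\ge\omega^n\cdot\otp(B\cup\domain(\vec\eta),\precdot)$ is used \emph{here} --- not where you placed it --- to check that $\vetap$ again satisfies the hypothesis of the lemma (one loses a factor of $\omega$ in passing from $\vec\eta$ to $\vetap$). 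One then enlarges $B'$ to a \LS\ $B''$ by adding the next $\omega$ members of $I$ in each gap of $B'$ that is not a gap of $B$, obtains a second iteration $k'$ and isomorphism $\sigma'$, and verifies via Lemma~\ref{thm:M_bDoesntmove} that the comparison map $\tau\colon\chang_k\xre\to\chang_{k'}\xrep$ is the identity. A chase through the resulting diagram --- $\chang_{k'}\xrep\cong\chang_{B''}\xrep\prec\chang_{B''}$, then the Main Lemma for $B''$ and for $B'$, then Lemma~\ref{thm:M-B-eta-elem} for $B'$ --- produces a witness $y_1\in\chang_{B'}\xre$, and hence $\sigma(y_1)\in\chang_k\xre$ with $\chang_k\xre\modelsCi\psi(x,\sigma(y_1))$.
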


\begin{proof}
  As was stated earlier, this proof is a simultaneous induction along with 
  Lemma~\ref{thm:mainlemma}.     We have completed
  the proof that Lemma~\ref{thm:mainlemma} holds
  for $(\iota,\phi)$, using as an induction hypothesis that Lemmas~\ref{thm:mainlemma} and~\ref{thm:mainlemma2} hold for all smaller pairs.
  We now use this same induction hypothesis, together
  with the fact that Lemma~\ref{thm:mainlemma} holds for $(\iota,\phi)$,  to prove that Lemma~\ref{thm:mainlemma2} holds
  for  $(\iota,\phi)$: that is, if
  $B$, $k$  and $\eta$ are as in Lemma~\ref{thm:mainlemma2}
  and $x$ is an arbitrary member of
  $C_{k}\xre$ such that $\modelsCi\exists y\psi(x,y)$, then $\chang_k\xre\modelsCi\exists y\psi(x,y)$.   
  By the newly proved case of  Lemma~\ref{thm:mainlemma},
  $\chang_{B}\modelsCi\exists y\,\psi(x,y)$. 
  Fix $y_0\in\chang_{B}$ so that $ \modelsCi\psi(x,y_0)$.
  We now define an extension $\vetap$ of the \vg{} sequence
  $\vec \eta$ such that $y_0\in C_{B}\xrep$.
  The sequence $\vetap$ will have the same sets $b$
  and $g$ as $\vec\eta$, but the domain    of $\vec \eta'$ will be enlarged
  by adding an $\omega$ sequence of new elements below each $(\lambda,\xi)\in g$.
  Thus, for each $\lambda\in b$ define a map $t_{\lambda}$ with
  $\domain(t_{\lambda})=\len(\vec\eta_\lambda)$  by 
  \begin{equation*}
    t_{\lambda}(\xi) =
    \begin{cases}
      0&\text{if $\xi=0$,}\\
      t_{\lambda}(\xi')+1&\text{if $\xi=\xi'+1$,}\\
      \sup_{\xi'<\xi}t_{\lambda}(\xi')&\text{if $\xi$ is a limit and
        $(\lambda,\xi)\notin g$}\\
      \sup_{\xi'<\xi}t_{\lambda}(\xi')+\omega&\text{if     $(\lambda,\xi)\in g$}.
    \end{cases}
  \end{equation*}
  Now we define $\vetap$, using an ordinal $\eta'\in\omega_1$ to be
  determined shortly: 
  \begin{gather*}
    \domain(\vetap)=\set{(\lambda,\xi)\mid\xi<\sup\range(t_{\lambda})}\\
    b^{\vetap}=b^{\vec\eta}\text{, and }
    g^{\vetap}=\set{(\lambda,t_{\lambda}(\xi)\mid (\lambda,\xi)\in
      g^{\vec\eta}}, \text{ and}\\
    \eta'_{\lambda,\xi} =
    \begin{cases}
      \eta_{\lambda,\xi'}&\text{if $\xi=t(\xi')$ }\\
      \eta'&\text{if $(\lambda,\xi)\notin\range(t)$}.
    \end{cases}
  \end{gather*}
  The first condition on $\eta'$ is that $\eta'\geq
  \omega^{n}\cdot\otp\bigl(B\cup\domain(\vetap,\precdot)\bigr)$ for each
  $n\in\omega$, and the second condition is that $y_0\in \chang_{B}\xrep$.
  It is possible to satisfy the second condition  since
  $\chang_B=\bigcup_{\eta'<\omega_1}\chang_B\xrep$.  Notice that the
  first condition implies that $\vetap$ satisfies the hypothesis of
  Lemma~\ref{thm:mainlemma2}, since if  $\xi=t_{\lambda}(\xi')$ then 
  $\eta'_{\lambda,\xi}=\eta_{\lambda,\xi'}>
  \omega^{n+1}\cdot\otp(B\cup\domain(\vec\eta),\precdot)=\omega^{n}\cdot
  \omega\cdot\otp(B\cup\domain(\vec\eta),\precdot)
  \geq\omega^{n}\cdot\otp(B\cup\domain(\vetap),\precdot)$.

  \newcommand\taumap{\tau}
  
  \begin{equation}\label{eq:mainlemma2-diag}
    \begin{tikzpicture}[>=angle 90,baseline]
      \tikzstyle{mysnake}=[decorate,decoration={snake,segment length=1.2mm,amplitude=.2mm}]; 	  
      \matrix(m)[matrix of math nodes, row sep =2.6em, column sep=2.8em, text height = 1.5ex, text depth=0.25ex]
      {&M_{B''}&&M_{B''}\etarestrict\vetap\\
        M_{B'}&&M_{B'}\etarestrict\vec\eta\\
        M_B&M_k&M_k\etarestrict\vec\eta \\
        &M_{k'}&&M_{k'}\etarestrict\vetap\\
      };
      \path  [right hook->](m-2-1) edge                     (m-1-2)
      (m-2-3.north east) edge                     (m-1-4)
      (m-3-1) edge                     (m-2-1);
      \path[->]            (m-3-1) edge node[auto] {$k$}    (m-3-2)
      edge   node[auto] {$k'$}    (m-4-2)
      (m-2-3) edge [mysnake] node[auto] {$\sigma$} (m-3-3)
      (m-3-2) edge node[auto] {$\taumap$}  (m-4-2)
      (m-1-4) edge [mysnake] node[auto] {$\sigma'$} (m-4-4);            
      \path[right hook->]   (m-3-3.south east) edge  node[sloped,above] {$\taumap$} (m-4-4);
      \path[left hook->]   (m-3-3) edge                     (m-3-2)
      (m-4-4) edge                     (m-4-2)
      (m-1-4) edge                     (m-1-2) 
      (m-2-3) edge                     (m-2-1);
    \end{tikzpicture}
  \end{equation}
  For the remainder of the proof we refer to
  Diagram~\eqref{eq:mainlemma2-diag}.   The inner rectangle is the same
  as Diagram~\eqref{eq:diagram_1}.   
  The map $\taumap$ is determined by using the map
  $(\lambda,\xi)\mapsto(\lambda,t_{\lambda}(\xi))$ to map the generators
  of indiscernibles from $\vec\eta$ into those of $\vetap$.
  As with Diagrams~\eqref{eq:diagram_1} and~\eqref{eq:diagram_2}, 
  Diagram~\eqref{eq:mainlemma2-diag} induces a similar diagram for the
  corresponding Chang models.    

  \begin{todoenv} {6/18/15 --- MAYBE --- Again,
      Lemma~\ref{thm:M_bDoesntmove} doesn't really cover this, as
      stated.}
  \end{todoenv}

  We claim that
  $\taumap\restrict(\chang_k\xre)$ is the identity.   First, 
  Lemma~\ref{thm:M_bDoesntmove} implies that the restriction of $\taumap$ to the ordinals of
  $M_k\xre$ is the identity.   
  Now every member of $\chang_k\xre$ is represented by a term
  $w=\set{z\in\chang_{\iota'}\!\mid\quad\models_{\chang_{\iota'}}\phi(z,a)}$,
  where $\iota'\in M_k\xre$ and $a$ is a sequence of ordinals from
  $M_{k}\xre$.  Thus $\taumap(w)$ is represented by the same term in
  $\chang_{k'}\xre$.   But $\chang_{k}=\chang_{k'}=\chang_{B}$, so this
  term represents the same set $w$ in $\chang_{k'}$. 

  Now define $B''$ to be $B'$ together with the next $\omega$-many
  members of $I$ from each of the gaps of $B'$ which are not gaps of
  $B$.
  The right-hand trapezoid commutes, and in particular
  $\sigma^{-1}(x)=(\sigma')^{-1}\circ \tau(x)=(\sigma')^{-1}(x)$.    Now
  $\chang_{k'}\xrep\modelsCi \psi(x,y_0)$, and since $\sigma'$
  is an isomorphism it follows that
  $\chang_{B''}\xrep\modelsCi\psi(\sigma^{-1}(x),(\sigma')^{-1}(y_0))$.
  It follows by Lemma~\ref{thm:M-B-eta-elem} that $\chang_{B''}$ satisfies the
  same formula,  by the induction hypothesis Lemma~\ref{thm:mainlemma} for
  $(\iota,\phi)$ it follows that $\modelsCi\exists
  y\;\psi(\sigma^{-1}(x),y)$, and by 
  another application of the same
  induction hypothesis $\chang_B$ satisfies the same formula.
  By Lemma~\ref{thm:M-B-eta-elem},  $\chang_{B'}\etarestrict\vec\eta$
  does as well, so   let $y_1$ be such that $M_{B'}\xre\modelsCi\psi(\sigma^{-1}(x),y_1)$.  Then   
  $\chang_k\xre\modelsCi \psi(x,\sigma(y_1))$, so $\chang_k\xre\modelsCi\exists y\psi(x,y)$, as required.
\end{proof}

\subsection{ Finite exceptions and  $\kappa_0\notin B$}\label{sec:finite-exceptions}
In the last subsection we assumed that $\kappa_0=\kappa$ is a member
of $B$;  here we indicate how this extra assumption can be eliminated.  The
same argument is used in the proof of
Theorem~\ref{thm:modified-suitable}  to support the provision allowing finitely many exceptions.

The reason that the previous argument fails when $\kappa_0\notin B$ is that $\kappa_0$ may be a member of the extended model $B'$ of diagram~\ref{eq:diagram_1}.   In this case the definition of  
the map $k$ in Diagram~\eqref{eq:diagram_1} fails because there is no tail of $B$ in this first gap.

To conclude the proof of Theorem~\ref{thm:main}(\ref{item:main-upper}), 
suppose that $B=\set{\lambda_\nu\mid\nu\leq\zeta}$ is a \LS\ set
with $\lambda_0>\kappa_0$, that 
$x\in\chang_{B}$, and that $\chang\models\phi(x)$.    We want to show
that $\chang_{B}\models\phi(x)$.  
Let $B'=B\cup\set{\kappa_{n}\mid n<\omega}$, a \LS\ sequence of length $\omega+\delta$.   
Since $\kappa_0\in B'$,  the version of
Theorem~\ref{thm:main}(\ref{item:main-upper})
already proved implies
that $\chang_{B'}\models\phi(x)$.
Let $G$ be the $M_{B'}$-generic subset of 
$i_\ords(P(\vec E\restrict(\omega+\delta))\mgkeq)$ constructed 
in section~\ref{sec:generic_set},
and set \[G_1=\set{[p\restrict(\omega,\omega+\delta)]\mid [p]
  \in G \land \omega\in\domain(p)}. \]
Then $G_1$ is an $M_B$-generic subset of 
$i_\ords(P'\mgkeq)$, where $P'$ is the forcing described following
Lemma~\ref{thm:factorization} such that $P(\vec
E\restrict(\omega+\delta))\equiv P(\vec
E\restrict\omega)*\dot R$ is a regular suborder of  
$P(\vec E\restrict\omega+1)\times P'$.

Now let $[q]\in G$ be a condition such that $[q]\forces
\chang_{B'}\models\phi(x)$.  We may assume that
$\omega=\min(\domain(q))$.  Let $G_0$ be a $M_B$-generic subset of
$P(\vec F^{q,\omega})$ with $q\restrict\omega+1\in G_0$, and let
$\tilde G$ be the resulting 
$M_B$-generic subset of $i_\ords(P(\vec E\restrict(\omega+\delta))\mgkeq)$.
Then $[q]\in \tilde G$, so $M[\tilde G]\models
\chang_{B''}\models\phi(x)$, where $B''$ is the set
$\set{\forceKappa_n\mid n\in\omega}\cup B$, interpreted as having,
like $B'$, a gap headed by $\lambda_0$.  Now the forcing does add a
new countable sequence of ordinals, as  $M[\tilde
G]\models\cof(\lambda_0)=\omega$.  However, 
$\lambda_0$ is being interpreted as the head of a gap and therefore $\chang_{ B''}=\bigcup\set{\chang_{\tilde B}\mid \tilde B\subset B\land \tilde B\text { is suitable}}$.  
Since the forcing $P(\vec F^{q,\omega})\mgkeq$ does not add bounded subsets of $\lambda_0$, this implies that $\chang_{B''}$, as defined inside $M[\tilde G]$, is equal to 
$\chang_B$.   This concludes the proof that  $\chang_B\models\phi(x)$.

\bigskip{}
It is critical to this argument that there are only a finite number of intervals
(in this case, only one interval) of $B$ which need special attention.
Finitely many such special cases can be dealt with a condition $q$
obtained, as in the proof, by finitely many one-step extensions, but
infinitely many would involve
adding Prikry type sequences, which  requires the use of the 
iteration to obtain genericity.

\section{Questions and Problems}
\label{sec:questions}
This study leaves a number of questions open.  Two which were
mentioned in the introduction essentially involve filling gaps in this paper:

\begin{question}\label{q:exact}
  Exactly what is the large cardinal strength of a sharp for $\chang$?
\end{question}
Theorem~\ref{thm:main} puts it between a mouse over the reals satisfying
$o(\kappa)=\kappa^{+(\omega+1)}+1$ and a sufficiently strong mouse
over the reals
satisfying $o(\kappa)=\kappa^{+\omega_1}+1$.   The second question
asks whether this procedure truly gives a sharp for the Chang model:
\begin{question}
  Can the restricted formulas be removed from the
  definition~\ref{def:Csharp} of the sharp for the Chang model?  That
  is, can the added Skolem functions be made full-fledged members of
  the language?
\end{question}

The next questions ask for more detailed information about the
structure of the sharp:
\begin{question}
  What is $K(\mathcal{R})^{\chang}$?   Is it an iterate (not moving
  members of $I$) of
  $M_{\Omega}\cut\Omega$ for some mouse $M$ over the reals?  If so, is
  this iteration definable in
  $L[M,\set{\lambda\mid\cof(\lambda)=\omega}]$? 
\end{question}

\begin{question}
  What is the core model $K^{\chang}$ of the Chang model?
  How does it relate to   $K^{L(\mathcal{R})}$ and to $K(\mathcal{R})^{\chang}$?
\end{question}
\begin{todoenv}
  Again, a further question: how much of the covering lemma survives?
\end{todoenv}
\begin{question}
  Is it true that the  measurable cardinals of 
  $K(\reals)^{\chang}$ are exactly the regular cardinals of
  $K(\reals)^{\chang}$ which have countable cofinality in $V$?
\end{question}
\medskip{}

The final question is about the next step from the Chang model.   The
$\omega_1$-Chang model $\omega_1$-$\chang$ is obtained by closing
under $\omega_1$-sequences of ordinals.

\begin{question}
  What can be said about the $\omega_1$-Chang model?
\end{question} 
The question is due to Woodin  (personal communication), as is most 
of the known information.  Gitik has pointed out that (contrary to
my earlier belief) his technique of recovering extenders from threads, or
strings of indiscernibles, appears to be essentially unlimited for
strings whose length has uncountable cofinality.  It follows that
the lower bound, the counterpart to
Theorem~\ref{thm:main}(\ref{item:main-lower}), is probably at least as
large as any cardinal for which there is a pure extender model.

There is one minor caveat to this statement:
\begin{proposition}\label{thm:omega1-sharp-lower-bound}
  Suppose that $V=L[\mathcal{E}]$ is an extender model, and that
  there is an iterated ultrapower $i\colon V\to M$ where $M$ is a
  definable submodel of $\omega_1$-$\chang$.  Then there is no
  strong cardinal in $V$.
\end{proposition}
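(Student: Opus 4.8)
The plan is to adapt the proof of Theorem~\ref{thm:main}(\ref{item:main-lower}) to $\wiichang$, replacing cofinality $\omega$ by cofinality $\omega_1$ and $\omega$-threads by $\omega_1$-threads; the role played there by the anti-large-cardinal hypothesis is now played by the assumption that $V=L[\mathcal E]$ is a pure extender model, and a strong cardinal in $V$ is what will be refuted. So suppose towards a contradiction that $\kappa$ is a strong cardinal of $V$ and that $i\colon V\to M$ is an iterated ultrapower with $M$ a definable inner model of $\wiichang$. Since $\wiichang\subseteq V$, $M$ is an inner model of $V$; being a linear iterate of $L[\mathcal E]$ it is again a pure extender model, and --- exactly as for $K(\reals)^{\chang}$ in Section~\ref{sec:basic-facts} --- it is well orderable in $\wiichang$ after collapsing the reals. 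Comparing $V$ with the core model $K^{\wiichang}$ of $\wiichang$, standard methods show $K^{\wiichang}$ is not moved, and the existence of $M$ lets us take $M=K^{\wiichang}$, so $M$ is the final model of an iterated ultrapower $\langle M_\nu\mid\nu\le\theta\rangle$ with $M_0=V$ in which $E_\nu$ is the $\mathcal E^{M_\nu}$-least extender not on the sequence of $K^{\wiichang}$.

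The first substantive step is to see that, because $\kappa$ is strong in $V$, this iteration is long. Its image is a strong cardinal of $M_{\crit(i)+1}$ which cannot be truncated away without a drop, so iterating it towards $K^{\wiichang}$ forces $\theta=\ords$ and the existence of a closed unbounded class $C$ of fixed points $\alpha$ with $\crit(E_\alpha)=\alpha=i_\alpha(\alpha)$; shrinking $C$ we may assume $i_{\alpha',\alpha}(E_{\alpha'})=E_\alpha$ for $\alpha'<\alpha$ in $C$ and that $E_\alpha$ witnesses $\alpha$ is $\alpha^{+\gamma}$-strong in $M_\alpha$ for every $\gamma<\omega_1$. This is the place where strongness, rather than a weaker hypothesis, is genuinely needed, and I expect it to take some care.

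Now comes the threading step, which is where the $\omega_1$-Chang model enters. The class $S$ of $\alpha\in C$ with $\cof^V(\alpha)=\omega_1$ is stationary; fix $\alpha\in S\cap\lim(S)$ and an increasing $\langle\alpha_\xi\mid\xi<\omega_1\rangle\subseteq S$ cofinal in $\alpha$. As in \cite{gitik-mitchell.ext-indisc}, but now for strings of length $\omega_1$, Gitik's technique supplies a formula which, from $\langle\alpha_\xi\rangle$, defines the set of all threads $\langle\beta_\xi\mid\xi<\omega_1\rangle$ for generators $\beta$ of $E_\alpha$, where $\beta_\xi=i^{-1}_{\alpha_\xi,\alpha}(\beta)$ for all large $\xi$; and, as Gitik has observed, for strings of uncountable cofinality this construction is not bounded by any fixed power $\alpha^{+\delta}$, so --- using $\langle i^{-1}_{\alpha_\xi,\alpha}(\len(E_\alpha))\mid\xi<\omega_1\rangle$ as an extra parameter if needed, just as in the proof of Theorem~\ref{thm:main}(\ref{item:main-lower}) --- it recovers all of $E_\alpha$. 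Every thread is an $\omega_1$-sequence of ordinals and hence lies in $\wiichang$, so $E_\alpha$, together with its transitive collapse to a genuine extender, is an element of $\wiichang$, definable from the parameters $M$ and $\langle\alpha_\xi\rangle$.

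The contradiction is then obtained as in Theorem~\ref{thm:main}(\ref{item:main-lower}): $E_\alpha$ was the least extender of $M_\alpha$ that moves $M_\alpha$ towards $K^{\wiichang}$, hence is not on the sequence of $K^{\wiichang}$, yet $M_\alpha\cut(\alpha+1)$ with the recovered $E_\alpha$ adjoined is an iterable extender structure belonging to $\wiichang$ and properly extending $K^{\wiichang}\cut\alpha$, contradicting the universality of $K^{\wiichang}$ in $\wiichang$; equivalently, the successor map on $C$ threaded through $E_\alpha$ would be a nontrivial elementary self-embedding of $K^{\wiichang}$ inside $\wiichang$. Thus $V$ has no strong cardinal. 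The main obstacle --- and the reason this is only offered as a minor caveat above --- is this endgame together with the length claim: one must verify that Gitik's threading really is length-unrestricted for $\omega_1$-strings, which is exactly where the overlapping-extender phenomena that first appear at a strong cardinal must be shown not to interfere, and one must pin down why the recovered $E_\alpha$ genuinely violates universality rather than merely witnessing large-cardinal strength that $K^{\wiichang}$ might anyway be entitled to carry.
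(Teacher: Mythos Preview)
Your approach is entirely different from the paper's, and it has a genuine gap.

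The paper's proof is a four-line argument that does not use threading or comparison at all. It exploits the two hypotheses you never really used: that $M$ is \emph{definable} in $\omega_1\text{-}\chang$, and that $\kappa$ is \emph{strong} (not merely that $V$ carries long extenders). If $\kappa$ is the least strong cardinal of $V$, then $i(\kappa)$ is the least strong cardinal of $M$. Now choose an extender $E$ on $\kappa$ with $i^{E}(\kappa)>i(\kappa)$ and such that $\ult(V,E)$ is closed under $\omega_1$-sequences; strongness guarantees such $E$. Then $(\omega_1\text{-}\chang)^{\ult(V,E)}=\omega_1\text{-}\chang$, and since $M$ is definable in $\omega_1\text{-}\chang$, the model $M$ is literally unchanged. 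But by elementarity of $i^{E}$, the least strong cardinal of $M$ must equal $i^{E}(i(\kappa))\ge i^{E}(\kappa)>i(\kappa)$, contradicting the previous computation. That is the whole proof.

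Your argument, by contrast, tries to force the comparison of $V$ with $K^{\omega_1\text{-}\chang}$ to have length $\ords$ and then thread. The step ``its image is a strong cardinal \dots\ so iterating it towards $K^{\omega_1\text{-}\chang}$ forces $\theta=\ords$'' is the gap: nothing prevents the comparison from terminating in a set-sized iteration with $K^{\omega_1\text{-}\chang}$ itself having a strong cardinal, and then there is no stationary class of fixed points through which to thread. Indeed, the paragraph immediately following the proposition in the paper says exactly that Gitik's threading \emph{succeeds} in recovering such extenders, so that $K^{\omega_1\text{-}\chang}$ is an iterate of $V$ whenever $\mathcal{E}$ is a set --- the opposite of the contradiction you are aiming for. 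The proposition is not a threading-style lower bound; it is the observation that a strong cardinal lets $V$ move itself by an $\omega_1$-closed embedding without moving $\omega_1\text{-}\chang$, and definability of $M$ turns that into an absurdity.
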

\begin{proof}
  Suppose the contrary, and let $\kappa$ be the smallest strong
  cardinal.  Then $i(\kappa)$ is the smallest strong cardinal in
  $M$.  However, since $\kappa$ is strong there is an extender $E$
  with critical point $\kappa$ such that $i^{E}(\kappa)>i(\kappa)$
  and
  ${\vphantom{\bigl)}}^{\omega_1}{\ult(L[\mathcal{E}],E)}\subseteq\ult(L[\mathcal{E}],E)$.
  Then
  $\omega_1$-$\chang=(\text{$\omega_1$-$\chang$})^{\ult(L[\mathcal{E}],E)}$,
  but the smallest strong cardinal in the latter is
  $i^{E}(i(\kappa))\ge i^E(\kappa)>i(\kappa)$.
\end{proof}

However this observation has no implications for the existence of a
sharp for $\omega_1$-$\chang$.    For example, if $V=L[\mathcal{E}]$
where $\mathcal{E}$ is a proper set, then so long as
$K^{\omega_1\text{-}\chang}$ exists and is sufficiently iterable, Gitik's
technique gives an iterated ultrapower from $L[\mathcal{E}]$ to $K^{\omega_1\text{-}\chang}$.

Woodin has observed that the existence of a sharp for
$\omega_1$-$\chang$ would imply  the Axiom of Determinacy, which
 implies that there is no embedding from $\omega_1$ into
the reals in $\omega_1$-$\chang$, and hence none in $V$.   Thus a
sharp for $\omega_1$-$\chang$ is inconsistent with the Axiom of
Choice in $V$.
However it would be of interest to find a sharp for the
$\omega_1$-Chang model as defined inside an inner model which satisfies the
Axiom of Determinacy.
\bibliographystyle{abbrv}
\bibliography{logic}

\begin{thebibliography}{1}

\bibitem{Chang1971Sets-constructi}
C.~C. Chang.
\newblock Sets constructible using {$L_{\kappa \kappa }$}.
\newblock In {\em Axiomatic {S}et {T}heory ({P}roc. {S}ympos. {P}ure {M}ath.,
  {V}ol. {XIII}, {P}art {I}, {U}niv. {C}alifornia, {L}os {A}ngeles, {C}alif.,
  1967)}, pages 1--8. Amer. Math. Soc., Providence, R.I., 1971.

\bibitem{Gitik2002Blowing-up-powe}
M.~Gitik.
\newblock Blowing up power of a singular cardinal---wider gaps.
\newblock {\em Ann. Pure Appl. Logic}, 116(1-3):1--38, 2002.

\bibitem{Gitik2005No-bound-for-th}
M.~Gitik.
\newblock No bound for the first fixed point.
\newblock {\em J. Math. Log.}, 5(2):193--246, 2005.

\bibitem{Gitik2010Prikry-type-for}
M.~Gitik.
\newblock Prikry-type forcings.
\newblock In {\em Handbook of set theory. {V}ol. 2}, pages 1351--1447.
  Springer, Dordrecht, 2010.

\bibitem{Gitik2012Violating-the-s}
M.~Gitik and P.~Koepke.
\newblock Violating the singular cardinals hypothesis without large cardinals.
\newblock {\em Israel J. Math.}, 191(2):901--922, 2012.

\bibitem{gitik-mitchell.ext-indisc}
M.~Gitik and W.~J. Mitchell.
\newblock Indiscernible sequences for extenders, and the singular cardinal
  hypothesis.
\newblock {\em Ann. Pure Appl. Logic}, 82(3):273--316, 1996.

\bibitem{Kunen1973A-model-for-the}
K.~Kunen.
\newblock A model for the negation of the axiom of choice.
\newblock In {\em Cambridge {S}ummer {S}chool in {M}athematical {L}ogic
  ({C}ambridge, 1971)}, pages 489--494. Lecture Notes in Math. Vol. 337.
  Springer, Berlin, 1973.

\bibitem{magidor.changecf}
M.~Magidor.
\newblock Changing cofinality of cardinals.
\newblock {\em Fundamenta Mathematicae}, 99(1):61--71, 1978.

\bibitem{Merimovich2007Prikry-on-exten}
C.~Merimovich.
\newblock Prikry on extenders, revisited.
\newblock {\em Israel J. Math.}, 160:253--280, 2007.

\end{thebibliography}
\todos
\end{document}

%%% Local Variables:
%%% mode: latex
%%% TeX-master: t
%%% End: